\documentclass{amsart}
\usepackage[utf8]{inputenc}

\title{Metric Lines in Engel-type Groups.}
 \author[Bravo-Doddoli]{Alejandro Bravo-Doddoli}
  \address{Alejandro Bravo-Doddoli;
 {University of Michigan, 530 Church St, Ann Arbor, MI 48109, United States
 \\
\href{abravodo@umich.edu}{abravodo@umich.edu}}
 }

\keywords{Carnot group, non-integrable distributions, Global minimizing geodesic, sub-Riemannian geometry}

\date{December 2024}
%\documentclass[a4paper, 12pt]{book}
%Margini e interlinea
%\usepackage[top=1in, bottom=1in, left=1.2in, right=1in]{geometry}
\pagestyle{plain}
%da sostituire con headings quando riuscirai ad aggiustare che nei capitoli con asterisco non compare la scritta giusta
%\linespread{1.5}

%Librerie utili
\usepackage[english]{babel}
\usepackage[utf8]{inputenc}
\usepackage{libertine}
\usepackage{graphicx}
\usepackage{floatflt}
\usepackage{blindtext}
\usepackage{enumitem}
\usepackage{amsthm}
\usepackage{subfig}
\usepackage{listings}
\usepackage{listingsutf8}
\usepackage{amsmath}
\usepackage{framed}
\usepackage{minibox}
\usepackage{float}
\usepackage{wrapfig}
\usepackage{longtable}
\usepackage[strict]{changepage}
\usepackage{pgfplots}
\usepackage{nicefrac}
\usepackage{units}
\usepackage{etoolbox,tikz}
\usepackage{bbm}
\usepackage{hyperref}

\usepackage[autostyle]{csquotes}

\usepackage[all]{xy}
\usepackage{filecontents}

\usetikzlibrary{external}
\tikzexternalize
\usetikzlibrary{cd}

\AtBeginEnvironment{tikzcd}{\tikzexternaldisable}
\AtEndEnvironment{tikzcd}{\tikzexternalenable}

\usepackage{amsmath}
\usepackage{amsfonts}

\usepackage{amsthm}
\usepackage {amsmath, amssymb}
\usepackage{pb-diagram}
\usepackage{tikz-cd}
\usepackage{mathtools}
\usepackage{amssymb}
%\usepackage[autostyle,italian=guillemets,]{csquotes}
%\usepackage[bibstyle=authortitle,
%citestyle=verbose-trad1,
%backend=biber]{biblatex}
%\bibliography{bibliografia.bib}

\usetikzlibrary{matrix}
\pgfplotsset{width=11cm,compat=1.9}
\usepgfplotslibrary{external}
\tikzexternalize

\newtheorem{Theorem}{Theorem}[section]
\newtheorem{defi}[Theorem]{Definition}
\newtheorem{Prop}[Theorem]{Proposition}
\newtheorem{conj}[Theorem]{Conjecture}
\newtheorem{Cor}[Theorem]{Corollary}
\newtheorem{lemma}[Theorem]{Lemma}
\newtheorem{Remark}[Theorem]{Remark}
\theoremstyle{plain}
\numberwithin{figure}{section}

%qua forse vorrai metterci chapter nel conteggio visto che non stai usando le sezioni

\def\R{\mathbb{R}}
\newcommand{\Lag}{\mathfrak{g}}
\newcommand{\Laa}{\mathfrak{a}}
\newcommand{\Lav}{\mathfrak{v}}
\newcommand{\Lah}{\mathfrak{h}}
\newcommand{\Ag}{\mathbb{A}}
\newcommand{\G}{\mathbb{G}}
\newcommand{\Di}{\mathcal{D}}
\newcommand{\Ho}{\mathcal{H}}
\newcommand{\Vs}{\mathcal{V}}
\newcommand{\T}{\mathcal{T}}
 
\def\R{\mathbb{R}}
\def\D{\mathcal{D}}

\def\ma{metabelian }
\def\Ri{Riemannian }
\def\Ri{Riemannian }
\def\SR{Sub-Riemannian } 
\def\J{\mathcal{J}^k(\mathbb{R},\mathbb{R})}
\def\sR{sub-Riemannian }

\DeclareMathOperator{\Eng}{Eng}

\DeclareMathOperator{\Na6}{N_{6,3,1a}}

\DeclareMathOperator{\Com}{Com}

\begin{document}

\begin{abstract}
In the framework of sub-Riemannian Manifolds, a relevant question is: what are the \enquote{metric lines} (i.e., the isometric embedding of the real line)? This article presents a conjecture classifying the metric lines in Carnot groups and takes the first steps in answering this question for  \enquote{arbitrary rank} Carnot groups. We classify the metric lines of the  Engel-type groups $\Eng(n)$ (Theorem 1.2), whose sub-Riemannian structure is defined on a non-integrable distribution of rank $n+1$. Our approach is a new method, called the sequence method, which we began to develop to study metric lines in the jet space.

\end{abstract}

\maketitle

\tableofcontents

\section{Introduction}

This work is devoted to presenting a conjecture that classifies metric lines in Carnot groups within the framework of sub-Riemannian geometry and attacking the conjecture in the case of \ma Carnot groups with semidirect product structure. Every Carnot group admits the structure of a sub-Riemannian manifold. Thus, to broaden the context, let $M$ be a sub-Riemannian manifold and let $\gamma(t)$ be a sub-Riemannian geodesic in $M$, i.e., a local arc length minimizing curve. A natural question is: What are the conditions for a sub-Riemannian geodesic to be a global minimizer? A curve $\gamma(t): \R \to M$ is called a metric line if $\gamma(t)$ is a globally minimizing geodesic; an alternative term for a metric line is \enquote{an isometric embedding of the real line}. The conjecture classifying metric lines in Carnot groups is as follows:
\begin{conj}\label{conj}
    Let $\G$ be a Carnot group with left-invariant \sR structure. The metric lines in $\G$ are precisely the \sR geodesics $\gamma(t)$ parameterized by arc length satisfying the following condition: there exists a unitary vector $v\in\Lag$ such that
\begin{equation*}
   v =  \lim_{t \to - \infty} (L_{\gamma^{-1}(t)})_* \dot{\gamma} = \lim_{t \to  \infty} (L_{\gamma^{-1}(t)})_* \dot{\gamma}, 
\end{equation*}
where $L_g$ is the left translation by the element $g$ in $\G$, and $(L_g)_*$ is the push-forward of $L_g$.
\end{conj}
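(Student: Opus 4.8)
The statement of Conjecture~\ref{conj} is an equivalence, so the plan is to treat the two implications separately, working throughout with the \emph{control} of an arc-length geodesic, namely the left-translated velocity
\begin{equation*}
u(t) := (L_{\gamma^{-1}(t)})_* \dot{\gamma}(t) \in \Lag_1,
\end{equation*}
which lies in the first stratum $\Lag_1$ (since $\dot\gamma$ is horizontal) and satisfies $|u(t)|=1$. In these terms the conjecture asserts that $\gamma$ is a metric line if and only if $u(t)$ converges to a common unit vector $v$ as $t\to-\infty$ and as $t\to+\infty$. I would first record the elementary one-sided bound: projecting to the abelianization $\Lag/[\Lag,\Lag]\cong\Lag_1$, the curve $\pi\circ\gamma$ has velocity $u(t)$ and $\pi$ is $1$-Lipschitz, so $d(\gamma(a),\gamma(b))\ge |\int_a^b u(t)\,dt|$; this is the fixed reference against which both directions are measured.

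For necessity I would use the self-similarity of Carnot groups under the dilations $\delta_s$. Given a metric line $\gamma$, the rescaled curves $\gamma_n(t)=\delta_{1/n}(\gamma(nt))$ are again arc-length metric lines, and by an Arzel\`a--Ascoli / Gromov--Hausdorff compactness argument they subconverge to a metric line $\gamma_\infty$ in the asymptotic cone of $\G$, which is $\G$ itself. The limit is forced to be $\delta_s$-homogeneous, and the only $\delta_s$-homogeneous metric lines are the horizontal one-parameter subgroups $t\mapsto\exp(tv)$ with $v\in\Lag_1$ a unit vector. Reading off the positive and negative ends separately identifies candidate limits $v_\pm$, and the fact that the \emph{whole} line blows down to a single one-parameter subgroup (rather than to two distinct rays) is what forces $v_+=v_-=v$. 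Upgrading subsequential convergence of $u(t)$ to genuine convergence is the delicate point here, and I expect it to rely on a monotonicity / ``no turning back'' estimate: any sustained deviation of $u$ from its asymptotic value produces a definite inefficiency in the abelianized projection, contradicting global minimality.

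For sufficiency I would argue by calibration. The goal is a function $f:\G\to\R$ with horizontal gradient bounded by $|\nabla_H f|\le 1$ and with $f(\gamma(b))-f(\gamma(a))=b-a$ along the given geodesic, since such an $f$ certifies that no horizontal curve joining $\gamma(a)$ to $\gamma(b)$ is shorter. The naive candidate is the linear function $g\mapsto\langle v,\pi(g)\rangle$ coming from the asymptotic direction, but this is sharp only in the limit and must be corrected by a term that absorbs the geodesic's drift in the higher strata; producing a globally $1$-Lipschitz correction is the crux. Here the hypothesis that $u(t)\to v$ at both ends is precisely what makes the Busemann-type limit $f(g)=\lim_{t\to\infty}\big(t-d(g,\gamma(t))\big)$ well defined and finite, and I would try to show that this Busemann function is the required calibration.

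The main obstacle, in both directions, is that these blow-down and calibration arguments are soft and do not by themselves produce the sharp asymptotics; one needs quantitative control of how the higher-stratum coordinates of $\gamma$ accumulate. In the arbitrary-rank setting this is exactly where the analysis resists the classical rank-one techniques, and I anticipate that the genuine progress comes from reducing to the Engel-type groups $\Eng(n)$ and analyzing the resulting geodesic equations via the sequence method, rather than from the general cone/calibration framework, which I expect to supply necessity but to fall short of a uniform sufficiency proof in general rank.
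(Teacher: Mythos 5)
The statement you are asked to prove is Conjecture~\ref{conj}, which the paper explicitly leaves open: it is proved there only for the Engel-type groups $\Eng(n)$ (Theorem~\ref{the:main-1}), together with two partial general results, namely the averaged necessary condition of Proposition~\ref{prop:met-char} and the exclusion of turn-back geodesics (Corollary~\ref{Cor:turn-back}, Lemma~\ref{lem:cond-metl-lin-2}). Your proposal is a strategy outline rather than a proof, and both of its halves stop exactly at the open difficulties. For necessity, the blow-down argument you invoke (Hakavuori--Le Donne, Proposition~\ref{prp:blow-down}) only yields convergence of the rescalings $\gamma_{u_n}$ along \emph{some} sequence of scales to a single one-parameter subgroup; unwinding this, as the paper does in the proof of Proposition~\ref{prop:met-char}, gives only the Ces\`aro-type condition $2=\lim_{t\to\infty}\frac{1}{t}\int_{-t}^{t}\langle v,u(s)\rangle\,ds$, which is strictly weaker than the pointwise two-sided convergence $u(t)\to v$ demanded by the conjecture. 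The ``monotonicity / no turning back estimate'' you would need to upgrade subsequential averaged convergence to genuine pointwise convergence is not supplied, and no such estimate is known; this is precisely why the conjecture's necessity direction remains open.

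For sufficiency, the calibration step is circular as stated: the Busemann-type limit $f(g)=\lim_{t\to\infty}\bigl(t-d(g,\gamma(t))\bigr)$ is only known to exist and to satisfy $f(\gamma(b))-f(\gamma(a))=b-a$ once one already knows that $\gamma$ is forward-minimizing on $[a,\infty)$, which is essentially the conclusion being sought. The hypothesis $u(t)\to v$ at both ends does not by itself produce the globally $1$-Lipschitz correction of the linear function $\langle v,\pi(\cdot)\rangle$ that you identify as ``the crux.'' The paper's actual positive results proceed by an entirely different route: symplectic reduction of the geodesic flow, projection to the magnetic space $\R^{n+2}_F$ via the submersion $\pi_F$, the cost map of Definition~\ref{def:cost-f-time}, and the sequence method of Theorem~\ref{the:metrtic-lines-method}, which is then verified condition by condition for $\Eng(n)$. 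Your closing paragraph correctly anticipates this, but it also concedes that the general cone/calibration framework does not close either implication; accordingly, the proposal should be regarded as a plausible research program for the conjecture, not a proof of it.
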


In the first part of the paper, we present our main contribution, Theorem \ref{the:metrtic-lines-method}. This is a new approach to verifying that a particular geodesic in a \ma Carnot group with a semidirect product structure is a metric line: the sequence method. We say that a group $\G$ is \ma if $[\G,\G]$ is abelian. In a prior work \cite{BravoDoddoli2024}, we performed the symplectic reduction of \sR geodesic flow on \ma nilpotent groups (every Carnot group is nilpotent). Here, we will use the symplectic reduction to classify and identify \sR geodesics satisfying the condition from Conjecture \ref{conj}. Then, we define a \sR space $\R^{n+1}_F$, called the magnetic space, and \sR submersion $\pi_F:\G \to \R^{n+1}_F$. Lemma \ref{lem:sub-submersion} establishes that the horizontal lift of a metric line is a metric line. As a result, we streamline the investigation of metric lines on a Carnot group $\G$ by focusing on metric lines in $\R^{n+1}_F$. Theorem \ref{the:metrtic-lines-method} states the sufficient conditions for a geodesic in $\R^{n+1}_F$ to be a metric line.

The second part of the paper is devoted to using the sequence method to prove Conjecture \ref{conj} in the Engel-type: The Engel-type group, denoted by $\Eng(n)$, is $(2n+2)$-dimensional \sR manifold with a $(n+1)$-rank distribution.  In \cite{BravoDoddoli2024}, we proved that the \sR geodesic flow on $\Eng(n)$ is integrable for all natural number $n$. The \sR geodesics on $\Eng(n)$ are of types: line, small oscillations, r-periodic, and r-homoclinic. Section \ref{subsub:clas-sr-geo} provides the formal definitions for each type.   Theorem \ref{the:main-1} is the second primary result of this paper, and it offers a comprehensive classification of metric lines in the Engel-type group.

\begin{Theorem}\label{the:main-1}
The metric lines in $\Eng(n)$ are precisely geodesics of the type line and $r$-homoclinic types.
\end{Theorem}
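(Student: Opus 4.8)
The plan is to split the proof into two complementary directions: showing that the line and $r$-homoclinic geodesics \emph{are} metric lines, and that the $r$-periodic and small-oscillation geodesics are \emph{not}. For the positive direction the strategy is to push everything down to the magnetic space $\R^{n+1}_F$ via the \sR submersion $\pi_F:\Eng(n)\to\R^{n+1}_F$, verify the hypotheses of the sequence method there, and then invoke Lemma~\ref{lem:sub-submersion} to lift the conclusion back to $\Eng(n)$. This reduction is available precisely because, by Lemma~\ref{lem:sub-submersion}, the horizontal lift of a metric line is a metric line, so it suffices to determine which projected geodesics $\pi_F\circ\gamma$ are metric lines in $\R^{n+1}_F$.

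First I would recall from the symplectic reduction of \cite{BravoDoddoli2024} the reduced dynamics governing the four geodesic types, which is of one-degree-of-freedom, pendulum-like type: the line type corresponds to an equilibrium (constant left-trivialized velocity), the small-oscillation and $r$-periodic types to the bounded and rotating closed orbits, and the $r$-homoclinic type to the separatrix. The key computation for the positive direction is the asymptotic analysis of the left-trivialized velocity $(L_{\gamma^{-1}(t)})_*\dot\gamma$. For the line type this velocity is constant, equal to some unit $v\in\Lag$; for the $r$-homoclinic type the reduced orbit converges, as $t\to\pm\infty$, to the \emph{same} saddle equilibrium, so $(L_{\gamma^{-1}(t)})_*\dot\gamma$ has a common limit $v$ at both ends, exactly the condition of Conjecture~\ref{conj}. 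In each case I would check that $\pi_F\circ\gamma$ satisfies the hypotheses of Theorem~\ref{the:metrtic-lines-method} — constructing the sequence of times $t_k\to\infty$ and the competitor data required by the sequence method, and using the exponential decay of the homoclinic orbit toward the equilibrium to control it — and conclude that $\pi_F\circ\gamma$ is a metric line in $\R^{n+1}_F$, hence that $\gamma$ is a metric line in $\Eng(n)$.

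For the negative direction the point is that the $r$-periodic and small-oscillation geodesics have a genuinely periodic reduced orbit, so the left-trivialized velocity oscillates forever and the limit in Conjecture~\ref{conj} fails to exist; this already signals non-minimality, but it must be proven directly since Theorem~\ref{the:metrtic-lines-method} supplies only a sufficient criterion. I would establish non-minimality by showing that such a geodesic admits a conjugate (equivalently, cut) point at finite parameter: the periodicity of the reduced orbit forces the linearized Jacobi equation along $\gamma$ to have a non-trivial periodic solution, producing a conjugate point beyond which $\gamma$ cannot be even locally minimizing, let alone globally. A more quantitative alternative is to compare $\gamma$ restricted to $N$ periods against a straight line-type competitor in the asymptotic direction and show that $\lim_{t\to\infty} d(\gamma(0),\gamma(t))/t<1$, contradicting the identity $d(\gamma(0),\gamma(t))=|t|$ that any metric line must satisfy.

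The main obstacle I anticipate is the negative direction: unlike the positive direction, which is largely bookkeeping once the sequence method of Theorem~\ref{the:metrtic-lines-method} is in hand, ruling out the periodic and oscillatory geodesics requires a genuine non-minimality estimate. Locating the first conjugate point explicitly in the $(2n+2)$-dimensional flow, or carrying out the distance-growth comparison uniformly in $n$, is where the technical work concentrates; care is also needed in the homoclinic case to ensure that the convergence to the saddle equilibrium is fast enough for the sequence-method hypotheses to hold uniformly.
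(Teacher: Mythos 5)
Your overall architecture matches the paper's: positive direction via the sequence method in the magnetic space followed by the lift through $\pi_F$ and Lemma~\ref{lem:sub-submersion}, negative direction handled separately upstairs (correctly so, since Lemma~\ref{lem:sub-submersion} only goes one way). Two differences on the positive side are worth noting. The paper does not run the sequence method for line geodesics at all --- they are metric lines in any Carnot group by Lemma~\ref{lem:sR-sub-metric-line} --- and Theorem~\ref{the:metrtic-lines-method} is invoked only for the $r$-homoclinic type. More substantively, before verifying its hypotheses the paper first reduces to a normal form: the $SO(n)$-symmetry (Lemmas~\ref{lem:planar-an-harmonic} and \ref{lem:plan-mag-geo-r}) confines the dynamics to a planar radial an-harmonic oscillator, homoclinic orbits exist only at angular momentum $\ell=0$ (Lemma~\ref{lemma:equi-point-an-har}), and the Carnot dilation (Lemma~\ref{lem:carnor-dil}) normalizes the polynomial to $F(r)=1-2r^2$. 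Your description of the reduced dynamics as one-degree-of-freedom and pendulum-like skips this; it is genuinely $n$-degree-of-freedom, and the radial reduction is what makes geodesic compactness (Lemma~\ref{lem:geod-com-Ent}) and the injectivity of $\Theta_2$ (Lemma~\ref{lem:theta-r-bounded}) checkable. The hypotheses to verify are finiteness of $\Theta_1$, geodesic compactness, injectivity of $\Theta_2$ on $Homc^+$, and strict normality of the minimizing sequence --- not a decay rate of the homoclinic orbit.

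The genuine gap is in your primary route for the negative direction. For $r$-periodic geodesics the cut-point idea is sound and is in fact proved in the paper as Proposition~\ref{Maxwell-r} (a Maxwell point after one radial period, not a conjugate point --- the two notions are not equivalent), although Corollary~\ref{cor:no-met-lin-Eng} instead cites the blow-down-based Corollary~\ref{Cor:turn-back}. But for small oscillations the reduced Hamiltonian splits as $H_{free}+H_{osc}$ with $H_{free}\neq 0$ in general, so the reduced orbit is \emph{not} periodic (it drifts linearly in the free directions); your ``periodic Jacobi field'' mechanism has nothing to act on, and since the orbit never reaches the boundary of the Hill set when $H_{free}\neq 0$, the conjugate-point criterion of Proposition~\ref{prop:conj-point} is unavailable as well. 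Your quantitative fallback is the one that works and is what the paper does: Proposition~\ref{prop:met-char} shows the time-average of $<v,(L_{\gamma^{-1}(s)})_*\dot{\gamma}(s)>$ is at most $4H_{free}<2$ for every unit $v$, which is precisely the statement that the distance growth rate is strictly less than $1$. Finally, a complete classification must also dispose of abnormal geodesics; the paper does this in Lemma~\ref{lemma:abn-geo-Eng}, and your proposal does not address them.
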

Corollary \ref{cor:no-met-lin-Eng} demonstrates that \sR geodesics of the type small oscillations and $r$-periodic do not qualify as metric lines. In addition, Lemma \ref{lemma:abn-geo-Eng} shows that only abnormal geodesics that are metric lines are line geodesics.

\begin{figure}%
    \centering
    {{\includegraphics[width=5cm]{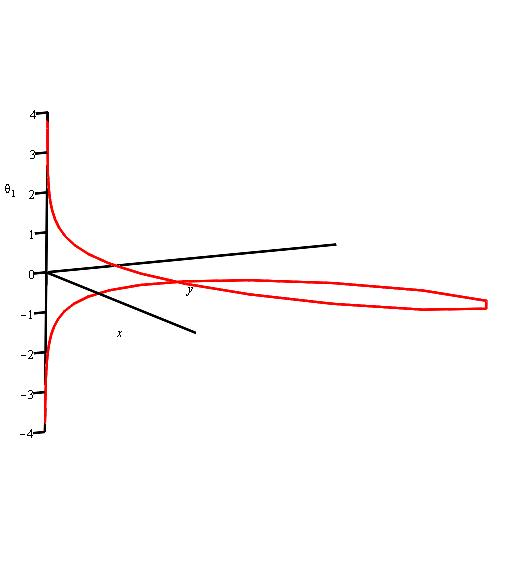} }}
    \caption{The images show the space $R^{3}$, with coordinate $(x_1,x_2,\theta_0)$,  and the projections by $\pi$ of the homoclinic-geodesic $c(t)$.} 
    \label{fig:euler}
\end{figure}

\subsection{History of the conjecture}
Before summarizing the preceding results that lead us to Conjecture \ref{conj}, we introduce the classification of \sR geodesic. A \sR geodesic in a general \ma Carnot group with a semidirect product structure is of the type: line, regular, homoclinic, direct-type, and turn-back, consult \ref{subsubsec:clas-geo-G} for the formal definition. With this classification in mind, the prior results are the following.

\textbf{(2003)} In \cite{kishimoto2003geodesics}, I. Kishimoto proved that all the metric lines are line geodesics in the Carnot group of step three. There are several alternative proofs; consult \cite{anzaldo2006dynamical,rizzi2017cut,agrachev2019comprehensive,hakavuori2016non}.

\textbf{(2011-2017)}  In \cite{ardentov2013conjugate,ardentov2011extremal,ardentov2015cut,ardentov2017maxwell}, A. Andertov and Y. Sachkov employed optimal
     synthesis to prove Conjecture \ref{conj} for the Engel
     group, a \ma Carnot group of dimension four and step three. Besides geodesic lines, the Engel group has only one family of metric lines of type homoclinic. It is worth noting that the Engel group is diffeomorphic to the 2-jet space $\mathcal{J}^2(\R,\R)$. 

\textbf{(2021-2022)}  In \cite{sachkov2021conjugate,ardentov2022cut}, A. Ardentov,  E. Hakavuori, and Y. Sachkov utilized optimal synthesis to demonstrate conjecture for the Cartan group, which is a \ma Carnot group of step three with just the group extension structure and dimension five. The Cartan group has one family of metric lines besides geodesic lines. 

\textbf{(2022)} Inspired by A. Andertov and Y. Sachkov's work, in \cite{bravo2022higher,bravo2022geodesics}, we set up a conjecture classifying metric lines in the jet space $\J$. We began to develop the sequence method and provided a family of metric lines of the direct-type.

\textbf{(2016-2023)} In \cite{hakavuori2016non,hakavuori2023blowups}, E. Le Donne and E. Hakavuori developed the blow-down technique to prove geodesics are not metric lines. This technique implies that periodic geodesics (regular geodesics whose reduced dynamics is periodic) are not metric lines. Moreover, they proposed a method to study the smoothness of \sR geodesics in Carnot groups using the blow-down and metric lines. For details about the regularity of \sR geodesics, consult \cite{vittone2014regularity,leonardi2008end,montgomery2002tour,grachev1999sub}.

     %The jet space $\J$ has a non-integrable distribution of rank two and an arbitrary dimension, contrasting with the previous result where the problem was set up in a low-dimensional group.  

\textbf{(2024)}  In \cite{BravoDoddoli2024}, we performed the symplectic reduction of the \sR geodesic flow for \ma nilpotent groups and used it to prove Proposition \ref{Cor:turn-back}. These results motivated us to create a conjecture classifying metric lines in the framework of \ma Carnot groups with a semidirect product structure.

\textbf{(2024)} In \cite{bravo2022metric}, we formalized the sequence method, used it to prove that the direct-type geodesics are metric lines in the jet space $\J$, and provided a family of homoclinic metric lines corresponding to the polynomial $P_{\mu}(x) = 1- 2x^{2n}$ for all $n$ in the natural numbers, which generalized the result given by A. Andertov and Y. Sachko. However, the problem remains open for the general homoclinic case.

The previous methods are the optimal synthesis and weak KAM theory, which require strong conditions that are rarely satisfied.
Theorem \ref{the:main-1} shows that the sequence method is more versatile than optimal synthesis and weak KAM theory. It works on arbitrary rank distributions and arbitrary dimension groups. Optimal synthesis requires an explicit integration of the \sR geodesic, which often is only possible in low dimension; consult \cite{agrachev2013control,jurdjevic1997geometric,navrat2024sub,li2021sub} for more details. Weak KAM theory needs the \sR geodesic flow to be Liouville integrable and the existence of a global solution of the Hamilton-Jacobi equation; refer to \cite{fathi2003weak,fathi2007weak,balogh2014hopf,figalli2013aubry} for more extensive explanation. 

\subsubsection{The Euler-Elastica problem}

The Euler-Elastica problem defines a remarkable family of curves (elastica is a Latin word for a thin strip of elastic material). There are several ways to set up the Elastica problem; the one of interest for us is the following: A plane curve $\eta(t)$ is a solution to the Euler-Elastica problem if, at each point, its curvature $\kappa(t)$ is proportional to the distance from $\eta(t)$ to a given straight line called the directrix. The Euler soliton is a distinguished curve among the solutions to the Euler-Elastica problem. Theorem \ref{the:main-1} represents generalizations of the result obtained by A. Andertov and Y. Sachkov: the metric line within the Engel group is such that its projection to $\R^{2}$  is the Euler-Soliton. Consult \cite{levien2008elastica,matsutani2010euler,singer2008lectures,kot2014first,jurdjevic1997geometric,agrachev2008geometry} for history and different formulations of the problem. A supplementary result concerning the \sR geodesic in the Engel-type group is Proposition \ref{prp:Euler-Elastica-Eng}. This proposition states that the projection to $\R^{n+1}$ of  \sR geodesics with zero angular momentum lies on a plane and satisfies the Euler-Elastica equation. In particular, the projection of a geodesic of the $r$-homoclinic type is the Euler-Solition, as illustrated in Figure \ref{fig:euler} and \ref{fig:perio-hom-curve}.

\begin{figure}%
    \centering
    {{\includegraphics[width=2.5cm]{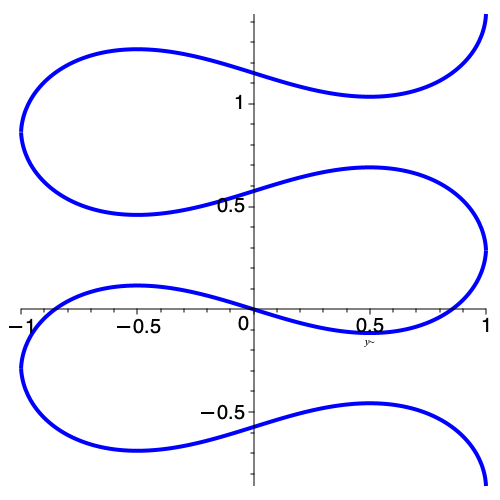} }}
    \qquad
    {{\includegraphics[width=2.5cm]{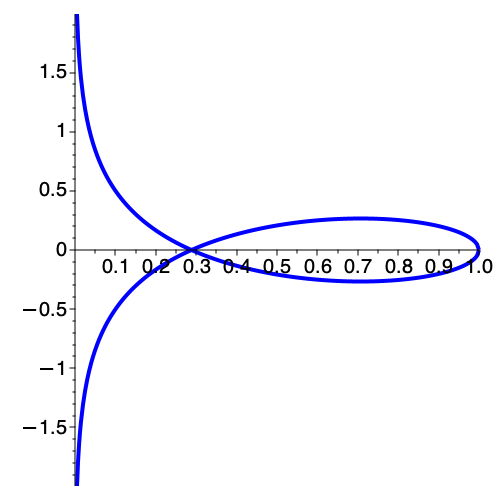} }}%
    \caption{The images show two solutions to the Euler-Elastica problem in the $(x,\theta_0)$-plane. The first panel presents a generic solution, and the second displays the Euler-soliton solution.}    
    \label{fig:perio-hom-curve}
\end{figure}

\subsection*{Acknowledgment}

I acknowledge my advisor for my Ph.D. degree, Richard Montgomery, who introduced me to sub-Riemannian geometry, Carnot groups, and Metric lines, and his invaluable help during my time at UCSC and after. I acknowledge my mentor, Anthony Block, and the mathematics department of the University of Michigan, where this work was done, for supporting my application and made me part of the department. 

Thank you to Enrico Le Donne, Nicola Padua, Gil Bor, Andrei Ardentov, and Yuri Sachkov. Enrico Le Donne for inviting me to the University of Fribourg and introducing me to the metabelian groups; Nicola Paddeu and Gil Bor for e-mail conversations regarding the course of this work; and Andrei Ardentov and Yuri Sachkov, whose works inspired me to create this paper.

\section{Preliminary}\label{sec:preliminary}

\subsection{\SR geometry and Carnot groups}

A \sR manifold is a triple $(M,\D, <\cdot,\cdot>)$, where $M$ is a smooth manifold, $\D$ is a non-integrable distribution with the property of being bracket generating, and $<\cdot,\cdot>$ is an inner product on $\D$. A curve $\gamma(t)$ is called horizontal if it is tangent to $\D$ whenever $\dot{\gamma}(t)$ exists. Chow's theorem states that every two points in $M$ can be connected by a horizontal curve $\gamma(t)$ if the distribution $\D$ is bracket generating; refer to  \cite{jurdjevic1997geometric,agrachev2008geometry,montgomery2002tour,agrachev2019comprehensive} for the formal statement. The \sR arc length of a smooth horizontal curve $\gamma(t)$ is defined as in \Ri geometry, i.e., as the integral over the domain of the curve of the \sR norm of $\dot{\gamma}(t)$. In addition, the \sR distance $dist_M(p,q)$ is defined using the \sR arc length as in \Ri geometry, i.e., as the infimum of the \sR arc length over all smooth horizontal curves connecting $p$ and $q$; consult \cite{montgomery2002tour,agrachev2019comprehensive,jean2003sub} for more details about \sR geometry. In the context of the Carnot group, this metric is called Carnot-Carathéodory \cite{mitchell1985carnot,gromov1996carnot,monti2001surface}. The Pontryagin maximum principle provides a method to find the locally optimal trajectory by solving the \sR geodesic flow \cite{pontryagin2018mathematical,agrachev2013control,jurdjevic1997geometric,agrachev2008geometry}. The \sR kinetic energy is a function from the cotangent bundle $T^*M$ to $\R$ with the property that the solution projected to $M$ is a locally minimizing geodesic; these geodesics are called normal. The Pontryagin principle implies the existence of another family of geodesics called abnormal; there is no analogous notion to these abnormal geodesics in \Ri geometry. Consult \cite{liu1995shortest,montgomery2002tour,agrachev2019comprehensive,jean2003sub,montgomery2014s,agrachev2009any,bryant1993rigidity,montgomery1994abnormal} for more details on normal and abnormal geodesics. The paper's primary goal is to study when a normal geodesic is globally minimized; the principal difficulty of this study in \sR geometry is the presence of abnormal geodesics.

\begin{defi}
     Let $M$ be a \sR manifold, let $dist_{M}(\cdot,\cdot)$ be the \sR distance on $M$, and let $|\cdot|:\R \to [0,\infty)$ be the absolute value. A curve $\gamma:\R \to M$ is a metric line if it is a globally minimizing geodesic, i.e.,  
$$|a-b| = dist_{M}(\gamma(a),\gamma(b))\;\;\; \text{for all compact intervals}\;\; [a,b] \subset \R. $$
\end{defi}
Alternative names for the term \enquote{metric line} are \enquote{globally minimizing geodesic}, \enquote{isometric embedding of the real line}, and \enquote{infinite geodesic}, refer to \cite{agrachev2019comprehensive,hakavuori2023blowups,bravo2022geodesics,bravo2022metric}.

A Carnot group $\G$ is a simply connected Lie group whose Lie algebra $\Lag$ is graded nilpotent and is Lie-generated by $\Lag_1$, i.e., the Lie algebra satisfies
$$ \Lag = \Lag_1 \oplus \dots \oplus \Lag_s, \;\;\; [\Lag_j,\Lag_i] \subset \Lag_{j+i}\;\;\;\text{and} \;\;\; \Lag_{s+1} = \{0\}, $$
where $[\cdot,\cdot]$ is the Lie bracket. We call the integer $s$ and the dimension of $\Lag_1$ the step and rank of $\G$, respectively. Consult \cite{le2008cornucopia,corwin1990representations,montgomery2002tour,agrachev2019comprehensive,le2015metric,bellaiche1996tangent,gong1998classification,gromov1996carnot,heinonen1995calculus} for more details on Carnot groups and their \sR structure. Every Carnot group $\G$ has a natural left-invariant non-integrable distribution $\D$ given by the left-translation of the sub-space $\Lag_1$. The fact that $\Lag_1$ generates the whole algebra $\Lag$ implies that the distribution $\D$ is bracket generating. Every Carnot group $\G$ is endowed with a canonical projection $\pi$ from $\G$ to  $\R^{d_1} \simeq \G / [\G,\G]\simeq \Lag_1$, where $d_1$ is the dimension of $\Lag_1$. To equip $\G$ with a \sR structure, consider a Euclidean product in $\R^{d_1}$, then define the \sR inner product in $\G$ as the pull-back by $\pi$ of the Euclidean product. In this way, a curve $\gamma(t)$ in $\G$ and its projection $\pi(\gamma(t))$ have the same arc length. The projection $\pi$ is an example of \sR submersion, an important concept for this work.

\begin{defi}\label{def:metric-line}
    Let $(M,\D_M,< \cdot,\cdot>_{M})$ and $(N,\D_N,< \cdot,\cdot>_{N})$ be two \sR manifolds. A submersion $\phi: M \to N$ is a \sR submersion if respects the \sR structures, i.e.,  $\phi_* \D_M = \D_N$ and $\phi^* < \cdot,\cdot>_N = < \cdot,\cdot>_M$, where $\phi_*$ and $\phi^*$ are the push-foward and pull-back of $\phi$, respectivly. 
\end{defi}

Every \sR submersion comes with an inverse map at the level of curves called horizontal lift, which is unique up to translation; consult \cite{burago2001course,montgomery2002tour,agrachev2019comprehensive,bravo2022geodesics} for more details about the horizontal lift. A classic result in metric spaces is the following.

\begin{lemma}[Proposition 1, \cite{bravo2022geodesics}]\label{lem:sub-submersion}
Let $\phi: M \to N$ be a \sR submersion. If $\gamma(t)$ is the horizontal lift of a metric line in $N$, then $\gamma(t)$ is a metric line in $M$. 
\end{lemma}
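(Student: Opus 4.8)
The plan is to establish the two inequalities
\[
dist_M(\gamma(a),\gamma(b)) \le |a-b| \quad\text{and}\quad dist_M(\gamma(a),\gamma(b)) \ge |a-b|
\]
for every compact interval $[a,b]\subset\R$, which together say that $\gamma$ is a metric line in $M$. Write $c(t)$ for the metric line in $N$ of which $\gamma$ is the horizontal lift, so that $\phi\circ\gamma = c$ and, since $c$ is a metric line parametrized by arc length, $dist_N(c(a),c(b)) = |a-b|$. The upper bound is the easy direction: $\gamma$ is itself a horizontal curve joining $\gamma(a)$ to $\gamma(b)$, and because the \sR distance is an infimum of lengths it suffices to bound the length of this one competitor. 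The lower bound is the substantive direction and will follow from the fact that a \sR submersion cannot increase distance, combined with the metric-line property of $c$.

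First I would record two elementary consequences of the definition of a \sR submersion (Definition \ref{def:metric-line}). (i) If $\sigma$ is any horizontal curve in $M$, then $\phi\circ\sigma$ is horizontal in $N$ because $\phi_*\D_M = \D_N$, and the compatibility $\phi^*\langle\cdot,\cdot\rangle_N = \langle\cdot,\cdot\rangle_M$ forces $\phi_*$ to restrict to a fiberwise isometry of $\D_M$ onto $\D_N$, so $\|\phi_*\dot\sigma\|_N = \|\dot\sigma\|_M$ pointwise and hence $\mathrm{length}_N(\phi\circ\sigma) = \mathrm{length}_M(\sigma)$. (ii) The same isometry, applied to the horizontal lift, gives $\|\dot\gamma\|_M = \|\phi_*\dot\gamma\|_N = \|\dot c\|_N$, so lifting preserves \sR arc length and $\gamma$ is parametrized by arc length exactly when $c$ is. In particular $\mathrm{length}_M(\gamma|_{[a,b]}) = |a-b|$, which already yields the upper bound $dist_M(\gamma(a),\gamma(b)) \le |a-b|$.

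Next I would prove the $1$-Lipschitz property of $\phi$, namely $dist_N(\phi(p),\phi(q)) \le dist_M(p,q)$ for all $p,q\in M$. Given $\varepsilon>0$, choose a horizontal curve $\sigma$ in $M$ from $p$ to $q$ with $\mathrm{length}_M(\sigma) < dist_M(p,q) + \varepsilon$; then $\phi\circ\sigma$ joins $\phi(p)$ to $\phi(q)$ and, by (i), has the same length, so $dist_N(\phi(p),\phi(q)) < dist_M(p,q) + \varepsilon$, and letting $\varepsilon\to 0$ proves the claim. Applying this with $p=\gamma(a)$, $q=\gamma(b)$ and using $\phi\circ\gamma = c$ together with the metric-line property of $c$,
\[
dist_M(\gamma(a),\gamma(b)) \ge dist_N(c(a),c(b)) = |a-b|,
\]
which is the lower bound. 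Combined with step (ii) this gives $dist_M(\gamma(a),\gamma(b)) = |a-b|$ for every compact $[a,b]$, completing the argument.

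I do not expect a genuine obstacle here; the only points requiring care are formal. Distances are infima rather than minima, so the Lipschitz estimate must be run through an $\varepsilon$-competitor rather than an honest minimizer, since existence of length minimizers on the noncompact $M$ is neither available nor needed. One must also read $\phi^*\langle\cdot,\cdot\rangle_N = \langle\cdot,\cdot\rangle_M$ correctly: nondegeneracy of $\langle\cdot,\cdot\rangle_M$ on $\D_M$ forces $\phi_*|_{\D_M}$ to be injective, hence (with $\phi_*\D_M=\D_N$) an isometric isomorphism, which is exactly what makes both the projection identity (i) and the length-preserving lift (ii) hold and what guarantees that the horizontal lift $\gamma$ is well defined, unique given a choice of initial point in the fiber. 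Everything else is a direct unwinding of the definitions, consistent with this being a restatement of a known result.
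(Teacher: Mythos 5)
Your argument is correct and is the standard proof of this fact: the paper itself does not reprove the lemma (it cites it as Proposition 1 of the reference), and the cited proof proceeds exactly as you do, via the two inequalities obtained from the length-preserving projection of horizontal curves (hence the $1$-Lipschitz property of $\phi$) and the length-preserving horizontal lift. No gaps; the $\varepsilon$-competitor handling of the infimum is the right level of care.
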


Every Carnot group has a family of geodesics called line-geodesics. 
\begin{defi}\label{def:lin-geo}
Let $\G$ be a Carnot group. We say that a \sR geodesic $\gamma(t)$ is a line-geodesic if $\gamma(t)$ is the horizontal lift of a line in $\R^{d_1}$. 
\end{defi}
An alternative term for \enquote{line-geodesic} is  \enquote{one parameter sub-group}. Lemma \ref{lem:sub-submersion} and Definition \ref{def:lin-geo} lead to the following result.
\begin{lemma}\label{lem:sR-sub-metric-line}
Every geodesic line in a Carnot group is a metric line. 
\end{lemma}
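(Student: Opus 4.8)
The plan is to obtain the statement as an immediate consequence of Lemma \ref{lem:sub-submersion} applied to the canonical projection $\pi:\G \to \R^{d_1}$, together with the elementary fact that straight lines are metric lines in Euclidean space.

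First I would recall, from the construction of the \sR structure in the Preliminary, that the canonical projection $\pi:\G \to \R^{d_1} \simeq \G/[\G,\G]$ is a \sR submersion in the sense of Definition \ref{def:metric-line}. Indeed, $\pi_* \D = T\R^{d_1}$ because $\D$ is the left-translation of $\Lag_1$ and $\R^{d_1} \simeq \Lag_1$, while the inner product on $\G$ was defined precisely as the pull-back $\pi^*\langle\cdot,\cdot\rangle_{\R^{d_1}}$ of the Euclidean product, so $\pi^*\langle\cdot,\cdot\rangle_{\R^{d_1}} = \langle\cdot,\cdot\rangle_\G$. Hence $\pi$ respects both \sR structures, and in particular preserves the arc length of curves.

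Second I would verify that any unit-speed straight line in Euclidean $\R^{d_1}$ is a metric line there. Writing such a line as $\ell(t) = \ell(0) + tv$ with $|v| = 1$, the standard Euclidean inequality gives $dist_{\R^{d_1}}(\ell(a),\ell(b)) = |\ell(b)-\ell(a)| = |b-a|\,|v| = |b-a|$ for every compact interval $[a,b]$, which is exactly the defining equality of a metric line.

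Finally, by Definition \ref{def:lin-geo} a line-geodesic $\gamma(t)$ is, by construction, the horizontal lift of a line in $\R^{d_1}$; parameterizing $\gamma$ by arc length makes the base line unit speed, so it is a metric line by the previous step. Applying Lemma \ref{lem:sub-submersion} with $M = \G$, $N = \R^{d_1}$, and $\phi = \pi$, the horizontal lift of a metric line in $\R^{d_1}$ is a metric line in $\G$; therefore $\gamma(t)$ is a metric line, as claimed. There is no genuine obstacle here: the result is a direct corollary of Lemma \ref{lem:sub-submersion}. The only point requiring a small amount of care is the compatibility of parameterizations, namely that arc-length parameterization of $\gamma$ corresponds to unit-speed parameterization of the base line, which follows from the fact that the \sR submersion $\pi$ preserves arc length.
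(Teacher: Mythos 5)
Your proof is correct and follows exactly the route the paper intends: the paper derives this lemma directly from Lemma \ref{lem:sub-submersion} applied to the canonical projection $\pi:\G\to\R^{d_1}$ together with Definition \ref{def:lin-geo}, which is precisely your argument. You have merely spelled out the details (that $\pi$ is a \sR submersion and that Euclidean lines are metric lines) that the paper leaves implicit.
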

The following proposition characterizes metric lines in Carnot groups.
\begin{Prop}\label{prop:met-char}
        Let $\G$ be a Carnot group with a \sR inner product $<\cdot,\cdot>$ and $\gamma(t)$ be a \sR geodesic parametrized by arc length. A necessary condition for $\gamma(t)$ being a metric line is that there exists a unitary vector $v\in \Lag_1$ such that 
        \begin{equation}\label{eq:in-met-char}
            2 = \lim_{t \to \infty} \frac{1}{t} \int_{-t}^t <v,(L_{\gamma^{-1}(s)})_* \dot{\gamma}(s)> ds . 
        \end{equation} 
    \end{Prop}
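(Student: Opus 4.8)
The plan is to transport the whole question to the Euclidean projection $c(t):=\pi(\gamma(t))$ and then to read off $v$ from the Busemann function of one of the two minimizing rays that make up $\gamma$. First I would left-translate so that $\gamma(0)=e$; this changes neither the metric-line property nor the integrand, since $u(s):=(L_{\gamma^{-1}(s)})_{*}\dot\gamma(s)$ is the left-trivialized velocity, which lies in $\Lag_1$ and is a unit vector because $\gamma$ is horizontal and parametrized by arc length. The one algebraic fact I need is that the abelianization $\pi:\G\to\R^{d_1}\simeq\Lag_1$ intertwines left translation with Euclidean translation, $\pi\circ L_{g}=T_{\pi(g)}\circ\pi$; as the differential of a Euclidean translation is the identity, pushing $\dot\gamma(s)=(L_{\gamma(s)})_{*}u(s)$ forward by $\pi$ gives $\dot c(s)=u(s)$ under $\Lag_1\simeq\R^{d_1}$. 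Hence $\int_{-t}^{t}\langle v,u(s)\rangle\,ds=\langle v,c(t)-c(-t)\rangle$, and the statement to prove becomes that some unit $v\in\Lag_1$ satisfies $\tfrac1t\langle v,c(t)-c(-t)\rangle\to 2$.

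Next I would record the trivial upper bound. Being a \sR submersion onto Euclidean $\R^{d_1}$, $\pi$ is $1$-Lipschitz, so $|c(t)-c(-t)|\le dist_{\G}(\gamma(-t),\gamma(t))$, and the metric-line hypothesis makes the right-hand side exactly $2t$. Thus $|c(t)-c(-t)|\le 2t$, and Cauchy--Schwarz gives $\limsup_{t\to\infty}\tfrac1t\langle v,c(t)-c(-t)\rangle\le 2$ for every unit $v$. The content of the proposition is therefore the matching lower bound, which is where the global minimality must be used.

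For the lower bound I would use the Busemann function $b_{+}(x):=\lim_{T\to\infty}\big(dist_{\G}(x,\gamma(T))-T\big)$ of the forward ray, which exists by monotonicity and is $1$-Lipschitz. Global minimality yields the clean value $b_{+}(\gamma(s))=-s$ for every $s\in\R$, since $dist_{\G}(\gamma(s),\gamma(T))=T-s$ for $T>s$. The crucial claim, and the genuinely sub-Riemannian ingredient, is that $b_{+}$ is a horizontal-linear function, $b_{+}(x)=-\langle v_{+},\pi(x)\rangle+a$ for a unit $v_{+}\in\Lag_1$. Granting this, $b_{+}(\gamma(s))=-s$ becomes $\langle v_{+},c(s)\rangle=s+a$ for all $s$, whence $\langle v_{+},c(t)-c(-t)\rangle=2t$ exactly and the limit equals $2$ with $v=v_{+}$. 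To establish the horizontal-linearity I would show that the forward ray has a well-defined unit asymptotic direction in $\Lag_1$ and that no positive proportion of its length is lost to the vertical layers $[\Lag,\Lag]$; the tool is the blow-down technique of \cite{hakavuori2016non,hakavuori2023blowups}: rescaling by the Carnot dilations, $\gamma_{\lambda}(s):=\delta_{1/\lambda}(\gamma(\lambda s))$ is again a unit-speed metric line through $e$, the family is uniformly Lipschitz into the proper space $\G$, and any Arzel\`a--Ascoli limit is a self-similar metric line through $e$, hence a line-geodesic $\exp(sX)$ with $X\in\Lag_1$ horizontal; projecting yields $|c(t)|/t\to 1$ with a single direction, and in turn the linearity of $b_{+}$. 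In the present paper one may instead read the same asymptotics directly off the explicit reduced dynamics of \cite{BravoDoddoli2024}.

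The step I expect to be the main obstacle is exactly this horizontal-linearity of the Busemann function, equivalently the full-rate asymptotic direction of a minimizing ray: a priori a ray could keep turning in the vertical directions so that $|c(t)|/t$ stays below $1$, and excluding this is precisely the point where the metric-line hypothesis and the self-similarity of the Carnot structure do all the work. Everything else---the identification $u=\dot c$, the $1$-Lipschitz upper bound, and the bookkeeping with $b_{+}$---is routine once this is in hand.
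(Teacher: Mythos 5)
Your reduction of the integral to $\langle v, c(t)-c(-t)\rangle$ via the abelianization $\pi$, together with the $1$-Lipschitz upper bound $\limsup_{t\to\infty}\tfrac1t\langle v,c(t)-c(-t)\rangle\le 2$, is correct. The problem is the lower bound, which you hang entirely on the claim that the Busemann function $b_{+}$ is horizontal-linear, $b_{+}(x)=-\langle v_{+},\pi(x)\rangle+a$ on all of $\G$; this is a genuine gap, and the route you sketch toward it does not work as stated. First, the blowdown theorem (Proposition \ref{prp:blow-down}) is only subsequential: it produces \emph{one} sequence of scales $u_n$ along which the blowdown is a line. It is not true that ``any Arzel\`a--Ascoli limit is a self-similar metric line through $e$, hence a line-geodesic'': limits of blowdowns of a metric line are metric lines, but self-similarity is extracted only for a carefully chosen diagonal subsequence, and non-line metric lines exist in these groups --- that is precisely the content of Theorem \ref{the:main-1}. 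So you do not get a single asymptotic direction for $c(t)/t$ as $t\to\infty$, only along $u_n$. Second, even granting a full-rate asymptotic direction of the ray, that controls $d(\gamma(s),\gamma(T))-T$ along the geodesic itself; global linearity of $b_{+}$ on $\G$ requires understanding $d(x,\gamma(T))-T$ for \emph{arbitrary} $x$, i.e.\ the horofunction boundary of the Carnot--Carath\'eodory metric, which is not known to consist of linear functions and is far more than the proposition needs.

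The fix is that the Busemann function is unnecessary: the subsequential blowdown statement already gives the result by a one-line change of variables, which is what the paper does. With $\gamma_{u_n}(t)=\delta_{1/u_n}\gamma(u_n t)$ and $\pi\circ\delta_{1/u}=u^{-1}\pi$ on the first layer, uniform convergence of $\gamma_{u_n}$ to the line $g\exp(tv)$ on $[-1,1]$ gives $\tfrac{1}{u_n}\bigl(c(u_n)-c(-u_n)\bigr)\to 2v$, i.e.\ $\tfrac{1}{u_n}\int_{-u_n}^{u_n}\langle v,(L_{\gamma^{-1}(s)})_{*}\dot\gamma(s)\rangle\,ds\to 2$, which is the paper's computation written in your notation. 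One caveat that applies to both your argument and the paper's: this establishes \eqref{eq:in-met-char} only along the subsequence $u_n$, and your upper bound ($\le 2$ for all $t$) does not by itself upgrade a subsequential limit equal to $2$ to a full limit; if the statement is to be read literally as a limit over all $t\to\infty$, an additional argument is needed in either approach.
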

The proof of Proposition \ref{prop:met-char} is in Section \ref{sec:sequ-meth}.
\subsubsection{Metabelian Carnot groups}

A group $\G$ is \ma if $[\G,\G]$ is abelian, equivalently a group $\G$ is \ma if there exists an abelian group $\Ag$ such that $\Ag\backslash  \G$ is abelian; refer to \cite{robinson2012course} for more details and properties of \ma groups. By definition, every Carnot group of step 3 is metabelian. Every  \ma Carnot group $\G$ has a maximal normal abelian subgroup $\Ag$ containing $[\G,\G]$. The subgroup $\Ag$ acts on $\G$ by left translation. As the action is free and proper, the quotient $\Ho:= \Ag\backslash  \G$ has the structure of a vector space. Therefore, $\G$ is an abelian extension of $\Ho$ by $\Ag$; this is a pivotal aspect in the symplectic reduction of the cotangent bundle of $\G$. Letting $\Laa$ be the Lie algebra of $\Ag$ and considering $\Lav:= \Lag_1\cap \Laa$, the \sR inner product induces a decomposition $\Lag_1 = \Lav \oplus \Lah$, where $\Lah$ is the orthogonal complement of $\Lav$ with respect to the \sR inner product. By identifying $\R^{d_1}$ with $T\R^{d_1}$, the prior decomposition induces an orthogonal decomposition $\R^{d_1} = \Vs \oplus \Ho$, where $\pi_* \Lav = \Vs$ and $\pi_* \Lah = \Ho$. If $\Lah$ is an abelian subalgebra, then the vector space  $\Ho$ is identified with a subgroup of $\G$, and $\G$ has the semidirect product structure $\Ag \rtimes \Ho$. 

Let $T^*\G$ be the cotangent bundle of $\G$ endowed with the canonical symplectic structure, and let $H_{sR}$ be the Hamiltonian function governing the \sR geodesic flow. The Hamiltonian action of $\Ag$ on $T^*\G$ fixes the Hamiltonian $H_{sR}$, and then this action defines a momentum map $J: T^*G \to \Laa^*$.  Consult \cite{arnol2013mathematical,jose1998classical,marsden2013introduction,ortega2013momentum,abraham2008foundations,gole1995note,marsden1992lectures} for more details about symplectic geometry, cotangent bundle, and momentum maps. Letting $(p(t),\gamma(t))$ be a solution of the Hamiltonian system for $H_{sR}$, then $\gamma(t)\in \G$ is a geodesic. Moreover, a geodesic $\gamma(t)$ has momentum $\mu\in \Laa^*$ if  $J(p(t),\gamma(t)) = \mu$ for all $t\in\R$.  The basic theory of symplectic reduction states that a symplectomorphism exists between the reduced space $T^*G//_{\mu} \Ag$ and $T^*\Ho$.  If $\G$ is a semidirect product with step $s$, then the reduced Hamiltonian has the form:
\begin{equation}\label{eq:red-Ham}
    H_{\mu}(p_x,x) = \frac{1}{2} \sum_{i=1}^{dim \;\Ho} p_{x_i}^2 +  \frac{1}{2} V_{\mu}(x), \;\;\text{where} \;\; V_{\mu}(x) = ||F_{\mu}(x)||^2_{\Vs},
\end{equation}
where $||\cdot||_{\Vs}$ is the Euclidean norm in $\Vs$ and $F_{\mu}(x)$ is a polynomial vector in $\Vs$, whose entries $F_j(x;\mu):\Ho \to \R$ are polynomials of degree $s-1$ depending on the parameter $\mu$. It is important to remark that the relation between $\mu\in\Laa^*$ and $F_{\mu}(x)$ is uniquely determined so that an equivalent statement for \enquote{the geodesic $\gamma(t)$ has momentum $\mu$} is \enquote{the geodesic $\gamma(t)$ corresponds to a polynomial vector $F_{\mu}(x)$}. Refer to  \cite{marsden2007hamiltonian,ortega2013momentum,arnol2013mathematical,abraham2008foundations,marsden1974reduction,marsden1998symplectic,marsden1984reduction} for more details about symplectic reduction.

The reduced dynamic occurs in a closed set called the Hill region. Refer to \cite{marchal1975hill,montgomery2014s} for a general discussion of the Hill region.
\begin{defi}\label{def:Hill-region}
Given a potential polynomial $V_{\mu}(x)$, a connected closed set $\Omega_{\mu}\subseteq\Ho$ is called Hill if  $|V_{\mu}(x)| < 1$  for every $x$ in the interior of $\Omega_{\mu}$ and $|V_{\mu}(x)| = 1$ for every $x$ in the boundary of $\Omega_{\mu}$. The union of all the Hill sets is called the Hill region.    
\end{defi}

The set of pairs $(V_{\mu},\Omega_{\mu})$, where $V_{\mu}(x)$  is in the pencil of potential polynomials and $\Omega_{\mu}$ is a Hill set, defines a moduli space of isoenergetic algebraic curves $\Gamma(\Omega_{\mu})\subset T^*\Ho$ given by
\begin{equation}\label{eq:alg-curv}
    \Gamma(\Omega_{\mu}) := \biggl\{ (p_x,x) \in T^*\Ho :  1 = \sum_{i=1}^{dim \;\Ho} p_{x_i}^2 +   V_{\mu}(x) \;\;\text{and} \;\;x \in \Omega_{\mu}  \biggl\} .
\end{equation}
The Hamilton equations imply that a point in $\Gamma(\Omega_{\mu})$ is an equilibrium point of the reduced system $H_{\mu}$ if and only if the algebraic curve $\Gamma(\Omega_{\mu})$ is singular at that point; refer to \cite{brocker1975differentiable,bruce1992curves} for more details in singularity theory. Therefore, classifying the equilibrium points of reduced dynamics is equivalent to classifying the singularities of $\Gamma(\Omega_{\mu})$.

The symplectic reconstruction is a standard technique to build a solution of the unreduced system, given a solution to the reduced system \cite{marsden1990reduction,marsden2013introduction,shapere1989geometric,Bloch2015}. In the case of metabelian Carnot groups, the symplectic reconstruction provides a prescription to build a normal geodesic, which  consists of three steps:

 \textbf{(1)} Find a solution $(p_x(t),x(t))$ to the reduced Hamiltonian system with initial condition  in $\Gamma(\Omega_{\mu})$.

 \textbf{(2)} Build a curve $\eta(t)\subset \R^{d_1}$ by the equation 
\begin{equation}\label{eq:pres-geo}
    \dot{\eta}(t) = \sum_{i=1}^{dim \;\Ho} p_i(t) e_i + \sum_{j=1}^{dim\; \Vs} F_{j}(x(t);\mu) \tilde{e}_{j},
\end{equation} 
where $\{e_i\}_{i = 1,\dots, dim\; \Ho}$ is a basis for the sub-space $\Ho \subset \R^{d_1}$ and $\{\tilde{e}_j\}_{j = 1,\dots, dim\; \Vs}$ is a basis for the sub-space $\Vs \subset \R^{d_1}$.

\textbf{(3)}  Let $\gamma(t)$ be the horizontal lift of $\eta(t)$. It is worth noting that the curve $\gamma(t)$ is defined for all $t\in\R$ by the completeness of the Hamiltonian system $H_{\mu}$.

 \begin{Theorem}[Background Theorem]\label{the:background}
The above prescription yields a normal geodesic in $\G$ parameterized by arc length and with momentum $\mu$.  Conversely, this prescription can achieve every arc length parameterized normal geodesic in $\G$ with momentum $\mu$ by applying this prescription to the polynomial $F_{\mu}(x)$. 
 \end{Theorem}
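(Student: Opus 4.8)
The plan is to derive the prescription directly from Marsden--Weinstein symplectic reduction together with reconstruction, exploiting the fact that $\Ag$ is abelian. Recall first that the normal geodesics of $\G$ are exactly the projections to $\G$ of the integral curves of the Hamiltonian vector field $X_{H_{sR}}$ on $T^*\G$, and that such a geodesic is parameterized by arc length precisely when its lifted curve lies on the level set $\{H_{sR} = \tfrac{1}{2}\}$, since $H_{sR}$ is the \sR kinetic energy and $2H_{sR} = |\dot\gamma|^2$. By Noether's theorem the momentum map $J$ is conserved along the flow, so a geodesic has momentum $\mu$ if and only if its lift lies in $J^{-1}(\mu)$. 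Thus the object to analyze is the flow of $X_{H_{sR}}$ restricted to $J^{-1}(\mu) \cap \{H_{sR} = \tfrac{1}{2}\}$.

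Next I would carry out the reduction. Because $\Ag$ is abelian, its coadjoint action is trivial, so the isotropy group of $\mu$ is all of $\Ag$ and the reduced space $J^{-1}(\mu)/\Ag$ is symplectomorphic to $T^*\Ho$, carrying the reduced Hamiltonian $H_{\mu}$ of \eqref{eq:red-Ham}. Under this symplectomorphism the energy level $\{H_{sR}=\tfrac{1}{2}\}$ descends to $\{2H_{\mu}=1\}$, which is exactly the constraint $\sum p_{x_i}^2 + V_{\mu}(x) = 1$ cutting out $\Gamma(\Omega_{\mu})$ in \eqref{eq:alg-curv}. Consequently Step (1) of the prescription produces precisely the reduced trajectories, and the converse direction reduces to showing that every arc-length normal geodesic with momentum $\mu$ has a reduced image in $\Gamma(\Omega_{\mu})$, which is the surjectivity of the reduction map.

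The heart of the argument is the reconstruction Step (2), and I would use a principal connection on the bundle $J^{-1}(\mu) \to T^*\Ho$ associated with the $\Ag$-action. The reconstructed velocity of the full trajectory splits into a horizontal part and a vertical drift; projecting to $\R^{d_1} = \Vs \oplus \Ho$, the horizontal part supplies the $\Ho$-components $\dot x_i = p_i(t)$, which are the reduced momenta appearing in the first sum of \eqref{eq:pres-geo}, while the vertical drift supplies the $\Vs$-components. The key identity to establish is that the \sR momentum functions along $\Lav$, evaluated along the reduced trajectory, coincide with the polynomial entries $F_j(x(t);\mu)$; this is the content of the uniquely-determined correspondence between $\mu$ and $F_{\mu}(x)$ asserted after \eqref{eq:red-Ham}, and it is where the \ma semidirect-product hypothesis is used, since it forces these functions to be polynomials of degree $s-1$ in $x$. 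Granting this, \eqref{eq:pres-geo} is exactly the reconstructed base velocity, so Step (3), the horizontal lift $\gamma$ of $\eta$ under the \sR submersion $\pi$, returns the geodesic itself. Arc length is then automatic: on $\Gamma(\Omega_{\mu})$ one has $|\dot\eta|^2 = \sum p_i^2 + \sum F_j^2 = \sum p_{x_i}^2 + V_{\mu}(x) = 1$, and $\pi$ preserves arc length, so $\gamma$ is unit speed.

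I expect the main obstacle to be precisely the identification in Step (2): verifying in coordinates that the vertical/drift part of the reconstruction equals the polynomial vector $F_{\mu}(x)$ rather than some more general function of $(x,\mu)$. This rests on writing the momentum functions explicitly using the graded nilpotent structure and checking both their polynomial degree and the uniqueness of the $\mu \leftrightarrow F_{\mu}$ correspondence; once this is in hand, both the direct statement and the converse (reconstruction being the inverse of reduction up to the $\Ag$-translation ambiguity, which is exactly the horizontal-lift freedom) follow from the standard reduction--reconstruction package.
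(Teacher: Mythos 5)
The paper does not actually prove Theorem \ref{the:background}: it is stated as a \emph{Background Theorem} and immediately attributed to Anzaldo-Meneses and Monroy-P\'erez (for the jet space) and to \cite{BravoDoddoli2024} (for general metabelian nilpotent groups), so there is no in-paper argument to compare yours against. Your reduction--reconstruction outline is the natural route and is, as far as one can tell, exactly the strategy of the cited reference, and every step you list is sound: conservation of $J$ by Noether, the identification $J^{-1}(\mu)/\Ag\simeq T^*\Ho$ for the abelian group $\Ag$, the descent of $\{H_{sR}=\tfrac12\}$ to the constraint cutting out $\Gamma(\Omega_{\mu})$, and the observation that the converse is just surjectivity of the quotient map plus the fact that reconstruction inverts reduction up to the $\Ag$-fiber ambiguity absorbed by the horizontal lift. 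What you have written, however, is a plan rather than a proof: the one substantive point, that the vertical drift in the reconstructed velocity is the polynomial vector $F_{\mu}(x(t))$ and nothing more general, is flagged but not carried out. It is worth noting that in the exponential coordinates of the second type this is a two-line computation from Lemma \ref{lem:exp-coor-sec}: the $\Ag$-action is translation in $\theta$, so the momenta $p_{\theta_0},p_{\theta_{j,k}}$ are the components of $J$ and are frozen at the entries $a_{1,1},a_{j,k}$ of $\mu$; the left-invariant momentum function of $Y$ is then $P_Y=p_{\theta_0}+\sum_{k,j}P_{j,k}(x)\,p_{\theta_{j,k}}=a_{1,1}+\sum_{k,j}a_{j,k}P_{j,k}(x)=F_{\mu}(x)$, which simultaneously gives $H_{sR}|_{J^{-1}(\mu)}=\tfrac12\sum p_{x_i}^2+\tfrac12 F_{\mu}^2(x)=H_{\mu}$, the drift term in \eqref{eq:pres-geo}, the degree bound $s-1$, and the uniqueness of the correspondence $\mu\leftrightarrow F_{\mu}$. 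With that inserted, your argument closes. The only other point to be careful about is that for cotangent-bundle reduction at $\mu\neq 0$ the reduced symplectic form on $T^*\Ho$ may a priori carry a magnetic correction term; the semidirect-product hypothesis is what guarantees the reduced system takes the natural-mechanical form \eqref{eq:red-Ham}, and your sketch implicitly assumes this rather than checking it.
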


 In \cite{anzaldo2003goursat,monroy2003integrability,monroy2002optimal}, Anzaldo-Meneses and Monroy-Perez Monroy proved and used the Background Theorem in the jet space $\J$. Their work inspired us to generalize this result to metabelian nilpotent groups in \cite{BravoDoddoli2024}. The above prescription is a particular case of a more general construction given by the authors in the context of nilpotent groups.

\subsubsection{Classification of \sR geodesics}\label{subsubsec:clas-geo-G}

To select the \sR geodesic satisfying the conditions from Conjecture \ref{conj}, the \sR geodesics are classified according to their reduced dynamics; consult \cite{arnold1992ordinary,jose1998classical,meiss2007differential} for the classification of smooth dynamics.

\textbf{(Regular geodesic)} A geodesic is regular if its reduced dynamics is regular. The reduced dynamics are regular if it is not asymptotic to equilibrium points. A regular geodesic is called bounded if its reduced dynamics lies on a compact set of $\Ho$ for all time. A regular geodesic is called unbounded if it is not bounded.

\textbf{(Singular geodesic)} A geodesic is singular if its reduced dynamics is singular. The reduced dynamic is singular if it is asymptotic to equilibrium points. If $\gamma(t)$ is a singular geodesic in $\G$, then equation \eqref{eq:pres-geo} implies that  $\gamma(t)$ satisfies the condition from Conjecture \ref{conj} if and only if
\begin{equation}\label{eq:asym-pol}
    \lim_{t \to \infty} F_j(x(t);\mu) = \lim_{t \to - \infty}  F_j(x(t);\mu) \qquad \text{for all} \; j = 1, \dots, dim\;\Vs .
\end{equation}

\textbf{(Homoclinic geodesic)} A singular geodesic is homoclinic if its reduced dynamics is homoclinic. The reduced dynamics is homoclinic if 
$$\lim_{t \to \infty} (p_x(t), x(t)) =  (0, x_0) = \lim_{t \to -\infty} (p_x(t), x(t))  .$$ 

\textbf{(Heteroclinic geodesic)} A singular geodesic is heteroclinic if its reduced dynamics is heteroclinic. The reduced dynamics is heteroclinic if 
$$\lim_{t \to \infty} (p_x(t), x(t)) =  (0, x_0), \;\;\text{and}  \lim_{t \to -\infty} (p_x(t), x(t))  =  (0, x_1) , \;\;\text{where}\;x_0 \neq x_1.$$

By definition, the homoclinic geodesics satisfy equation \eqref{eq:asym-pol}. However, the heteroclinic geodesics do not necessarily satisfy equation \eqref{eq:asym-pol}. The following definitions distinguish between this dichotomy.

\textbf{(Direct-type geodesic)} A heteroclinic geodesic is direct-type if its reduced dynamic satisfies equation \eqref{eq:asym-pol}.

\textbf{(Turn-back geodesic)} A heteroclinic geodesic is turn-back if its reduced dynamics does not satisfy equation \eqref{eq:asym-pol}.

The following proposition states that turn-back geodesics are not metric lines
\begin{Cor}[Corollary 1.5, \cite{BravoDoddoli2024}]\label{Cor:turn-back}
    Let $\G$ be a \ma Carnot group with a semidirect structure. The turn-back geodesics are not metric lines. Moreover, bounded geodesics with reduced dynamics within a regular open set of the potential $V_{\mu}(x)$ are not metric lines.  
\end{Cor}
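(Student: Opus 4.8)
The plan is to deduce both statements from the necessary condition in Proposition \ref{prop:met-char} by computing the Cesàro average of the left-translated velocity. First I would observe that the reconstruction formula \eqref{eq:pres-geo}, together with the decomposition $\R^{d_1}=\Vs\oplus\Ho$, gives an explicit expression for the left-translated velocity $w(s):=(L_{\gamma^{-1}(s)})_*\dot\gamma(s)\in\Lag_1$: its $\Ho$-components are the reduced momenta $p_i(s)$ and its $\Vs$-components are $F_j(x(s);\mu)$. On the energy level $H_\mu=\tfrac12$ of \eqref{eq:red-Ham} we have $|w(s)|^2 = \sum_i p_i(s)^2 + V_\mu(x(s)) = 1$, so $w(s)$ is a unit vector for all $s$. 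Writing $\bar w$ for the symmetric mean $\lim_{t\to\infty}\tfrac{1}{2t}\int_{-t}^t w(s)\,ds$, condition \eqref{eq:in-met-char} reads $\langle v,\bar w\rangle = 1$ for some unit $v\in\Lag_1$; since $|w(s)|\equiv 1$ forces $|\bar w|\le 1$, Cauchy--Schwarz shows this is possible only if $|\bar w| = 1$ (and then $v=\bar w$). Thus it suffices to prove $|\bar w|<1$ in each of the two cases.

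For the turn-back case, the geodesic is singular, so its reduced dynamics is asymptotic to equilibria as $s\to\pm\infty$; there $p_i\to 0$ and $x(s)\to x_{\pm}$, whence $w(s)$ converges to the two vectors $w(\pm\infty) = \sum_j F_j(x_\pm;\mu)\tilde e_j$, which are unit since an equilibrium on $H_\mu=\tfrac12$ has $V_\mu=1$. Because the limits exist, the transient contributes nothing to the mean and $\bar w = \tfrac12\big(w(+\infty)+w(-\infty)\big)$, giving $|\bar w|^2 = \tfrac12\big(1 + \langle w(+\infty),w(-\infty)\rangle\big)$. The turn-back hypothesis is exactly the failure of \eqref{eq:asym-pol}, i.e. $F_j(x_+;\mu)\ne F_j(x_-;\mu)$ for some $j$, so $w(+\infty)\ne w(-\infty)$; two distinct unit vectors have inner product strictly below $1$, hence $|\bar w|<1$ and the geodesic cannot be a metric line.

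For the bounded regular case, I would first note that Hamilton's equations for \eqref{eq:red-Ham} give $\dot x_i = p_i$, so the $\Ho$-components of the mean telescope: $\tfrac{1}{2t}\int_{-t}^t p_i\,ds = \tfrac{x_i(t)-x_i(-t)}{2t}\to 0$ because $x(s)$ stays bounded. Hence $\bar w$ lies in $\Vs$ with components $\overline{F_j}$, and by Jensen's inequality $|\bar w|^2 = \sum_j(\overline{F_j})^2 \le \sum_j\overline{F_j^2} = \overline{V_\mu(x(s))}$. It then remains to establish $\overline{V_\mu}<1$, equivalently $\overline{\sum_i p_i^2}>0$. This is where the hypothesis that the orbit lives in a regular open set of the potential enters: since the orbit does not limit to an equilibrium and crosses the turning locus $\{V_\mu=1\}$ (the boundary of the Hill set $\Omega_\mu$ of Definition \ref{def:Hill-region}) transversally along a regular level, those turning points occupy zero asymptotic density in time, so the reduced kinetic energy has strictly positive mean. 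Therefore $|\bar w|<1$ and, by the reduction above, the geodesic is not a metric line.

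The main obstacle is the strict inequality $\overline{V_\mu}<1$ in the bounded case: one must rule out that the orbit spends a full density of time arbitrarily close to the turning locus, where $w(s)$ would be asymptotically of unit length in $\Vs$. I expect this to follow from the regularity hypothesis---transversality of the orbit to the smooth boundary $\partial\Omega_\mu$ makes the sojourn time near $\{V_\mu=1\}$ negligible---but making the density estimate uniform for a possibly quasi-periodic bounded orbit, rather than merely periodic, is the step requiring care; for periodic reduced dynamics it is immediate by averaging over one period.
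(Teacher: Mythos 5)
Your turn-back argument is correct and is in substance the route the paper itself records: Proposition \ref{prop:met-char} plus the Ces\`aro/L'H\^opital observation is exactly Lemma \ref{lem:cond-metl-lin-2}, and your identification of the left-translated velocity with $(p(s),F_\mu(x(s)))$ via \eqref{eq:pres-geo}, its convergence to two distinct unit vectors of $\Vs$ at the two ends, and the resulting strict bound $|\bar w|<1$ all check out. Keep in mind, though, that the paper does not actually prove this corollary here: it imports it from \cite{BravoDoddoli2024} and offers the averaging argument only as an \emph{alternative} proof of the turn-back half.

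The bounded-regular half has a genuine gap, and it is the one you flag yourself: the strict inequality $\liminf_{t\to\infty}\tfrac{1}{2t}\int_{-t}^{t}\sum_i p_i^2(s)\,ds>0$. Everything before it is sound (the telescoping of the $\Ho$-components, the Cauchy--Schwarz step $\sum_j(\overline{F_j})^2\le\overline{V_\mu}$), but the assertion that the turning points ``occupy zero asymptotic density in time'' is a claim, not a proof. Closing it requires two quantitative estimates: an upper bound on the sojourn time per visit to a neighborhood of $\partial\Omega_\mu$ (this one does follow from regularity, since $\dot p=-\tfrac12\nabla V_\mu$ is bounded away from zero there and hence $|p(s)|^2$ grows quadratically in time away from each turning instant), and a lower bound, uniform along the orbit, on the time between consecutive visits. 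The second is the delicate one for a non-periodic bounded orbit in several degrees of freedom, and nothing in your sketch supplies it; as written, the orbit could in principle bounce with unboundedly increasing frequency and drive the mean kinetic energy to zero. In one degree of freedom the orbit is periodic and $\tfrac1T\oint p\,dx>0$ settles the matter, as you note, but the corollary is stated for general metabelian $\G$ with $\dim\Ho=n$. This is presumably why the bounded case is not handled by this route in the paper: the cited proof in \cite{BravoDoddoli2024}, like the treatment of periodic geodesics in the literature, rests on the blow-down technique (Proposition \ref{prp:blow-down}) rather than on the averaged necessary condition. To make your proposal a proof you must either establish the density estimate or restrict the second statement to periodic reduced dynamics.
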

 A consequence of Proposition \ref{prop:met-char} is Lemma \ref{lem:cond-metl-lin-2}, providing an alternative proof for the turn-back case.

\subsubsection{Exponential Coordinates Of Second Type}\label{sub-sub-sec:exp-coor}

We will present the sequence method in the following framework: Let us consider metabelian Carnot group $\G$ with step $s$ and a semidirect product structure $\G = \Ag \rtimes \Ho$ such that $dim \;\Lah = n$,  $dim \; \Lav = 1$ and $dim \; \Lag_k = n_k$ for all $k = 2,\dots, s$.
The conditions from Section \ref{sub-sub-sec:exp-coor} imply the non-integrable distribution $\D$  has rank $n+1$. The Engel-type group $\Eng(n)$ fulfills these conditions. Using these conditions, we will build coordinates in $\G$ that will help us to define the \sR submersion $\pi_F: \G \to  \R^{n+2}_F$, where $\R^{n+2}_F$ is the \sR magnetic space mentioned above.

Given the above framework, we will define a basis for $\Lag$: we denote by $E_1,\dots,E_n$ the basis for $\Lah$ and $\{E_{0}^{\Laa},E_{1,2}^{\Laa},\dots,E_{n_s,s}^{\Laa}\}$ the basis for $\Laa$, where $\{E_0^{\Laa}\}$ the basis for $\Lav$, and $\{E_{1,k}^{\Laa},\dots,E_{n_k,k}^{\Laa}\}$ is a basis for $\Lag_k$. Therefore, $\{E_1,\dots,E_n,E_{0}^{\Laa},E_{1,2}^{\Laa},\dots,E_{n_s,s}^{\Laa}\}$ constitutes a basis for $\Lag$.

We remark that the fact that $\Lag$ is nilpotent implies the exponential map is a global diffeomorphism. We use this property to define exponential coordinates of second type.

\begin{defi}
We endows $\G$  with a unique chart  $(\Ag\times \Ho,\Phi)$. A pair $(\theta,x)\in \Ag\times \Ho$, where $\theta\in \Ag$ and $x\in\Ho$, defines a point $g\in\G$ through the map $\Phi$ given by
$$ g = \Phi(\theta,x) = \exp(\theta_{0} E_{0}^{\Laa}) * \prod_{k=2}^{s} \prod_{j=1}^{n_k} \exp(\theta_{j,k} E_{j,k}^{\Laa}) * \prod_{i=1}^{n} \exp(x_i E_i),  $$
where $*$ is multiplication on $\G$ and $\exp:\Lag \to \G$ is the exponential map. We call the coordinates $(\theta,x)$ exponential coordinates of second type.
\end{defi}

The establishment of exponential coordinates of the second type results in the subsequent proposition.

\begin{lemma}[Lemma 3.2, \cite{BravoDoddoli2024}]\label{lem:exp-coor-sec}
Let $\G$ be a \ma Carnot group satisfying the above conditions, then the left-invariant vector fields generating the non-integrable distribution $\Di$ are given by
\begin{equation*}
\begin{split}
X_i(g) & := (L_g)_* E_i  = \frac{\partial}{\partial x_i}\;\; 1 \leq i \leq n, \\
Y(g) & := (L_g)_* E_{0}^{\Laa} = \frac{\partial}{\partial \theta_{0}} + \sum_{k=2}^s\sum_{j=1}^{n_k}  P_{j,k}(x) \frac{\partial}{\partial \theta_{j,k}} ,
\end{split}
\end{equation*}
where $P_{j,k}(x):\Ho \to \R$ is a homogeneous polynomial of degree $k-1$.
\end{lemma}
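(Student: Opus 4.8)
The plan is to compute both families of vector fields directly from the identity $(L_g)_* E = \tfrac{d}{dt}\big|_{t=0}\, g * \exp(tE)$, and to linearize the right-translation $g \mapsto g*\exp(tE)$ in the second-type coordinates by exploiting that $\G$ carries \emph{two} abelian pieces. First I would write a point $g = \Phi(\theta,x)$ as a product $g = A * P$, where
\begin{equation*}
A = \exp\!\Big(\theta_{0}E_{0}^{\Laa} + \sum_{k=2}^{s}\sum_{j=1}^{n_k}\theta_{j,k}E_{j,k}^{\Laa}\Big)\in\Ag, \qquad P = \exp\!\Big(\sum_{i=1}^{n} x_i E_i\Big)\in\Ho .
\end{equation*}
Collapsing the two products of exponentials in the definition of $\Phi$ into single exponentials is legitimate: $\Laa$ is abelian (it is the Lie algebra of the abelian group $\Ag$), and $\Lah$ is abelian by the semidirect-product hypothesis $\G = \Ag\rtimes\Ho$. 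Thus computing the left-invariant fields reduces to reading off how the coordinates $(\theta,x)$ move under right multiplication.

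For the horizontal fields $X_i$, I would multiply on the right by $\exp(tE_i)$. Since $\Lah$ is abelian, $\exp(tE_i)$ commutes through $P=\exp(\sum_{i'}x_{i'}E_{i'})$ and merely shifts $x_i\mapsto x_i+t$, leaving $A$ and all other coordinates fixed. Differentiating at $t=0$ yields $X_i = \partial/\partial x_i$, as claimed.

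The substance is the field $Y$. Multiplying $g = A*P$ on the right by $\exp(tE_{0}^{\Laa})$ and using that $\Ag$ is \emph{normal}, I would conjugate $\exp(tE_{0}^{\Laa})$ to the left of $P$ via $P\,\exp(tE_{0}^{\Laa})\,P^{-1} = \exp\!\big(t\,\mathrm{Ad}_P E_{0}^{\Laa}\big)\in\Ag$, giving $g*\exp(tE_{0}^{\Laa}) = A*\exp\!\big(t\,\mathrm{Ad}_P E_{0}^{\Laa}\big)*P$. Because $\Ag$ is abelian, the two $\Ag$-factors combine additively in the exponent. Writing $\xi=\sum_i x_i E_i$ and expanding $\mathrm{Ad}_P E_{0}^{\Laa} = e^{\mathrm{ad}_{\xi}}E_{0}^{\Laa} = \sum_{m\ge 0}\tfrac{1}{m!}\,\mathrm{ad}_{\xi}^{\,m}E_{0}^{\Laa}$ (a finite sum by nilpotency), the grading forces $\tfrac{1}{(k-1)!}\mathrm{ad}_{\xi}^{\,k-1}E_{0}^{\Laa}\in\Lag_k$; expressing this in the basis $\{E_{j,k}^{\Laa}\}$ defines the coefficients $P_{j,k}(x)$. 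Hence the coordinates of $g*\exp(tE_{0}^{\Laa})$ are $\theta_{0}\mapsto\theta_{0}+t$, $\theta_{j,k}\mapsto\theta_{j,k}+t\,P_{j,k}(x)$, $x\mapsto x$, and differentiating at $t=0$ gives exactly $Y = \partial/\partial\theta_{0} + \sum_{k,j}P_{j,k}(x)\,\partial/\partial\theta_{j,k}$.

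The homogeneity statement then falls out for free: since $\mathrm{ad}_{\xi}^{\,k-1}$ is $(k-1)$-fold linear in $\xi=\sum_i x_i E_i$, each $P_{j,k}$ is a homogeneous polynomial of degree $k-1$ in $x$. I expect the only real obstacle to be the bookkeeping of the adjoint expansion, namely verifying that the Baker--Campbell--Hausdorff corrections genuinely vanish at the two places where I claimed linearization, which is precisely where the metabelian hypotheses ($\Lah$ abelian, $\Laa$ an abelian ideal) are doing the work; once those two commutation facts are isolated, the computation is forced and the degree count is immediate from the grading $[\Lag_1,\Lag_k]\subseteq\Lag_{k+1}$.
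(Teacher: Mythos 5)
Your proof is correct. Note that the paper does not actually prove this lemma — it imports it verbatim as Lemma 3.2 of \cite{BravoDoddoli2024} — so there is no in-paper argument to compare against; your derivation (splitting $g=A*P$ using that $\Laa$ and $\Lah$ are abelian, computing $X_i$ by commuting $\exp(tE_i)$ through $P$, and computing $Y$ by conjugating $\exp(tE_0^{\Laa})$ into the normal factor via $\mathrm{Ad}_P=e^{\mathrm{ad}_\xi}$ and reading off the $\Lag_k$-components) is the standard one and correctly isolates the two commutation facts that make it work. As a sanity check, your formula $P_{j,k}=\tfrac{1}{(k-1)!}\,(\text{coefficients of }\mathrm{ad}_\xi^{k-1}E_0^{\Laa})$ reproduces exactly the frame $Y=\partial_{\theta_0}+\sum_i x_i\partial_{\theta_i}+\tfrac12\|x\|^2\partial_{\theta_{n+1}}$ displayed for $\Eng(n)$ in Section 4, which confirms the normalization.
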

 The \sR metric in these coordinates is $ds^2_{\G} = (dx_1^2 + \dots dx_n^2 + d\theta_0^2)|_{\D}$ and the frame $\{X_1,\dots,X_n,Y\}$ is orthonormal. 

Under the assumptions that $\Lah = n$ and $dim \; \Lav = 1$, the reduced Hamiltonian from equation \eqref{eq:red-Ham} is a $n$-degreee of freedom Hamiltonian system whose potential is the square of a polynomial $F(x)$ of degree $s-1$ with the form 
\begin{equation}\label{eq:F-pol}
F(x) =  a_{1,1}   + \sum_{k=2}^s\sum_{j=1}^{n_k} a_{j,k} P_{j,k}(x).
\end{equation}

The following result implies that it is enough to consider only one asymptotic direction when we classify metric lines.
\begin{lemma}
The reflection $R_Y(g) = \Phi(-\theta,x)$ defines an isometry on $\G$.  
\end{lemma}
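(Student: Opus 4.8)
The plan is to show that $R_Y$ is a \sR isometry by verifying that its differential preserves the distribution $\D$ together with the \sR inner product; since distances are defined as infima of lengths of horizontal curves, this is exactly what is needed. Because the frame $\{X_1,\dots,X_n,Y\}$ of Lemma \ref{lem:exp-coor-sec} is orthonormal and spans $\D$, it suffices to check that $(R_Y)_*$ carries this frame to another orthonormal frame spanning $\D$. First I would record that $R_Y$ is genuinely a diffeomorphism: as $\Phi$ is a global diffeomorphism (exponential coordinates of second type) and $(\theta,x)\mapsto(-\theta,x)$ is a linear involution of $\Ag\times\Ho$, the composition $R_Y=\Phi\circ(\text{negate }\theta)\circ\Phi^{-1}$ is a smooth involution of $\G$.

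The main computation is to read off $(R_Y)_*$ in the coordinates $(\theta,x)$. Since $R_Y$ acts as $(\theta,x)\mapsto(-\theta,x)$, its differential sends
\[
\frac{\partial}{\partial x_i}\mapsto \frac{\partial}{\partial x_i},\qquad
\frac{\partial}{\partial \theta_0}\mapsto -\frac{\partial}{\partial \theta_0},\qquad
\frac{\partial}{\partial \theta_{j,k}}\mapsto -\frac{\partial}{\partial \theta_{j,k}}.
\]
Applying this to the frame gives $(R_Y)_* X_i = X_i$ at once, since $X_i=\partial/\partial x_i$ and the $x$-coordinates are fixed by $R_Y$.

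The decisive step is computing $(R_Y)_* Y$, and here I would invoke that in Lemma \ref{lem:exp-coor-sec} the polynomials $P_{j,k}$ depend only on $x$ and not on $\theta$. Because $R_Y$ fixes the coordinate $x$, each coefficient $P_{j,k}(x)$ is unchanged while every $\partial/\partial\theta_0$ and $\partial/\partial\theta_{j,k}$ acquires a minus sign, so that
\[
(R_Y)_* Y = -\Bigl(\frac{\partial}{\partial\theta_0}+\sum_{k=2}^{s}\sum_{j=1}^{n_k}P_{j,k}(x)\frac{\partial}{\partial\theta_{j,k}}\Bigr) = -Y.
\]
Thus $(R_Y)_*$ restricts on $\D$ to the linear reflection $X_i\mapsto X_i$, $Y\mapsto -Y$, which is orthogonal for the inner product declaring $\{X_1,\dots,X_n,Y\}$ orthonormal. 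Consequently $(R_Y)_*\D=\D$ and the inner product is preserved, so $R_Y$ carries horizontal curves to horizontal curves of equal length, hence is an isometry.

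The hard part is conceptual rather than computational: the only possible obstruction to $Y$ being an eigenvector of $(R_Y)_*$ would be a $\theta$-dependence in its coefficients, and Lemma \ref{lem:exp-coor-sec} precisely rules this out. I expect the one point requiring care to be the bookkeeping of base points — comparing $(R_Y)_* Y$ at $R_Y(g)$ with $Y$ evaluated at the same point $R_Y(g)$ — and this comparison is clean exactly because $P_{j,k}$ is a function of the fixed coordinate $x$ alone.
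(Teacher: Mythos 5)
Your proof is correct, but it takes a different route from the paper's. The paper argues Lie-theoretically: it defines a linear reflection $R_{\Laa}$ on the Lie algebra ($E_i \mapsto E_i$, $E_0^{\Laa}\mapsto -E_0^{\Laa}$, $E_{j,k}^{\Laa}\mapsto -E_{j,k}^{\Laa}$), observes that this is a Lie algebra automorphism (using that $\Lah$ and $\Laa$ are abelian and that all nontrivial brackets land in $\Laa$), and then invokes simple connectedness to exponentiate it to a group automorphism whose differential satisfies $(R_Y)_*X_i = X_i$ and $(R_Y)_*Y=-Y$. You instead take the coordinate description $R_Y:(\theta,x)\mapsto(-\theta,x)$ at face value and compute the pushforward of the frame from Lemma \ref{lem:exp-coor-sec} directly, the key point being that the coefficients $P_{j,k}(x)$ depend only on the fixed coordinate $x$; your basepoint bookkeeping at the end is exactly the right thing to be careful about, and the argument closes cleanly. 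The trade-off: your computation is more elementary and self-contained (no need to verify bracket-compatibility or to appeal to the lifting theorem), while the paper's route additionally establishes that $R_Y$ is a group \emph{automorphism}, which is the structural fact behind the remark immediately following the lemma that $R_Y$ carries geodesics for $F(x)$ to geodesics for $-F(x)$. For the isometry statement itself, both arguments reduce to the same final verification that $(R_Y)_*$ acts on the orthonormal frame by $X_i\mapsto X_i$, $Y\mapsto -Y$, hence preserves $\D$ and the metric.
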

\begin{proof}
  The reflection $R_{\Laa}$ mapping $E_i \to E_i$, $E_0^{\Laa} \to -E_0^{\Laa}$ and $E_{j,k}^{\Laa} \to -E_{j,k}^{\Laa}$  is a Lie automorphism, then exponential of the reflection $R_{\Laa}$ induce a automorphism on $\G$ since $\G$ is simple connected \cite[Theorem 5.6]{hall2003lie}. Let $R_Y$ be the lift of $R_{\Laa}$, then $R_Y$  preserve the distribution $\D$, since $(R_Y)_*X_i = X_i$ for all $i = 1,\dots,n$ and $(R_Y)_*Y = - Y$. In addition, the \sR norm of vector $v\in\D$ is the same as the vector $(R_Y)_*v$. 
\end{proof}
We conclude that if $\gamma(t)$ is a \sR geodesic corresponding to $F(x)$, then $R_Y(\gamma(t))$ is a \sR geodesic corresponding to $-F(x)$.
Therefore,  it is enough to study singular \sR geodesics such that $F(x(t)) \to 1$ when $t \to \pm \infty$.

\subsection{The magnetic space}\label{sec:mag-spa}

Following the framework from Section \ref{sub-sub-sec:exp-coor}, given a \sR geodesic $\gamma(t)\in \G$ for a polynomial $F(x)$. Let us define the \sR manifold $(\R^{n+2}_F,\D_F, <\cdot,\cdot>_{\R^{n+2}_F})$, where $\R^{n+2}_F := \Ho \times \R^2$ and  whose geometry depends on $F(x)$  and we call it  \enquote{\sR magnetic space}; the \sR manifold $\R^{n+2}_F$ generalizes a magnetic space built in \cite{bravo2022geodesics,bravo2022metric} to study metric lines in the jet space. We endow $\R^{n+2}_F $ with the global coordinates $(x,y,z)$, we define the non-integrable distribution $\Di_{F}$ using the Pfaffian equation $ dz - F(x) dy = 0$ and the \sR metric on $\Di_{F}$ as $ds^2_{\R^{n+2}_F} = ( \sum_{i=1}^n dx_i^2 + dy^2)|_{\Di_F}$. We build a \sR submersion  $\pi_{F}:\G \to \R^{n+2}_F$ factoring the \sR submersion $\pi:\G \to \R^{n+1}$, that is, $\pi = pr \circ \pi_{F}$, where $pr: \R^{n+2}_F \to \R^{n+1}$ is a \sR submersion. If $F(x)$ has the form from equation \eqref{eq:F-pol}, then the projections $\pi_{F}$ and $pr$ are given by
\begin{equation}\label{eq:proojection-F}
\begin{split}
\pi_{F}(x,\theta) = (x,\theta_0,a_0\theta_{0}   + \sum_{k=2}^s\sum_{j=1}^{n_k} a_{j,k} \theta_{j,k} ) = (x,y,z), \;\;\text{and}\;\; pr(x,y,z) := (x,y).  
\end{split}
\end{equation}
 It follows that $\pi_{F}$ maps  the frame $\{X_1,\dots,X_n,Y \}$, defined in Lemma \ref{lem:exp-coor-sec}, into the frame $\{ \tilde{X}_1\dots, \tilde{X}_n ,\tilde{Y}\}$, i.e.,
\begin{equation*}
\begin{split}
\tilde{X}_i & := \frac{\partial}{\partial x_i} = (\pi_{F})_* X_i\;\;\text{for all}\; i=1,\dots,n, \\
 \tilde{Y} & := \frac{\partial}{\partial y} + F(x)\frac{\partial}{\partial z} = (\pi_{F})_* Y . \\
\end{split}
\end{equation*} 
We conclude that $\{ \tilde{X}_1\dots, \tilde{X}_n,\tilde{Y} \}$ is a global framed for $\Di_{F}$. The magentic space $\R^{n+2}_F$ has the property that curve $\pi_F(\gamma(t))$ is a geodeic in $\R^{n+2}_F$. An explanation of the name \enquote{magnetic space} is given in \cite[Section 4.1]{bravo2022geodesics}.

\subsubsection{Geodesics In The Magnetic Space}\label{sub-sec:geo-mag}
The Hamiltonian function governing the \sR geodesic flow in $\R^{n+2}_{F}$ is 
\begin{equation}\label{eq:fund-mag-eq-F}
H_{F}(p_x,p_y,p_z,x,y,z) = \frac{1}{2} \sum_{i=1}^n p_{x_i}^2 + \frac{1}{2}(p_y + F(x) p_z)^2. 
\end{equation}
A curve $c(t) = (x(t),y(t),z(t))\in \R^{n+2}_{F}$ is a geodesic parametrized by arc length if it is the projection of the \sR geodesic flow with the energy condition $H_{F} = \frac{1}{2}$. Since  $H_{F}$ does not depend on the coordinates $y$ and $z$, then the momentum $p_y$ and $p_z$ are constant of motion, and the translation $\varphi_{(y_0,z_0)}(x,y,z) = (x,y+y_0,z+z_0)$ is an isometry.
\begin{defi}\label{def:s-r-distance-isome}
Let us denote by $dist_{\R^{n+2}_{F}}(\cdot,\cdot)$ and $Iso(\R^{n+2}_F)$ the \sR distance on $\R^{n+2}_{F}$ and the isometry group of $\R^{n+2}_{F}$, respectively. 
\end{defi}
For a more comprehensive understanding of the sub-Riemannian distance definition and the sub-Riemannian group of isometries, consult  \cite[Chapter 1.4]{montgomery2002tour} or \cite[Chapter 3.2]{agrachev2019comprehensive}. 

\begin{defi}\label{def:pencil}
We say that the two-dimensional linear space $Pen_{F}$ is the pencil of $F(x)$, if $Pen_{F} := \{ G(x) = a + bF(x):  (a,b) \in \R^2 \}$. 
\end{defi}

We define the lift of a curve in $\R^{n+2}_{F}$ to a curve in $\G$.

\begin{defi}\label{def:mag-lift}
Let $c(t)$ be a curve in $\R^{n+2}_{F}$. We say that a curve $\gamma(t)$ in $\G$ is the lift of $c(t) = (x(t),y(t),z(t))$ if $\gamma(t)$ solves
\begin{equation*}
\dot{\gamma}(t) =  \dot{x}(t) X(\gamma(t)) + G(x(t)) Y(\gamma(t)).
\end{equation*} 
\end{defi}
Now, we delineate the geodesics in $\R^{n+2}_{F}$, their lifts, and their connection with the \sR geodesics in $\G$. 
\begin{Prop}[Section 4.1, \cite{bravo2022geodesics}]
Let $c(t)\in \R^{n+2}_{F}$  be a geodesic for $G(x)$ in $Pen_{F}$, then the component $x(t)$ satisfies the one-degree of freedom given by the Hamiltonian function;
\begin{equation}\label{eq:red-hal-mag-spa}
H_{G}(p_x,x) := \frac{1}{2} \sum_{i=1}^{dim \; \Ho} p_{x_i}^2  + \frac{1}{2}(a+bF(x))^2 = \frac{1}{2}  p_{x}^2 + \frac{1}{2}G^2(x).
\end{equation}
And the coordinates $y(t)$ and $z(t)$ satisfy
\begin{equation}\label{eq:ode-y-z}
\dot{y} = G(x(t)) \;\;\text{and}\;\; \dot{z} = G(x(t)) F(x(t)).
\end{equation}
Moreover, every geodesic in  $\R^{n+2}_{F}$ is the $\pi_{F}$-projection of a geodesic in $\G$ corresponding to $G(x)$ in $Pen_{F}$. Conversely, the lifts of a geodesic in $\R^{n+2}_{F}$ are precisely those geodesics corresponding to polynomials in $Pen_F$.
\end{Prop}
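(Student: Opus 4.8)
The plan is to establish the three assertions of the proposition in sequence, exploiting the fact that $H_F$ in equation \eqref{eq:fund-mag-eq-F} has $y$ and $z$ as cyclic coordinates. First I would write down the Hamiltonian equations for $H_F$. Since $H_F$ does not depend on $y$ or $z$, the conjugate momenta $p_y$ and $p_z$ are conserved; call them $-a$ and $b$ respectively (the signs chosen to match the pencil notation), so that the combination $p_y + F(x)p_z$ appearing in $H_F$ equals $-a + bF(x)$ up to sign conventions, i.e.\ a polynomial $G(x) = a + bF(x)$ lying in $Pen_F$. Substituting these constants into $H_F = \tfrac12$ immediately collapses the full Hamiltonian into the one-degree-of-freedom Hamiltonian $H_G(p_x,x)$ of equation \eqref{eq:red-hal-mag-spa}, because the $\tfrac12(p_y+F(x)p_z)^2$ term becomes $\tfrac12 G^2(x)$, which now plays the role of a potential depending only on $x$. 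This proves the first claim: the $x(t)$-component is governed by $H_G$.

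Next I would derive equation \eqref{eq:ode-y-z} for $y(t)$ and $z(t)$. The Hamilton equations $\dot y = \partial H_F/\partial p_y$ and $\dot z = \partial H_F/\partial p_z$ give $\dot y = p_y + F(x)p_z$ and $\dot z = F(x)(p_y + F(x)p_z)$. Recognizing the common factor $p_y + F(x)p_z$ as $G(x(t))$ (the same constant-coefficient polynomial identified above, evaluated along the trajectory) yields $\dot y = G(x(t))$ and $\dot z = G(x(t))F(x(t))$ directly. This is a short computation once the identification of $G$ with the conserved-momentum combination is in place.

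For the third and final claim — the correspondence between geodesics in $\R^{n+2}_F$ and geodesics in $\G$ — I would invoke the Background Theorem together with the reconstruction prescription \eqref{eq:pres-geo} and the \sR submersion $\pi_F$ of Section \ref{sec:mag-spa}. The key observation is that the reduced Hamiltonian $H_G$ in \eqref{eq:red-hal-mag-spa} is exactly the one-degree-of-freedom reduced Hamiltonian $H_\mu$ from \eqref{eq:red-Ham} in the case $\dim\Vs = 1$, with potential $V_\mu(x) = G^2(x)$. Thus a geodesic $c(t)$ in $\R^{n+2}_F$ determines a reduced solution $(p_x(t),x(t))$ which, via the prescription, reconstructs a geodesic $\gamma(t)$ in $\G$ corresponding to the polynomial $G(x)$; and conversely $\pi_F$ projects any such $\gamma(t)$ back to $c(t)$, since $\pi_F$ intertwines the orthonormal frames as recorded just before equation \eqref{eq:fund-mag-eq-F}. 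The statement that the lifts are precisely the geodesics corresponding to polynomials in $Pen_F$ then follows by checking that varying $(a,b)$ over $\R^2$ — equivalently varying the conserved momenta $(p_y,p_z)$ — sweeps out exactly the pencil $Pen_F$ of Definition \ref{def:pencil}.

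The main obstacle I anticipate is not any single calculation but the bookkeeping in the correspondence step: one must verify carefully that the constant $G(x) = a + bF(x)$ produced by fixing the momenta $(p_y,p_z)$ coincides with the polynomial labelling the reconstructed geodesic in $\G$ under the momentum-to-polynomial dictionary asserted in the Background Theorem, and that $\pi_F$ and the reconstruction \eqref{eq:pres-geo} are genuinely inverse to one another at the level of curves (uniqueness of the horizontal lift up to translation, together with the isometry $\varphi_{(y_0,z_0)}$, handles the translation ambiguity). Since the proposition is quoted from \cite{bravo2022geodesics}, I would lean on that reference for the verification that these identifications are consistent, and present the first two parts as the self-contained computation and the third as an application of the already-established submersion and reconstruction machinery.
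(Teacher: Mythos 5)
Your proposal is correct and follows exactly the route the paper intends (the paper states this proposition without proof, citing \cite{bravo2022geodesics}): $y$ and $z$ are cyclic for $H_F$ in \eqref{eq:fund-mag-eq-F}, so fixing the conserved momenta $(p_y,p_z)=(a,b)$ collapses the flow to $H_G$ in \eqref{eq:red-hal-mag-spa} and yields \eqref{eq:ode-y-z}, while the correspondence with geodesics in $\G$ is the Background Theorem read through the submersion $\pi_F$ and Definition \ref{def:mag-lift}. The only blemish is your sign bookkeeping: setting $p_y=-a$ gives $p_y+F(x)p_z=-a+bF(x)$, which is not $G(x)=a+bF(x)$; simply take $p_y=a$ and $p_z=b$ so the identification holds on the nose (harmless, since $(a,b)$ ranges over all of $\R^2$ anyway).
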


\begin{Cor}
Let $\gamma(t)\in \G $ be a \sR geodesic corresponding to the polynomial $F(x)$. If $c(t):= \pi_{F}(\gamma(t))\in \R^{n+2}_{F}$, then $c(t)$ is a geodesic corresponding to the pencil $(a,b) = (0,1)$. 
\end{Cor}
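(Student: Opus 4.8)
The plan is to compute the velocity of $c(t) = \pi_F(\gamma(t))$ directly from the velocity of $\gamma(t)$, and then to read off the pencil coordinates by matching the resulting equations of motion against equation \eqref{eq:ode-y-z}. First I would write the horizontal velocity of $\gamma(t)$ in the orthonormal frame of Lemma \ref{lem:exp-coor-sec}. Since $\gamma(t)$ is the geodesic in $\G$ corresponding to $F(x)$, the reconstruction prescription \eqref{eq:pres-geo} together with the Background Theorem \ref{the:background}, specialized to $dim\;\Vs = 1$ and using $p_{x_i}(t) = \dot{x}_i(t)$ from $H_\mu$, gives
\[
\dot{\gamma}(t) = \sum_{i=1}^n \dot{x}_i(t)\, X_i(\gamma(t)) + F(x(t))\, Y(\gamma(t)),
\]
where $(p_x(t),x(t))$ solves the reduced system with $V_\mu(x) = F(x)^2$.

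Next I would push this forward by $(\pi_F)_*$, using the frame identities $(\pi_F)_* X_i = \tilde{X}_i = \partial/\partial x_i$ and $(\pi_F)_* Y = \tilde{Y} = \partial/\partial y + F(x)\,\partial/\partial z$ established in Section \ref{sec:mag-spa}. This yields
\[
\dot{c}(t) = \sum_{i=1}^n \dot{x}_i(t)\,\frac{\partial}{\partial x_i} + F(x(t))\Big(\frac{\partial}{\partial y} + F(x(t))\,\frac{\partial}{\partial z}\Big),
\]
so the components of $c(t)$ obey $\dot{y} = F(x(t))$ and $\dot{z} = F(x(t))^2$, while the $x$-component evolves under the one-degree-of-freedom Hamiltonian \eqref{eq:red-hal-mag-spa} with $G = F$. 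Comparing $\dot{y} = F(x(t))$ and $\dot{z} = F(x(t))\,F(x(t))$ against the defining equations \eqref{eq:ode-y-z} of a geodesic for $G \in Pen_F$, namely $\dot{y} = G(x(t))$ and $\dot{z} = G(x(t))\,F(x(t))$, forces $G(x(t)) = F(x(t))$ along the trajectory; by Definition \ref{def:pencil} this is the pencil element $(a,b) = (0,1)$.

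The only point requiring care is pinning the pencil coordinates down uniquely from $G(x(t)) = F(x(t))$: since $a + bF(x(t)) = F(x(t))$ for all $t$, whenever $F(x(t))$ is non-constant one reads off $a = 0$, $b = 1$. In the degenerate case where $F(x(t))$ is constant I would instead invoke the last sentence of the preceding Proposition, which states that the lifts of a geodesic in $\R^{n+2}_F$ are precisely the $\G$-geodesics corresponding to polynomials in $Pen_F$; as $\gamma(t)$ is by construction a lift of $c(t)$ corresponding to $F(x) = 0 + 1\cdot F(x)$, and this correspondence is unique, the pencil coordinate is again $(0,1)$. I expect no genuine conceptual obstacle here, since the corollary is essentially a bookkeeping consequence of the reconstruction prescription and the frame push-forward of Section \ref{sec:mag-spa}; the bulk of the work is confirming the identification $p_{x_i}(t) = \dot{x}_i(t)$ and the push-forward of the frame, both routine.
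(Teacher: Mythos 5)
Your proposal is correct and follows the natural route the paper intends: the corollary is stated without proof as an immediate consequence of the preceding Proposition, and your computation (reconstruction prescription giving $\dot{\gamma} = \sum_i \dot{x}_i X_i + F(x(t))Y$, push-forward of the frame by $(\pi_F)_*$, and matching against equation \eqref{eq:ode-y-z}) is exactly the bookkeeping that justifies it. Your extra care about the degenerate case where $F(x(t))$ is constant is sensible but not strictly needed, since the statement only asserts that $c(t)$ is a geodesic corresponding to $(a,b)=(0,1)$, not that this pencil element is unique.
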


We classify the \sR geodesics in $\R^{n+2}_F$ according to their reduced dynamics defined by the reduced Hamiltonian $H_{G}$, equation \eqref{eq:red-hal-mag-spa}, in the same way as we did in Section \ref{subsubsec:clas-geo-G}.

The following lemma characterizes the abnormal geodesics in  $\R^{n+2}_{F}$.
\begin{lemma}\label{lemma:abn-geo-mag}
If $F(x)$ is a polynomial of degree $2$, then the space $\R^{n+2}_{F}$ possesses two families of abnormal curves;

\textbf{(Family of vertical lines)}  We say an abnormal curve $c(t)$ is a  vertical line, if $c(t)$ is  tangent to the vector field $\tilde{Y}$ and $x(t) = x^*$ is a constant point in $\Ho$ such that $dF|_{x=x^*} = 0$. So, a vertical abnormal geodesic $c(t)$ qualifies as a metric line.

\textbf{(Family of horizontal isocontour)} We say an abnormal curve $c(t)$ is horizontal isocontour, if $c(t)$ is tangent to space $\Ho$ and $F(x(t)) = const$. So, a horizontal isocontour abnormal geodesic $c(t)$ qualifies as a metric line if and only if $c(t)$ is a line geodesic.

\end{lemma}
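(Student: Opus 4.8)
The plan is to treat $\Di_{F}$ as a corank-one distribution, namely the kernel of the single one-form $\omega := dz - F(x)\,dy$, and to invoke the classical fact that for a corank-one distribution the abnormal extremals are exactly the horizontal integral curves of the Cauchy characteristic field, i.e. the curves whose velocity lies in $\ker\big(d\omega|_{\Di_{F}}\big)$. First I would compute $d\omega = -\,dF\wedge dy = -\sum_i \partial_{x_i}F\,dx_i\wedge dy$ and evaluate it on the orthonormal frame $\{\tilde{X}_1,\dots,\tilde{X}_n,\tilde{Y}\}$. A direct computation gives $d\omega(\tilde{X}_i,\tilde{X}_j)=0$ and $d\omega(\tilde{X}_i,\tilde{Y})=-\partial_{x_i}F$, so that a horizontal vector $v=\sum_i a_i\tilde{X}_i+b\tilde{Y}$ is characteristic if and only if $b\,\partial_{x_j}F=0$ for every $j$ and $\sum_i a_i\,\partial_{x_i}F=0$.

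Next I would read the two families off these two linear conditions according to whether $dF$ vanishes. Away from the critical set $\{dF=0\}$ the first condition forces $b=0$, so the characteristic directions are the horizontal vectors $v=\sum_i a_i\tilde{X}_i$ with $\langle\nabla F,a\rangle=0$; their integral curves keep $y$ and $z$ fixed and satisfy $\tfrac{d}{dt}F(x(t))=0$, which are precisely the horizontal isocontours. On the critical set $\{dF=0\}$ both conditions are vacuous, the whole of $\Di_{F}$ becomes characteristic, and in particular $\tilde{Y}$ is admissible: its integral curve fixes $x=x^{\ast}$ at a critical point and runs in the $(y,z)$-plane, giving a vertical line. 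Because $F$ has degree two, $\nabla F$ is affine and $\{dF=0\}$ is an affine subspace, so these are exactly the two geometric families claimed.

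For the metric-line statement about vertical lines I would use the linear function $y$. Since $dy(\tilde{X}_i)=0$ and $dy(\tilde{Y})=1$, the horizontal gradient satisfies $\|\nabla_{sR}\,y\|\le 1$, so $y$ is $1$-Lipschitz for $dist_{\R^{n+2}_{F}}$; hence $|y(c(b))-y(c(a))|\le dist_{\R^{n+2}_{F}}(c(a),c(b))$ for every horizontal curve. Along a unit-speed vertical line $\dot y=\pm1$ equals the speed, so $|y(c(b))-y(c(a))|=|b-a|$ is simultaneously a lower bound for the distance and the length of the segment; equality forces the segment to be globally minimizing, so $c$ is a metric line.

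Finally, for the horizontal isocontours I would exploit that $b=0$ forces $y$ and $z$ to be constant, so the curve lies in a single fibre $\{y=y_0,\,z=z_0\}$, on which every $x$-curve is horizontal and the induced metric is the Euclidean metric of $\Ho$. The $x$-projection is $1$-Lipschitz, giving $dist_{\R^{n+2}_{F}}\big((p,y_0,z_0),(q,y_0,z_0)\big)=|p-q|$, attained only by the Euclidean segment, which itself lies in the fibre and is horizontal. Thus an isocontour is minimizing between its endpoints if and only if it is that straight segment in $\Ho$; such a segment has constant $y$ and projects under $pr$ to a line in $\R^{n+1}$, i.e. it is a line geodesic, and conversely a line geodesic that is an isocontour must have $\dot y=0$, hence be straight in $\Ho$. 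Together with Lemma \ref{lem:sR-sub-metric-line} this gives the equivalence. I expect the main obstacle to be the bookkeeping at the rank jump of the characteristic distribution across $\{dF=0\}$, namely checking that no further abnormal curves appear when the kernel enlarges on a degenerate critical set, together with the clean justification that the in-fibre Euclidean segment is the global minimizer, which is where the degree-two hypothesis (affine $\nabla F$, quadric level sets) is genuinely used.
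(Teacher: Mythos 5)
Your proposal is correct and reaches the same two linear conditions on the controls as the paper, but by a genuinely different (and equivalent) route. The paper's proof in Appendix \ref{sub-AP:ab-geo} runs Pontryagin's maximum principle explicitly: it writes the abnormal Hamiltonian $u_1p_{x_1}+\cdots+u_np_{x_n}+u_{n+1}(p_y+p_zF(x))$, extracts the constraints $p_{x_i}=0$ and $p_y+p_zF(x)=0$, and differentiates them along the flow to obtain exactly your pair of equations $u_{n+1}\,\partial F/\partial x_i=0$ and $\sum_i u_i\,\partial F/\partial x_i=0$; you instead package the same computation as the statement that abnormal extremals of the corank-one distribution $\ker(dz-F\,dy)$ are the horizontal curves tangent to $\ker(d\omega|_{\Di_F})$, which is the geometric form of the same PMP argument and buys a shorter derivation at the price of citing the characteristic-curve characterization. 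Where you genuinely add value is in the metric-line assertions: the paper only asserts them (for the isocontours it says in one line that a curve in the Euclidean subspace $\Ho$ is a metric line iff it is a line, and for vertical lines it gives no argument at all), whereas you supply calibration proofs via the $1$-Lipschitz functions $y$ and $x$ and the isometric identification of the fibre $\{y=y_0,\,z=z_0\}$ with $\Ho$; these are correct and make the lemma self-contained. One caveat applies equally to both arguments: the dichotomy ``either $u_{n+1}=0$ or $x(t)$ is a critical point of $F$'' closes into the two stated families only because the critical set of $F$ is a single isolated point, which the paper asserts (``its unique critical point $x^*$ is isolated'') and which holds for the radial quadratics $F=\alpha+\beta r^2$, $\beta\neq 0$, actually used in the sequel, but not for every degree-two polynomial (e.g.\ $F=x_1^2$ has an affine critical subspace along which mixed abnormal curves with $u_{n+1}\neq 0$ exist). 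You correctly flag this rank-jump issue as the remaining bookkeeping; it is a gap in the lemma as literally stated rather than in your argument, and it disappears under the nondegeneracy hypothesis the paper implicitly uses.
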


In Lemma \ref{lemma:abn-geo-mag}, we used the expressions horizontal and vertical for curves $c(t)\in \R^{n+2}_{F}$ in the following way. We say a curve is horizontal and vertical if $c(t)$ is tangent to the sub-bundles $(\pi_F\circ L_g)_*\Lah$ and $(\pi_F\circ L_g)_*\Lav$ for all time $t\in \R$, respectively. The proof of Lemma \ref{lemma:abn-geo-mag} is in Appendix \ref{sub-AP:ab-geo}.

\subsubsection{Conjugate Points}

In this section, we will show that the boundary of the Hill set defines the conjugate point for geodesics in $\R^{n+2}_F$. Let us formalize the concept.

Given a polynomial $G(x)$ in $Pen_F$, we define the Hill set $\Omega_G$ of $G(x)$ in the same way as Definition \ref{def:Hill-region}. Moreover, we split the boundary of the Hill set into two disjoint sets $\partial \Omega_G = \partial \Omega_G^+ \cup \partial \Omega_G^-$, where
    $$\partial \Omega_G^+ = \{x \in \Omega_G : G(x) = 1 \}; \;\; \text{and} \;\; \partial \Omega_G^- = \{x \in \Omega_G : G(x) = -1 \}. $$ 

The following proposition describes the conjugate points in  $\R^{n+2}_F$.
\begin{Prop}\label{prop:conj-point}
    Let $c(t) = (x(t),y(t),z(t))$ be a geodesic in $\R^{n+2}_F$ for a polynomial $G(x)$ such that $x(t_0)\in \partial \Omega_G^+$ (or $\partial \Omega_G^-$). If there exists a time $t_{cut}$ such that $x(t_{cut})\in \partial \Omega_G^+$ (or $\partial \Omega_G^-$), then $c(t_{cut})$ is cojugate point throught $c(t_0)$, so $t_{cut}$ is a conjugate time. 
\end{Prop}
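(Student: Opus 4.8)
The plan is to produce a single nontrivial Jacobi field along $c$ that vanishes at both $t_0$ and $t_{cut}$; by the standard characterization of conjugate points for normal geodesics this is exactly what must be shown. I would assemble this field out of three Jacobi fields that are available for free in $\R^{n+2}_F$, so that essentially no integration of the flow is required.

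First I would translate the hypothesis into a statement about the reduced dynamics. Writing $\epsilon=+1$ on $\partial\Omega_G^{+}$ and $\epsilon=-1$ on $\partial\Omega_G^{-}$, the assumption $x(t_0),x(t_{cut})\in\partial\Omega_G^{\epsilon}$ means $G(x(t_0))=G(x(t_{cut}))=\epsilon$, and the energy relation $H_G=\tfrac12$ then forces $p_x=0$ at these two instants, i.e. $\dot x(t_0)=\dot x(t_{cut})=0$: they are turning points. Consequently, by the reconstruction equations \eqref{eq:ode-y-z}, the velocity $\dot c=\dot x\,\partial_x+G\,\partial_y+GF\,\partial_z$ is purely vertical there, equal to $\epsilon\tilde Y$. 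Writing $G=a+bF$, the equality $G=\epsilon$ at both turning points gives $bF(x(t_0))=bF(x(t_{cut}))$; since a nonconstant oscillating geodesic has $b\neq0$ (if $b=0$ then $G$ is constant and the trajectory degenerates to an abnormal vertical line, which is excluded here), both turning points lie on the common level set $\{F=\kappa\}$ with $\kappa:=F(x(t_0))=F(x(t_{cut}))$.

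Next I would exhibit the Jacobi fields. The time-translation field $\dot c$, being the variation field of the reparametrized family $c(\,\cdot+s)$, is a Jacobi field. Moreover, since each $\varphi_{(y_0,z_0)}$ is an isometry of $\R^{n+2}_F$, its infinitesimal generators $\partial_y$ and $\partial_z$ are Killing fields, so their restrictions to $c$ are Jacobi fields as well. I then set
\[
J:=\dot c-\epsilon\bigl(\partial_y+\kappa\,\partial_z\bigr),
\]
which is again a Jacobi field and is realized explicitly as the variation field of the family of geodesics $c_s:=\varphi_{(-\epsilon s,\,-\epsilon\kappa s)}\bigl(c(\,\cdot+s)\bigr)$, all of which are geodesics because isometries and time-shifts preserve geodesics.

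Finally I would verify the endpoints and the nontriviality by direct substitution into the coordinate expression $J=\bigl(\dot x,\;G-\epsilon,\;GF-\epsilon\kappa\bigr)$. At $t_0$ and at $t_{cut}$ one has simultaneously $\dot x=0$, $G=\epsilon$, and $F=\kappa$, so all three components vanish and $J(t_0)=J(t_{cut})=0$. On the open interval $(t_0,t_{cut})$ the point $x(t)$ lies in the interior of the Hill set, where $|G|<1$, so the $\partial_y$-component $G-\epsilon$ is nowhere zero and hence $J\not\equiv0$. A nontrivial Jacobi field vanishing at $t_0$ and $t_{cut}$ makes $c(t_{cut})$ conjugate to $c(t_0)$, which proves the proposition. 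The only genuinely non-formal points, which I would treat with care, are the purely-vertical structure of $\dot c$ at turning points and the coincidence $F(x(t_0))=F(x(t_{cut}))$ (equivalently $b\neq0$): these are precisely what force the two vertical components of $J$ to vanish at both turning points at once, and they are the crux of the argument, everything else being linearity of the Jacobi equation and substitution.
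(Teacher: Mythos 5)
Your proposal is correct and follows essentially the same route as the paper: both arguments combine the time-translation Jacobi field $\dot c$ with the Killing-field Jacobi fields $\partial_y$ and $\partial_z$ (weighted by the constant value $\kappa=\tfrac{1-a}{b}$ of $F$ on the boundary component) to produce a Jacobi field vanishing at the two turning points. Your write-up merely adds some details the paper leaves implicit, namely the explicit check of nontriviality via $|G|<1$ in the interior of the Hill set and the uniform treatment of $\partial\Omega_G^{\pm}$ through the sign $\epsilon$.
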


\begin{proof}
    We remark that a vector field $X$ is a Killing vector field if the momentum map $P_X$ Poisson commutes with the \sR kinetic energy. We saw that the momentum coordinates $p_y$ and $p_z$ Poisson commute with the \sR kinetic energy. We note that they are the momentum functions of the following vector fields
    $$W_1 = \frac{\partial}{\partial_y}\;\text{and}\;W_2 = \frac{\partial}{\partial_z}.$$
    We will use the result that the restriction of a Killing vector field to a geodesic is a Jacobi vector field. 
    
    Let us consider a geodesic in $\R^{n+2}_F$ for a polynomial $G(x)$ such that $x(t_0)\in \partial \Omega_G^+$. We notice that the value of $F(x)$ is constant when we restrict the function to the subset $\partial \Omega_G^+$, indeed if $x \in \partial \Omega_G^+$ then $G(x) = 1$, so $F(x) = \frac{1-a}{b}$ ($\partial\Omega_G^+$ is not empty if and only if $b\neq 0$). Let us define the following Jacobi vector field
    $$ W(t) = \big( W_1 + \frac{1-a}{b}W_2\big)\big|_{c(t)}.$$
    Using that $p_x(t) = 0$ and $G(x(t))=1$ when $x(t) \in \partial\Omega_G^+$, we note that the Jacobi vector field $\dot{c}(t)- W(t)$ has the following property
    $$ \dot{c}(t) - W(t) = 0,\quad \text{when} \; t=t_0\;\text{and}\;t_{cut}. $$
    Therefore, $c(t_{cut})$ is a conjugate point through the point $c(t_0)$. 
\end{proof}
It is well-known that a geodesic cannot be minimizing after passing through a conjugate point. Therefore, we conclude that a \sR geodesic $c(t)$ cannot be minimizing after touching two times the boundary $\partial\Omega_G^+$ (the same is true for $\partial\Omega_G^-$).

\subsubsection{Cost Map In Magnetic Space}

An essential tool for the sequence method is the cost map. In \cite{bravo2022geodesics,bravo2022metric}, we defined and used the cost map to prove our result concerning the metric lines in the jet space in the context of the magnetic space $\R^3_F$. We generalized this definition for a general magnetic space.
\begin{defi}\label{def:cost-f-time}
Let $(c,\mathcal{T})$ be a pair of a $\R^{n}_{F}$-geodesic $c(t)$  parametrized by arc length, and a time interval $ \mathcal{T} := [t_0,t_1]$. For every pair $(c,\mathcal{T})$, we denote by $\Delta(c,\mathcal{T})$ the change performe by the time $t$, and the coordinates $y$, and $z$ after the geodesic $c(t)$ travel during the time interval $ \mathcal{T}$, then the function $\Delta:(c,\mathcal{T})$ $ \to [0,\infty) \times \R^2$ is given by
\begin{equation}
\begin{split}
\Delta(c,\mathcal{T}) & := (\Delta t (c,\mathcal{T}),\Delta y (c,\mathcal{T}), \Delta z (c,\mathcal{T})) \\
                    & := (t_1 - t_0,y(t_1) - y(t_0)  ,z(t_1) - z(t_0)). \\
\end{split}
\end{equation}
For every pair $(c,\mathcal{T})$, we define the function $Cost:(c,\mathcal{T})$ $ \to [0,\infty) \times \R$ by
\begin{equation}
\begin{split}
Cost(c,\mathcal{T})&:= ( Cost_t(c,\mathcal{T}) , Cost_y(c,\mathcal{T})) \\       
\end{split}
\end{equation} 
where
\begin{equation*}
\begin{split}
Cost_t(c,\mathcal{T}) = & \Delta t(c,\mathcal{T}) - \Delta y(c,\mathcal{T}) \\
Cost_y(c,\mathcal{T}) = &  \Delta y(c,\mathcal{T}) - \Delta z(c,\mathcal{T}). \\
\end{split}
\end{equation*} 
We call $Cost(c,\mathcal{T})$ the cost function of $c(t)$. 
\end{defi}

We interpret $Cost_t(c,\mathcal{T})$ as the cost that it takes for the geodesic $c(t)$ to travel in the direction of the $y$-component in the positive direction. To provide further context for this interpretation, we present the following lemma.
\begin{lemma}[Lemma 21, \cite{bravo2022metric}]\label{lem:cost-fun-int}
Let $c(t)$ and $\tilde{c}(t)\in \R^{n+2}_{F}$ be two geodesics such that they travel from a point $A$ to a point $B$ in the time intervals $\mathcal{T}$ and $\widetilde{\mathcal{T}}$, respectively. If $Cost_t(c,\mathcal{T}) < Cost_t(\tilde{c},\widetilde{\mathcal{T}})$, then the arc length of $c(t)$ is shorter that the arc length of $\tilde{c}(t)$.
\end{lemma}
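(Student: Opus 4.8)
The plan is to relate the arc length of a magnetic-space geodesic directly to the quantity $Cost_t(c,\mathcal{T})$, and then to observe that when two geodesics share the same endpoints $A$ and $B$, all the $\Delta$-data except the time are forced to agree, so that comparing arc lengths reduces to comparing $Cost_t$. First I would recall that the geodesic $c(t)$ is parametrized by arc length, so the energy condition $H_F = \tfrac12$ from \eqref{eq:fund-mag-eq-F} gives $\sum_i p_{x_i}^2 + (p_y + F(x)p_z)^2 = 1$, and the arc length over $\mathcal{T} = [t_0,t_1]$ is simply $\Delta t(c,\mathcal{T}) = t_1 - t_0$. So at the coarsest level, arc length equals $\Delta t$, and the content of the lemma is that this $\Delta t$ can be reorganized into $Cost_t$ plus terms that are pinned down by the endpoints.

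The key step is the observation that, by \eqref{eq:ode-y-z}, the increments $\Delta y$ and $\Delta z$ are themselves the differences of the $y$- and $z$-coordinates, which are ambient coordinates on $\R^{n+2}_F$. Since both $c$ and $\tilde c$ run from the same point $A = (x_A,y_A,z_A)$ to the same point $B = (x_B,y_B,z_B)$, I would conclude
\begin{equation*}
\Delta y(c,\mathcal{T}) = y_B - y_A = \Delta y(\tilde c,\widetilde{\mathcal{T}}), \qquad \Delta z(c,\mathcal{T}) = z_B - z_A = \Delta z(\tilde c,\widetilde{\mathcal{T}}).
\end{equation*}
Thus $\Delta y$ and $\Delta z$ take the same values for both geodesics. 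From the definition $Cost_t(c,\mathcal{T}) = \Delta t(c,\mathcal{T}) - \Delta y(c,\mathcal{T})$, and using that $\Delta y$ is common to both curves, I get
\begin{equation*}
Cost_t(c,\mathcal{T}) - Cost_t(\tilde c,\widetilde{\mathcal{T}}) = \Delta t(c,\mathcal{T}) - \Delta t(\tilde c,\widetilde{\mathcal{T}}).
\end{equation*}
Since the arc length of an arc-length-parametrized geodesic over its time interval equals exactly $\Delta t$, the hypothesis $Cost_t(c,\mathcal{T}) < Cost_t(\tilde c,\widetilde{\mathcal{T}})$ immediately yields $\Delta t(c,\mathcal{T}) < \Delta t(\tilde c,\widetilde{\mathcal{T}})$, i.e. the arc length of $c$ is strictly shorter than that of $\tilde c$.

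The step I expect to be the only real subtlety is verifying that the shared endpoints genuinely force $\Delta y$ and $\Delta z$ to coincide; this rests on $y$ and $z$ being honest global coordinates on $\R^{n+2}_F$ (so that their increments depend only on the endpoints, not on the path), which is exactly the setup from Section \ref{sec:mag-spa}. One should also be a little careful that ``travel from $A$ to $B$'' is interpreted as $c(t_0) = \tilde c(\tilde t_0) = A$ and $c(t_1) = \tilde c(\tilde t_1) = B$, so that the increments are taken over the matching intervals. Once this bookkeeping is in place the argument is purely algebraic and requires no further estimates, since the $Cost_y$ component plays no role in the comparison of arc lengths and the whole content is the cancellation of the common $\Delta y$ term.
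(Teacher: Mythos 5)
Your argument is correct and is essentially the proof given for this lemma in the cited reference \cite{bravo2022metric}: since $y$ and $z$ are global coordinates on $\R^{n+2}_F$, the shared endpoints force $\Delta y$ (and $\Delta z$) to agree, so $Cost_t = \Delta t - \Delta y$ differs between the two curves exactly by the difference of the $\Delta t$'s, which are the arc lengths because both geodesics are parametrized by arc length. No gaps.
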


Let $c(t)$ be a for $G(x)$ and let $\mathcal{T} = [t_0,t_1]$ be a time interval, we can rewrite the maps $\Delta(c,\mathcal{T})$ and $Cost(c,\mathcal{T})$ in terms of $G(x)$:   We denote by $s(\T)$ the arc length of the curve $x(t)$ in $\Ho$, that is,
$$ s(\T)  := \int_{t_0}^{t_1} \sqrt{ p_{x_1}^2(t)+ \dots + p_{x_n}^2(t)} dt = \int_{t_0}^{t_1} \sqrt{ 1-G^2(x(t)) } dt. $$
%\end{defi}
Because $s(t) := s([t_0,t])$ is a monotone-increasing function, we can consider the inverse function $t(s)$ and parametrize $x$ in terms of $s$. We use this fact to express the map $\Delta(c,\mathcal{T})$ and $Cost(c,\mathcal{T})$ in terms of $G(x)$.

\begin{Prop}\label{prop:mag-geo-Delta-C}
Let $(c,\mathcal{T})$ be a pair where $c(t)\in \R^{n+2}_{F}$ is a geodesic for $G(x)\in Pen_{F}$, and $\mathcal{T}$ is a time interval. If $x$ is parametrized by $s$, then we can rewrite the map $\Delta(c,\mathcal{T})$ from Definition \ref{def:cost-f-time} in terms of $G(x)$ and the arc length $\mathcal{S} := [0,s(\mathcal{T})]$ as follows:
\begin{equation*}
\begin{split}
\Delta(c,\mathcal{T}) =  ( \int_\mathcal{S} \frac{ds}{\sqrt{1-G^2(x(s))}}, \int_\mathcal{S}  \frac{G(x(s)) ds}{\sqrt{1-G^2(x(s))}}, \int_\mathcal{S}  \frac{G(x(s)) F(x(s)) ds}{\sqrt{1-G^2(x(s))}} )
\end{split}
\end{equation*}
In the same way, we can rewrite the map $Cost(c,\mathcal{T})$ from Definition \ref{def:cost-f-time} as follows:
\begin{equation*}
\begin{split}
Cost(c,\mathcal{T})  &  =  ( \int_\mathcal{S} \frac{1- G(x(s))}{\sqrt{1-G^2(x(s))}} ds,  \int_\mathcal{S} \frac{ G(x(s))(1-F(x(s)))}{\sqrt{1-G^2(x(s))}} ds).\\
\end{split}
\end{equation*}
\end{Prop}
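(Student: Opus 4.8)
The plan is to reduce the entire statement to the fundamental theorem of calculus together with the single change of variables $t \mapsto s$ that is already set up immediately before the statement. First I would write the three components of $\Delta(c,\mathcal{T})$ as integrals over the time interval $\mathcal{T} = [t_0,t_1]$. The component $\Delta t$ is trivially $\int_{t_0}^{t_1} dt$. For the other two I would integrate the defining ODEs \eqref{eq:ode-y-z}, namely $\dot y = G(x(t))$ and $\dot z = G(x(t))F(x(t))$, and use Definition \ref{def:cost-f-time}, obtaining $\Delta y = \int_{t_0}^{t_1} G(x(t))\, dt$ and $\Delta z = \int_{t_0}^{t_1} G(x(t))F(x(t))\, dt$.

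The key ingredient is the speed of the reduced curve. Since $c(t)$ is parametrized by arc length, its reduced data lies on the energy level $H_G = \tfrac12$ of the reduced Hamiltonian \eqref{eq:red-hal-mag-spa}, which gives $\sum_i p_{x_i}^2 = 1 - G^2(x(t))$. Hence $\tfrac{ds}{dt} = \sqrt{\sum_i p_{x_i}^2} = \sqrt{1 - G^2(x(t))}$, equivalently $dt = \tfrac{ds}{\sqrt{1 - G^2(x(s))}}$; this is exactly the arc-length identity recorded just above the statement. I would substitute this relation into each of the three $t$-integrals. Because $s(t)$ is monotone-increasing (as noted before the statement), the map $t \mapsto s$ is a bijection of $[t_0,t_1]$ onto $\mathcal{S} = [0,s(\mathcal{T})]$, so $x(s) := x(t(s))$ is well defined and the substitution yields precisely the three claimed integrals for $\Delta(c,\mathcal{T})$.

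Finally, the two components of $Cost(c,\mathcal{T})$ follow by subtraction directly from Definition \ref{def:cost-f-time}. Pulling the common denominator out of each difference, $Cost_t = \Delta t - \Delta y$ collapses the first two integrals into $\int_{\mathcal{S}} \tfrac{1 - G(x(s))}{\sqrt{1 - G^2(x(s))}}\, ds$, and $Cost_y = \Delta y - \Delta z$ collapses the last two into $\int_{\mathcal{S}} \tfrac{G(x(s))(1 - F(x(s)))}{\sqrt{1 - G^2(x(s))}}\, ds$, matching the asserted formulas.

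The only genuine subtlety, and the step I would treat most carefully, is the legitimacy of the change of variables near the turning points of the reduced dynamics, where $x(s)$ meets the boundary $\partial\Omega_G$ and the denominator $\sqrt{1 - G^2}$ vanishes, so the $s$-integrals are improper there. The point to make is that the original $t$-integrals carry no singularity at all: their integrands $1$, $G(x(t))$, and $G(x(t))F(x(t))$ are smooth and bounded on the compact interval $\mathcal{T}$, so the substituted improper integrals converge and equal the finite $t$-integrals, the square-root singularity of $\tfrac{1}{\sqrt{1-G^2}}$ being cancelled against $ds = \sqrt{1-G^2}\, dt$. I would also note that the strict monotonicity of $s(t)$ across an isolated turning point, where $\dot x$ vanishes only instantaneously, is what guarantees the inverse $t(s)$ exists globally on $\mathcal{T}$, so no subdivision of the interval into separately monotone pieces is required.
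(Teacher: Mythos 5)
Your proof is correct and follows essentially the same route as the paper: integrate the defining ODEs $\dot y = G(x(t))$, $\dot z = G(x(t))F(x(t))$ over $\mathcal{T}$, substitute $ds = \sqrt{1-G^2(x(s))}\,dt$ coming from the energy level $H_G = \tfrac12$, and obtain the $Cost$ components by subtraction. Your additional discussion of the improper integrals at the turning points, where $\sqrt{1-G^2}$ vanishes, is a point the paper's one-line proof passes over silently, and it is a worthwhile clarification rather than a deviation in method.
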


\begin{proof}
Set the change of variable  $t = t(s)$, then $ds = \sqrt{ 1-G^2(x(s)) } dt$, this last expression yield the formula for $\Delta t(c,\mathcal{T})$. For $\Delta y(c,\mathcal{T})$ and $\Delta z(c,\mathcal{T})$, set up the same change of variable and use the expression $\dot{y} = G(x(t))$ and $\dot{z} = G(x(t))F(x(t))$. For $Cos_t(c,\mathcal{T})$ and $Cos_y(c,\mathcal{T})$ consider the difference of the previous expressions. 
\end{proof}

We notice if $\varphi_{(y_0,z_0)}$ is the translation in $\R^{n+2}_F$ mentioned above, then $\Delta(c,\T)$ is invariant under the action of $\varphi_{(y_0,z_0)}$, so does $Cost(c,\T)$, i.e., for all geodesic $c(t)$ and all time interval $\T$ it follows that 
$$\Delta(c,\T)= \Delta(\varphi_{(y_0,z_0)}(c),\T)\;\;\text{and} \;\; Cost(c,\T) = Cost(\varphi_{(y_0,z_0)}(c),\T).$$
Therefore, the maps $\Delta(c,\T)$ and $Cost(c,\T)$ are invariant under the action of a non-trivial subgroup of $Iso^(\R^{n+2})$. Moreover, isometries map metric lines in metric lines, and then, to classify metric lines, it is enough to study geodesics up to isometries; this inspired the following definitions. 
\begin{defi} 
Let us denote by $Iso^*(\R^{n+2})$ the subgroup of isometries leaving invariant the map $\Delta(c,\T)$. We say that two geodesics $c_1(t)$ and $c_2(t) \in \R^{n+2}_{F}$ are related if and only if there exists an isometry $\varphi(x,y,z)$ in $Iso^*(\R^{n+2})$ such that $c_1(t) = \varphi (c_2(t))$.  We denote by $[c]$ the class of geodesics induced by the equivalence relationship. 
\end{defi}

We notice that if $[c]$ is a class of geodesics, then all the representatives are geodesics corresponding to the same polynomial $G(x)$ in $Pen_F$. Moreover, the equivalence relationship on the space of geodesics induces a class in the space of solutions $x(t)$ of the reduced system. Therefore,  every class $[c]$ corresponds to a unique pair $(G,[x])$. So, if $c(t)$ is a homoclinic geodesic, then all the representatives of $[c]$ are homoclinic geodesics with the same homoclinic orbit, and the same statement holds for heteroclinic geodesics. 

\begin{defi}\label{def:pair-polG-orbit}
Let $x_0 \in \Ho$ be a point. By $Homc(x_0)$, we denote the set of equivalence classes $[c]$, whose reduced dynamics have a homoclinic orbit asymptotically to $x_0$. $Homc(x_0)$ is the disjoint union of two sets, namely 
$$Homc^{\pm}(x_0) := \{ (G,[x]) : G(x_0) = \pm 1 \}.$$
If the magnetic space $\R^{n+2}_F$ is defined by a polynomial $F(x)$ with a unique critical point, we omitted $x_0$ in the notation.

(2) Let $x_0,x_1 \in \Ho$ be two points. We denote by $Hetc(x_0,x_1)$ the set of all the classes of equivalence $[c]$, whose reduced dynamics have a heteroclinic orbit asymptotically to $x_0$ and $x_1$ when $t \to \mp \infty$ respectively. $Hetc(x_0,x_1)$ is the union of two disjoint sets, namely $$Hetc^{\pm}(x_0,x_1) := \{ (G,[x]) : G(x_0) = \pm 1\;\;\text{and}\;\; G(x_1) = \pm 1 \}.$$
\end{defi}

To determine a class $[c]$ is enough to provide a polynomial $G(x)$ and a homoclinic orbit $x([-\infty,\infty])$. In Section \ref{sub-sec:proof-theo-main-1}, we will use this fact to characterize the space $Homc^{\pm}(x_0)$.

\begin{defi}
%For every class 
We define the period map  $\Theta: Homc(x_0) \to \R$ by  
\begin{equation}
\begin{split}
\Theta[c] & :=  (\int_{\mathcal{S}_{\infty}}  \frac{1- G(x(s))}{\sqrt{1-G^2(x(s))}} ds , \int_{\mathcal{S}_{\infty}} \frac{ G(x(s))(1-F(x(s)))}{\sqrt{1-G^2(x(s))}} ds),
\end{split}
\end{equation} 
where $\mathcal{S}_{\infty} := s([-\infty,\infty])$ is the arc length of the homoclinic orbit. 
\end{defi}

If $[c]\in Homc^-(x_0)$, then $\Theta_1([c]) = \infty$. The following lemma is essential for proving the sequence method.

\begin{lemma}\label{lem:uniform-bound-c-s}
Let $\R^{n+2}_F$ be a \sR magnetic space, and let $c(t)$ be a geodesic in $\R^{n+2}_F$ for the polynomial $F(x)$ such that is $\Theta_1[c]$ is finite, then $Cost(c_h,\mathcal{T})$ is uniformly bounded in the sup norm by $\Theta_1[c]$.
\end{lemma}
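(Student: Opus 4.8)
The plan is to reduce the statement to the explicit arc-length integral formulas for the cost given in Proposition~\ref{prop:mag-geo-Delta-C} and to bound each component of $Cost(c,\T)$ by the integral of one and the same nonnegative density taken over the \emph{entire} homoclinic orbit, which is precisely $\Theta_1[c]$. Since $c(t)$ is a geodesic for the polynomial $F(x)$ itself, the Corollary following the proposition in Section~\ref{sub-sec:geo-mag} places us in the pencil $(a,b)=(0,1)$, so $G=F$ throughout. Writing $\rho(x):=\frac{1-F(x)}{\sqrt{1-F^2(x)}}$, Proposition~\ref{prop:mag-geo-Delta-C} then reads
\[
Cost_t(c,\T)=\int_{\mathcal{S}}\rho(x(s))\,ds,\qquad
Cost_y(c,\T)=\int_{\mathcal{S}}F(x(s))\,\rho(x(s))\,ds,\qquad
\Theta_1[c]=\int_{\mathcal{S}_\infty}\rho(x(s))\,ds.
\]

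First I would record two pointwise facts that drive the estimate. On the Hill set one has $|F(x)|\le 1$, hence $1-F\ge 0$ and $1-F^2\ge 0$, so the density $\rho$ is nonnegative; moreover $|F(x)\rho(x)|=|F(x)|\,\rho(x)\le\rho(x)$, again by $|F|\le 1$. Thus the $Cost_t$-integrand equals $\rho$ and the $Cost_y$-integrand is dominated in absolute value by $\rho$. Next I would make the comparison of integration domains precise: because $c$ is homoclinic its reduced orbit $x(t)$ is defined for all $t\in(-\infty,\infty)$, any window $\T=[t_0,t_1]$ is a subinterval of $(-\infty,\infty)$, and as the arc length $s(t)$ is monotone the portion of the orbit traversed during $\T$ is a sub-arc of the full homoclinic orbit. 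Consequently $\int_{\mathcal{S}}\rho\,ds$ is the integral of the nonnegative density $\rho$ over a sub-arc and is therefore at most its integral over the whole orbit, namely $\Theta_1[c]$.

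With these two observations the bound is immediate. For $Cost_t$, nonnegativity of $\rho$ together with the sub-arc comparison gives $0\le Cost_t(c,\T)\le\Theta_1[c]$. For $Cost_y$, the domination $|F\rho|\le\rho$ and the same comparison give $|Cost_y(c,\T)|\le\int_{\mathcal{S}}\rho\,ds\le\Theta_1[c]$. Taking the larger of the two absolute values yields $\|Cost(c,\T)\|_{\sup}\le\Theta_1[c]$ for every time interval $\T$, which is the asserted uniform bound.

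The estimate itself is routine once the formulas are in place; the step I would treat as the main obstacle is the domain comparison and, with it, the exact role of the hypothesis. One must verify that integrating the nonnegative density over the finite window never exceeds its integral over $\mathcal{S}_\infty$, and this is meaningful only because $\Theta_1[c]$ is assumed finite (equivalently $[c]\in Homc^+(x_0)$): finiteness guarantees the dominating integral $\int_{\mathcal{S}_\infty}\rho\,ds$ converges, so the sup-norm bound is by a genuine finite constant rather than a vacuous $+\infty$. I would also note for completeness that if $[c]\in Homc^-(x_0)$ then $\Theta_1[c]=\infty$ and the statement is vacuous, consistent with the remark preceding the lemma.
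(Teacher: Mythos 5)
Your proof is correct and follows essentially the same route as the paper's: both apply the integral formulas of Proposition~\ref{prop:mag-geo-Delta-C} with $G=F$, use $|F|\le 1$ to get $|Cost_y(c,\T)|\le Cost_t(c,\T)$, and bound $Cost_t(c,\T)$ by the integral of the same nonnegative density over the full homoclinic orbit, i.e.\ by $\Theta_1[c]$. Your additional remarks on the sub-arc domain comparison and on why finiteness of $\Theta_1[c]$ is the operative hypothesis are a useful elaboration of steps the paper leaves implicit, but they do not change the argument.
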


\begin{proof}
Using Proposition \ref{prop:mag-geo-Delta-C} and $|F(x(s))| \leq 1$ for all $s$ in $\mathcal{S}_{\infty}$, we find that
\begin{equation*}
| Cost_y(c_h,\mathcal{T}) | \leq Cos_t(c_h,\mathcal{T}) \leq  \int_{\mathcal{S}_{\infty}} \sqrt{\frac{1-F(x(s))}{1+F(x(s))}}ds =: \Theta_1[c_h]     .    
\end{equation*}
%Therefore, equation \eqref{eq:cots-fuc-cond} implies that $Cost(c_h,\mathcal{T})$.
\end{proof}

\subsubsection{Sequence of geodesics}

Let us present two classical results on metric spaces and two essential definitions for the sequence method.

\begin{Prop}[Proposition 2.26, \cite{bravo2022metric}]\label{prp:seque-comp}
Let $M$ be a \sR manifold, let $K\subset M$ be a compact subset, and let  $\mathcal{T}$ be a compact time interval. Consider the following set of geodesics
\begin{equation*}
\begin{split}  Min(K,\mathcal{T}) := \big\{ & \R^{n+2}_{F}\text{-geodesics}\; c(t) :   c(\mathcal{T}) \subset K  \text{ and}\;c(t) \text{ is minimizing in} \; \mathcal{T} \big\}. 
\end{split}
\end{equation*}
 Then,  $Min(K,\mathcal{T})$ is a sequentially compact set with respect to the uniform topology. 
\end{Prop}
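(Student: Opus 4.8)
The plan is to apply the Arzelà–Ascoli theorem, the standard tool for compactness of curve families, and then verify that the limit curve inherits the three defining properties of $Min(K,\mathcal{T})$: staying in $K$, being parametrized by arc length, and being minimizing on $\mathcal{T}$.

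First I would establish equicontinuity. Since every $c(t)\in Min(K,\mathcal{T})$ is parametrized by arc length, it is $1$-Lipschitz for the \sR distance: $dist_{\R^{n+2}_F}(c(s),c(t))\le |s-t|$ for all $s,t\in\mathcal{T}$. Because on the compact set $K$ the \sR distance dominates the ambient Euclidean distance of $\R^{n+2}$ (the two induce the same topology, e.g. by the Ball–Box estimate), the family is uniformly equicontinuous in the Euclidean sense, and uniform boundedness is immediate from $c(\mathcal{T})\subset K$ with $K$ compact. Arzelà–Ascoli then yields, for any sequence $c_m(t)$ in $Min(K,\mathcal{T})$, a subsequence converging uniformly on $\mathcal{T}$ to a limit curve $c_\infty(t)$.

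Next I would check that $c_\infty$ belongs to $Min(K,\mathcal{T})$. Since $K$ is closed, $c_\infty(\mathcal{T})\subset K$. The curve $c_\infty$ is horizontal: it is $1$-Lipschitz, hence absolutely continuous, and the uniform limit of horizontal curves of uniformly bounded length is horizontal, which follows from lower semicontinuity of the \sR length together with a weak-$*$ limit argument forcing $\dot c_\infty$ to be tangent to $\Di_F$ almost everywhere. To see that $c_\infty$ is minimizing and unit-speed, fix any $[a,b]\subseteq\mathcal{T}$. Each $c_m$ is minimizing and parametrized by arc length, so $dist_{\R^{n+2}_F}(c_m(a),c_m(b))=b-a$; continuity of the \sR distance then gives $dist_{\R^{n+2}_F}(c_\infty(a),c_\infty(b))=b-a$. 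Lower semicontinuity of \sR length yields $\mathrm{length}(c_\infty|_{[a,b]})\le \liminf_m \mathrm{length}(c_m|_{[a,b]})=b-a$, while the general bound $\mathrm{length}\ge dist$ supplies the reverse inequality. Hence $\mathrm{length}(c_\infty|_{[a,b]})=b-a=dist_{\R^{n+2}_F}(c_\infty(a),c_\infty(b))$, so $c_\infty$ is a minimizing geodesic parametrized by arc length on $\mathcal{T}$.

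The main obstacle is the passage to the limit: guaranteeing that the uniform limit of horizontal minimizing curves is again horizontal and minimizing, rather than merely a Lipschitz curve. This is precisely where lower semicontinuity of the \sR length functional and continuity of the \sR distance carry the argument; by contrast, the equicontinuity and boundedness needed to invoke Arzelà–Ascoli are routine consequences of the unit-speed parametrization and the compactness of $K$.
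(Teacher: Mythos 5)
Your proof is correct and is the standard argument for this statement; the paper itself does not reprove the proposition but imports it as Proposition 2.26 of \cite{bravo2022metric}, where the proof follows the same route you take: Arzel\`a--Ascoli applied to the $1$-Lipschitz family with values in the compact set $K$, followed by lower semicontinuity of the sub-Riemannian length and continuity of the sub-Riemannian distance to show the uniform limit is again a horizontal, arc-length-parametrized minimizer.
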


% \begin{proof}
% Let $c_n(t)$ be a sequence in $Min(K,\mathcal{T})$, we must prove $c_n(t)$ has a uniformly convergent subsequence converging to $c(t)$ in $Min(K,\mathcal{T})$.
% The space of geodesics $Min(K,\mathcal{T})$ is uniformly bounded and equi-Lipschitz because it consists of minimizing geodesics. By Arzela-Ascoli theorem, every sequence $c_n(t)$ in $Min(K,\mathcal{T})$ has a convergent subsequence $c_{n_j}(t)$ converging uniformly to a smooth curve $c(t)$. Let $[t_0,t_1] \subset \mathcal{T}$, since $c_{n_j}(t)$ is sequence of minimizing geodesic then $dist_{M}(c_{n_j}(t_0),c_{n_j}(t_1)) = |t_1 - t_0|$ for all $n$. By the uniform convergence, if $n_j \to \infty$ then $dist_{M}(c(t_0),c(t_1)) = |t_1 - t_0|$.
% \end{proof}

The following lemma is the last essential tool for the sequence method.
\begin{lemma}\label{lem:iso-metr}
Let $c_1(t)$ and $c_2(t)\in\R^{n+2}_{F}$ be two geodesics where $c_1(t)$ in $Min(K,\mathcal{T})$. If $\varphi(x,y,z)$ is an isometry such that $c_2(\mathcal{T}') \subset \varphi(c_1(\mathcal{T}))$, then $c_2(t) \in Min(\varphi(K),\mathcal{T}')$. 
\end{lemma}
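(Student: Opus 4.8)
The plan is to reduce the claim to two standard facts: an isometry sends minimizing geodesics to minimizing geodesics, and the restriction of a minimizing geodesic to a subinterval is again minimizing. By the definition of the set $Min(\,\cdot\,,\,\cdot\,)$ in Proposition~\ref{prp:seque-comp}, the membership $c_2\in Min(\varphi(K),\mathcal{T}')$ amounts to two requirements: that $c_2(\mathcal{T}')\subset\varphi(K)$, and that $c_2$ be minimizing on $\mathcal{T}'$. The first is immediate, since by hypothesis $c_2(\mathcal{T}')\subset\varphi(c_1(\mathcal{T}))$ and, because $c_1\in Min(K,\mathcal{T})$, we have $c_1(\mathcal{T})\subset K$; hence $c_2(\mathcal{T}')\subset\varphi(c_1(\mathcal{T}))\subset\varphi(K)$.

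For the minimizing requirement, first I would note that $\varphi\circ c_1$ is again a geodesic in $\R^{n+2}_F$, because $\varphi\in Iso(\R^{n+2}_F)$ preserves both the distribution $\Di_F$ and the \sR metric, and therefore carries solutions of the geodesic flow of $H_F$ to solutions. Since $\varphi$ preserves arc length and the distance $dist_{\R^{n+2}_F}$, and $c_1$ realizes $dist_{\R^{n+2}_F}(c_1(t_0),c_1(t_1))$ as the length of $c_1|_{\mathcal{T}}$ on $\mathcal{T}=[t_0,t_1]$, the image $\varphi\circ c_1$ realizes the distance between its endpoints as its own length on $\mathcal{T}$; that is, $\varphi\circ c_1$ is minimizing on $\mathcal{T}$, with image $\varphi(c_1(\mathcal{T}))\subset\varphi(K)$.

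The next step is to identify $c_2|_{\mathcal{T}'}$ as a sub-arc of the minimizing geodesic $\varphi\circ c_1|_{\mathcal{T}}$. Both are arc-length parametrized geodesics, and over the nondegenerate interval $\mathcal{T}'$ the inclusion $c_2(\mathcal{T}')\subset\varphi(c_1(\mathcal{T}))$ forces their images to overlap along a set of positive length. Because an arc-length parametrized geodesic in $\R^{n+2}_F$ is the base projection of an integral curve of the Hamiltonian flow of $H_F$ and is thus determined by a single point together with its lifted covector in $T^*\R^{n+2}_F$, this overlap forces $c_2$ to coincide, up to a time translation and a choice of orientation, with $\varphi\circ c_1$ on a subinterval $\mathcal{T}''\subset\mathcal{T}$. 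Hence $c_2|_{\mathcal{T}'}$ is a restriction of the minimizing geodesic $\varphi\circ c_1$ to a subinterval, so it is minimizing; together with $c_2(\mathcal{T}')\subset\varphi(K)$ this gives $c_2\in Min(\varphi(K),\mathcal{T}')$.

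I expect the rigidity step to be the main obstacle: passing from the set inclusion $c_2(\mathcal{T}')\subset\varphi(c_1(\mathcal{T}))$ to the parametrized statement that $c_2|_{\mathcal{T}'}$ is an honest sub-arc of $\varphi\circ c_1$. The delicate point is that, a priori, two distinct geodesics could share part of their images; what must be verified is that a shared point carrying a shared unit horizontal velocity (equivalently, a shared lifted covector) propagates under the geodesic flow, so that overlapping images force agreement as parametrized curves on the common interval. Once this is secured, the isometry invariance of minimizers and the minimality of sub-arcs are routine, and the conclusion follows.
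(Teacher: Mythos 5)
The paper states this lemma without proof, presenting it (together with Proposition \ref{prp:seque-comp}) as one of ``two classical results on metric spaces,'' so there is no in-paper argument to compare against; your job here is to supply the standard proof, and your proposal essentially does so correctly. The decomposition is right: the containment $c_2(\mathcal{T}')\subset\varphi(c_1(\mathcal{T}))\subset\varphi(K)$ is immediate, isometries carry minimizers to minimizers, and sub-arcs of minimizers are minimizers, so everything reduces to the rigidity step you flag. One caution on how you justify that step: your parenthetical ``a shared unit horizontal velocity (equivalently, a shared lifted covector)'' is not an equivalence in sub-Riemannian geometry --- a point together with a horizontal velocity does \emph{not} determine the covector (only its restriction to the distribution), and distinct normal geodesics can share a point and an initial velocity. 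So you cannot pass from overlapping images to agreement of the Hamiltonian lifts quite as stated. Fortunately the lemma does not need that: you only need $c_2|_{\mathcal{T}'}$ to be a unit-speed monotone reparametrization of a sub-arc of $\varphi\circ c_1$. Writing $c_2(t)=\varphi(c_1(\sigma(t)))$, the fact that both curves are parametrized by arc length forces $|\dot\sigma|=1$ almost everywhere, and continuity of $\dot c_2$ (geodesics here are $C^1$) forces $\sigma(t)=\pm t+\mathrm{const}$ on $\mathcal{T}'$; minimality of $c_2|_{\mathcal{T}'}$ then follows from minimality of $\varphi\circ c_1$ on $\mathcal{T}$ without any appeal to uniqueness of the geodesic flow. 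With that repair the argument is complete.
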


The first definition gives a condition for a sequence of geodesics to be bounded in $\Ho$.
\begin{defi}\label{def:geo-comp-def}
Let $\Com(r,x_0,C)$ be the set of pairs $(c,\mathcal{T})$ satifying the following conditions:
\begin{enumerate}
\item $c(t)$ is a minimizing geodesic in $\mathcal{T}$.

\item $Cost(c,\mathcal{T})$ is uniformly bounded by $C$ with respect to the supremum norm.

\item $x(t)\in B_{\Ho}(r,x_0)$ for all $t$ in $\partial \mathcal{T}$, where $B_{\Ho}(r,x_0)$ is the closed ball on $\Ho$ with radius $r$ and center at $x_0$, and $\partial \mathcal{T}$ is the boundary of $\mathcal{T}$.
\end{enumerate}

We say that a region $B_{\Ho}(r,x_0) \times \R^2$ is geodesically compact if  for every sequence $\{(c_n,\mathcal{T}_n)\}_{n\in \mathbb{N}} \in Com(r,x_0,C)$ satisfying 
$$\lim_{n \to \infty} \mathcal{T}_n = [-\infty,\infty],$$
there exists a constant $S_0$ with the property that  $s(\mathcal{T}_n) < S_0$ for all $n$. Consequently, there exists compact subset $K_{\Ho}$ of $\Ho$ such that $x_n(\mathcal{T}_n) \subset K_{\Ho}$ for all $n$.

We say a magnetic space is geodesically compact if the region $B_{\Ho}(r,x_0) \times \R^2$ is geodesically compact for every $r$. 
\end{defi}

\begin{lemma}\label{lem:com-converge}
    If $\{(c_n,\mathcal{T}_n)\}_{n\in \mathbb{N}} \in Com(r,x_0,C)$ converges to $(c,\mathcal{T})$ where $\mathcal{T} = [-\infty,\infty]$, then
    $$\lim_{t \to \pm \infty}G((x(t)) = 1, \;\text{and}\;\lim_{t \to \pm \infty}F((x(t)) = 1.$$
\end{lemma}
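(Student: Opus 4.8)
The plan is to exploit the nonnegativity of the time-cost together with a Barbalat-type argument. First I would rewrite the first cost component in the time parametrization as $Cost_t(c_n,\mathcal{T}_n) = \Delta t - \Delta y = \int_{\mathcal{T}_n}\bigl(1 - G_n(x_n(t))\bigr)\,dt$, whose integrand is nonnegative because the energy relation $|p_x|^2 = 1 - G_n^2(x) \ge 0$ forces $G_n(x_n(t)) \le 1$. Condition (2) of Definition \ref{def:geo-comp-def} bounds this integral by $C$ for every $n$. Since $(c_n,\mathcal{T}_n)\to(c,\mathcal{T})$ uniformly and the associated polynomials converge in $Pen_F$ to the polynomial $G$ of the limit geodesic, on any fixed compact $[a,b]$ eventually contained in $\mathcal{T}_n$ the nonnegative integrands converge uniformly, so $\int_a^b\!\bigl(1 - G(x(t))\bigr)\,dt = \lim_n \int_a^b\!\bigl(1 - G_n(x_n(t))\bigr)\,dt \le \liminf_n Cost_t(c_n,\mathcal{T}_n) \le C$; letting $[a,b]\uparrow\R$ yields $\int_{\R}\bigl(1 - G(x(t))\bigr)\,dt \le C < \infty$. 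Thus $t \mapsto 1 - G(x(t))$ is a nonnegative integrable function on the whole line.

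Next I would show that $t \mapsto G(x(t))$ is uniformly continuous along the limit geodesic. Differentiating, $\tfrac{d}{dt}G(x(t)) = \langle \nabla G(x(t)), p_x(t)\rangle$, and the energy constraint gives $|p_x(t)| = \sqrt{1 - G^2(x(t))} \le 1$. The orbit is confined to the Hill set $\Omega_G = \{G^2 \le 1\}$, and I would argue that the portion of $\Omega_G$ swept out by $x(t)$ is bounded, so the polynomial gradient $\nabla G$ is bounded there by some constant $M$; hence $|\tfrac{d}{dt}G(x(t))| \le M$ and $G(x(t))$ is Lipschitz, in particular uniformly continuous. With a nonnegative, integrable, uniformly continuous integrand, Barbalat's lemma gives $1 - G(x(t)) \to 0$, i.e. $\lim_{t\to\pm\infty} G(x(t)) = 1$.

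To upgrade this to $F$, I would use that $c$ corresponds to $G = a + bF \in Pen_F$: when $b \ne 0$, the convergence $G(x(t)) \to 1$ forces $F(x(t)) \to (1-a)/b$, and the identification $(1-a)/b = 1$ follows from the second cost component. Concretely, the estimate $|Cost_y|\le Cost_t$ from Lemma \ref{lem:uniform-bound-c-s} (or a direct Fatou bound on $Cost_y(c,\mathcal{T}) = \int_{\R} G(x(t))\bigl(1 - F(x(t))\bigr)\,dt$) shows this quantity is finite; combined with $G(x(t)) \to 1 > 0$ it forces $1 - F(x(t)) \to 0$, giving $\lim_{t\to\pm\infty} F(x(t)) = 1$. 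In the case of immediate interest, where $c$ is a geodesic for $F$ itself so that $G = F$, the two conclusions coincide.

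I expect the main obstacle to be the second step: justifying that $x(t)$ remains in a region where $\nabla G$ is uniformly bounded, i.e. that the limiting orbit does not escape to infinity in $\Ho$. I would control this using the energy confinement to $\Omega_G$ together with the finite total cost $\int_{\R}\bigl(1-G(x(t))\bigr)\,dt < \infty$, since an escaping orbit would have to spend infinite time far from the level set $\{G=1\}$ and thus accumulate infinite cost; and, if needed, I would invoke the conjugate-point restriction of Proposition \ref{prop:conj-point}, which prevents the minimizing limit $c$ from touching $\partial\Omega_G^{+}$ (or $\partial\Omega_G^{-}$) more than once and hence pins down the monotone asymptotic approach to the boundary.
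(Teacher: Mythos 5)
Your proof is correct and rests on the same engine as the paper's: the uniform bound $C$ on $Cost_t(c_n,\mathcal{T}_n)$ passes to the limit and makes $t\mapsto 1-G(x(t))$ a nonnegative integrable function on all of $\R$, which then forces $G(x(t))\to 1$. Where you diverge is in how the pointwise limit is extracted from integrability. The paper invokes Proposition \ref{prop:conj-point} to confine the set where $G(x(t))=1$ to a single interior time $t^*$ and then argues by contradiction under the hypothesis that $1-G(x(t))\ge\epsilon$ off a single $\delta$-neighborhood of $t^*$; strictly speaking that hypothesis is not the negation of the conclusion (the negation only yields a sequence $t_k\to\pm\infty$ with $1-G(x(t_k))\ge\epsilon$), so the paper's contradiction implicitly needs exactly the equicontinuity input you make explicit. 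Your Barbalat-type step --- a Lipschitz bound on $t \mapsto G(x(t))$ from $|p_x|\le 1$ and $\nabla G$ bounded on the compact set $K_{\Ho}$ furnished by geodesic compactness of the region (which is the hypothesis in force wherever this lemma is applied) --- closes that gap and dispenses with the conjugate-point proposition altogether. For the $F$-statement both you and the paper lean on the uniform bound on $Cost_y$; your route through the pencil relation $G=a+bF$ is the cleaner one, since $G(x(t))\to 1$ already gives existence of $\lim_{t\to\pm\infty} F(x(t))=(1-a)/b$ when $b\neq 0$, and a limit different from $1$ would make the integrand $G(1-F)$ eventually bounded away from zero with a fixed sign at both ends (the two limits coincide), contradicting the finiteness of the $y$-cost; this sidesteps the sign-cancellation issue that a bare Fatou argument on $Cost_y$ would face. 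You should dispose of the degenerate case $b=0$ separately: there $G\equiv a=1$, the geodesic is vertical, $F(x(t))$ is constant, and the same $Cost_y$ bound forces that constant to equal $1$.
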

\begin{proof}
    Proposition \ref{prop:conj-point} implies that $x(t)$ only touches the boundary $\Omega^+$ at most once in the interior of $\mathcal{T}$. Let us assume that $t^* \in (-\infty,\infty)$ is such that $G(x(t^*)) = 1$ since $G(x(t^*))$ is a continuous function for every $\epsilon>0$ we can find a $\delta>0$ with the property that $1-G(x(t))< \epsilon$ if $t \in (t^*-\delta,t^*+\delta)$. Let us proceed by contradiction: let us assume that there exists a $\epsilon>0$ such that $\epsilon< 1-G(x(t))$ if $t \in \T \setminus (t^*-\delta,t^*+\delta)$, then for big enough $n$ we have 
    $$ 2\epsilon (n-\delta) < \int_{\T\cap[-n,n]}(1-G(x(t)))dt < CosT_t(c,\T). $$
    Therefore, 
    $$ \lim_{n \to \infty}Cost_t(c,\T) = \infty,\;\text{ so we concluded that}\; \lim_{t \to \pm \infty}G((x(t)) = 1 .$$
    By a similar contradiction proof and using that $Cost_y(c_n,\T_n)$ is uniformly bounded, we show that $F((x(t)) \to \infty$ when $t \to \pm \infty$.
\end{proof}

The second definition helps to associate a polynomial $G_n(x)$ to a sequence of geodesics $c_n(t)$.

\begin{defi}
We say a sequence of geodesics $c_n(t)$ is strictly normal if $c_n(t)$ is a normal geodesic for all $n$ and every convergent subsequence converges to a normal geodesic.  
\end{defi}

\section{Metric lines and the Sequence method}\label{sec:sequ-meth}

This section shows Proposition \ref{prop:met-char}. In addition, the section presents and proves the sequence method, summarized in Theorem \ref{the:metrtic-lines-method}.

\subsection{Characterization of metric lines}

Before proving Proposition \ref{prop:met-char}, let us introduce the Carnot dilation and the blowdown of a curve. They are two essential definitions for the proof of Proposition \ref{prop:met-char}.

Carnot groups have the property of admitting dilatations \cite{montgomery2002tour,agrachev2019comprehensive}. The dilatation is a one-parameter group of automorphism of $\G$, denote by $\delta_u: \G \to \G$ and with $u$ in $\R \setminus \{0\}$. The dilatation is compatible with the metric, that is $dist_{\G}(\delta_u g_1,\delta_u g_2) = |u| dis_{\G}( g_1, g_2)$. If $u \neq 0$ and $\gamma(t)$ is a \sR geodesic parametrized by arc length, so is $\gamma_u(t)$, where 
$$  \gamma_u(t) := \delta_{\frac{1}{u}} \gamma(ut) . $$
 The following lemma says that the dilatation of a metric line is a metric line.
\begin{lemma}[Lemma 7, \cite{bravo2022metric}]\label{lem:carnor-dil}
If $\gamma(t)$ is a metric line in a Carnot group $\mathbb{G}$, then $\gamma_u(t)$ is a metric line in $\mathbb{G}$.
\end{lemma}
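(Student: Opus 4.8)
The plan is to verify the definition of metric line for $\gamma_u$ directly, reducing everything to the metric line property of $\gamma$ via the single homogeneity relation $dist_{\G}(\delta_u g_1, \delta_u g_2) = |u|\,dist_{\G}(g_1, g_2)$ recorded above. First I would fix an arbitrary compact interval $[a,b]\subset\R$ and unwind the definition $\gamma_u(t) = \delta_{\frac{1}{u}}\gamma(ut)$, so that the two endpoints become $\gamma_u(a) = \delta_{\frac{1}{u}}\gamma(ua)$ and $\gamma_u(b) = \delta_{\frac{1}{u}}\gamma(ub)$.

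Next I would apply the dilation-distance compatibility with parameter $\frac{1}{u}$, which scales distances by $\frac{1}{|u|}$, to obtain
\[
dist_{\G}(\gamma_u(a),\gamma_u(b)) = \frac{1}{|u|}\,dist_{\G}(\gamma(ua),\gamma(ub)).
\]
Because $\gamma$ is a metric line, the right-hand distance equals $|ua-ub| = |u|\,|a-b|$, and the two factors of $|u|$ cancel, leaving $dist_{\G}(\gamma_u(a),\gamma_u(b)) = |a-b|$. Since $[a,b]$ was arbitrary, this is exactly the metric line condition. I would also record that $\gamma_u$ is parametrized by arc length, which the excerpt already notes: $\gamma_u(t)=\delta_{\frac{1}{u}}\gamma(ut)$ is the standard dilation of an arc-length geodesic, so the phrase ``globally minimizing geodesic parametrized by arc length'' holds verbatim and no extra reparametrization bookkeeping is needed.

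I expect essentially no hard step here. The only points requiring care are bookkeeping ones: applying the homogeneity relation with the reciprocal parameter $\frac{1}{u}$ rather than $u$, and checking that the two powers of $|u|$ cancel against the stretch factor $|u|$ coming from the time rescaling $t \mapsto ut$. The lemma is really a formal consequence of the fact that the dilations form a one-parameter group of homotheties whose scaling factor is matched precisely by the linear time reparametrization used to define $\gamma_u$, so the whole argument collapses to the single displayed chain of equalities once the definitions are expanded.
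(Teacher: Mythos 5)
Your proof is correct. The paper does not reprove this lemma---it imports it as Lemma 7 of \cite{bravo2022metric}---but your argument (apply the homogeneity $dist_{\G}(\delta_{1/u}g_1,\delta_{1/u}g_2)=\tfrac{1}{|u|}dist_{\G}(g_1,g_2)$ to the endpoints $\gamma(ua)$, $\gamma(ub)$, invoke the metric-line property of $\gamma$ on $[ua,ub]$, and cancel the two factors of $|u|$ against the time-stretch) is exactly the standard computation behind that citation, and the arc-length bookkeeping is already covered by the paper's preceding remark that $\gamma_u$ is again an arc-length parametrized geodesic.
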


E. Hakavuouri and E. Le Donne developed a method to prove that a geodesic is not a metric line. This idea relies on the concept of a blowdown.

\begin{defi}
Let $\G$ be a Carnot group, and $\gamma(t)$ be a rectifiable curve in $\G$. We say $\bar{\gamma}(t)$ is the blowdown of $\gamma(t)$ if $\bar{\gamma}(t) = \lim_{n \to \infty} \gamma_{u_n}(t)$ where $u_n$ is any sequence of scales tending to infinity with $n$, and the limit being uniform on compact intervals.
\end{defi}

E. Hakavuouri and E. Le Donne proved the following proposition.

\begin{Prop}[Corollary 1.6, \cite{hakavuori2023blowups}] \label{prp:blow-down}
Let $\G$ be a Carnot group. Let $\gamma:\R \to \G$ be a metric line. Then there exists a sequence $u_n$, with $\lim_{n\to\infty}u_n = \infty$, for which the blowdown $\bar{\gamma}$ converges uniformly on compact sets to a line geodesic parametrized by arc length.
\end{Prop}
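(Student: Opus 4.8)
The plan is to combine Arzel\`a--Ascoli compactness with the scaling structure of $\G$, reducing the statement to showing that \emph{some} blowdown is a one-parameter subgroup generated by a first-layer vector. First I would normalize $\gamma(0)=e$ and record that each rescaled curve $\gamma_u(t)=\delta_{1/u}\gamma(ut)$ is a metric line through $e$: by Lemma \ref{lem:carnor-dil} it is a metric line, and the dilation--metric compatibility $dist_{\G}(\delta_u g_1,\delta_u g_2)=|u|\,dist_{\G}(g_1,g_2)$ gives $dist_{\G}(\gamma_u(s),\gamma_u(t))=|s-t|$, so the whole family is equi-Lipschitz with common basepoint $e$. Since a Carnot group with its Carnot--Carath\'eodory metric is a proper (Heine--Borel) geodesic space, Arzel\`a--Ascoli produces, for any $u_n\to\infty$, a subsequence along which $\gamma_{u_n}$ converges uniformly on compact intervals to a limit $\bar{\gamma}$; continuity of $dist_{\G}$ forces $dist_{\G}(\bar{\gamma}(s),\bar{\gamma}(t))=|s-t|$, so every blowdown is again a metric line, in particular a horizontal minimizing geodesic. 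It remains only to choose the scales $u_n$ so that $\bar{\gamma}$ is a line geodesic.

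Next I would isolate the exact target. Because a geodesic is horizontal and the abelianization $\pi:\G\to\R^{d_1}$ preserves the length of horizontal curves while being $1$-Lipschitz, the first-layer projection $\bar{c}:=\pi\circ\bar{\gamma}$ is a $1$-Lipschitz curve in $\R^{d_1}$ with $\bar{c}(0)=0$. If I can arrange that $\bar{c}$ is a unit-speed straight line $\bar{c}(t)=tv$ with $v\in\Lag_1$, $|v|=1$, then $\bar{\gamma}$ --- being the unique horizontal lift of $\bar{c}$ through $e$ --- coincides with the one-parameter subgroup $\exp(tv)$, which by Definition \ref{def:lin-geo} is precisely a line geodesic parametrized by arc length. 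Thus the whole statement reduces to straightening the first-layer projection of a blowdown.

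The conceptual reason to expect straightening is self-similarity: writing $D_\lambda$ for the action $(D_\lambda c)(t):=\delta_{1/\lambda}c(\lambda t)$ on curves, one has $\gamma_{\lambda u}=D_\lambda\gamma_u$, so the family $\{\gamma_u\}$ is a single orbit of the flow $D$ and blowdowns are its $\omega$-limits as $u\to\infty$. A blowdown fixed by every $D_\lambda$ satisfies $\bar{\gamma}(\lambda t)=\delta_\lambda\bar{\gamma}(t)$, hence $\bar{\gamma}(t)=\delta_t(g_0)$; since $\delta_t\exp(\sum_i v_i)=\exp(\sum_i t^i v_i)$, horizontality together with unit-speed minimality kills every layer above the first and forces $\bar{\gamma}(t)=\exp(tv)$ with $v\in\Lag_1$ --- a line geodesic. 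So it would suffice to produce a self-similar (homogeneous) blowdown.

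The hard part is precisely producing this homogeneous blowdown, i.e.\ straightening $\bar{c}$. Compactness alone does not suffice: the $\omega$-limit set of the flow $D$ on the compact orbit closure is $D$-invariant but need not contain a fixed point, and a generic $1$-Lipschitz curve in $\R^{d_1}$ has non-linear blowdowns. The global minimality of $\gamma$ in $\G$ must enter in an essential way --- it is what prevents the projection from ``wasting length'' in the higher layers indefinitely. Concretely, using the homogeneous-norm equivalence $dist_{\G}(e,g)\asymp\max_i|\pi_i(g)|^{1/i}$, I would try to select scales $u_n\to\infty$ along which the first-layer defect $u_n-|\pi\gamma(u_n)|$ is sublinear, so that $\pi\gamma_{u_n}$ converges to a unit-speed segment and, by the equi-Lipschitz bound and the triangle inequality, to a full line. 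Carrying this selection out rigorously --- quantifying how minimality forbids a persistent sublinear first-layer displacement --- is the technical core, and is the step where I would lean on the blowup/blowdown estimates of \cite{hakavuori2023blowups}.
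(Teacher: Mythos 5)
The paper does not prove this proposition at all: it is imported verbatim as Corollary 1.6 of \cite{hakavuori2023blowups}, so the only ``proof'' in the paper is the citation. Your soft machinery is fine and is what any proof must begin with: the rescalings $\gamma_u$ are metric lines through $e$ by Lemma \ref{lem:carnor-dil}, they are equi-Lipschitz with a common basepoint, Arzel\`a--Ascoli on the proper space $(\G, dist_{\G})$ extracts a uniform-on-compacta limit along any $u_n \to \infty$, and continuity of the distance makes the limit again a metric line. Your reduction is also correct: if the abelianized projection $\bar{c} = \pi\circ\bar{\gamma}$ is a unit-speed line, then $\bar{\gamma}$ is its horizontal lift through $e$, hence a line geodesic; and a dilation-homogeneous blowdown is forced by horizontality and unit speed to be $\exp(tv)$ with $v\in\Lag_1$.

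The genuine gap is exactly where you flag it, and it is not a removable technicality: it is the entire content of the theorem. You must show that global minimality forces $\limsup_{T\to\infty}|\pi\gamma(T)-\pi\gamma(-T)|/(2T)=1$, and in a form uniform enough to run a diagonal argument over the windows $[-T,T]$ with consistent limit directions. For finite geodesic segments or merely locally minimizing geodesics this is false (circular arcs in the Heisenberg group have strictly sublinear abelianized displacement on their domain), so nothing short of the infinite-minimality hypothesis can close it, and your proposal never extracts that consequence. Deferring the step to ``the blowup/blowdown estimates of \cite{hakavuori2023blowups}'' is circular, since the statement you are asked to prove \emph{is} Corollary 1.6 of that paper. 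Note also that your own $\omega$-limit-set remark cuts against the third paragraph: the scaling flow on the orbit closure has a nonempty invariant $\omega$-limit set but no a priori fixed point, so ``produce a self-similar blowdown'' does not follow from compactness; Hakavuori and Le Donne obtain the conclusion by an iterated-blowdown, step-reduction argument rather than by a direct selection of scales with sublinear first-layer defect. As a sketch of the architecture your write-up is sound; as a proof it establishes only the routine half.
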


\subsubsection{Proof of Proposition \ref{prop:met-char} }

\begin{proof}
Let us assume $\gamma(t)$ is a metric line, then Proposition \ref{prp:blow-down} implies there exists a sequence $u_n$ such that $\bar{\gamma}(t) = g \exp(tv)$ for some unitary vector $v$ in $\mathfrak{g}_1$. If $\mathcal{T} = [-1,1]$ and  $\mathcal{T}_n := [-u_n,u_n]$,  we have
\begin{equation*}
\begin{split}
\int_{\mathcal{T}} \rho(v,(L_{\gamma^{-1}_{u_n}(t)})_*\dot{\gamma}_{u_n}(t)) dt & = \int_{\mathcal{T}} \rho(v,(L_{(\delta_{\frac{1}{u_n}}\gamma(u_n t))^{-1}})_*\frac{d}{dt} (\delta_{\frac{1}{u_n}}\gamma(u_n t))) dt \\
          & = \frac{1}{u_n} \int_{\mathcal{T}_n} \rho(v,(L_{\gamma^{-1}(\tilde{t})})_*(\dot{\gamma}(\tilde{t}))) d\tilde{t} \\
\end{split}
\end{equation*}
The first equality follows by the definition of $\gamma_{u_n}(t)$ and the second by the change of variable $u_n t = \tilde{t}$.   Taking the limit $n \to \infty$ and using the uniformly convergence of $\bar{\gamma}$ on the compact set $\mathcal{T}$, we have      
\begin{equation*}
\begin{split}
 2 = \int_{\mathcal{T}} \rho(v,v) dt & =  \lim_{n \to \infty} \frac{1}{u_n} \int_{\mathcal{T}_n} \rho(v,(L_{\gamma^{-1}(\tilde{t})})_*(\dot{\gamma}(\tilde{t}))) d\tilde{t} . \\
\end{split}
\end{equation*}
\end{proof}

The following lemma shows that the condition from Conjecture \ref{conj} implies the condition from Proposition \ref{prop:met-char}. 

\begin{lemma}\label{lem:cond-metl-lin-2}
Let $\G$ be a Carnot group, then
\begin{itemize}
\item Conjecture \ref{conj} implies the condition from Proposition \ref{prop:met-char}.

\item If $\gamma(t)$ is a geodesic in $\G$ such that the following limits exist but differ
$$\lim_{t \to \infty}\dot{\gamma}(t) \;\; \text{and} \;\; \lim_{t \to -\infty}\dot{\gamma}(t), $$
then $\gamma(t)$ is not a metric line. Therefore, the turn-back geodesics are metric lines. 
\end{itemize}
\end{lemma}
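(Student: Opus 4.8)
The plan is to prove both bullet points by reducing them to Proposition \ref{prop:met-char} together with the blowdown result, Proposition \ref{prp:blow-down}. For the first bullet, I would start from the hypothesis of Conjecture \ref{conj}: there is a unitary $v\in\Lag_1$ equal to both $\lim_{t\to\pm\infty}(L_{\gamma^{-1}(t)})_*\dot\gamma$. The goal is the Ces\`aro-type average in equation \eqref{eq:in-met-char}. The key observation is that if a function $f(s):=\langle v,(L_{\gamma^{-1}(s)})_*\dot\gamma(s)\rangle$ has limits as $s\to\pm\infty$, then its symmetric average $\frac{1}{t}\int_{-t}^t f(s)\,ds$ converges to the common limit value. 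Since $\gamma$ is parametrized by arc length and $v$ is unitary, the integrand satisfies $f(s)\le 1$ by Cauchy--Schwarz, and the two-sided limit hypothesis gives $\lim_{s\to\pm\infty}f(s)=\langle v,v\rangle=1$. A standard elementary lemma (if $f\to L$ at $\pm\infty$ then $\frac1t\int_{-t}^t f\to L$) then yields $\frac1t\int_{-t}^t f\,ds\to 1$, so $\lim \frac{1}{t}\int_{-t}^t f\,ds \cdot 2 = 2$, matching the right side of \eqref{eq:in-met-char}.

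For the second bullet, the cleanest route is the contrapositive flavor already supplied by the blowdown theorem. Suppose $\gamma(t)$ is a geodesic for which $\lim_{t\to\infty}\dot\gamma(t)=:v_+$ and $\lim_{t\to-\infty}\dot\gamma(t)=:v_-$ both exist but $v_+\neq v_-$. I would assume for contradiction that $\gamma$ is a metric line. Then Proposition \ref{prop:met-char} furnishes a single unitary $v\in\Lag_1$ with $2=\lim_{t\to\infty}\frac1t\int_{-t}^t\langle v,(L_{\gamma^{-1}(s)})_*\dot\gamma(s)\rangle\,ds$. As above, the symmetric average equals the arithmetic mean of the forward and backward limits of the integrand, so this forces $\tfrac12\big(\langle v,v_+\rangle+\langle v,v_-\rangle\big)=1$. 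Since $v,v_+,v_-$ are all unitary, Cauchy--Schwarz gives each inner product $\le 1$, so both must equal $1$ simultaneously; equality in Cauchy--Schwarz between unit vectors forces $v=v_+$ and $v=v_-$, contradicting $v_+\neq v_-$. Hence $\gamma$ is not a metric line.

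The remaining task is to connect this to turn-back geodesics. For a geodesic in $\G$, the reconstruction equation \eqref{eq:pres-geo} expresses $(L_{\gamma^{-1}(t)})_*\dot\gamma$ in terms of the momenta $p_i(t)$ and the polynomials $F_j(x(t);\mu)$. For a singular (heteroclinic) geodesic the reduced dynamics is asymptotic to equilibria, so $p_x(t)\to 0$ and $x(t)\to x_0,x_1$ as $t\to\pm\infty$; consequently $\dot\gamma(t)$ has well-defined one-sided limits in the left-trivialized frame, determined by the limiting values $F_j(x_1;\mu)$ and $F_j(x_0;\mu)$. The turn-back condition is precisely that these limits differ, i.e.\ \eqref{eq:asym-pol} fails. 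Thus the hypotheses of the second bullet are met, and the conclusion applies.

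The step I expect to be the main obstacle is the elementary-looking averaging lemma in the regime where the one-sided limits of the left-trivialized velocity exist. I would need to verify carefully that $\lim_{t\to\pm\infty}\dot\gamma(t)$ (a limit of tangent vectors in $T_{\gamma(t)}\G$) translates into a well-defined limit of the scalar integrand $\langle v,(L_{\gamma^{-1}(s)})_*\dot\gamma(s)\rangle$, and that the symmetric Ces\`aro average genuinely returns the arithmetic mean of the two one-sided limits rather than some path-dependent quantity. Once that averaging identity is pinned down, the Cauchy--Schwarz equality analysis that produces the contradiction is routine.
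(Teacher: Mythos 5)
Your proposal is correct and follows essentially the same route as the paper: the paper's entire proof is the one-sided Ces\`aro/L'H\^opital averaging lemma ($f\to c$ at $+\infty$ implies $\frac1t\int_0^t f\to c$), and your expansion — splitting the symmetric average into the two one-sided averages so that $\frac1t\int_{-t}^t f\to L_-+L_+$, then forcing equality in Cauchy--Schwarz — is exactly the intended argument, including the identification of the limits via equation \eqref{eq:pres-geo} for the turn-back case. One small caution: as written your averaging lemma (``if $f\to L$ at $\pm\infty$ then $\frac1t\int_{-t}^t f\to L$'') should read $\to 2L$ (or be normalized by $2t$); your subsequent computations are consistent with the correct version, so this is only a notational slip.
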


\begin{proof}
The L'Hopital rule yields the following result: let $f(t)$ be a continuous function, if $\lim_{t \to \infty} f(t) = c$ then $\lim_{t \to \infty} \frac{1}{t} \int_0^t f(s)ds = c$. 
\end{proof}

\subsection{The sequence method}

We will present the sequence method for the homoclinic geodesics since we will use it for the Engel-type group. In the appendix, we will introduce the sequence method for direct-type geodesics. Let $\G$ be a \ma Carnot group with a semidirect product structure, let $F(x)$ be a polynomial, and let $\gamma_h(t)$ be a homoclinic geodesic in $\G$ corresponding to a polynomial $F(x)$. We define the \sR geodesic $c_h(t)\in \R^{n+2}_F$ as $c_h(t) := \pi_F(\gamma_h(t))$ (without lose of generality let assume that $[c_h]\in Homc^+(x_0)$). The goal is to show that for arbitrary $T$, the geodesic $c_h(t)$ is an arc length minimizer in the interval $[-T,T]$.  The strategy to verify this goal is the following: For all $n >T$, we will take a sequence of geodesics $c_n(t)$ minimizing in the interval $[0,T_n]$ and joining the points $c_h(-n)$ and $c_h(n)$, see Figure \ref{fig:hom}. We will build a compact subset  $K\subset \R^3_{F}$ and a compact interval $\mathcal{T}$. Then, we will identify a convergent subsequence $c_{n_j}(t)\in Min(K,\mathcal{T})$ converging to a $\R^3_{F}$-geodesic $c_{\infty}(t)$ and isometry $\varphi$ in $Iso^*(\R^3_{F})$ such that $c_h([-T,T]) \subseteq \varphi( c_{\infty}(\mathcal{T}))$.  By Lemma \ref{lem:iso-metr}, $c_h(t)$ is minimizing on  $[-T,T]$. Since $T$ is arbitrary, $c_h(t)$ is a metric line.

%in $\R^{n+2}_F$ corresponding to the polynomial $G(x)$ in $Pen(F)$ such that $\Theta_2([c_h])$ is finite (the construction and the proof of Theorem \ref{the:metrtic-lines-method} are the same for a direct-type geodesic). 

\begin{figure}%
    \centering
    {{\includegraphics[width=5cm]{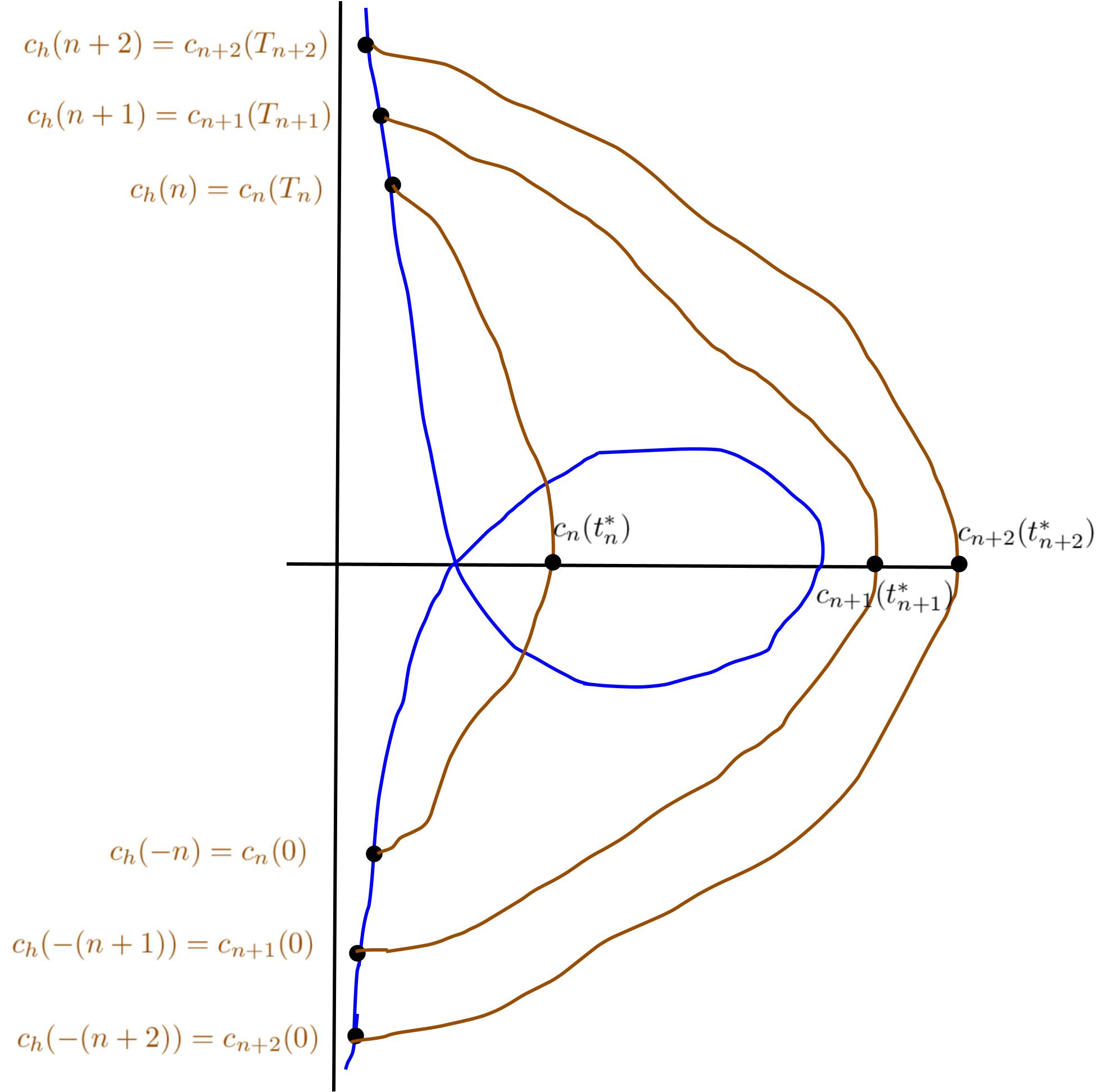} }}
    \caption{The images show the $(x,y)$-plane and the projections by $pr$ of the homoclinic-geodesic $c_h(t)$ and the sequence of minimizing geodesics $c_n(t)$.} 
    \label{fig:hom}
\end{figure}

\subsubsection{Set Up the Theorem}

Let $T$ be arbitrary large and, let $n\in \mathbb{N}$ satisfies $T < n$. We will consider the points $c_h(-n)$ and $c_h(n)$. Let $c_n(t) = (x_n(t),y_n(t),x_n(t))$ be a sequence of minimizers geodesics on the interval $[0,T_n]$ such that:
\begin{equation}\label{eq:shot-con}
c_n(0) = c_h(-n),\;\;\; c_n(T_n) = c_h(n) \;\;\text{and}\;\; T_n \leq n.
\end{equation}
We call the equations and inequality from \eqref{eq:shot-con} the endpoints and shorter conditions, respectively. Since the endpoints holds for all $n$, the sequence $c_n(t)$ has the following asymptotic conditions: 
\begin{equation}\label{eq:asymp-cond-h}
\lim_{n \to \infty} c_n(0) = (x_0,-\infty,-\infty) \;\;\; \text{and} \;\;\; \lim_{n \to \infty} c_n(T_n) = (x_0,\infty,\infty).
\end{equation}
% And, if $c_h(t)$ is a direct-type geodesic, then
% \begin{equation}\label{eq:asymp-cond-d-t}
% \lim_{n \to \infty} c_n(0) = (x_0,-\infty,-\infty) \;\;\; \text{and} \;\;\; \lim_{n \to \infty} c_n(T_n) = (x_1,\infty,\infty).
% \end{equation}
In addition, the endpoint condition implies that the difference of the endpoints are equal
$$ \Delta y(c_h,[-n,n]) = \Delta y(c_n,[0,T_n]), \;\;\;\text{and}\;\; \Delta z(c_h,[-n,n]) = \Delta z(c_n,[0,T_n]), $$
for all $n$. Therefore, the endpoints and shorter conditions imply
\begin{equation}\label{eq:cots-fuc-cond}
\begin{split}
Cost_t(c_h,[-n,n]) & \geq Cost_t(c_n,[0,T_n]), \\
Cost_y(c_h,[-n,n]) & = Cost_y(c_n,[0,T_n]), \\
\end{split}
\end{equation} 
for all $n$. Equation \ref{eq:cots-fuc-cond} yields the asymptotic period condition;
\begin{equation}\label{eq:asymp-cond-theta}
\lim_{n \to \infty} Cost_y(c_n,[0,T_n]) = \Theta_2([c_h]).
\end{equation}
We remark that equation \eqref{eq:asymp-cond-theta} does not tell that $c_n(n)$ converges to $c_h(t)$. It only tells that that the sequence $Cost_y(c_n,[0,T_n] )\in \R$ converges to the value $\Theta_2([c_h])$.

The following theorem concretizes the sequence method. 
\begin{Theorem}[Sequence method for homoclinic gedesic]\label{the:metrtic-lines-method}
Let $\G$ be a \ma Carnot group with a semidirect product structure such that $\dim \Vs = 1$. Let $\gamma_h(t)$ be a homoclinic geodesic in $\G$ corresponding to a polynomial $F(x)$, and let $c_h(t) := \pi_F(\gamma_s(t))$ be the \sR geodesic in $\R^{n+2}_F$ such that $[c_h]\in Homc^+(x_0)$. If the following conditions hold: 
\begin{enumerate}

\item \label{num-cond-3} The value $\Theta_1([c_h])$ is finite.

\item \label{num-cond-1} There exist a ball $B_{\Ho}(r,x_0)$ such that the region $B_{\Ho}(r,x_0) \times \R^{2}$ is geodesically compact.

\item \label{num-cond-2} The map $\Theta_2: Homc^+(x_0) \to \R$ is one to one.

\item \label{num-cond-5} The sequence of minimizing geodesics $c_n(t)$ joining the points $c(-n)$ and $c(n)$ is strictly normal. 

\end{enumerate}
Then $c_h(t)$ is a metric line in $\R^{n+2}_F$, consequently the homoclinic geodesic $\gamma_h(t)$ is a metric line in $\G$. 
\end{Theorem}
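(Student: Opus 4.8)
The plan is to follow exactly the roadmap laid out just before the theorem statement: fix an arbitrary $T>0$ and show that $c_h(t)$ is arc length minimizing on $[-T,T]$, from which the metric line property follows by letting $T\to\infty$, and then invoke Lemma \ref{lem:sub-submersion} to lift the conclusion from $\R^{n+2}_F$ back to $\G$. First I would introduce, for each $n>T$, a sequence of geodesics $c_n(t)$ minimizing on $[0,T_n]$ subject to the endpoint and shorter conditions \eqref{eq:shot-con}; the existence of such minimizers is guaranteed because $\R^{n+2}_F$ is a complete \sR manifold and any two points are joined by a minimizer. Condition \eqref{num-cond-5} ensures each $c_n(t)$ is normal, so that each corresponds to a polynomial $G_n(x)\in Pen_F$ and the machinery of Proposition \ref{prop:mag-geo-Delta-C} applies.

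\textbf{Compactness and extraction of a limit.}
The heart of the argument is to extract a convergent subsequence whose limit recovers $c_h$ up to isometry. From the endpoint and shorter conditions I would derive the cost inequalities \eqref{eq:cots-fuc-cond}, and then use Condition \eqref{num-cond-3} (finiteness of $\Theta_1[c_h]$) together with Lemma \ref{lem:uniform-bound-c-s} to conclude that $Cost(c_n,[0,T_n])$ is uniformly bounded by $C:=\Theta_1[c_h]$ in the sup norm. This places the pairs $(c_n,[0,T_n])$ inside some $\Com(r,x_0,C)$. Condition \eqref{num-cond-1}, geodesic compactness, then forces the spatial components $x_n$ to remain in a fixed compact subset $K_{\Ho}\subset\Ho$, so after a translation by an isometry $\varphi_n\in Iso^*(\R^{n+2}_F)$ normalizing the base point (using invariance of $\Delta$ and hence of the minimizing property via Lemma \ref{lem:iso-metr}) I can view all $c_n$ as landing in a single compact set $K\subset\R^{n+2}_F$ over a compact time interval. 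Proposition \ref{prp:seque-comp} (sequential compactness of $Min(K,\mathcal{T})$) yields a subsequence $c_{n_j}$ converging uniformly to a geodesic $c_\infty(t)$, which is normal by the strictly normal hypothesis \eqref{num-cond-5}.

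\textbf{Identifying the limit and closing the argument.}
It remains to show $c_\infty$ is (a translate of) the homoclinic geodesic $c_h$. By Lemma \ref{lem:com-converge}, the limit satisfies $\lim_{t\to\pm\infty}G(x_\infty(t))=1$ and $\lim_{t\to\pm\infty}F(x_\infty(t))=1$, so $c_\infty$ is itself homoclinic and its class lies in $Homc^+(x_0)$. The asymptotic period condition \eqref{eq:asymp-cond-theta}, obtained from \eqref{eq:cots-fuc-cond}, gives $\Theta_2([c_\infty])=\lim_{j\to\infty} Cost_y(c_{n_j},[0,T_{n_j}])=\Theta_2([c_h])$; now the injectivity of $\Theta_2$ on $Homc^+(x_0)$, Condition \eqref{num-cond-2}, forces $[c_\infty]=[c_h]$, i.e.\ $c_\infty=\varphi(c_h)$ for some $\varphi\in Iso^*(\R^{n+2}_F)$. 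Since $c_\infty$ is a uniform limit of geodesics each minimizing on their intervals, and its interval exhausts $[-\infty,\infty]$, the restriction of $c_\infty$ (hence of $c_h$) to $[-T,T]$ is minimizing by Lemma \ref{lem:iso-metr}; as $T$ was arbitrary, $c_h$ is a metric line in $\R^{n+2}_F$, and Lemma \ref{lem:sub-submersion} promotes $\gamma_h$ to a metric line in $\G$.

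\textbf{Main obstacle.}
I expect the delicate step to be the simultaneous control of the two limiting procedures: one must ensure that after the isometry normalizations $\varphi_{n_j}$ the limiting geodesic still covers the \emph{entire} homoclinic orbit of $c_h$ on $[-T,T]$ rather than degenerating (for instance collapsing the traversal of the orbit into a boundary arc or losing a portion of it to the noncompact $y,z$ directions). This is precisely why Conditions \eqref{num-cond-3} and \eqref{num-cond-1} are both needed — finiteness of $\Theta_1$ keeps the cost bounded so no length escapes, and geodesic compactness keeps the orbit from drifting off in $\Ho$ — and why the injectivity of $\Theta_2$ is the linchpin that rigidly pins the limit to $c_h$ rather than to some other homoclinic competitor.
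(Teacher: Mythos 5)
Your overall strategy coincides with the paper's: build the comparison sequence $c_n$ subject to the endpoint and shorter conditions \eqref{eq:shot-con}, derive the cost inequalities \eqref{eq:cots-fuc-cond} and the uniform bound from Lemma \ref{lem:uniform-bound-c-s}, extract a convergent subsequence via Proposition \ref{prp:seque-comp}, pin the limit down to $[c_h]$ through Lemma \ref{lem:com-converge}, the asymptotic period condition \eqref{eq:asymp-cond-theta} and the injectivity of $\Theta_2$, and close with Lemma \ref{lem:iso-metr} and Lemma \ref{lem:sub-submersion}. That skeleton is correct and is exactly the paper's.

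The genuine gap is in the step you compress into ``after a translation by an isometry $\varphi_n$ normalizing the base point \dots I can view all $c_n$ as landing in a single compact set over a compact time interval.'' You never say \emph{at which time} along $[0,T_n]$ you normalize, and the choice is not cosmetic: if you anchor near the endpoint $c_n(0)=c_h(-n)$, then $x_n(0)\to x_0$ and the reduced momentum there tends to $0$, so the normalized subsequence converges to the equilibrium (vertical-line) solution rather than to a curve traversing the homoclinic orbit, the class identification $[c_\infty]=[c_h]$ fails, and the final containment $c_h([-T,T])\subset\varphi(c_\infty(\mathcal{T}))$ needed for Lemma \ref{lem:iso-metr} is lost. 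The paper resolves this in Proposition \ref{prp:bounded-ini-con}: it picks $t^*_n$ with $y_n(t^*_n)=0$ by the intermediate value theorem (using Lemma \ref{cor:T_h}) and then must prove that $z_n(t^*_n)$ is uniformly bounded. That last point is the real technical content you are missing: the bound on $Cost_y(c_n,[0,T_n])$ over the \emph{full} interval, which you correctly obtain from \eqref{eq:cots-fuc-cond}, does not control $Cost_y(c_n,[0,t^*_n])$ over the \emph{partial} interval, because the integrand $G(x)(1-F(x))/\sqrt{1-G^2(x)}$ is not sign-definite and cancellation on $[t^*_n,T_n]$ could hide unbounded growth on $[0,t^*_n]$. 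The paper needs the separate Lemma \ref{lem:uniform-bound-c-n}, proved in Appendix \ref{app:lem-unif-bounf-c-n} by showing $Cost_y$ is continuous on the sequentially compact space of pairs $(G_n,x_n(\mathcal{T}_n))$ via the General Lebesgue Dominated Convergence Theorem. You flag the right obstacle in your last paragraph, but the mechanism you propose (an unspecified normalizing translation) does not supply this partial-interval estimate, so the argument as written does not close.
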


We will devote the following Sections to prove Theorem \ref{the:metrtic-lines-method}. 

\subsubsection{Prelude of Proof}
During this Section, let us assume the hypothesis from Theorem \ref{the:metrtic-lines-method}.

%Moreover, the translation $\varphi_{(y_0,z_0)}$ implies that, we can assume that the homoclinic geodesic $c_h(t) = (x_h(t),y_h(t),z_h(t))$ has the initial condition $c_h(0) = (x_h(0),0,0)$,  without loss of generality.

\begin{lemma}
\label{cor:T_h}
Let assume $c_h(t)$ has the form $ (x(t),y(t),z(t))$. Therefore, there exists a positive time $T^*_h$ such that if $T^*_h < t$, then ;
\begin{itemize}
    \item $y_h(t) > 0$ and $y_h(-t) < 0$.

    \item $x_h(-t)$ and $x_h(t)$ are in $B(r,x_0)$, where $B(r,x_0)$ is the ball given by Condition \ref{num-cond-1} from Theorem \ref{the:metrtic-lines-method}.
\end{itemize}
\end{lemma}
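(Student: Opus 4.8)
The plan is to analyze the homoclinic geodesic $c_h(t)$ using its asymptotic behavior, which follows from the fact that $[c_h]\in Homc^+(x_0)$ together with Condition \ref{num-cond-3} of Theorem \ref{the:metrtic-lines-method}. Since $c_h(t)$ is homoclinic to $x_0$, its reduced dynamics satisfies $x_h(t)\to x_0$ as $t\to\pm\infty$, and since $[c_h]\in Homc^+(x_0)$ we have $G(x_0)=F(x_0)=1$. The first step is to extract the two desired conclusions separately, establishing each via the asymptotics of the component functions and the finiteness of $\Theta_1([c_h])$.

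For the claim about $x_h(\pm t)$ entering the ball, I would argue directly from the convergence $x_h(t)\to x_0$ as $t\to+\infty$ and $x_h(t)\to x_0$ as $t\to-\infty$. By the definition of the limit, for the radius $r>0$ furnished by Condition \ref{num-cond-1}, there exist positive times $T^+$ and $T^-$ such that $x_h(t)\in B(r,x_0)$ for all $t>T^+$ and $x_h(t)\in B(r,x_0)$ for all $t<-T^-$. Taking a time larger than both handles the second bullet point simultaneously for $x_h(t)$ and $x_h(-t)$.

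For the sign conditions on $y_h$, the key observation is that $\dot y_h = G(x_h(t))$ from equation \eqref{eq:ode-y-z}, and since $[c_h]\in Homc^+(x_0)$ we have $G(x_h(t))\to 1 > 0$ as $t\to\pm\infty$. Hence for large $|t|$ the function $y_h$ is strictly increasing. Combined with the finiteness of $\Theta_1([c_h])$ from Condition \ref{num-cond-3}, which guarantees that $\Delta y(c_h,\mathcal{T})$ remains integrable and that $y_h(t)$ has well-defined (finite) increments, one sees that $y_h(t)\to+\infty$ and $y_h(t)\to-\infty$ as $t\to+\infty$ and $t\to-\infty$ respectively, consistent with the asymptotic endpoint behavior recorded in equation \eqref{eq:asymp-cond-h}. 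In particular $y_h(t)>0$ for sufficiently large positive $t$ and, by the antisymmetry coming from the homoclinic structure (or simply by the monotonicity argument applied at $-t$), $y_h(-t)<0$ for such $t$. Choosing $T^*_h$ larger than all the thresholds produced above yields both bullet points at once.

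The main obstacle I anticipate is not the convergence of $x_h$, which is essentially immediate from the definition of a homoclinic orbit, but rather pinning down the sign of $y_h$ rigorously: one must rule out that the monotone growth of $y_h$ near infinity is offset by negative contributions over the bounded middle portion of the orbit, i.e. one must verify that $y_h$ genuinely crosses zero and stays positive past a finite time rather than merely growing eventually. This is where the finiteness of $\Theta_1([c_h])$ and the positivity $G\to 1$ become essential, since they control $\dot y_h$ and ensure the accumulated displacement diverges to $+\infty$; the argument then reduces to choosing the origin of the $y$-coordinate appropriately (using the translation isometry $\varphi_{(y_0,z_0)}$, which leaves the relevant structure invariant) so that the sign change occurs symmetrically about $t=0$.
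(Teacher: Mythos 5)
Your proposal is correct and follows essentially the same route as the paper, whose proof simply asserts the two limits $\lim_{t\to\pm\infty}y_h(t)=\pm\infty$ and $\lim_{t\to\pm\infty}x_h(t)=x_0$ and stops there; you supply the justification the paper omits, namely that $\dot y_h=G(x_h(t))\to 1>0$ forces $y_h$ to diverge to $\pm\infty$, after which both bullet points follow by choosing $T^*_h$ large enough. The only superfluous part is your worry about negative contributions on the bounded middle portion: since the lemma only asks for the sign conditions for $t>T^*_h$ and $y_h(t)\to+\infty$, continuity alone already gives a threshold past which $y_h$ stays positive, with no need to locate a symmetric zero crossing or invoke the translation isometry.
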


\begin{proof}
Since $c_h(t)$ is a homoclinic geodesic such that 
$$\lim_{t \to \pm \infty}y_h(t) = \pm\infty\;\; \text{and} \;\; \lim_{t \to \pm\infty}x_h(t) = x_0.$$
\end{proof}

Condition \ref{num-cond-5} implies $c_n(t)$ is a normal geodesic, so $c_n(t)$ is geodesic for a polynomial $G_n(x)$ and with reduced dynamics $x_n(t)$. The construction of the geodesic $c_n(t)$ is such that the initial condition is unbounded. The first step is to find a bounded initial condition for $c_n(t)$ to build a set $Min(K,\mathcal{T})$. 

\begin{Prop}\label{prp:bounded-ini-con}
Let $T^*_h$ be the time given by Lemma \ref{cor:T_h}, and let $n \in \mathbb{N}$. If $T_h^* < n$, then there exist a compact sub-set $K_0 \subset R^{n+2}_{F_h}$ and a time $t^*_n \in (0,T_n)$ such that $c_n(t^*_n)\in K_0$ .
\end{Prop}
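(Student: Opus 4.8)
The plan is to locate a single time $t_n^*$ at which all three coordinates of $c_n$ can be controlled, treating the three coordinate directions by different devices: the $y$-direction by an intermediate value argument, the $x$-direction by geodesic compactness, and the $z$-direction by a translation isometry.

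First I would pin down the $y$-coordinate. By Lemma \ref{cor:T_h}, for $n>T_h^*$ we have $y_n(0)=y_h(-n)<0$ and $y_n(T_n)=y_h(n)>0$, so by continuity of $y_n$ there is a time $t_n^*\in(0,T_n)$ with $y_n(t_n^*)=0$. Since $c_n$ is parametrized by arc length one has $|\dot y_n|\le 1$, and therefore travelling from $y_h(-n)$ up to $0$ and from $0$ up to $y_h(n)$ forces $t_n^*\ge |y_h(-n)|$ and $T_n-t_n^*\ge y_h(n)$; by \eqref{eq:asymp-cond-h} both quantities tend to $\infty$. This is exactly what will let me re-center the time interval at $t_n^*$ so that it exhausts $[-\infty,\infty]$.

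Next I would bound the $x$-coordinate by showing $(c_n,[0,T_n])\in\Com(r,x_0,\Theta_1[c_h])$ and invoking geodesic compactness (Condition \ref{num-cond-1}). The three conditions of Definition \ref{def:geo-comp-def} are checked as follows: $c_n$ is minimizing by construction; its endpoints satisfy $x_n(0)=x_h(-n)\in B_{\Ho}(r,x_0)$ and $x_n(T_n)=x_h(n)\in B_{\Ho}(r,x_0)$ again by Lemma \ref{cor:T_h}; and the cost is uniformly bounded because \eqref{eq:cots-fuc-cond} gives $Cost_t(c_n,[0,T_n])\le Cost_t(c_h,[-n,n])$ and $Cost_y(c_n,[0,T_n])=Cost_y(c_h,[-n,n])$, both of which are bounded by $\Theta_1[c_h]$ through Lemma \ref{lem:uniform-bound-c-s} — this is where the finiteness hypothesis (Condition \ref{num-cond-3}) enters. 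Re-centering time at $t_n^*$ turns $[0,T_n]$ into an interval tending to $[-\infty,\infty]$ by the previous paragraph, so geodesic compactness yields a compact $K_{\Ho}\subset\Ho$ with $x_n(\mathcal{T}_n)\subset K_{\Ho}$ for every $n$; in particular $x_n(t_n^*)\in K_{\Ho}$.

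Finally I would dispose of the $z$-coordinate, which is the one subtlety. Unlike $x$ and $y$, the value $z_n(t_n^*)=z_h(-n)+\int_0^{t_n^*}G_n(x_n)F(x_n)\,dt$ is not controlled, since $z_h(-n)\to-\infty$. The remedy is the translation isometry $\varphi_{(0,-z_n(t_n^*))}$: it lies in $Iso^*(\R^{n+2})$, preserves the minimizing property and the maps $\Delta$ and $Cost$, and leaves the class $[c_n]$ unchanged, so I may replace $c_n$ by its image. After this normalization $c_n(t_n^*)=(x_n(t_n^*),0,0)$ belongs to the compact set $K_0:=K_{\Ho}\times\{(0,0)\}$, which is the desired conclusion. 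The main obstacle is the middle step: verifying that the sequence genuinely lands in $\Com(r,x_0,C)$ — the cost bound rests entirely on \eqref{eq:cots-fuc-cond} together with the finiteness of $\Theta_1[c_h]$ — and organizing the time re-centering so that the hypothesis $\mathcal{T}_n\to[-\infty,\infty]$ of geodesic compactness is actually satisfied. The $z$-direction is a minor but genuine point, handled only because the translations form a nontrivial subgroup of isometries under which the whole construction is invariant.
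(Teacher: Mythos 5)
Your treatment of the $y$- and $x$-coordinates coincides with the paper's: the intermediate value theorem produces $t_n^*$ with $y_n(t_n^*)=0$, and the membership $(c_n,[0,T_n])\in\Com(r,x_0,C)$ — via \eqref{eq:cots-fuc-cond} and Lemma \ref{lem:uniform-bound-c-s} — feeds Condition \ref{num-cond-1} to give $x_n(t_n^*)\in K_{\Ho}$. The divergence, and the gap, is in the $z$-coordinate. The proposition asserts $c_n(t_n^*)\in K_0$ for the sequence $c_n$ fixed by the endpoint conditions \eqref{eq:shot-con}; replacing each $c_n$ by $\varphi_{(0,-z_n(t_n^*))}(c_n)$ proves a statement about a different sequence, and the amount you translate by is, on your own account, uncontrolled and may vary unboundedly with $n$. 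That is not harmless downstream: the translated endpoints become $z_h(\mp n)-z_n(t_n^*)$, so the asymptotic condition \eqref{eq:asymp-cond-h} in the $z$-coordinate — which Lemma \ref{lem:uniq-G} invokes to identify $[c_\infty]$ with an element of $Homc^+(x_0)$ — is no longer guaranteed unless you already know $z_n(t_n^*)$ stays bounded, which is precisely what you are trying to avoid proving. So the dodge is either circular or forces you to reprove the downstream lemmas for the modified sequence, neither of which you do.

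The paper's point is that $z_n(t_n^*)$ \emph{is} uniformly bounded, and proving this is the real content of the proposition. From $Cost_y = \Delta y - \Delta z$ applied to $c_n$ on $[0,t_n^*]$ and to $c_h$ on $[-n,0]$, together with $z_n(0)=z_h(-n)$, $z_h(0)=0$, and the matching $\Delta y(c_n,[0,t_n^*])=-y_h(-n)=\Delta y(c_h,[-n,0])$ forced by $y_n(t_n^*)=0=y_h(0)$, one gets
\begin{equation*}
|z_n(t_n^*)| \;=\; |\Delta z(c_n,[0,t_n^*])-\Delta z(c_h,[-n,0])| \;\le\; |Cost_y(c_n,[0,t_n^*])|+|Cost_y(c_h,[-n,0])|.
\end{equation*}
The second term is bounded by $\Theta_1[c_h]$ via Lemma \ref{lem:uniform-bound-c-s}; the first is the nontrivial piece, since \eqref{eq:cots-fuc-cond} only controls $Cost_y$ on the full interval $[0,T_n]$ and the integrand of $Cost_y$ changes sign, so restriction to the sub-interval $[0,t_n^*]$ is not automatic — this is exactly what Lemma \ref{lem:uniform-bound-c-n} (proved in Appendix \ref{app:lem-unif-bounf-c-n} by a continuity-plus-sequential-compactness argument) supplies. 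Your proposal misses this identity and the role of Lemma \ref{lem:uniform-bound-c-n} entirely.
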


\begin{proof}
Let $n \in \mathbb{N}$ be larger than  $T^*_h$. Then, $y_n(0) < 0$ and $y_n(T_n) > 0$, by construction. So, the intermediate value theorem implies the existence of a $t_n^*\in (0,T_n)$ such that $y_n(t_n^*) = 0$. Lemma \ref{lem:uniform-bound-c-s} implies that $Cost(c_{n},[0,T_n])$ is uniformly in the sup norm bounded. So, Condition \ref{num-cond-1} tells there exists a compact set $K_{\Ho}$ such that each $x_n(t)\in K_{\Ho}$ for all $t\in [0,T_n]$, in particular $x_n(t_n^*) \in K_{\Ho}$. 

%E} and 

Let us prove that $z_n(t_n^*)$ is bounded: The definition of $Cost_y$ yields the following equations:
\begin{equation*}
\begin{split}
 z_n(t^*_n) - z_n(0) = \Delta z(c_n,[0,t^*_n]) & =  \Delta y(c_n,[0,t^*_n]) - Cost_y(c_n,[0,t^*_n]),  \\
z_{h}(0) - z_{h}(-n) =  \Delta z(c_{h},[-n,0]) & = \Delta y(c_{h},[-n,0]) - Cost_y(c_{h},[-n,0]). \\
\end{split}
\end{equation*}
By construction $z_{h}(t^*_n) = 0$  and $z_n(0) = z_{d}(-n)$, these identities and equation \eqref{eq:cots-fuc-cond} imply
\begin{equation}
\begin{split}
| z_n(t^*_n) |  & = | \Delta z(c_n,[0,t^*_n]) -  \Delta z(c_{h},[-n,0])| \\
                & \leq |Cost_y(c_n,[0,t^*_n])| + |Cost_y(c_{h},[-n,0])| . \\
\end{split}
\end{equation}
Lemma \ref{lem:uniform-bound-c-s} implies $Cost_y(c_h,[-n,0])$ is bounded by $\Theta_1[c_h]$. Lemma \ref{lem:uniform-bound-c-n}, see below, tells $Cost_y(c_n,[0,t^*_n])$ is uniform bounded by a constant $C$. Therefore, $c_n(t_n^*)\in K_0$, where $K_0$ is the compact set given by  
$$ K_0 := K_{\Ho} \times [-1,1] \times [-\Theta_1[c_h]-C,\Theta_1[c_h]+C].$$
\end{proof}

The following result states that the constant $C$ exists.

\begin{lemma}\label{lem:uniform-bound-c-n}
There exist a constant $C$ such that $Cost_y(c_n,[0,t^*_n])$ is uniformly bounded by $C$ for all $n$.
\end{lemma}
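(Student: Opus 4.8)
The plan is to control the partial cost $Cost_y(c_n,[0,t^*_n])$ by the full–interval cost $Cost_y(c_n,[0,T_n])$, which is already pinned down by the endpoint and shorter conditions \eqref{eq:cots-fuc-cond}. Two facts are recorded first. As in the opening lines of the proof of Proposition \ref{prp:bounded-ini-con}, the uniform bound on $Cost(c_n,[0,T_n])$ together with the geodesic compactness of Condition \ref{num-cond-1} confines the reduced dynamics to one fixed compact set, $x_n(t)\in K_{\Ho}$ for all $t\in[0,T_n]$ and all $n$. Moreover, since $1-G_n(x_n)\ge 0$, the function $t\mapsto Cost_t(c_n,[0,t])=\int_0^t(1-G_n(x_n))\,dt'$ is nondecreasing, so by \eqref{eq:cots-fuc-cond} and Lemma \ref{lem:uniform-bound-c-s},
\begin{equation*}
\int_0^{t}\big(1-G_n(x_n(t'))\big)\,dt' \le Cost_t(c_n,[0,T_n]) \le Cost_t(c_h,[-n,n]) \le \Theta_1[c_h]
\end{equation*}
for every $t\in[0,T_n]$.

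The key step is to isolate the part of $Cost_y$ that diverges with $n$. Since $c_n$ is a geodesic for $G_n=a_n+b_nF\in Pen_F$, one has $1-F=(\alpha_n-G_n)/b_n$ with $\alpha_n:=G_n(x_0)=a_n+b_n$. Setting $w:=1-G_n(x_n)\ge 0$ and expanding $G_n(1-F)=\tfrac1{b_n}(1-w)(\alpha_n-1+w)$ gives, for every $t\in[0,T_n]$,
\begin{equation*}
Cost_y(c_n,[0,t]) = \int_0^t G_n(1-F)\,dt' = \lambda_n\,t + \rho_n(t), \qquad \lambda_n:=\frac{\alpha_n-1}{b_n},
\end{equation*}
where $\rho_n(t)=\tfrac1{b_n}\big[(2-\alpha_n)\int_0^t w\,dt'-\int_0^t w^2\,dt'\big]$. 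Because $0\le w\le 2$ and $\int_0^t w\,dt'\le\Theta_1[c_h]$, we have $\int_0^t w^2\,dt'\le 2\Theta_1[c_h]$, so $|\rho_n(t)|\le C'$ uniformly in $t\in[0,T_n]$ and in $n$, provided the coefficients $(a_n,b_n)$ stay in a fixed compact region with $|b_n|$ bounded away from $0$ (so that $\alpha_n$ and $1/b_n$ are controlled).

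Granting that coefficient control, the lemma follows at once. Evaluating the displayed identity at $t=T_n$ and using the full–interval bound $|Cost_y(c_n,[0,T_n])|=|Cost_y(c_h,[-n,n])|\le\Theta_1[c_h]$ yields $|\lambda_n|\,T_n\le\Theta_1[c_h]+C'$. Since $0<t^*_n<T_n$ and the slope $\lambda_n$ is the same constant on both intervals, $|\lambda_n|\,t^*_n\le|\lambda_n|\,T_n$, whence
\begin{equation*}
\big|Cost_y(c_n,[0,t^*_n])\big| \le |\lambda_n|\,t^*_n + |\rho_n(t^*_n)| \le \Theta_1[c_h] + 2C' =: C.
\end{equation*}

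The genuine obstacle is the coefficient control, and I expect the proof to split into a dichotomy. The upper bound on $|b_n|$ follows from $|a_n+b_nF(x_n(t))|\le 1$ as soon as the trajectory explores $F$–values with oscillation bounded below; the only way this can fail is that $x_n$ is trapped in a shrinking neighborhood of $x_0$, in which case $1-F(x_n)\approx 0$ uniformly and $Cost_y(c_n,[0,t^*_n])$ is directly negligible. Likewise, if $|b_n|\to 0$ then $G_n\to a_n$ is nearly constant, and the constraints $x_n\in K_{\Ho}$ together with $\Delta y(c_n,[0,t^*_n])=-y_h(-n)\to+\infty$ force $a_n\to 1$ and $x_n$ to remain near $x_0$, again making the cost small. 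Thus the argument reduces to the generic case, handled by the linear–plus–bounded–remainder estimate above, and the degenerate near–$x_0$ case, handled directly; making this dichotomy quantitative is the technical crux.
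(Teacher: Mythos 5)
Your reduction of the lemma to a coefficient bound is algebraically correct and is a genuinely different route from the paper's: the identity $Cost_y(c_n,[0,t]) = \lambda_n t + \rho_n(t)$ with $\lambda_n = (\alpha_n-1)/b_n$, the uniform bound $|\rho_n(t)|\le C'$ via $\int_0^t w\,dt' \le \Theta_1[c_h]$ and $0\le w\le 2$, and the monotone transfer of the endpoint bound from $T_n$ to $t^*_n$ are all sound \emph{conditional on} $\alpha_n$ being bounded and $|b_n|$ being bounded away from $0$. But that conditional clause is precisely where the lemma's difficulty lives, and you have not proved it; you explicitly defer it to an unquantified dichotomy. The degenerate branches are not as benign as you suggest: since $t^*_n\to\infty$, the claim that a trajectory trapped near $x_0$ makes $Cost_y(c_n,[0,t^*_n])$ ``directly negligible'' requires a rate (a quantity that is merely $o(1)$ pointwise, integrated over a time interval of diverging length, need not be small); and the failure mode for the upper bound on $|b_n|$ is not confinement near $x_0$ but confinement near \emph{some level set} of $F$, on which $1-F$ need not vanish, so excluding it requires invoking the boundedness of the full-interval cost together with $\Delta y(c_n,[0,t^*_n]) = -y_h(-n)\to\infty$ --- again an argument, not an observation. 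As written, the proof is incomplete.

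For comparison, the paper avoids explicit coefficient bounds altogether: it views $Cost_y(c,\T)$ as a function of the pair $(G, x(\T))$, proves it is continuous on $Com(r,x_0,C)$ via the General Lebesgue Dominated Convergence Theorem (Proposition \ref{prp:cost-cont}), and proves sequential compactness of the pairs $(G_n, x_n([0,t^*_n]))$ by rescaling the polynomials onto the compact unit ball $\mathcal{P}(s-1)$ with affine maps adapted to the Hill regions (Lemma \ref{le:apend-sub-seq-pair}); boundedness then follows from continuity on a sequentially compact set. The compactness-of-normalized-polynomials step is exactly the paper's substitute for your missing coefficient control. If you want to complete your argument along your own lines, the cleanest fix is to import that normalization: extract a convergent subsequence of $(a_n,b_n)$ after rescaling, and handle the two degenerate limits ($b_n\to 0$ and $|b_n|\to\infty$) by showing each contradicts either the confinement $x_n(\T_n)\subset K_{\Ho}$ or the uniform bound on $Cost(c_n,[0,T_n])$ from \eqref{eq:cots-fuc-cond}. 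Your generic-case computation would then give a more explicit constant than the paper's soft compactness argument, which is a real (if modest) gain.
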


The proof of Lemma \ref{lem:uniform-bound-c-n} is in Appendix \ref{app:lem-unif-bounf-c-n}.

Let us reparametrize the minimizing geodesic $c_n(t)$ sequence. Let $\tilde{c}_n(t)$ be the minimizing geodesic in the interval $\mathcal{T}_n := [-t^*_n,T_n-t^*_n]$ given by $\tilde{c}_n(t) := c_n(t+t^*_n)$, then  $\tilde{c}_n(0)$ is bounded.  
 
\begin{Cor}\label{cor:sub-seq-time-int}
There exist a subsequence $\mathcal{T}_{n_j}$ such that $\mathcal{T}_{n_j} \subset \mathcal{T}_{n_{j+1}}$.
\end{Cor}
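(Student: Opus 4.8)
The plan is to show that the left endpoints of $\mathcal{T}_n = [-t^*_n,\,T_n - t^*_n]$ diverge to $-\infty$ and the right endpoints diverge to $+\infty$, and then to produce a nested subsequence by a double monotone-subsequence extraction. The pivot is the time $t^*_n$, selected in Proposition \ref{prp:bounded-ini-con} so that $y_n(t^*_n) = 0$.

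First I would establish the two divergences. Since each $c_n(t)$ is a geodesic parametrized by arc length for a polynomial $G_n(x)$, the energy condition $H_{G_n} = \tfrac12$ forces $p_{x}^2 + G_n^2(x_n) = 1$, so by equation \eqref{eq:ode-y-z} we have $|\dot y_n(t)| = |G_n(x_n(t))| \leq 1$ for all $t$. By construction $y_n(t^*_n) = 0$, while the endpoint conditions $c_n(0) = c_h(-n)$ and $c_n(T_n) = c_h(n)$ together with $\lim_{t\to\pm\infty} y_h(t) = \pm\infty$ give $y_n(0) = y_h(-n) \to -\infty$ and $y_n(T_n) = y_h(n) \to +\infty$. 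Integrating the uniform speed bound yields
\begin{equation*}
|y_n(0)| = |y_n(0) - y_n(t^*_n)| \leq \int_0^{t^*_n} |\dot y_n|\,dt \leq t^*_n, \qquad |y_n(T_n)| = |y_n(T_n) - y_n(t^*_n)| \leq \int_{t^*_n}^{T_n} |\dot y_n|\,dt \leq T_n - t^*_n.
\end{equation*}
Since $|y_n(0)| \to \infty$ and $|y_n(T_n)| \to \infty$, it follows that $t^*_n \to \infty$ and $T_n - t^*_n \to \infty$; equivalently, the left endpoint $-t^*_n \to -\infty$ and the right endpoint $T_n - t^*_n \to +\infty$.

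Next I would run the extraction. Writing $a_n := -t^*_n$ and $b_n := T_n - t^*_n$, I have $a_n \to -\infty$ and $b_n \to +\infty$. Because $a_n \to -\infty$, only finitely many terms exceed any given threshold, so I may greedily choose indices $m_1 < m_2 < \cdots$ with $a_{m_{j+1}} \leq a_{m_j}$, making $a$ non-increasing along this subsequence. The sequence $b_{m_j}$ is a subsequence of $b_n$ and hence still diverges to $+\infty$, so a second greedy selection produces a further subsequence $n_j$ along which $b$ is non-decreasing; since $(n_j)$ is a subsequence of $(m_j)$, the quantity $a$ remains non-increasing along $(n_j)$. For this final subsequence $a_{n_{j+1}} \leq a_{n_j}$ and $b_{n_j} \leq b_{n_{j+1}}$, which is precisely $\mathcal{T}_{n_j} = [a_{n_j}, b_{n_j}] \subseteq [a_{n_{j+1}}, b_{n_{j+1}}] = \mathcal{T}_{n_{j+1}}$.

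The only genuine content is the uniform speed bound $|\dot y_n| \leq 1$, which comes directly from the arc-length energy normalization $H_{G_n} = \tfrac12$; everything afterward is the standard fact that a family of compact intervals whose endpoints escape to $\mp\infty$ admits a nested subsequence. I do not expect a real obstacle here. The one point that requires care is ensuring that \emph{both} endpoints diverge rather than merely the interval length, and this is exactly why the argument is routed through the vanishing-$y$ time $t^*_n$: it anchors a common interior point from which both one-sided integral estimates can be read off.
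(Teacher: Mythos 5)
Your proof is correct, and it establishes the two key divergences $t^*_n \to \infty$ and $T_n - t^*_n \to \infty$ by a different mechanism than the paper. The paper's proof uses the minimizing property of $\tilde{c}_n$: since $\tilde{c}_n$ minimizes on every subinterval, $dist_{\R^{n+2}_F}(\tilde{c}_n(0),\tilde{c}_n(-t^*_n)) = t^*_n$ and $dist_{\R^{n+2}_F}(\tilde{c}_n(0),\tilde{c}_n(T_n-t^*_n)) = T_n-t^*_n$, and then it lets the base point $\tilde{c}_n(0)=c_n(t^*_n)$ sit in the compact set $K_0$ of Proposition \ref{prp:bounded-ini-con} while the endpoints $c_h(\mp n)$ escape to infinity, forcing both distances to blow up. You instead use only the single coordinate $y$: the anchor $y_n(t^*_n)=0$, the divergences $y_n(0)=y_h(-n)\to-\infty$ and $y_n(T_n)=y_h(n)\to+\infty$, and the bound $|\dot{y}_n|\le 1$. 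Your route is more elementary: it does not invoke the minimizing property at all, it uses only the $y$-component of the conclusion of Proposition \ref{prp:bounded-ini-con} rather than the full compact set $K_0$, and it avoids the properness of the sub-Riemannian distance that is implicit in the paper's step from ``the endpoints are unbounded'' to ``the distances diverge.'' (Note also that $|\dot{y}_n|\le 1$ already follows from arc-length parametrization of a horizontal curve, since the metric is $\sum_i dx_i^2 + dy^2$ restricted to $\Di_F$, so even the appeal to normality via $H_{G_n}=\tfrac12$ is not strictly needed.) You are also more explicit about the final combinatorial step, the double monotone extraction, which the paper leaves as an unstated standard fact. What the paper's formulation buys is consistency with the surrounding argument, which is phrased throughout in terms of $dist_{\R^{n+2}_F}$ and the compact sets $K_0$ and $K_N$.
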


\begin{proof}
Being $\tilde{c}_{n}(t)$ minimizing in $[0,T_n]$ implies that 
$$ dist_{\R^{n+2}_F}(\tilde{c}_{n}(0),\tilde{c}_{n}(-t^*_n)) = t^*_n, \;\text{and}\; dist_{\R^{n+2}_F}(\tilde{c}_{n}(0),\tilde{c}_{n}(T_n-t^*_n)) = T_n-t^*_n.$$ 
Since  $\tilde{c}_{n}(0)$ is bounded, but $\tilde{c}_{n}(-t^*_n)$ and $\tilde{c}_{n}(T_n-t^*_n)$ are unbounded. Then,
$$ \lim_{n \to \infty}[-t^*_n,T_n-t^*_n] = [-\infty,\infty],$$
and we can take a subsequence of intervals $\mathcal{T}_{n_j}$ such that $\mathcal{T}_{n_j} \subset \mathcal{T}_{n_{j+1}}$. 
\end{proof}

For simplicity , we will use the notation $\mathcal{T}_n$ for the subsequence $\mathcal{T}_{n_j}$. 

\begin{lemma}\label{lem:Min-seq}
Let $N$ be a natural number larger than $T^*_h$, Then there exist a compact set $K_N \subset \R^{n+2}_F$ such that $\tilde{c}_n(t)\in Min(K_N,\mathcal{T}_N)$ if $n > N$.
\end{lemma}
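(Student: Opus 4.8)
The plan is to verify the two defining clauses of $Min(K_N,\mathcal{T}_N)$ separately: that each $\tilde{c}_n$ is length-minimizing on $\mathcal{T}_N$, and that the trace $\tilde{c}_n(\mathcal{T}_N)$ lies in a single compact set $K_N$ independent of $n$. The minimizing clause is immediate: after the relabeling in Corollary \ref{cor:sub-seq-time-int} the intervals are nested, so $\mathcal{T}_N\subset\mathcal{T}_n$ whenever $n>N$; since $\tilde{c}_n$ is minimizing on all of $\mathcal{T}_n$ and the restriction of a minimizer to a subinterval is again a minimizer, $\tilde{c}_n$ is minimizing on $\mathcal{T}_N$. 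Hence the real content of the lemma is the construction of $K_N$, and the crux throughout will be keeping every estimate uniform in $n$.

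First I would bound the horizontal component by invoking geodesic compactness (Condition \ref{num-cond-1}), for which I must check $(\tilde{c}_n,\mathcal{T}_n)\in \Com(r,x_0,C)$. Minimality holds by construction; the cost is controlled because time-shift invariance gives $Cost(\tilde{c}_n,\mathcal{T}_n)=Cost(c_n,[0,T_n])$, which by \eqref{eq:cots-fuc-cond} is dominated by $Cost(c_h,[-n,n])$, itself bounded by $\Theta_1[c_h]$ (finite by Condition \ref{num-cond-3}, via Lemma \ref{lem:uniform-bound-c-s}); and the endpoint requirement holds since at $\partial\mathcal{T}_n$ the horizontal component equals $x_h(\pm n)\in B(r,x_0)$ for $n>N>T^*_h$ by Lemma \ref{cor:T_h}. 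Because $\lim_n\mathcal{T}_n=[-\infty,\infty]$ by Corollary \ref{cor:sub-seq-time-int}, Definition \ref{def:geo-comp-def} yields a single compact $K_{\Ho}\subset\Ho$ with $x_n(\mathcal{T}_n)\subset K_{\Ho}$ for every $n$; in particular $\tilde{x}_n(\mathcal{T}_N)\subset K_{\Ho}$ for all $n>N$.

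Next I would bound the $y$ and $z$ components by integrating their velocities over the fixed compact interval $\mathcal{T}_N$ from a uniformly bounded initial point. Proposition \ref{prp:bounded-ini-con} gives $\tilde{c}_n(0)=c_n(t^*_n)\in K_0$, so $\tilde{y}_n(0)$ and $\tilde{z}_n(0)$ are bounded independently of $n$. Since each $\tilde{c}_n$ is a geodesic for some $G_n\in Pen_F$ confined to its Hill region, \eqref{eq:ode-y-z} yields $|\dot{\tilde{y}}_n|=|G_n(\tilde{x}_n)|\le 1$ and $|\dot{\tilde{z}}_n|=|G_n(\tilde{x}_n)F(\tilde{x}_n)|\le M_F$, where $M_F:=\max_{x\in K_{\Ho}}|F(x)|$ is finite by continuity of $F$ and compactness of $K_{\Ho}$. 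Integrating over $\mathcal{T}_N=[-t^*_N,T_N-t^*_N]$, of length at most $N$, produces bounds on $\tilde{y}_n(t)$ and $\tilde{z}_n(t)$ over $\mathcal{T}_N$ depending only on $N$, $M_F$, and $K_0$. The product of these bounds with $K_{\Ho}$ is the desired compact box $K_N$.

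The main obstacle I anticipate is exactly this uniformity in $n$, and it concentrates in two spots. The horizontal set must be the single $K_{\Ho}$ guaranteed by geodesic compactness rather than an $n$-dependent one, which is why I first establish membership in $\Com(r,x_0,C)$; and the $z$-velocity must be estimated by $\max_{K_{\Ho}}|F|$ rather than any global bound on $F$, which does not exist since $F$ is a nonconstant polynomial. Once $K_{\Ho}$, and hence $M_F$, are pinned down uniformly, the integration estimates over the fixed interval $\mathcal{T}_N$ are automatically independent of $n$, and the compactness of $K_N$ follows at once.
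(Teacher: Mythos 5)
Your proposal is correct and follows essentially the same route as the paper: restriction of a minimizer to the nested subinterval $\mathcal{T}_N\subset\mathcal{T}_n$ gives the minimizing clause, the base point $\tilde{c}_n(0)\in K_0$ from Proposition \ref{prp:bounded-ini-con} anchors the trajectory, and the velocity bounds $\|\dot{\tilde{x}}_n\|\le 1$, $|\dot{\tilde{y}}_n|\le 1$, $|\dot{\tilde{z}}_n|\le\max_{K_{\Ho}}|F|$ integrated over the fixed interval $\mathcal{T}_N$ produce the compact box $K_N$. The only cosmetic difference is that you re-verify membership in $\Com(r,x_0,C)$ to obtain $K_{\Ho}$, whereas the paper takes that containment as already established in Proposition \ref{prp:bounded-ini-con} and bounds the horizontal displacement directly by $T_N$.
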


\begin{proof}
If $N <n$, then $\mathcal{T}_N \subset \mathcal{T}_{n}$ and $\tilde{c}_{n}(t)$  is minimizing on the interval $\mathcal{T}_N$. Moreover, $\Delta x(\tilde{c}_{n},\mathcal{T}_N)$ and $\Delta y(\tilde{c}_{n},\mathcal{T}_N)$ are bounded by $T_N$, since $T_N$ is the length of the interval $\mathcal{T}_{N}$, and by construction $||\dot{\tilde{x}}_n||_{\R^n} \leq 1$ and $|\dot{\tilde{y}}_n| \leq 1$. Using equation \eqref{eq:ode-y-z}, we have 
$$ | \Delta z(\tilde{c}_{n},\mathcal{T}_N)|  \leq \int_{-t^*_N}^{T_n-t^*_N} |F(x(t))| dt \leq T_N \max_{x \in K_{\Ho} + [-T_n,T_n]} |F(x)| =: C_z.  $$
If $K_N := K_0 + [-T_n,T_n]^n \times [-T_n,T_n] \times [-C_z,C_z]$, then $\tilde{c}_{n}(\mathcal{T}_N) \subset K_N$.
\end{proof}

Therefore, $\tilde{c}_n(t)$ has a convergent subsequence $\tilde{c}_{n_j}(t)$ converging to a geodesic $c_{\infty}(t)$ in $Min(K_N,\mathcal{T}_N)$. Condition \ref{num-cond-5} implies that $c_{\infty}(t)$ is a normal geodesic, then there exist a polynomial $G_{\infty}(x)$ in $Pen_F$. The following Lemma tells $[c_h] = [\tilde{c}_{\infty}]$. 
 
\begin{lemma}\label{lem:uniq-G}
The unique class of equivalence $[\tilde{c}_{\infty}]$ satisfying the asymptotic conditions given by \eqref{eq:asymp-cond-h} and \eqref{eq:asymp-cond-theta} is $[c_h]$.
\end{lemma}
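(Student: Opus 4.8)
The plan is to read off, from the two asymptotic conditions, enough information about the limiting class $[\tilde c_\infty]$ to invoke the injectivity of $\Theta_2$ and conclude it coincides with $[c_h]$. First I would check that the reparametrized minimizers form a sequence $\{(\tilde c_n,\mathcal{T}_n)\}$ in $\Com(r,x_0,C)$: each $\tilde c_n$ is minimizing on $\mathcal{T}_n$, its cost is uniformly bounded by way of the shorter condition \eqref{eq:cots-fuc-cond} and Lemma \ref{lem:uniform-bound-c-s} applied to $c_h$ (finite by Condition \ref{num-cond-3}), and its $x$-endpoints lie in $B_\Ho(r,x_0)$ for large $n$ thanks to the endpoint asymptotics \eqref{eq:asymp-cond-h} and Lemma \ref{cor:T_h}. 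Since $\mathcal{T}_n\uparrow[-\infty,\infty]$ (Corollary \ref{cor:sub-seq-time-int}) and $\tilde c_n\to c_\infty$, Lemma \ref{lem:com-converge} yields $G_\infty(x_\infty(t))\to 1$ and $F(x_\infty(t))\to 1$ as $t\to\pm\infty$, where $G_\infty\in Pen_F$ is the polynomial of the normal limit $c_\infty$ granted by Condition \ref{num-cond-5}.

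Next I would pin down the type and the asymptotic point of $c_\infty$. The convergence $G_\infty(x_\infty(t))\to 1$ shows $x_\infty(t)$ approaches $\partial\Omega_{G_\infty}^+$, so $c_\infty$ is singular rather than bounded-regular. Feeding in the $x$-component of \eqref{eq:asymp-cond-h}, whose limits force the reduced orbit to accumulate at $x_0$ as the interval endpoints $-t_n^*$ and $T_n-t_n^*$ run off to $\pm\infty$, I would conclude that $x_\infty(t)\to x_0$ at both ends; hence $c_\infty$ is homoclinic to $x_0$ and, since $G_\infty(x_0)=\lim_t G_\infty(x_\infty(t))=1$, its class lies in $Homc^+(x_0)$.

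The final step is the period identity $\Theta_2([c_\infty])=\Theta_2([c_h])$, after which Condition \ref{num-cond-2} closes the argument. I would pass to the limit in $Cost_y(\tilde c_n,\mathcal{T}_n)$ by splitting it into its restriction to a fixed $\mathcal{T}_N$, which converges to $Cost_y(c_\infty,\mathcal{T}_N)$ by uniform convergence on the compact $\mathcal{T}_N$, and a tail that I would make uniformly small in $n$ using $|Cost_y|\le\int\sqrt{(1-F)/(1+F)}\,ds$ and the finiteness of $\Theta_1[c_h]$. Letting $N\to\infty$ identifies $\lim_n Cost_y(c_n,[0,T_n])$ with $Cost_y(c_\infty,[-\infty,\infty])=\Theta_2([c_\infty])$, while \eqref{eq:asymp-cond-theta} equates the same limit with $\Theta_2([c_h])$. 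As both classes lie in $Homc^+(x_0)$ with equal $\Theta_2$, injectivity (Condition \ref{num-cond-2}) gives $[c_\infty]=[c_h]$, and since every class satisfying \eqref{eq:asymp-cond-h} and \eqref{eq:asymp-cond-theta} runs through the identical chain, $[c_h]$ is the unique such class.

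I expect the main obstacle to be this period-matching step, because $Cost_y(\tilde c_n,\mathcal{T}_n)$ is an integral over a time interval whose length diverges, so exchanging the limit with the integral is not automatic; the required uniform integrability of the tails has to be squeezed out of the finiteness of $\Theta_1[c_h]$ (Condition \ref{num-cond-3}) together with $F(x_n)\to 1$ near the endpoints. A secondary delicate point is the previous step: Lemma \ref{lem:com-converge} only places $x_\infty(t)$ near $\partial\Omega_{G_\infty}^+$, and it is precisely the endpoint condition \eqref{eq:asymp-cond-h} that prevents the limiting orbit from drifting along the boundary and forces it to concentrate at the single point $x_0$.
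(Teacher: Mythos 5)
Your proposal is correct and follows essentially the same route as the paper: extract a convergent subsequence via Proposition \ref{prp:seque-comp}, use the endpoint asymptotics \eqref{eq:asymp-cond-h} to identify the limit as a homoclinic geodesic in $Homc^+(x_0)$, and then invoke the injectivity of $\Theta_2$ together with \eqref{eq:asymp-cond-theta}. The one place you go beyond the paper is the period-matching step $\Theta_2([c_\infty])=\lim_n Cost_y(c_n,[0,T_n])$: the paper's proof of this lemma asserts the conclusion directly and leaves the limit--integral interchange to the continuity of $Cost_y$ on $\Com(r,x_0,C)$ established in the appendix (Proposition \ref{prp:cost-cont}), whereas you supply the tail estimate from $\Theta_1[c_h]<\infty$ explicitly, which is a legitimate and arguably cleaner way to close that gap.
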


\begin{proof}
By Proposition \ref{prp:seque-comp}, $\tilde{c}_{n}(t)$ has a convergent subsequence $\tilde{c}_{n_s}(t)$ converging to a minimizing geodesic $c_{\infty}(t)$ on the interval $\mathcal{T}_N$. Moreover, if $x_\infty(t)$ is the reduced dynamics associated to $\tilde{c}_{\infty}(t)$ then the condition \eqref{eq:asymp-cond-h} tells us that $x_\infty(t)$ is assymptotic to $x_0$ when $t \to \pm \infty$, so $\tilde{c}_\infty(t)$ is a homoclinic geodesic. By condition \ref{num-cond-5}, $c_{\infty}(t)$ is a normal geodesic, so $c_{\infty}(t)$ is associated to a polynomial $G(x) = a +bF(x)$. By Proposition \ref{prop:mag-geo-Delta-C} the coordinates $y$ and $z$ satisfies the condition from equation \eqref{eq:asymp-cond-h} if and only if $G(x_0) = 1$, so $[c_{\infty}]\in Homc^+(x_0)$. Therefore, the condition \ref{num-cond-2} implies $[c_h]$ is the  unique  satisfying the condition  \eqref{eq:asymp-cond-h} and \eqref{eq:asymp-cond-theta}. 
\end{proof}

\subsubsection{Proof of Theorem \ref{the:metrtic-lines-method}}

\begin{proof}
Since $c_{\infty}(t)$ and $c_{h}(t)$ belong to the same equivalent class, there exists a translation $\varphi_{(y_0,z_0)}\in Iso(\R^{n+2}_{F})$ sending $c_{\infty}(t)$ to $c_{h}(t)$. Using that $N$ is arbitrary and $c_{h}([-T,T])$ is bounded, we can find compact a natural number $N$ such that the compact sets
$K := K_N$ and $\mathcal{T} := \mathcal{T}_N$ satisfy $c_{h}([-T,T]) \subset \varphi_{(y_0,z_0)} (c_{\infty}(\mathcal{T}))$ and $c_{\infty}(t)\in Min(K,\mathcal{T})$. Lemma \ref{lem:iso-metr} implies that $c_{h}(t)$ is minimizing in $[-T,T]$ and $T$ is arbitrary. Therefore,  $c_{h}(t)$ is a metric line in $\R^3_{h}$, so does $\gamma_h(t)$.
\end{proof}

\section{The Engel-type group}\label{sec:Eng-type}

Let $\Eng(n)$ be the Engel-type group and let $\mathfrak{eng}(n)$ be its Lie Algebra, which is spanned by  $\{E_1,\dots, E_n, E_0^{\Laa},\dots,E_{n+1}^{\Laa}\}$ and satisfies the following non trivial relations
\begin{equation}\label{eq:Eng-alg}
E_{i}^{\Laa} = [E_i,E_{0}^{\Laa}] \;\;\text{and} \;\; E_{n+1}^{\Laa} = [E_i,E_{i}^{\Laa}] .
\end{equation} 
Where the first layer $\Lag_1$  and the Lie algebra $\Laa$ are spanned by $\{E_1,\dots, E_n,E_0^{\Laa}\}$ and $\{E_0^{\Laa},\dots,E_{n+1}^{\Laa}\}$, respectively. In addition, its growth vector is $(n+1,2n+1,2n+2)$. To endow $\Eng(n)$ with a left-invariant \sR metric, we declare the vector from equation \eqref{eq:Eng-alg} orthonormal, then $\Lav$ has dimension $1$ and $\Lah$ is spanned by $E_1,\dots, E_n$, so equation \eqref{eq:Eng-alg} implies $\Eng(n)$ satisfy the conditions from Section \ref{sub-sub-sec:exp-coor} and we can consider the \sR submersion $\pi_F$ given in Section \ref{sub-sec:geo-mag}. In \cite{donne2020semigenerated}, E. Le Donne and T. Moisala studied the \sR structure of $\Eng(n)$. Moreover, in \cite{le2008cornucopia}, E. Le Donne and F. Tripaldi classified the Carnot groups in low dimension; in this context, they denoted the group $\Eng(2)$ by $\Na6$.

Let $g = \Phi(\theta,x)$ be the exponential coordinates of second defined in Section \ref{sub-sub-sec:exp-coor}, where $x = (x_1,\dots,x_n)$ and $\theta = (\theta_0,\theta_1,\dots,\theta_{n+1})$, then the following left-invariant vector fields define a frame for the non-integrable distribution $\Di$:
\begin{equation*}
\begin{split}
X_i & = \frac{\partial}{\partial x_i} \;\;\text{for}\;i = 1,\dots,n, \\
\; Y & = \frac{\partial}{\partial \theta_0} + x_1 \frac{\partial}{\partial \theta_1} + \dots + x_n \frac{\partial}{\partial \theta_n} + \frac{1}{2} ( x_1^2+\dots+x_n^2) \frac{\partial}{\partial \theta_{n+1}}.
\end{split}
\end{equation*}

Then, the \sR metric on $\Di$ in the coordinates $(\theta,x)$ is given by the expression $ds^2_{\Eng(n)} = \sum_{i=1}^n dx_i^2 + d\theta_0^2$.

The following lemma characterizes the abnormal curves in the Engel-type group.
\begin{lemma}\label{lemma:abn-geo-Eng}
The Engel-type group $\Eng(n)$ possesses two families of abnormal curves:

\textbf{(Family of vertical lines)} We say a curve $\gamma(t)$ is an abnormal vertical line, if $\gamma(t)$ is tangent to the vector field $Y$. So $\gamma(t)$ qualifies as a metric line.

\textbf{(Family of horizontal curves)} We say a curve $\gamma(t)$ is an abnormal horizontal curves, if $\gamma(t)$ is tangent to sub-space $\Ho \subset \Eng(n)$. And in addition, $\dot{\gamma}(t)$ is perpendicular to a constant vector or tangent to a sphere on $\Ho$. So $\gamma(t)$ qualifies as a metric line only if $\gamma(t)$ is a horizontal line.
\end{lemma}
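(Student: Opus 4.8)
The plan is to establish the two assertions of the lemma—first the classification of abnormal curves into the vertical and horizontal families, then the determination of which are metric lines—via the Pontryagin Maximum Principle applied to the left-invariant frame $\{X_1,\dots,X_n,Y\}$, in exact parallel to the magnetic-space companion Lemma \ref{lemma:abn-geo-mag}.

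First I would record the only nonzero brackets of the frame, which follow from \eqref{eq:Eng-alg}: writing $Z_i := [X_i,Y]$ for the field of $E_i^{\Laa}$ and $W:=[X_i,Z_i]$ for the field of $E_{n+1}^{\Laa}$, one checks directly from the given $X_i,Y$ that $[X_i,X_j]=0$, $[X_i,Y]=Z_i$, $[X_i,Z_j]=\delta_{ij}W$, and $[Z_i,Y]=0=[W,\cdot]$. An abnormal curve $\gamma$ with controls $(u_0,u_1,\dots,u_n)$, i.e. $\dot\gamma=u_0Y+\sum_iu_iX_i$, carries a nowhere-zero covector $\lambda(t)$ annihilating $\Di$, so the momentum functions $P_{X_i}:=\langle\lambda,X_i\rangle$ and $P_Y:=\langle\lambda,Y\rangle$ vanish identically. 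Differentiating along the flow and using $\{P_V,P_{V'}\}=P_{[V,V']}$ yields the first-order conditions $u_0\,P_{Z_i}=0$ and $\sum_iu_i\,P_{Z_i}=0$, together with the evolution $\dot P_{Z_i}=-u_iP_W$ and $\dot P_W=0$, so $P_W$ is constant.

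The case analysis then produces exactly the two families. If $u_0\neq0$ on an interval, the first condition forces $P_{Z_i}=0$; nonvanishing of $\lambda$ then requires $P_W\neq0$, and $\dot P_{Z_i}=-u_iP_W=0$ forces $u_i\equiv0$, so $\gamma$ is tangent to $Y$—a vertical line. If instead $u_0\equiv0$, then $\gamma$ is tangent to $\Ho$ and $\dot P_{Z_i}=-u_iP_W$ integrates to $P_{Z_i}(t)=P_{Z_i}(0)-P_W\,(x_i(t)-x_i(0))$. When $P_W=0$ the vector $(P_{Z_i})$ is a nonzero constant $\mathbf c$ and the constraint $\sum_iu_iP_{Z_i}=0$ reads $\dot\gamma\perp\mathbf c$; when $P_W\neq0$ the same constraint becomes $\tfrac{d}{dt}\big(\tfrac12|x|^2-P_W^{-1}\sum_iP_{Z_i}(0)\,x_i\big)=0$, i.e. $x(t)$ lies on a sphere of $\Ho$. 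This is precisely the dichotomy ``$\dot\gamma$ perpendicular to a constant vector or tangent to a sphere.''

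Finally I would settle the metric-line claims. A vertical line is the integral curve of $Y$, hence the one-parameter subgroup $\exp(tE_0^{\Laa})$; its $\pi$-projection to $\Lag_1$ moves with constant unit velocity $E_0^{\Laa}\in\Lav$, so it is a line-geodesic (Definition \ref{def:lin-geo}) and therefore a metric line by Lemma \ref{lem:sR-sub-metric-line}. For the horizontal family the key observation is that each $X_i=\partial/\partial x_i$ has no $\theta$-component, so along a horizontal curve every coordinate $\theta$ is constant and $\gamma(t)=(x(t),\theta)$ with $x$ unit-speed in $\Ho\cong\R^n$. If $x$ is not a straight line, choose $a<b$ with $|x(b)-x(a)|<b-a$; the straight horizontal segment $s\mapsto((1-s)x(a)+sx(b),\theta)$ is again a horizontal curve joining $\gamma(a)$ to $\gamma(b)$, of length $|x(b)-x(a)|<b-a$, so $\gamma$ fails to minimize and is not a metric line. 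Thus among horizontal abnormal curves only the straight lines qualify, and these are line-geodesics, hence metric lines. I expect the main obstacle to be the abnormal classification itself—justifying that the first-order momentum conditions are exhaustive and ruling out ``mixed'' curves where $u_0$ alternates between zero and nonzero values; the metric-line half is then short, since the constancy of $\theta$ along horizontal curves reduces it to an elementary shortcut comparison.
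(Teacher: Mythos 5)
Your proposal is correct and follows essentially the same route as the paper's proof: Pontryagin's maximum principle applied to the left-invariant momentum functions, with the same case split governed by whether $P_{Y_{n+1}}$ (your $P_W$) vanishes, yielding the vertical lines, the hyperplane-orthogonal horizontal curves, and the spherical horizontal curves. You additionally make explicit the metric-line verification (vertical lines as one-parameter subgroups, and the shortcut comparison using the constancy of $\theta$ along horizontal curves), which the paper's proof leaves implicit.
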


The horizontal and vertical expressions have the same meaning as in Lemma \ref{lemma:abn-geo-mag}. The proof of Lemma \ref{lemma:abn-geo-Eng} is in Appendix \ref{sub-AP:ab-geo}.

\subsection{Reduced Hamiltonian}\label{sub-sec:red-ham-Eng}

If $\mu = (a_0,\dots,a_n)\in \Laa^*$, then equation \eqref{eq:red-Ham} implies that the reduced Hamiltonian $H_{\mu}$ is of the form
\begin{equation}\label{eq:fund-eq-F-Eng}
H_{\mu}(p_x,x) = \frac{1}{2} \sum_{i=1}^n p_{x_i}^2 + \frac{1}{2} ( a_0 + \sum_{i=1}^n a_i x_i  +\frac{a_{n+1}}{2}  \sum_{i=1}^n x_i^2 )^2.
\end{equation}

When $a_{n+1} = 0$, the reduced Hamiltonian $H_{\mu}$ takes the form of a Hamiltonian for small oscillations. For further elaboration on the theory of small oscillations, consult \cite[Section 23]{arnol2013mathematical} or \cite[Section 23]{Landau}. In instances where $a_{n+1} \neq 0$, to find a more suitable expression for the reduced Hamiltonian, we establish the following change of coordinates; %if we set up the change of coordinates
$$ (\tilde{x}_1,\dots,\tilde{x}_n) = (\frac{a_1}{a_{n+1}} +x_1,\dots, \frac{a_n}{a_{n+1}} +x_n), $$
and define the constants $(\alpha,\beta) := (a_0 - \frac{1}{2a_2}\sum_{i=1}^n a_i^2, \frac{a_{n+1}}{2})$. If $||\cdot||_{\Ho}$ is the Euclidean norm in $\Ho$, we denote $r = ||x||_{\Ho}$, then the reduced Hamiltonian is the radial an-harmonic oscillator
\begin{equation}\label{eq:an-harm-osci}
H_{\mu}(p_{\tilde{x}},\tilde{x}) = \frac{1}{2} \sum_{i=1}^n p_{\tilde{x}_i}^2 + \frac{1}{2} F_{\mu}^2(r), \;\;\text{where} \;\; F_{\mu}(r) = \alpha + \beta r^2.
\end{equation}

If $(x(t),p_x(t))$ is the solution of an $n$-degree of freedom system with radial potential, then it is well known that the curve $x(t)$ lies on a two-plane. Let us formalize this idea in the following proposition.

\begin{lemma}\label{lem:planar-an-harmonic}
Let $T^*\R^2_{T^*\Ho}$ be the $4$-dimensional sub-manifold of $T^*\Ho$ given by
$$ T^*\R^2_{T^*\Ho} := \{ (p_x,x) \in T^*\Ho : (p_x,x) = (p_{x_1},p_{x_2},0,\dots,0,x_1,x_2,0,\dots,0) \}.$$
 If $(p_x(t),x(t))$ is a solution to the an-harmonic oscillator, given by \eqref{eq:an-harm-osci}, with the initial condition $(p_x(0),x(0))\in T^*\R^2_{\Ho}$, then $(p_x(t),x(t))\in T^*\R^2_{T^*\Ho}$ for all time.

Moreover, let $\Psi_Q(x)$ be the right action of $SO(n)$ by an element $Q$, and  let $\Psi_Q^*(p_x,x)$ be the cotangent lift of the action of $SO(n)$ on $T^*\Ho$, then every solution $(p_x(t),x(t))$ to the radial an-harmonic oscillator, given by \eqref{eq:an-harm-osci}, has the form $(p_x(t),x(t)) = \Psi_Q^*(\tilde{p}_x(t),\tilde{x}(t))$ where $(\tilde{p}_x(t),\tilde{x}(t))\in T^*\R^2_{T^*\Ho}$ is a solution.
\end{lemma}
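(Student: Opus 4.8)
The plan is to exploit the rotational symmetry of the radial an-harmonic oscillator \eqref{eq:an-harm-osci}. First I would write out Hamilton's equations. Since the potential $\frac12 F_\mu^2(r)$ depends on $x$ only through $r=\|x\|_{\Ho}$ and $F_\mu(r)=\alpha+\beta r^2$, a direct computation yields $\dot x_i=p_{x_i}$ and $\dot p_{x_i}=-2\beta F_\mu(r)\,x_i$, so the reduced motion obeys the central-force law
\begin{equation*}
\ddot x_i=-2\beta\,F_\mu\big(r(t)\big)\,x_i,\qquad i=1,\dots,n .
\end{equation*}
In particular $p_x=\dot x$. The Hamiltonian $H_\mu$ is invariant under the cotangent lift $\Psi_Q^*$ of each $Q\in SO(n)$, because $Q$ preserves both $\|x\|_{\Ho}$ and the kinetic term; hence $\Psi_Q^*$ is a symplectomorphism satisfying $H_\mu\circ\Psi_Q^*=H_\mu$, and therefore commutes with the Hamiltonian flow, carrying solutions to solutions. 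This equivariance, combined with the uniqueness of integral curves of the (smooth, in fact polynomial) Hamiltonian vector field, is the engine of both assertions.

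For the first assertion I would show that $T^*\R^2_{T^*\Ho}$ is flow-invariant. For each index $i\ge 3$ the reflection $R_i\in O(n)$ sending $x_i\mapsto -x_i$ and fixing the remaining coordinates preserves $\|x\|_{\Ho}$, hence $H_\mu$; its cotangent lift flips $p_{x_i}$ as well, so its fixed-point set is exactly $\{x_i=0,\ p_{x_i}=0\}$. The submanifold $T^*\R^2_{T^*\Ho}$ is the intersection of these fixed-point sets over $i=3,\dots,n$. Since each lifted reflection commutes with the flow, its fixed-point set is invariant, and so is the intersection; thus a trajectory starting in $T^*\R^2_{T^*\Ho}$ stays there for all time. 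Equivalently, one may argue directly from the central-force law: for $i\ge 3$ the coordinate $x_i(t)$ solves the scalar linear equation $\ddot x_i=-2\beta F_\mu(r(t))x_i$ with continuous coefficient $r(t)$, and the vanishing initial data $x_i(0)=p_{x_i}(0)=0$ force $x_i\equiv 0$ by uniqueness.

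For the second assertion, fix a solution $(p_x(t),x(t))$ and look at the initial data $x(0)$ and $p_x(0)=\dot x(0)$. These two vectors span a subspace of $\Ho$ of dimension at most two, so they lie in some two-plane. I would choose $Q\in SO(n)$ carrying the coordinate plane $\{x_3=\dots=x_n=0\}$ onto this plane, so that $(p_x(0),x(0))=\Psi_Q^*(\tilde p_x(0),\tilde x(0))$ for some planar initial datum $(\tilde p_x(0),\tilde x(0))\in T^*\R^2_{T^*\Ho}$. Let $(\tilde p_x(t),\tilde x(t))$ be the solution with this datum; by the first assertion it remains in $T^*\R^2_{T^*\Ho}$ for all $t$. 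By equivariance, $\Psi_Q^*(\tilde p_x(t),\tilde x(t))$ is again a solution, and it agrees with $(p_x(t),x(t))$ at $t=0$; uniqueness then forces $(p_x(t),x(t))=\Psi_Q^*(\tilde p_x(t),\tilde x(t))$ for all $t$, which is the desired form.

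The only genuinely delicate point is the equivariance step, namely that the cotangent lift of the $SO(n)$ action commutes with the flow of $H_\mu$; as indicated, this reduces to the identity $H_\mu\circ\Psi_Q^*=H_\mu$, which is immediate from the radial form of the potential and the orthogonality of $Q$. The remaining work is bookkeeping: fixing the left/right and pull-back conventions for $\Psi_Q^*$ consistently, and handling the degenerate cases in which $x(0)$ and $p_x(0)$ are parallel or one of them vanishes, where they span a line or a point and the rotation $Q$ still exists.
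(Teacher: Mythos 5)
Your proof is correct, and the second assertion is handled exactly as in the paper: pick $Q\in SO(n)$ rotating the coordinate $2$-plane onto the plane spanned by $x(0)$ and $p_x(0)$, use that $\Psi_Q^*$ preserves $H_\mu$ and hence maps solutions to solutions, and conclude by uniqueness of integral curves. Where you diverge is the first assertion. The paper argues via the momentum map of the $SO(n)$ action: it identifies $J(p_x,x)=p_x\wedge x$, notes that this $2$-form has rank $2$ and is conserved along the flow, and concludes that $p_x(t)$ and $x(t)$ span the same plane for all $t$. You instead use either the discrete reflections $x_i\mapsto -x_i$ ($i\ge 3$) — whose lifted fixed-point sets are flow-invariant and intersect in $T^*\R^2_{T^*\Ho}$ — or the scalar linear ODE $\ddot x_i=-2\beta F_\mu(r(t))x_i$ with vanishing initial data. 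Both of your variants are valid, more elementary, and in fact slightly more robust: the conservation-of-$J$ argument is cleanest when $p_x(0)\wedge x(0)\neq 0$, whereas your uniqueness argument covers the degenerate case of parallel (or vanishing) initial vectors without further comment, a case you correctly flag at the end. What the paper's route buys is conceptual economy — the angular-momentum map reappears elsewhere (e.g., in the reduction to the planar radial oscillator and the role of $p_\theta=\ell$), so deriving planarity from $J$ keeps the exposition unified with the symplectic-reduction framework of the rest of the article.
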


\begin{proof}
Let us first prove that if $(p_x(0),x(0))\in T^*\R^2_{\Ho}$, then $(p_x(t),x(t))\in T^*\R^2_{T^*\Ho}$ for all time:  By construction the Hamiltonian functions is invariant under the action of $SO(n)$ on $T^*\Ho$, then $SO(n)$  defines a momentum map $J:T^*\Ho \to \mathfrak{so}^*(n)$. Using the Killing form, we can identify the $\mathfrak{so}^*(n)$ with $\mathfrak{so}(n)$, then $J:T^*\Ho \to \mathfrak{so}(n)$ and a standard computation shows that $J(p_x,x) = p_x \wedge x$. Therefore, on one side, the skew-symmetric matrix associated with the two-form $p_x \wedge x$ has rank 2; on the other side, the conservation of the momentum map implies $p_x(0) \wedge x (0) = p_x(t) \wedge x(t)$. So the  $p_x(t)$ y  $x(t)$ generates the same plan for all time $t$.

Let us consider an arbitrary initial condition $(p_x(0),x(0))$. The goal is to write a solution to the reduced system with this initial condition and the above form: There exists a matrix $Q^{-1}\in SO(n)$ such that $\Psi^*_{Q^{-1}}(p_x(0),x(0))\in T^*\R^2_{T^*\Ho}$. Let $(\tilde{p}_x(t),\tilde{x}(t))$ be a solution to reduced system with initial condition $\Psi^*_{Q^{-1}}(p_x(0),x(0))$, since $\Psi_Q^*$ maps solutions of the reduced system to solutions, $(p_x(t),x(t)) = \Psi_Q^*(\tilde{p}_x(t),\tilde{x}(t))$ is a solution to the reduced system, see \cite[Section 6]{arnold1992ordinary}. By construction, $(p_x(t),x(t))$ has initial condition $(p_x(0),x(0))$, so $(p_x(t),x(t))$ is the desired solution. 
\end{proof}

Proposition \ref{lem:planar-an-harmonic} implies it is enough to understand the planar an-harmonic oscillator to describe and classify the dynamics of the reduced Hamiltonian $H_{\mu}$.

\subsubsection{The Planar Radial An-Harmonic Oscillator}

Let us restrict the reduced Hamiltonian given by equation \eqref{eq:an-harm-osci} to the sub-manifold $T^*\R^2_{\Ho}$ and write it in polar coordinates,
\begin{equation}
H_{\mu}(p_x,x)|_{T^*\R^2_{T^*\Ho}} = H_{\mu}(p_r,p_{\theta},r,\theta) = \frac{1}{2}(p_r^2 + \frac{p_{\theta}^2}{r^2}) + \frac{1}{2}F^2_{\mu}(r).
\end{equation}

Being the potential radial implies that $\theta$ is a cyclic coordinate, then $p_{\theta}$ is a constant of motion. Let us fix $p_{\theta} = \ell$, then we reduce the Hamiltonian $H_{\mu}$ to a one-degree of freedom system with the form
\begin{equation}\label{eq:planar-an-harm-osc}
H_{(\mu,\ell)}(p_r,r) = \frac{1}{2}p^2_r + \frac{1}{2}V_{(\mu,\ell)}(r),\;\;\text{where}\;\; V_{(\mu,\ell)}(r) := \frac{\ell^2}{r^2} + F^2_{\mu}(r).
\end{equation}
Fixing the energy level $H_{(\mu,\ell)} = \frac{1}{2}$ and using the Hamilton equation $p_r = \dot{r}$, we reduce to quadrature the radial coordinate;
\begin{equation}
t-t_0 = \int_{r_0}^r \frac{dr}{\sqrt{1-V_{(\mu,\ell)}(r)}}. \;\; \text{where}\;\; r_0 = r(t_0).
\end{equation}

We notice that the effective potential is non-negative and unbounded, then the conservation of energy $H_{(\mu,\ell)}(p_x(t),x(t)) = \frac{1}{2}$ implies the hill region is compact, and the radial coordinates lays on a closed interval called the radial Hill interval. Let us formalize the definition. 
\begin{defi}
We say an interval $\mathcal{R} = [r_{min},r_{\max}]$ is the radial hill interval of the effective potential $V_{(\mu,\ell)}(r)$, if $V_{(\mu,\ell)}(r_{max}) = V_{(\mu,\ell)}(r_{max}) = 1$ and $V_{(\mu,\ell)}(r) < 1$ for all $r\in (r_{min},r_{\max})$.
\end{defi}

The following result characterizes the equilibrial point of the planar an-harmonic oscillator.

\begin{lemma}\label{lemma:equi-point-an-har}
The relative equilibrium points of the system defined by the Hamiltonian function from \eqref{eq:planar-an-harm-osc} and with energy $H_{(\mu,\ell)} = \frac{1}{2}$  are the following:
\begin{itemize}
    \item If $\ell = 0$, then there is a unique relative equilibrium point $(p_r,r) = (0,0)$ if and only if $F_{\mu}(r) = \pm (1-\beta r^2)$, this point has a homoclinic orbit given by the radial Hill integral $ \mathcal{R} =[0,\sqrt{\frac{2}{\beta}}]$

    \item If $\ell \neq  0$, then there is a unique relative equilibrium point $(p_r,r) = (0,r^*)$ for some values of the parameters $(\mu,\ell)$, the radial Hill interval is a singleton $[r^*.r^*]$.
\end{itemize}
\end{lemma}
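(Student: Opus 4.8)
The plan is to reduce the statement to an elementary analysis of the one-degree-of-freedom effective potential $V_{(\mu,\ell)}(r)$ from \eqref{eq:planar-an-harm-osc}. A relative equilibrium of the Hamiltonian flow is a fixed point, which forces $p_r = 0$ and $V_{(\mu,\ell)}'(r) = 0$; requiring it to lie on the energy level $H_{(\mu,\ell)} = \tfrac12$ imposes the additional constraint $V_{(\mu,\ell)}(r) = 1$, since $p_r^2 = 1 - V_{(\mu,\ell)}(r)$. Thus the relative equilibria on this level are exactly the points $(0, r^*)$ with $V_{(\mu,\ell)}'(r^*) = 0$ and $V_{(\mu,\ell)}(r^*) = 1$. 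I would treat the two cases $\ell = 0$ and $\ell \neq 0$ separately.

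For $\ell = 0$ we have $V_{(\mu,0)}(r) = F_\mu^2(r) = (\alpha + \beta r^2)^2$, so $V_{(\mu,0)}'(r) = 4\beta r(\alpha + \beta r^2)$. The critical points in $r \geq 0$ are $r = 0$ and, when $\alpha/\beta < 0$, the point $r = \sqrt{-\alpha/\beta}$; at the latter $F_\mu = 0$ and hence $V_{(\mu,0)} = 0 \neq 1$, so it never lies on the energy level. At $r = 0$ one has $V_{(\mu,0)}(0) = \alpha^2$, which equals $1$ precisely when $\alpha = \pm 1$. This shows that a relative equilibrium on the level $\tfrac12$ exists, is unique, and sits at the origin if and only if $\alpha = \pm 1$, i.e. $F_\mu(r) = \pm(1 - \beta r^2)$ after normalizing the sign of the $r^2$-coefficient; the reflection isometry $R_Y$, which sends $F_\mu$ to $-F_\mu$, lets me reduce to the $+$ case. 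A homoclinic (rather than isolated) orbit occurs precisely when the potential dips below $1$, i.e. for the normalized sign with $\beta > 0$: then $V_{(\mu,0)}(r) = (1 - \beta r^2)^2 \leq 1$ is equivalent to $0 \leq \beta r^2 \leq 2$, giving the radial Hill interval $\mathcal{R} = [0, \sqrt{2/\beta}]$, and since $V_{(\mu,0)}''(0) = -4\beta < 0$ the origin is a strict local maximum of the potential, so the orbit filling $\mathcal{R}$ is homoclinic to $(0,0)$.

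For $\ell \neq 0$ the key observation is a convexity statement. Writing $u = r^2 \in (0,\infty)$, the effective potential becomes $V_{(\mu,\ell)} = \ell^2/u + (\alpha + \beta u)^2$, whose second derivative in $u$ is $2\ell^2/u^3 + 2\beta^2 > 0$. Hence $V_{(\mu,\ell)}$ is strictly convex in $u$ and, because it tends to $+\infty$ as $u \to 0^+$ and as $u \to \infty$ (recall $\beta \neq 0$ in the an-harmonic regime), it has a unique critical point $u^* = (r^*)^2$, which is its global minimum. Consequently there is at most one relative equilibrium, located at $r^*$, and it lies on the energy level $\tfrac12$ exactly when $V_{(\mu,\ell)}(r^*) = 1$; this is a codimension-one condition on $(\mu,\ell)$, which is the meaning of ``for some values of the parameters.'' When it holds, $V_{(\mu,\ell)}(r) \geq V_{(\mu,\ell)}(r^*) = 1$ everywhere, so the set $\{\, r : V_{(\mu,\ell)}(r) \leq 1 \,\}$ collapses to the single point $r^*$, i.e. the radial Hill interval is the singleton $[r^*, r^*]$, corresponding to a circular relative equilibrium.

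The main obstacle I anticipate is bookkeeping of signs rather than any genuine difficulty: matching the computed critical condition $\alpha = \pm 1$ (with the original coefficient $\beta = a_{n+1}/2$) to the normalized form $F_\mu(r) = \pm(1 - \beta r^2)$ in the statement, and ensuring the Hill interval $[0,\sqrt{2/\beta}]$ is read with the normalized positive $\beta$. The substantive content, the strict convexity in $u = r^2$ that forces uniqueness of the equilibrium and the degeneration of the Hill interval to a point when $\ell \neq 0$, is short once the change of variable is made.
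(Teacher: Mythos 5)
Your proposal is correct, and its overall skeleton (reduce to the one-degree-of-freedom effective potential, impose $p_r=0$ together with the energy constraint, split on $\ell$) matches the paper's; the $\ell=0$ case is essentially identical, though you add two useful checks the paper omits: that $V''(0)=-4\beta<0$ so the origin is a strict local maximum of the potential (which is what actually makes the orbit filling $[0,\sqrt{2/\beta}]$ homoclinic rather than merely bounded), and the explicit computation $(1-\beta r^2)^2\le 1 \Leftrightarrow 0\le\beta r^2\le 2$ identifying the Hill interval. Where you genuinely diverge is the $\ell\neq 0$ case. The paper eliminates $F_\mu$ between the equilibrium equation $\dot p_r=0$ and the energy relation and gets uniqueness of $r^*$ by observing that one side of the resulting equation is monotone decreasing and the other monotone increasing; it then asserts the Hill interval is a singleton because ``Hamiltonian dynamics do not have cycle limits.'' Your route --- strict convexity of $V_{(\mu,\ell)}$ in $u=r^2$ (second derivative $2\ell^2/u^3+2\beta^2>0$) plus coercivity at both ends of $(0,\infty)$ --- gives the unique critical point as a global minimum in one stroke, and then $V(r^*)=1$ immediately forces $V\ge 1$ everywhere, so the sublevel set $\{V\le 1\}$ collapses to $\{r^*\}$. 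This is a cleaner and more rigorous justification of the singleton claim than the paper's appeal to the absence of limit cycles, at the modest cost of the change of variable. No gaps.
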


\begin{proof}
    The reduced equations are
    $$\dot{r} = p_r, \;\;\text{and}\;\; \dot{p}_r = -\frac{2\ell^2}{r^3} + 2br(a+br^2).$$
Then the necessary conditions for an equilibrium point are $p_r = 0$ and  $V_{(\mu,\ell)}(r) = \frac{1}{2}$. If $\ell = 0$, then the energy conservation implies  $(a+br^2)^2=1$, so the point $(p_r,r)$ is a relative equilibrium point if and only if $(p_r,r) = (0,0)$, where $F_{\mu}(r) = \pm (1-\beta r^2)$.

If $\ell \neq 0$, then using the energy conservation and plugging in equation $\dot{p}_r = 0$, we have
$$1- \frac{2\ell^2}{r^2} = (a+br^2)^2 ,\;\text{and}\;\; \frac{\ell^4}{b^2 r^8} = 1- \frac{2\ell^2}{r^2}.$$
We notice that the left side of the last equation is a monotone decreasing function for $0 < r$, and the right side is monotone increasing. Therefore, there is a unique $r^*$ such that $(p_r,r) = (0,r^*)$ is relative equilibrium point. Since Hamiltonian dynamics do not have cycle limits, the radial Hill interval is a singleton $\mathcal{R} = [r^*.r^*]$. 
\end{proof}
 
\begin{Remark}
We present some importants remarks about Lemma \ref{lemma:equi-point-an-har}:
\begin{itemize}
    \item The effective potential polynomial given by $\ell = 0$ and $F_{\mu}(r) = \pm (1-\beta r^2)$ defines a family homoclinic geodesic. This family generalizes the family of metric lines in the Engel group associated with the polynomial $F_{\mu}(x) = \pm (1-\beta x^2)$, see \cite[Theorem B]{bravo2022metric}.
    \item We will consider the solution defined by the relative equilibrium point with $\ell \neq 0$ as $r$-periodic. 
\end{itemize}    
\end{Remark}

\subsubsection{Classification of normal \sR geodesics in the Engel-type group}\label{subsub:clas-sr-geo}

%We classify the \sR geodesics in the Engel type group $\Eng(n)$ according to their reduced dynamics.

Let $\gamma(t)$ be a normal \sR geodesic different of line type and with momentum $\mu \neq 0$, then $\gamma(t)$ is only one of the following options:

\textbf{Small oscillation} We say a geodesic $\gamma(t)$ is of the type small oscillation, if $a_{n+1} = 0$.

\textbf{$r$-periodic} We say a geodesic $\gamma(t)$ is of the type $r$-periodic, if $a_{n+1} \neq 0$ and the radial dynamic is periodic.

\textbf{$r$-homoclinic} We say a geodesic $\gamma(t)$ is of the type $r$-homoclinic, if $a_{n+1} \neq 0$ and the radial dynamic is a homoclinic orbit.

%Line-geodesic corresponds to the case $\mu = 0$, or the reduced dynamic is trivial.  

\begin{Cor}\label{cor:no-met-lin-Eng}
The \sR geodesic in $\Eng(n)$ of the type small oscillations and $r$-periodic are not metric lines.
\end{Cor}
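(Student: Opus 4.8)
The plan is to show that in every one of the two types the geodesic fails the necessary condition of Proposition \ref{prop:met-char}, by computing the Cesàro time-average of its left-translated velocity and showing this average has strictly sub-unit norm. Writing $\gamma(t)$ via the reconstruction prescription, in the orthonormal frame $\{X_1,\dots,X_n,Y\}$ its left-translated velocity is
\begin{equation*}
V(t) := (L_{\gamma^{-1}(t)})_* \dot\gamma(t) = \dot x(t) + F_{\mu}(x(t))\, E_0^{\Laa}\in\Lag_1,
\end{equation*}
where $\dot x(t) = \sum_i p_{x_i}(t)\,E_i\in\Lah$ and $F_{\mu}$ is the polynomial of the reduced Hamiltonian \eqref{eq:fund-eq-F-Eng}. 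Since $\|V(t)\|=1$, it suffices to establish that
\begin{equation*}
\bar V := \lim_{t\to\infty}\frac{1}{2t}\int_{-t}^{t} V(s)\,ds
\end{equation*}
exists with $\|\bar V\|<1$: then for every unit $v\in\Lag_1$ one has $\frac1t\int_{-t}^t\langle v,V(s)\rangle\,ds \to 2\langle v,\bar V\rangle \le 2\|\bar V\|<2$, so equation \eqref{eq:in-met-char} cannot hold and $\gamma$ is not a metric line. The evaluation of $\bar V$ I would split into its $\Lah$-component (the $\dot x$ part) and its $E_0^{\Laa}$-component (the $F_{\mu}$ part).

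For the $\Lah$-component I would use $\frac{1}{2t}\int_{-t}^t\dot x(s)\,ds = \frac{x(t)-x(-t)}{2t}$. When the reduced orbit is bounded — which is the case for the $r$-periodic type, where $r(t)$ remains in a compact radial Hill interval $[r_{\min},r_{\max}]$, and for small oscillations without drift — this quotient tends to $0$. The remaining case is a small oscillation ($a_{n+1}=0$) with nontrivial drift: decomposing $\Ho=\R\hat a\oplus\hat a^{\perp}$ with $\hat a=(a_1,\dots,a_n)/\|(a_1,\dots,a_n)\|$, the Hamiltonian \eqref{eq:fund-eq-F-Eng} separates into a one-dimensional harmonic oscillator in $w=\hat a\cdot x$ and free motion in $\hat a^{\perp}$. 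The free part contributes a constant drift velocity $p_\perp$; since the oscillation is nontrivial (the geodesic is not of line type), the conserved oscillator energy $H_w=\tfrac12 p_w^2+\tfrac12 F_\mu^2$ is strictly positive, and conservation of the sub-Riemannian energy forces $\|p_\perp\|^2 = 1-2H_w<1$. Thus the $\Lah$-component of $\bar V$ is $p_\perp$ with $\|p_\perp\|<1$ in this case and $0$ in all others.

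For the $E_0^{\Laa}$-component, $\frac{1}{2t}\int_{-t}^t F_{\mu}(x(s))\,ds$, the two families behave differently. For small oscillations $F_{\mu}=a_0+\|a\|w$ is affine in the oscillating coordinate $w$, whose time mean is its equilibrium value $w_0=-a_0/\|a\|$, where $F_\mu=0$; hence this average vanishes and $\bar V=p_\perp$ with $\|\bar V\|<1$. For the $r$-periodic type the angular momentum is nonzero, so on the Hill interval $F_\mu^2(r)=V_{(\mu,\ell)}(r)-\ell^2/r^2\le 1-\ell^2/r_{\max}^2<1$; thus $|F_\mu(x(t))|\le\sqrt{1-\ell^2/r_{\max}^2}<1$ uniformly in $t$, the average has modulus $<1$, and since the $\Lah$-component vanishes we again get $\|\bar V\|<1$. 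In every case the necessary condition of Proposition \ref{prop:met-char} fails, which is the desired conclusion.

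The main obstacle is that, unlike the turn-back case, $\dot\gamma(t)$ does not converge as $t\to\pm\infty$ — it oscillates — so Lemma \ref{lem:cond-metl-lin-2} does not apply and one genuinely needs the averaged criterion together with the strict energy and angular-momentum estimates that force $\|\bar V\|<1$. I note that the two bounded sub-cases could alternatively be dispatched by the second clause of Corollary \ref{Cor:turn-back}, since there the reduced dynamics is bounded and regular; the averaging argument above is what uniformly covers the unbounded drifting small oscillations as well.
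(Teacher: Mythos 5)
Your small-oscillation argument is essentially the paper's own proof: the paper likewise separates the reduced Hamiltonian into an $(n-1)$-dimensional free part and a one-dimensional harmonic oscillator, observes that the oscillating components average to zero, and bounds the pairing with any unit $v$ by a quantity controlled by $H_{free}<\tfrac12$, so that equation \eqref{eq:in-met-char} fails. For the $r$-periodic case you take a genuinely different route: the paper does not average at all there, but invokes Corollary \ref{Cor:turn-back} (generic $r$-periodic orbits are bounded in a regular region of the potential; the relative equilibria give periodic unreduced solutions, handled by the blow-down result). Your direct verification of the failure of Proposition \ref{prop:met-char} is a legitimate alternative and has the virtue of treating all cases by one criterion, but it contains a concrete error.

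The gap is the sentence ``for the $r$-periodic type the angular momentum is nonzero.'' This is false: the classification in Section \ref{subsub:clas-sr-geo} only requires $a_{n+1}\neq 0$ and periodic radial dynamics, and Lemma \ref{lemma:equi-point-an-har} shows that for $\ell=0$ every bounded non-homoclinic orbit of the one-degree-of-freedom system $\tfrac12 p_r^2+\tfrac12 F_\mu^2(r)$ is periodic; these are $r$-periodic geodesics with $\ell=0$. For them your pointwise bound $|F_\mu(x(t))|\le\sqrt{1-\ell^2/r_{\max}^2}$ degenerates to $|F_\mu|\le 1$, and equality \emph{is} attained at the turning points, so the strict inequality $\|\bar V\|<1$ does not follow from the uniform estimate as written. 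The repair is easy and worth stating: a non-constant periodic orbit spends a positive fraction of each period in the interior of the Hill interval, where $|F_\mu|<1$ strictly, so the period average of $F_\mu$ still has modulus strictly less than $1$; alternatively, these $\ell=0$ periodic orbits are bounded and regular, so the second clause of Corollary \ref{Cor:turn-back} disposes of them directly, as the paper does. With that one case patched, your proof is complete.
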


\begin{proof}
By construction, the generic $r$-periodic geodesics are bounded in a regular region of the potential $F_{\mu}(r)$. The singular $r$-periodic geodesics define a periodic unreduced solution for the system defined by $H_{\mu}$, then Lemma \ref{Cor:turn-back} tells they do not qualify as a metric line. 

Let $\gamma(t)$ be a geodesic of the oscillatory type; we will show that $\gamma(t)$ does not satisfy the condition from Proposition \ref{prop:met-char}. First, we notice that the reduced Hamiltonian has a quadratic potential, and then we can use the theory of small oscillations. Let $B$ be a $n$ by  $n$  symmetric matrix generating the quadratic potential, by construction $B$ has rank one. So $B$ has $(n-1)$-eigenvectors with eigenvalue $0$ and  one eigenvector with positive eigenvalue $\omega^2$, then after a change of variables the reduced Hamiltonian $H_{\mu}$ is the sum of two Hamiltonian functions, namely a $(n-1)$-degree of freedom system for a free particle $H_{free}(p_{x_1}, \dots, p_{x_{n-1}})$ and one degree of freedom system for harmonic oscillator $H_{osc}(p_{x_n},x_n)$ given by
$$ H_{free} = \frac{1}{2}(p_{x_1}^2 + \cdots + p^2_{x_{n-1}}) \;\;\text{and}\;\; H_{osc} = \frac{1}{2}(p_{x_n}^2 + \omega^2 x_n^2). $$ 

If $H_{osc} = 0$, the corresponding geodesic is a line-geodesic. Therefore, we will focus on the case  $H_{osc} \neq 0$. Without lose of generality let us assume $\gamma(0) = 0$, then $\dot{\gamma}(t)$ is given by
$$ \dot{\gamma}(t) = (p_{x_1} X_1+ \cdots + p_{x_{n-1}} X_{n-1} + \sqrt{2H_{osc}}\cos(\omega t) X_{n} + \frac{\sqrt{2H_{osc}}}{\omega}\sin(\omega t)Y_0)|_{\gamma(t)}, $$
we remark  $1 = ||\dot{\gamma}|| = 2 H_{free} + 2H_{osc}$. Let $v = (v_1,\cdots,v_n,v_{n+1})$ be an unitary arbitrary vector in $\mathfrak{eng}(n)$, then
\begin{equation*}
\begin{split}
 |\lim_{t \to \infty} \frac{1}{t} \int_{-t}^t  \rho(v,(L_{\gamma^{-1}(s)})_*\dot{\gamma}(s)) ds | & = 2|p_{x_1} v_1+ \cdots + p_{x_{n-1}} v_{n-1}| \\
  & \leq 4H_{free}   < 2.
 \end{split}
\end{equation*}
Therefore, $\gamma(t)$ does not satisfy the condition from Proposition \ref{prop:met-char}.
\end{proof}

\subsubsection{Radial geodesics}\label{sub-sec:radial-geo}

\begin{Prop}\label{prp:act-eng-alg}
The Lie algebra $\mathfrak{eng}(n)$ is invariant under the action of $SO(n)$ given by
$$ \tilde{E}_j = \sum_{i=1}^n Q_{ji} E_i,\;\; \tilde{E}_0^{\Laa} = E^{\Laa}_0, \;\; \tilde{E}_j^{\Laa} = \sum_{i=1}^n Q_{ji} E_i^{\Laa},\;\; \tilde{E}_{n+1}^{\Laa} = E^{\Laa}_{n+1}, $$
where $Q := (Q_{ij})\in SO(n)$. Moreover, the action on $\mathfrak{eng}(n)$ induces an isometric action $\psi_Q$ on $\Eng(n)$. If $(\theta,x)$ are the exponential coordinates of the second type, then the action  $\psi_Q (\theta,x) = (\tilde{\theta},\tilde{x})$ given by
\begin{equation*}
\tilde{\theta_0} = \theta_0, \;\; \tilde{\theta}_j = \sum_{i=1}^n Q_{ji}\theta_i, \;\; \tilde{\theta}_{n+1} = \theta_{n+1}, \;\; \tilde{x}_j = \sum_{i=1}^n Q_{ji}x_i.
\end{equation*}
\end{Prop}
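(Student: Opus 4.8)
The plan is to prove this in three stages: (i) verify that the linear map $\phi_Q$ sending $E_j\mapsto\tilde E_j$, $E_0^{\Laa}\mapsto E_0^{\Laa}$, $E_j^{\Laa}\mapsto\tilde E_j^{\Laa}$ and $E_{n+1}^{\Laa}\mapsto E_{n+1}^{\Laa}$ is a Lie-algebra automorphism of $\mathfrak{eng}(n)$; (ii) exponentiate it to a group automorphism $\psi_Q$ and check that $\psi_Q$ is an isometry; and (iii) read off the coordinate formula.

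For (i), I would check that $\phi_Q$ preserves the defining brackets \eqref{eq:Eng-alg}, namely $[E_i,E_0^{\Laa}]=E_i^{\Laa}$ and $[E_i,E_j^{\Laa}]=\delta_{ij}E_{n+1}^{\Laa}$ (with all remaining brackets among generators vanishing). The first is immediate from bilinearity,
\[
[\tilde E_j,\tilde E_0^{\Laa}]=\Big[\sum_i Q_{ji}E_i,\;E_0^{\Laa}\Big]=\sum_i Q_{ji}E_i^{\Laa}=\tilde E_j^{\Laa}.
\]
The decisive computation is the second bracket:
\[
[\tilde E_j,\tilde E_k^{\Laa}]=\sum_{i,l}Q_{ji}Q_{kl}[E_i,E_l^{\Laa}]=\Big(\sum_i Q_{ji}Q_{ki}\Big)E_{n+1}^{\Laa}=(QQ^{T})_{jk}\,E_{n+1}^{\Laa},
\]
which equals $\delta_{jk}\tilde E_{n+1}^{\Laa}$ exactly because $Q$ is orthogonal, $QQ^{T}=I$. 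This is the only place the hypothesis $Q\in SO(n)$ is used, and it is what I expect to be the crux: orthogonality, not mere invertibility, is precisely what keeps the top bracket fixed, so it is the heart of the statement.

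For (ii), since $\Eng(n)$ is simply connected and nilpotent, $\phi_Q$ exponentiates to a unique automorphism $\psi_Q$ of $\Eng(n)$ with $d\psi_Q=\phi_Q$, by the same argument used earlier for the reflection $R_Y$. To see $\psi_Q$ is an isometry, observe that $\phi_Q$ preserves $\Lag_1=\Lah\oplus\Lav$, acting as the orthogonal matrix $Q$ on $\Lah=\mathrm{span}(E_1,\dots,E_n)$ and as the identity on $\Lav=\mathrm{span}(E_0^{\Laa})$; hence $\phi_Q|_{\Lag_1}$ is orthogonal for the inner product that declares $\{E_1,\dots,E_n,E_0^{\Laa}\}$ orthonormal. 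Consequently $\psi_Q$ preserves the left-invariant distribution $\D$ together with the \sR inner product, so it is an isometry of $\Eng(n)$.

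For (iii), I would apply $\psi_Q$ factor-by-factor to $\Phi(\theta,x)$ using $\psi_Q\circ\exp=\exp\circ\phi_Q$ and the homomorphism property. Since $\psi_Q$ preserves the order of the product defining $\Phi$, no rearrangement between layers is needed; within the second layer the vectors $\tilde E_j^{\Laa}\in\Lag_2$ commute (because $[\Lag_2,\Lag_2]\subset\Lag_4=\{0\}$) and within the first layer the vectors $\tilde E_i$ commute (because $[E_k,E_l]=0$ for $k,l\le n$), so each layer's product collapses to a single exponential with coefficients transformed by $Q$. Comparing the result with $\Phi(\tilde\theta,\tilde x)$ then gives $\tilde\theta_0=\theta_0$, $\tilde\theta_{n+1}=\theta_{n+1}$, and the stated rotation of $(\theta_1,\dots,\theta_n)$ and $(x_1,\dots,x_n)$; the only bookkeeping subtlety is that this convention produces $Q^{T}=Q^{-1}$ in place of $Q$, which is immaterial since it merely reparametrizes the same $SO(n)$-action.
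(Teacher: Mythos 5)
Your proof is correct and takes essentially the same route as the paper's, which simply asserts that "a direct computation shows" the invariance of $\mathfrak{eng}(n)$ and then lifts the Lie-algebra automorphism to $\Eng(n)$ via simple connectedness (citing Hall, Theorem 5.6) — exactly your steps (i) and (ii), with your computation $[\tilde E_j,\tilde E_k^{\Laa}]=(QQ^{T})_{jk}E_{n+1}^{\Laa}$ correctly isolating where orthogonality is used. The transpose discrepancy you flag in step (iii) is real but, as you say, immaterial since $Q\mapsto Q^{T}$ merely reparametrizes the $SO(n)$-action.
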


\begin{proof}
    A direct computation shows that $\mathfrak{eng}(n)$ is invariant under the action of $SO(n)$. Since $\Eng(n)$ is simply connected, we can define the lift action $SO(n)$ on $\Eng(n)$ \cite[Theorem 5.6]{hall2003lie}.
\end{proof}

The \sR geodesic flow in $\Eng(n)$ has $n+3$ constants of motion in involution given the momentum map defined by the symplectic action of $\Ag$. In addition, the symplectic action of $SO(n)$ defines another momentum map, so \sR geodesic flow possesses as many constants of motion as the dimension of $SO(n)$. These constants of motion are not in involution. However, it is possible to find $n$ more constants using the standard proof for potential with radial symmetry; see \cite[Appendix B]{BravoDoddoli2024}.

%, we can write these constant of motion in terms of left-invariant momentum functions in the following form way $L_{ij} = P_{X_i}P_{Y_j} - P_{Y_i}P_{X_j}$, for all $1 \leq i < j \leq n$.

\begin{lemma}\label{lem:Eng(2)-geo}
Let $\Eng_c(2)$ be the 6 dimensional sub-manifold of $\Eng(n)$ given by
$$ \Eng_c(2) := \{ (x,\theta) \in \Eng(n) : (x,\theta) = (x_1,x_2,\dots,\theta_0,\theta_1,\theta_2,c_1,\dots,c_n,\theta_{n+1})  \}, $$
where $c = (c_1,\dots,c_n)$. If $\gamma(t)$ is a normal \sR  geodesic in $\Eng(n)$ with momentum $a_{n+1} \neq 0$ such that $\gamma(0)\in \Eng_c(2)$ and $\dot{\gamma}(0) = T_{\gamma(0)}\Eng(n)$, then $\gamma(t)$ lies in $\Eng_c(2)$ for all $t$.

Moreover, every normal \sR geodesic in $\Eng(n)$ with momentum $a_{n+1} \neq 0$ has the form $\gamma(t) = \psi_Q(\gamma_0(t))$, where $\psi_Q$ is given by Proposition and $\gamma_0(t)$ is a normal \sR geodesic in $\Eng_c(2)$.
\end{lemma}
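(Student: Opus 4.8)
The plan is to run the symplectic reduction and reconstruction of Theorem~\ref{the:background}: a normal geodesic is completely encoded by its momentum $\mu$ together with the solution $(p_x(t),x(t))$ of the reduced system, which for $a_{n+1}\neq 0$ is the radial an-harmonic oscillator \eqref{eq:an-harm-osci}. Both assertions then reduce to statements about this central-force system, for which Lemma~\ref{lem:planar-an-harmonic} is tailor-made, together with tracking the reconstruction equation \eqref{eq:pres-geo} through the frozen coordinates.

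For the first assertion I would begin by translating the initial data. Writing $\dot\gamma = \sum_i p_{x_i}X_i + F_\mu(x)\,Y$ and using $X_i=\partial_{x_i}$ together with the explicit $Y$, the hypotheses $\gamma(0)\in\Eng_c(2)$ and $\dot\gamma(0)\in T_{\gamma(0)}\Eng_c(2)$ say exactly that $x_i(0)=0$ and $p_{x_i}(0)=0$ for $i\ge 3$ and, at the level of the initial covector that actually pins down the normal geodesic, that the conserved momenta satisfy $a_3=\dots=a_n=0$, so the oscillator is centered in the $x_1$--$x_2$ plane. Hence the reduced initial condition lies in $T^*\R^2_{T^*\Ho}$, and Lemma~\ref{lem:planar-an-harmonic} forces $(p_x(t),x(t))\in T^*\R^2_{T^*\Ho}$ for all $t$; in particular $x_i(t)\equiv 0$ for $i\ge3$. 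Feeding this into the reconstruction, $\dot x_i=p_{x_i}=0$ keeps $x_i\equiv0$, while $\dot\theta_i=F_\mu(x(t))\,x_i(t)=0$ keeps each frozen $\theta_i$ constant for $i\ge3$, and $\theta_0,\theta_1,\theta_2,\theta_{n+1}$ are the free coordinates of $\Eng_c(2)$. Therefore $\gamma(t)\in\Eng_c(2)$ for all $t$.

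For the second assertion, given an arbitrary normal geodesic $\gamma$ with $a_{n+1}\neq0$, its reduced trajectory solves the radial an-harmonic oscillator, so the ``moreover'' part of Lemma~\ref{lem:planar-an-harmonic} produces $Q\in SO(n)$ and a planar solution $(\tilde p_x,\tilde x)\in T^*\R^2_{T^*\Ho}$ with $(p_x,x)=\Psi_Q^*(\tilde p_x,\tilde x)$. Let $\gamma_0$ be the geodesic reconstructed from $(\tilde p_x,\tilde x)$; by the first assertion $\gamma_0\subset\Eng_c(2)$. It remains to check that $\gamma=\psi_Q(\gamma_0)$. Here I would use that $\psi_Q$ from Proposition~\ref{prp:act-eng-alg} is an isometry whose differential sends the frame as $(\psi_Q)_*X_i=\sum_k Q_{ik}X_k$ and $(\psi_Q)_*Y=Y$; applying $(\psi_Q)_*$ to the reconstruction equation for $\gamma_0$ therefore yields the reconstruction equation driven by the rotated reduced data $\Psi_Q^*(\tilde p_x,\tilde x)=(p_x,x)$ and by $F_{Q\cdot\mu}\circ\Psi_Q=F_\mu$ (the radial potential being $SO(n)$-invariant, the linear part of $\mu$ rotating consistently). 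Thus $\psi_Q(\gamma_0)$ and $\gamma$ are normal geodesics with identical momentum and reduced dynamics; choosing the free initial fiber point of $\gamma_0$ so that $\psi_Q(\gamma_0(0))=\gamma(0)$ gives $\gamma=\psi_Q(\gamma_0)$, since the lift is unique up to a fiber translation.

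The main obstacle is the equivariance step in the second assertion: verifying cleanly that $\psi_Q$ intertwines reconstruction, which requires keeping careful track of how $\mu$---in particular its linear coefficients $(a_1,\dots,a_n)$---transforms under the cotangent lift of $SO(n)$, and confirming the resulting potential identity $F_{Q\cdot\mu}\circ\Psi_Q=F_\mu$ together with the bookkeeping $(\psi_Q)_*X_i=\sum_k Q_{ik}X_k$. A secondary but delicate point is the correct reading of the tangency hypothesis in the first assertion at the level of the initial covector, so that $a_3=\dots=a_n=0$; without this the frozen coordinates $x_i,\theta_i$ $(i\ge3)$ need not stay frozen. Carrying out both computations in the centered coordinates $\tilde x$, where the potential $F_\mu(r)=\alpha+\beta r^2$ is manifestly radial and $SO(n)$-invariant, is the convenient way to make these points transparent.
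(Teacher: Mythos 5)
Your strategy is the intended one: the paper's own ``proof'' of this lemma is a single sentence deferring to Lemma \ref{lem:planar-an-harmonic}, and your write-up is exactly that argument carried out in detail --- planar invariance of the reduced radial oscillator via conservation of $J(p_x,x)=p_x\wedge x$, then reconstruction of the frozen coordinates, then $SO(n)$-equivariance of the reconstruction for the ``moreover'' part. The ``moreover'' half and the bookkeeping $(\psi_Q)_*X_i=\sum_kQ_{ik}X_k$, $(\psi_Q)_*Y=Y$, $F_{Q\cdot\mu}\circ\Psi_Q=F_\mu$ are sound and match the paper's intent.

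The gap is the step you flag and then assert rather than prove: that $\gamma(0)\in\Eng_c(2)$ together with $\dot\gamma(0)\in T_{\gamma(0)}\Eng_c(2)$ forces $a_3=\dots=a_n=0$. It does not. A normal geodesic is pinned down by its initial \emph{covector}, but the stated hypotheses only constrain the initial \emph{velocity}, and by the reconstruction equation $\dot\gamma=\sum_ip_{x_i}X_i+F_\mu(x)\,Y$ the velocity at $t=0$ only sees $p_{x_1}(0),\dots,p_{x_n}(0)$ and the single scalar $F_\mu(x(0))$; in particular $\dot\theta_i(0)=F_\mu(x(0))\,x_i(0)=0$ for $i\ge3$ holds automatically once $x_i(0)=0$, so tangency gives no information about $a_3,\dots,a_n$. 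And the conclusion genuinely needs them to vanish: if some $a_i\neq0$ with $i\ge3$, then the centered coordinate satisfies $\tilde x_i(0)=a_i/a_{n+1}+x_i(0)=a_i/a_{n+1}\neq0$, so the reduced initial condition is not in $T^*\R^2_{T^*\Ho}$, and indeed $\ddot x_i(0)=-2F_\mu(x(0))\,a_i$ is generically nonzero, so $x_i(t)$ leaves $0$ and $\gamma(t)$ leaves $\Eng_c(2)$. Either the tangency hypothesis must be read at the level of the lifted extremal in $T^*\Eng(n)$ (which does yield $a_i=0$ for $i\ge3$), or the first assertion is false as written. In fairness this imprecision originates in the lemma's statement, and the paper's one-line proof does not address it either; but your proof presents the needed condition as a consequence of the hypotheses when it is really an additional assumption that has to be built into them.
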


The proof of Lemma \ref{lem:Eng(2)-geo} is the same as the one for Lemma \ref{lem:planar-an-harmonic}, and implies it is enough to prove Theorem \ref{the:main-1} for the case $\Eng(2)$.

\begin{Prop}\label{prp:Euler-Elastica-Eng}
Let $\gamma(t)$ be a normal \sR geodesic in $\Eng(n)$ with momentum $\mu$ such that $a_{n+1} \neq 0$ and whose reduced dynamics has zero angular momentum. If $c(t) := \pi(\gamma(t))$, where $\pi:\Eng(n) \to \R^{n+1}$ is the canonical projection, then $c(t)$ lays in a two-plane and solves the Euler-Elastica problem.  
\end{Prop}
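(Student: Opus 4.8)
The plan is to make the projected curve $c(t)=\pi(\gamma(t))$ explicit through the reconstruction prescription \eqref{eq:pres-geo}, use the zero angular momentum hypothesis to confine it to a two-plane, and then read off its curvature directly from the conservation of energy. Since $\pi$ sends $\gamma$ to its $(x_1,\dots,x_n,\theta_0)$-coordinates and here $\dim\Ho=n$, $\dim\Vs=1$, the prescription \eqref{eq:pres-geo} gives $\dot x_i(t)=p_i(t)$ for $i=1,\dots,n$ and $\dot\theta_0(t)=F(x(t))$. After the centering change of coordinates of Section \ref{sub-sec:red-ham-Eng} the polynomial becomes $F(x)=\alpha+\beta r^2$ with $r=\|\tilde x\|_{\Ho}$ and $\beta=a_{n+1}/2\neq 0$, cf. \eqref{eq:fund-eq-F-Eng} and \eqref{eq:an-harm-osci}. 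Because $\pi$ is a \sR submersion, $c(t)$ is unit speed in the Euclidean metric of $\R^{n+1}$, which is exactly the energy relation $\sum_i p_i^2+F^2=1$.

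Next I would establish the two-plane claim. Zero angular momentum means $\tilde p_x\wedge\tilde x=0$, and since Hamilton's equations for \eqref{eq:an-harm-osci} give $\dot{\tilde x}=\tilde p_x$, this forces $\dot{\tilde x}\parallel\tilde x$; hence the direction of $\tilde x$ is constant and the reduced trajectory stays on a fixed line $L$ through the center $x_c$ (where $\tilde x=0$). Writing $\tilde x(t)=w(t)\,u$ for a fixed unit vector $u$ and a scalar $w$, with $r=|w|$, the curve $c(t)$ lies in the two-plane $\Pi:=L\times\R_{\theta_0}\subset\R^{n+1}$; this is the degeneration, at $\ell=0$, of the two-plane furnished by Lemma \ref{lem:planar-an-harmonic}. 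On $\Pi$ I use orthonormal coordinates $(w,\theta_0)$ with $w$ measured from $x_c$, so that $c(t)=(w(t),\theta_0(t))$ with $\dot\theta_0=\alpha+\beta w^2$ and $\dot w^2+\dot\theta_0^2=1$.

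To obtain the Euler-Elastica property I would parametrize $c$ by its tangent angle $\phi$, writing $\dot c=(\cos\phi,\sin\phi)$, so that $\sin\phi=\dot\theta_0=\alpha+\beta w^2$ and the signed curvature is $\kappa=\dot\phi$. Differentiating $\dot\theta_0=\sin\phi$ gives $\ddot\theta_0=(\cos\phi)\dot\phi=\dot w\,\kappa$, while differentiating $\dot\theta_0=\alpha+\beta w^2$ gives $\ddot\theta_0=2\beta w\,\dot w$; equating the two yields $\kappa=2\beta w=a_{n+1}\,w$. Since $|w|$ is exactly the distance from $c(t)$ to the line $\{w=0\}\subset\Pi$ (the line through $x_c$ parallel to the $\theta_0$-axis), the curvature is proportional to the distance to this directrix, which is the defining property of the Euler-Elastica; in the $r$-homoclinic case of Lemma \ref{lemma:equi-point-an-har} one recovers the Euler soliton.

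The computation is routine once set up; the points requiring care, rather than genuine obstacles, are checking that the vanishing of the angular momentum collapses the planar motion of Lemma \ref{lem:planar-an-harmonic} onto the single line $L$, and handling the isolated instants where $\dot w=0$, at which the cancellation in $\ddot\theta_0=\dot w\,\kappa=2\beta w\,\dot w$ is only formal so that $\kappa=a_{n+1}w$ must be read off by continuity.
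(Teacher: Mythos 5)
Your proposal is correct and follows essentially the same route as the paper: use the zero angular momentum to confine the reduced dynamics to a line through the center (the paper does this by normalizing the initial condition via the $SO(n)$ symmetry of Lemma \ref{lem:Eng(2)-geo}), observe via the reconstruction prescription that $c(t)$ is a unit-speed curve in the two-plane spanned by that line and the $\theta_0$-axis, and then compute that the signed curvature equals $a_{n+1}$ times the signed distance to the directrix. Your tangent-angle computation of $\kappa=2\beta w$ is just a repackaging of the paper's direct evaluation of $\ddot{\eta}$ from the reduced Hamilton equations, so there is nothing substantive to add.
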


The Euler-Elastica problem has many ways to be characterized, let us remind the one useful to prove Proposition \ref{prp:Euler-Elastica-Eng}: We say a plane curve is a solution to the Euler-Elastica problem if, at each point, its curvature is directly proportional to the oriented distance from the curve at that point to a specified line, referred to as the directrix.

\begin{proof}
By Corollary \ref{lem:Eng(2)-geo}, it is enough to consider a normal \sR geodesic in $\Eng(2)$. We can assume the initial condition $\gamma(0) = (x_1(0),0,0,0,0,0)$ for some $x_1(0)$ in $\R$, without loss of generality. Let $(p_x(t),x(t))$ be the solution to the reduced system associated to $\gamma(0)$. The \sR geodesic $\gamma(t)$ has angular momentum zero if and only if $p_x(t)$ is parallel to the radial direction, then the reduced solution has the form $(p_x(t),x(t)) = (p_{x_1}(t),0,x_1(t),0)$. If  $F_{\mu}(x)$ is the polynomial from equation \eqref{eq:fund-eq-F-Eng}, then   $F_{\mu}(x(t)) = a_0+a_1x_1(t)+\frac{a_2}{2} x_1^2(t)$ for all $t$. Without loss of generality, after a translation along the axis $x_1$, we can consider $a_1 = 0$.

% in the solution $x(t)$ has the form $F(x(t)) = a_0+a_1x_1(t)+\frac{a_2}{2} x_1^2(t)$ .

By construction, the projected curve $\eta(t) := \pi(\gamma(t))$ lays on the two-plane given by $(x_1,0,\theta_0)$. In addition, the \textbf{Background Theorem} implies $\dot{\eta}(t) = (p_{x_1}(t),0,F_{\mu}(x(t)))$ and $||\dot{\eta}(t)||  = 1$. We use reduced Hamilton equations to find
$$ \ddot{\eta}(t) = \frac{\partial F_{\mu}(x(t))}{\partial x_1} (-F_{\mu}(x(t)),0,p_{x_1}(t)), \;\; \text{where}\;\; \frac{\partial F_{\mu}(x(t))}{\partial x_1} = a_2 x_1(t). $$ 
So, the curvature of $c(t)$ is equal to linear function $ a_2 x_1(t)$, that is, the curvature is proportional to the oriented distance from $(x_1(t),0,\theta_0(t))$ to the directrix defined by  $x_1 = 0$.
\end{proof}

\subsection{The Magnetic Space For the Engel-Type Group}\label{sub-sec:mag-spa-Eng}

Let $\pi_F$ be the \sR submersion from $\Eng(n)$ to $\R^{n+2}_F$. We will assume the polynomial $F_\mu(x)$ has the form $ \alpha + \beta r^2$ where $r = ||x||_{\Ho}$ and $\beta \neq 0$, then we will study some of the properties of this magnetic space.

Let us consider the isometric action $\varphi_{(Q,y_0,z_0)}$ of $SO(n) \times \R^2$ on $\R^{n+2}_F$. If $Q\in SO(n)$ and $(y_0,z_0)\in \R^2$, then $\varphi_{(Q,y_0,z_0)}(x,y,z) = (Qx,y+y_0,z+z_0)$.

\begin{lemma}\label{lem:plan-mag-geo-r}
Let $\R^4_{\R^{n+2}_F}$ be a four-dimensional sub-manifold of $\R^{n+2}_F$ given by
$$ \R^4_{\R^{n+2}_F} := \{ (x,y,z) \in \R^{n+2}_F : 0 = x_3 = \dots = x_n \}. $$
If $c(t)\in \R^{n+2}_F$ is a normal \sR geodesic with initial condition $c(0)\in R^4_{\R^{n+2}_F}$ and $\dot{c}(0)\in T_{c(0)}R^4_{\R^{n+2}_F}$, then $c(t)\in R^4_{\R^{n+2}_F}$ for all time $t \in \R$. Moreover, every normal \sR geodesic in $\R^{n+2}_F$ has the form $c(t) = \varphi_{(Q,y_0,z_0)} (c_0(t))$ where $c_0(t)$ is a normal \sR  geodesic in $R^4_{\R^{n+2}_F}$ for all $t$.
\end{lemma}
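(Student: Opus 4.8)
The plan is to transcribe the proof of Lemma~\ref{lem:planar-an-harmonic} to the magnetic setting, with the isometric $SO(n)$-action of Proposition~\ref{prp:act-eng-alg} (the $Q$-factor of $\varphi_{(Q,y_0,z_0)}$) playing the role of the cotangent lift used there. The decisive point is that, since $F(x)=\alpha+\beta r^2$ depends on $x$ only through $r=\|x\|_{\Ho}$, the Hamiltonian $H_F$ of equation~\eqref{eq:fund-mag-eq-F} is invariant under the action $x\mapsto Qx$ on $\Ho$ that fixes the coordinates $y,z$ and the momenta $p_y,p_z$.

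First I would prove the confinement statement. The $SO(n)$-action on the base $\R^{n+2}_F$ acting by $Q$ on $\Ho$ and trivially on the $\R^2$-factor lifts to a symplectic action on $T^*\R^{n+2}_F$ fixing $H_F$, hence it carries a momentum map $J\colon T^*\R^{n+2}_F\to\mathfrak{so}^*(n)\cong\mathfrak{so}(n)$. Because the action is trivial on $(y,z,p_y,p_z)$, the momentum map depends only on the $\Ho$-variables and the same computation as in Lemma~\ref{lem:planar-an-harmonic} gives $J(p_x,p_y,p_z,x,y,z)=p_x\wedge x$. The associated skew-symmetric form has rank at most two, and conservation of $J$ along the geodesic flow yields $p_x(t)\wedge x(t)=p_x(0)\wedge x(0)$ for all $t$, so $x(t)$ and $p_x(t)$ stay in the fixed two-plane determined by the initial data. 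If $c(0)\in\R^4_{\R^{n+2}_F}$ and $\dot c(0)\in T_{c(0)}\R^4_{\R^{n+2}_F}$ then $x_i(0)=p_{x_i}(0)=0$ for $i\ge 3$, so that plane is $\mathrm{span}(e_1,e_2)$ and $x_3(t)=\dots=x_n(t)=0$ throughout, i.e.\ $c(t)\in\R^4_{\R^{n+2}_F}$. One can equivalently read this off the reduced equations $\dot p_{x_i}=-2\beta\,(p_y+F(x)p_z)\,p_z\,x_i$, which describe a central force in $\Ho$.

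For the reconstruction statement, let $c(t)$ be an arbitrary normal geodesic with reduced initial data $(p_x(0),x(0))$, and choose $Q\in SO(n)$ so that $Q^{-1}$ maps the two-plane $\mathrm{span}(x(0),p_x(0))$ into $\mathrm{span}(e_1,e_2)$. Set $c_0:=\varphi_{(Q^{-1},0,0)}(c)$. Since $\varphi_{(Q^{-1},0,0)}$ is an isometry of $\R^{n+2}_F$ it maps geodesics to geodesics, so $c_0$ is a normal geodesic, and by construction its initial data lies in $\R^4_{\R^{n+2}_F}$; the first part then gives $c_0(t)\in\R^4_{\R^{n+2}_F}$ for all $t$. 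Applying the inverse isometry, $c(t)=\varphi_{(Q,0,0)}(c_0(t))$, which has the asserted form (the translation parameters $y_0,z_0$ may be taken to be $0$, or used to normalize $c_0$ further, since $\R^4_{\R^{n+2}_F}$ is invariant under translation in $y$ and $z$). Uniqueness of the normal geodesic through a given point with a given initial covector guarantees the two curves coincide.

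The only step carrying genuine content is the momentum-map computation together with the verification that $H_F$ is $SO(n)$-invariant; both rest entirely on the hypothesis $F_\mu(x)=\alpha+\beta r^2$ being radial, which is what decouples the $(y,z,p_y,p_z)$ variables and reduces $J$ to the angular momentum $p_x\wedge x$ of the $\Ho$-factor. Once this is in place the argument is a direct transcription of Lemma~\ref{lem:planar-an-harmonic}, and I do not anticipate further obstacles.
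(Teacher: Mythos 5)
Your proposal is correct and follows exactly the route the paper takes: the paper states that the proof of Lemma~\ref{lem:plan-mag-geo-r} is the same as that of Lemma~\ref{lem:planar-an-harmonic}, namely the $SO(n)$-invariance of $H_F$ (coming from $F=\alpha+\beta r^2$ being radial), conservation of the rank-two angular momentum $J(p_x,x)=p_x\wedge x$ for the confinement statement, and the cotangent-lifted isometric action for the reconstruction statement. Your added remark that the invariance of the subspace can alternatively be read off the reduced equations $\dot p_{x_i}=-2\beta(p_y+F(x)p_z)p_z x_i$ is a useful supplement but does not change the argument.
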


The proof of Lemma \ref{lem:plan-mag-geo-r} is the same as the one for Lemma \ref{lem:planar-an-harmonic}, and implies that it is enough to understand the \sR geodesics in $\R^{4}_F$ to describe the \sR geodesics in $\R^{n+2}_F$.

\begin{defi}
We say that the three-dimensional space $Pen_V$ is the pencil of $F(r)$, if $ Pen_V := \{ V_{(\mu,\ell)}(r) = \frac{\ell^2}{r^2} + G^2(r) : G(r) \in Pen_F \}.$
\end{defi}

We can rewrite $\Delta(c, \mathcal{T})$ as a function of the effective potential.

\begin{Prop}\label{prop:mag-geo-Delta-r}
Let $c(t)$ be an $\R^{4}_{F}$-geodesic for $V_{(\mu,\ell)}(r)\in Pen_{V}$ and let $\mathcal{T}$ be a time interval, then $\Delta (c,\mathcal{T})$ from Definition \ref{def:cost-f-time} can be rewritten in terms of the effective potential $V_{(\mu,\ell)}(r)$ as follows,
\begin{equation*}\label{eq:Del-t-r}
\Delta (c,\mathcal{T}) = (\int_{r(\mathcal{T})} \frac{dr}{\sqrt{1-V_{(\mu,\ell)}(r)}},\int_{r(\mathcal{T})}\ \frac{G(r)dr}{\sqrt{1-V_{(\mu,\ell)}(r)}},\int_{r(\mathcal{T})} \frac{ G(r)F(r)dr}{\sqrt{1-V_{(\mu,\ell)}(r)}}) .
\end{equation*}
In the same way, the map $Cost(c,\mathcal{T})$ from Definition \ref{def:cost-f-time} can be rewritten as follows;
\begin{equation*}
\begin{split}
 Cost(c,\mathcal{T}) =  2(\int_{r(\mathcal{T})} \frac{1-G(r)}{\sqrt{1-V_{(\mu,\ell)}(r)}}dr,  \int_{r(\mathcal{T})} \frac{(1-F(r))G(r)}{\sqrt{1-V_{(\mu,\ell)}(r)}}dr) . \\        
\end{split} 
\end{equation*} 
\end{Prop}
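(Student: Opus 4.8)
The plan is to reduce to the planar case and then change the integration variable from time to the radial coordinate $r$. By Lemma \ref{lem:plan-mag-geo-r} it suffices to treat a geodesic $c(t)$ whose reduced dynamics $x(t)$ lies in a fixed two-plane, so that polar coordinates $(r,\theta)$ are available and the angular momentum $p_\theta=\ell$ is conserved along the energy level $H_{(\mu,\ell)}=\tfrac12$. The first step is to record a pointwise identity: since the reduced Hamiltonian $H_G$ is of free-particle type one has $\dot x=p_x$ and, in polar coordinates, $\|p_x\|^2=p_r^2+\ell^2/r^2$. Because $F(r)=\alpha+\beta r^2$ is radial, every $G=a+bF\in Pen_F$ satisfies $G(x(t))=G(r(t))$ and $F(x(t))=F(r(t))$, so conservation of energy gives $\|p_x\|^2=1-G^2(r)$ and hence $p_r^2=1-G^2(r)-\ell^2/r^2=1-V_{(\mu,\ell)}(r)$, using the definition of $V_{(\mu,\ell)}$ in \eqref{eq:planar-an-harm-osc}. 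In other words $\dot r=p_r=\pm\sqrt{1-V_{(\mu,\ell)}(r)}$, which is the change-of-variables Jacobian I will use throughout.

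Next I substitute $dt=dr/\dot r$ into the time-integral form of $\Delta(c,\mathcal{T})$. Writing $\Delta t=\int dt$, and using $\dot y=G$, $\dot z=GF$ from \eqref{eq:ode-y-z} so that $\Delta y=\int G\,dt$ and $\Delta z=\int GF\,dt$, each integrand becomes a function of $r$ alone and the factor $1/\dot r=1/\sqrt{1-V_{(\mu,\ell)}(r)}$ produces exactly the three radial integrals in the statement. Equivalently, one may start from Proposition \ref{prop:mag-geo-Delta-C}: since $ds/dt=\|p_x\|=\sqrt{1-G^2}$ and $dr/dt=\sqrt{1-V_{(\mu,\ell)}}$, we have $ds=\bigl(\sqrt{1-G^2}/\sqrt{1-V_{(\mu,\ell)}}\bigr)\,dr$, whereupon the factor $\sqrt{1-G^2}$ cancels against the denominators in that proposition and again yields the radial formulas. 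The $Cost$ expressions then follow by forming the differences $Cost_t=\Delta t-\Delta y$ and $Cost_y=\Delta y-\Delta z$ of Definition \ref{def:cost-f-time} and collecting the integrands $\tfrac{1-G(r)}{\sqrt{1-V_{(\mu,\ell)}(r)}}$ and $\tfrac{(1-F(r))G(r)}{\sqrt{1-V_{(\mu,\ell)}(r)}}$.

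The main obstacle is that $r(t)$ is in general not monotone: it turns around at the endpoints of the radial Hill interval $\mathcal{R}=[r_{\min},r_{\max}]$, where $V_{(\mu,\ell)}(r)=1$ and $\dot r=0$, so $t\mapsto r$ is not a diffeomorphism there and the change of variables must be performed piecewise on the maximal monotone arcs of $r(t)$. On consecutive arcs the sign of $\dot r$ alternates, but every integrand above is even under $\dot r\mapsto-\dot r$, so the outgoing and returning arcs add rather than cancel; a full radial excursion crosses $\mathcal{R}$ twice, which is the source of the factor $2$ in the $Cost$ formula once $\int_{r(\mathcal{T})}$ is read as the integral over $\mathcal{R}$ traversed once. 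Two technical points then need checking: first, the improper integrals must converge at the turning points, where $1-V_{(\mu,\ell)}(r)$ has a simple zero (a regular endpoint of $\mathcal{R}$ has $V_{(\mu,\ell)}'\neq0$) and hence $1/\sqrt{1-V_{(\mu,\ell)}}$ has only an integrable square-root singularity; and second, the orientation of the monotone pieces must be tracked so that $\int_{r(\mathcal{T})}$ denotes the correct (oriented) sum matching the prefactor. With these two points settled, the stated identities for $\Delta(c,\mathcal{T})$ and $Cost(c,\mathcal{T})$ follow.
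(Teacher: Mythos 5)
Your proposal is correct and follows essentially the same route as the paper, which simply states that the proof is ``similar to the proof of Proposition \ref{prop:mag-geo-Delta-C}'', i.e.\ a change of variables --- here $dt = dr/\dot r$ with $\dot r = p_r = \pm\sqrt{1-V_{(\mu,\ell)}(r)}$ obtained from conservation of energy and angular momentum after reducing to the planar case via Lemma \ref{lem:plan-mag-geo-r}. Your additional care about the piecewise monotonicity of $r(t)$, the integrable square-root singularity at the endpoints of the radial Hill interval, and the reading of $\int_{r(\mathcal{T})}$ that accounts for the factor $2$ in the $Cost$ formula (which the paper leaves implicit, and which is the only way to reconcile the two displayed formulas with $Cost_t=\Delta t-\Delta y$ and $Cost_y=\Delta y-\Delta z$) supplies detail the paper omits rather than a different argument.
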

The proof of Proposition \ref{prop:mag-geo-Delta-r} is similar to the proof of Proposition \ref{prop:mag-geo-Delta-C}. We remark that two geodesics belong to the same equivalent class $[c]$ if and only if they are related by isometry $\varphi_{(Q,y_0,z_0)}(x,y,z)$. We remark that the radial Hill interval coincides with the homoclinic radial orbit. Using the radial Hill interval, we can rewrite the normalized period map for every \sR geodesic in $\R^{4}_{F}$-geodesic.

\begin{defi}\label{def:y-z-period-r}
The period map $Homc \to  [0,\infty] \times \R$ is given by
\begin{equation*}
\begin{split}
\Theta([c])& = (\Theta_1([c]),\Theta_2([c]))  := 2 ( \int_{\mathcal{R}} \frac{1-G(r)}{\sqrt{1-G^2(r)}} dr,  \int_{\mathcal{R}} \frac{G(r)(1-F(r))}{\sqrt{1-G^2(r)}} dr),\\
\end{split}
\end{equation*}
where $\mathcal{R}$ is the radial Hill interval. 
\end{defi}

We remark that in the above definition, it is enough only to consider the effective potential $V_{(\mu,\ell)}(r) = \frac{1}{2}G^2(r)$ since Lemma \ref{lemma:equi-point-an-har} says that the homoclinic orbits only exist when $\ell = 0$. 

\subsubsection{Upper bound of the cut point}

\begin{figure}%
    \centering
    {{\includegraphics[width=2.5cm]{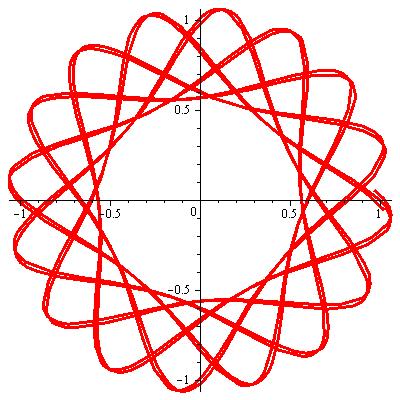} }}
    \quad
    {{\includegraphics[width=2.5cm]{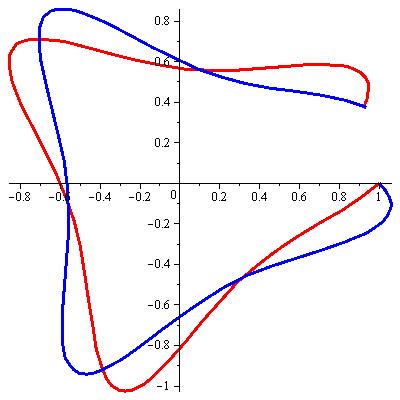} }}%
    \quad
    {{\includegraphics[width=2.5cm]{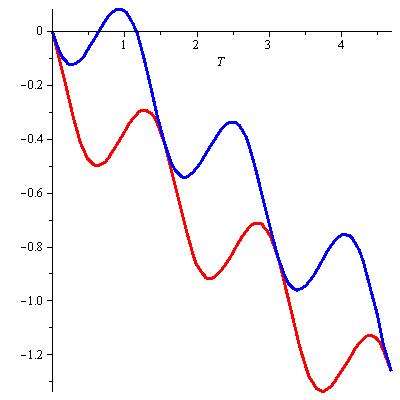} }}%
     \quad
     {{\includegraphics[width=3cm]{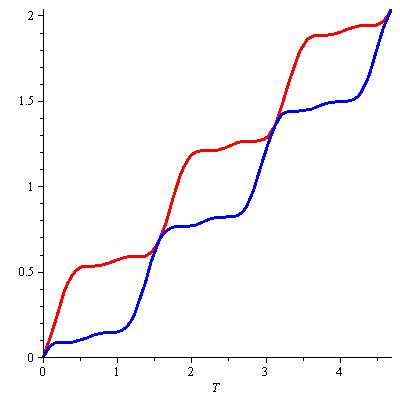} }}%
    \caption{The first panel displays a typical $r$-periodic solution of an-harmonic oscillator. The last three panels show the Maxwell point in the plane $(x_1,x_2)$, $(t,y)$ and $(t,z)$, respectively.}    
    \label{fig:perio-hom-curve}
\end{figure}

\begin{Prop}\label{Maxwell-r}
Let $c(t)$ be a $r$-periodic geodesic on  $R^4 _F$
with $r$-period $L(c)$. Then  $t_{cut} \le L(c)$. 
\end{Prop}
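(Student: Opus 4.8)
The plan is to exhibit a conjugate point at time exactly $t=L(c)$ and then invoke the standard fact (quoted right after Proposition \ref{prop:conj-point}) that a geodesic cannot minimize past its first conjugate point; this yields $t_{cut}\le L(c)$ at once. Working directly on $R^4_F$, I introduce polar coordinates $(r,\theta)$ on the $x$-plane and use that $\ell:=p_\theta$ is a constant of motion. Assume first $\ell\neq 0$ (the case $\ell=0$ is disposed of at the end). Since $c(t)$ is $r$-periodic, $r(t)$ oscillates in its radial Hill interval with period $L(c)$; fix $t=0$ to be a time at which $r$ reaches an apsis $r_0\in\{r_{min},r_{max}\}$, so $p_r(0)=0$, and observe that $t=L(c)$ is the next instant at which the \emph{same} apsis $r_0$ is attained, again with $p_r(L)=0$.

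The construction generalizes Proposition \ref{prop:conj-point}. There, at an $\ell=0$ turning point the full momentum $p_x$ vanishes and one builds a Jacobi field from $\dot c$ together with the translational Killing fields $W_1=\partial_y$ and $W_2=\partial_z$. For $\ell\neq0$ only the radial momentum vanishes at an apsis, while the velocity keeps its tangential part, which is parallel to the rotational Killing field $K:=x_1\partial_{x_2}-x_2\partial_{x_1}$ generating the isometric $SO(2)$-action $\varphi_{(Q,0,0)}$ used in Lemma \ref{lem:plan-mag-geo-r} (it is Killing because $F(x)=\alpha+\beta r^2$ is radial). Hence the restrictions of $K,W_1,W_2$ to $c(t)$, as well as $\dot c$, are all Jacobi fields, and I set
\[
J(t):=\dot c(t)-\frac{\ell}{r_0^{2}}\,K|_{c(t)}-G(r_0)\,W_1-G(r_0)F(r_0)\,W_2 ,
\]
a fixed linear combination of Jacobi fields, hence itself a Jacobi field.

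It remains to check that $J$ vanishes at $t=0$ and $t=L$ and is not identically zero. At the apsis $t=0$ one has $\dot r=0$, so the $x$-part of $\dot c(0)$ is purely tangential, of the form $(\ell/r_0)\,\hat\tau$ for the unit tangential vector $\hat\tau$, while $K|_{c(0)}=r_0\,\hat\tau$; the $x$-parts therefore cancel, and the coefficients $G(r_0)$, $G(r_0)F(r_0)$ cancel the components $\dot y(0)=G(r_0)$ and $\dot z(0)=G(r_0)F(r_0)$, so $J(0)=0$. At $t=L$ the geodesic sits at the same apsis $r_0$, so the scalars $G(r_0),F(r_0)$ are unchanged and the ratio between the tangential parts of $\dot c$ and $K$ is still $\ell/r_0^{2}$ (only the tangential direction has rotated); the identical cancellation gives $J(L)=0$. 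Finally $J\not\equiv0$: at any non-apsidal time the radial component of $\dot c$ equals $p_r(t)\neq0$, whereas $K,W_1,W_2$ contribute nothing radially, so $J$ has radial component $p_r(t)\neq0$ there. Thus $J$ is a nontrivial Jacobi field vanishing at $t=0$ and $t=L$, which by the argument of Proposition \ref{prop:conj-point} makes $c(L)$ conjugate to $c(0)$ and yields $t_{cut}\le L(c)$.

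For the degenerate case $\ell=0$ the motion is rectilinear through the $x$-origin and the apsides occur exactly where $G=\pm1$, i.e.\ on $\partial\Omega_G^{\pm}$; the geodesic meets one such component twice within one radial period, so Proposition \ref{prop:conj-point} already supplies a conjugate point at time $\le L(c)$ and no new construction is needed. The main obstacle is the $\ell\neq0$ step: one must verify that mixing $\dot c$ with the \emph{rotational} Killing field (rather than only the translational ones of Proposition \ref{prop:conj-point}) still produces a bona fide Jacobi field whose double vanishing certifies conjugacy in the \sR sense, and that the tangential velocity and $K$ match with the \emph{same} constant ratio at both apsides despite the intervening angular precession; checking $J\not\equiv0$ and that the two same-type apsides are spaced exactly one radial period apart is what pins the bound to $L(c)$ rather than to a larger multiple.
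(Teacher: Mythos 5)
Your construction takes a genuinely different route from the paper's. The paper proves the bound by exhibiting a \emph{Maxwell point} rather than a conjugate point: it takes the second geodesic $\tilde{c}$ through $c(0)$ with the same effective potential but reflected radial velocity $\dot{\tilde{r}}(0)=-\dot{r}(0)$, and uses that $\Delta y$, $\Delta z$, $\Delta\theta$ over one radial period are integrals of functions of $r$ alone over the full radial Hill interval, hence equal for $c$ and $\tilde{c}$; therefore $c(L)=\tilde{c}(L)$ and two distinct geodesics of equal length join $c(0)$ to $c(L)$, forcing $t_{cut}\le L(c)$. Your idea of adjoining the rotational Killing field $K$ to $W_1,W_2$ is sound in itself: $K$ is Killing because $F$ is radial, the cancellations at an apsis are computed correctly, and $J\not\equiv 0$ whenever $r(t)$ is non-constant.

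There is, however, a genuine gap. The normalization \enquote{fix $t=0$ to be a time at which $r$ reaches an apsis} is not without loss of generality, because the cut time is measured from the given initial point and is not invariant under shifting the base point along the geodesic. Your Jacobi field vanishes only at consecutive \emph{same-type} apsides, which are spaced exactly $L(c)$ apart; from a general starting point the first apsis occurs at some $t_1\in(0,L(c)/2]$, so the first conjugate pair your construction places inside $[0,T]$ is $(t_1,\,t_1+L(c))$, and the conclusion is only $t_{cut}\le t_1+L(c)\le \tfrac{3}{2}L(c)$, not $t_{cut}\le L(c)$. The same defect affects your $\ell=0$ case: the two radial turning points generally lie on different components $\partial\Omega_G^{+}$ and $\partial\Omega_G^{-}$ (e.g.\ for $G=1-\beta r^2$), so Proposition \ref{prop:conj-point} again only applies to two touches of the \emph{same} component, which are separated by a full period measured from the first touch rather than from $t=0$. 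The paper's Maxwell-point argument avoids this entirely because the reflected geodesic exists for any interior starting radius and rejoins $c$ exactly one radial period after the arbitrary initial time. Your argument does prove the stated inequality when $c(0)$ is an apsis, and the weaker bound $\tfrac{3}{2}L(c)$ would still suffice for the later applications, but as written it does not establish $t_{cut}\le L(c)$. You also leave untreated the relative equilibria ($r\equiv r^{*}$, $\ell\neq 0$), for which your $J$ has no nonvanishing radial component and a separate argument is needed.
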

\begin{proof}
    Let us consider a $r$-periodic geodesic $c(t)\in \R^{4}_{F}$ for $V_{(\mu,\ell)}(r)\in Pen_{V}$, where $\mathcal{R}$ is the radial Hill interval.  The radial period is given by
\begin{equation}
    L(c) = \int_{\mathcal{R}} \frac{2dr}{\sqrt{1-V_{(\mu,\ell)}(r)}}.
\end{equation}
The above expression is a particular case of $\Delta(c,\mathcal{T})$ from Proposition \ref{prop:mag-geo-Delta-r} when $r(\mathcal{T}) = \mathcal{R}$. Moreover, the Hamilton equation for $\dot{\theta}$ and the change of variable $t(r)$ imply that the following expression
\begin{equation}\label{eq:theta-period}
    \Delta \theta(c,L) =  \int_{\mathcal{R}} \frac{2\ell dr}{r^2\sqrt{1-V_{(\mu,\ell)}(r)}}
\end{equation}
is the change of the angular coordinate after a period $L(c)$.

  Write $c(0) = (r_i,\theta_i,y_i,z_i)$. 
  Suppose $r_i$ is in the interior of the Hill interval. In that case, there are exactly two magnetic geodesics passing
  through $c(0)$ and associated to $V_{(\mu,\ell)}(r)$, namely, the given one $c(t)$  and $\tilde{c} = (\tilde{r} (t), \tilde{\theta} (t) , \tilde{y} (t) , \tilde{z}  (t)) $ characterized by  $\dot r (0) = - \dot \tilde{r} (0)$.  
  Then    $\tilde{r} (t) = r (-t)$ for all $t$. By $r$-periodicity we have $r (L(c)) = \tilde{r} (L(\tilde{c})) = r_i$.   Proposition \ref{prop:mag-geo-Delta-r} implies
  $$ \Delta y (c,L(c)) = \Delta y (c,L(c)),\;\text{and}\; \Delta z (G,L(c)) = \Delta z (G,L(\tilde{c})) .$$  
  Equation \ref{eq:theta-period} shows $\Delta \theta (G,L(c)) = \Delta \theta (G,L(\tilde{c}))$.  Then, 
$$ c(L) =  \big(r_i,\theta_i + \Delta \theta (G,L(c)) , y_i + \Delta y (G,L(c)) , z_i + \Delta z (G,L(c))\big) =  \tilde{c}(L).  $$ 
The geodesic curves are distinct, showing $t_{cut}  \le L(c)$.

If $r_i$ is in the boundary of the radial Hill interval $R$, we apply Proposition \ref{prop:conj-point}. 
\end{proof}

\subsection{Proof of Theorem \ref{the:main-1}}\label{sub-sec:proof-theo-main-1}

This Section is devoted to proving Theorem \ref{the:main-1}. Let $\gamma_h(t)$ be an arbitrary $r$-homoclinic geodesic in $\Eng(n)$ for a polynomial $F_{h}(x)$. We will consider the space $\R^{n+2}_F$ and the $\R^{n+2}_F$-geodesic $c_h(t) := \pi_{F_h}(\gamma(t))$, then we will prove the following Theorem.

\begin{Theorem}\label{the:1}
Let $\gamma_h(t)$ be an arbitrary $r$-homoclinic geodesic in $\Eng(n)$ for a polynomial $F_{h}(x)$. If $c_h(t) := \pi_{F_h}(\gamma(t))$, then $c_h(t)$ is a metric line in $\R^{n+2}_F$.
\end{Theorem}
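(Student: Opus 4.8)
The plan is to obtain Theorem \ref{the:1} as a direct application of the sequence method, Theorem \ref{the:metrtic-lines-method}, by verifying its four hypotheses for the geodesic $c_h(t)$. First I would reduce to the planar, radial situation: by Lemma \ref{lem:Eng(2)-geo} and Lemma \ref{lem:plan-mag-geo-r} every normal geodesic in $\R^{n+2}_F$ is $\varphi_{(Q,y_0,z_0)}$-equivalent to one lying in $\R^4_{\R^{n+2}_F}$, and since isometries send metric lines to metric lines it suffices to argue in $\R^4_F$ with the radial formulas of Proposition \ref{prop:mag-geo-Delta-r}. By Lemma \ref{lemma:equi-point-an-har} an $r$-homoclinic orbit forces $\ell=0$ and $F_h(r)=\pm(1-\beta r^2)$; taking the $+$ sign, $[c_h]\in Homc^+(x_0)$ with $x_0$ the unique critical point $r=0$ and radial Hill interval $\mathcal{R}=[0,\sqrt{2/\beta}]$.

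For Condition \ref{num-cond-3} I would compute $\Theta_1[c_h]=2\int_{\mathcal{R}}\sqrt{\tfrac{1-G(r)}{1+G(r)}}\,dr$ with $G=F_h$; the integrand carries only an integrable square-root singularity at the endpoint of $\mathcal{R}$ where $G\to-1$, so the integral converges and $\Theta_1[c_h]$ is finite. For Condition \ref{num-cond-1} the key is coercivity: for any competitor polynomial $G$ in the pencil the effective potential $V_{(\mu,\ell)}(r)=\ell^2/r^2+G^2(r)$ grows like $r^4$ as $r\to\infty$ and like $r^{-2}$ as $r\to 0$, so its radial Hill interval is contained in a fixed compact set once the coefficients of $G$ are controlled by the cost bound $C$. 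Combining this with Proposition \ref{prop:conj-point} and the cut-point bound $t_{cut}\le L(c)$ of Proposition \ref{Maxwell-r}, a minimizing geodesic is confined to at most one radial period, which yields the uniform bound $S_0$ on $s(\mathcal{T}_n)$ required by Definition \ref{def:geo-comp-def}.

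For Condition \ref{num-cond-2} I would parametrize $Homc^+(x_0)$: the normalization $G(x_0)=1$ fixes the constant term, leaving a one-parameter family $G(r)=1-b\beta r^2$ with $b$ ranging over a half-line (homoclinicity forces the sign of $b\beta$), the representative $c_h$ being one member. Writing $\Theta_2$ as an explicit integral over $\mathcal{R}$ in this parameter and establishing strict monotonicity would give injectivity; I expect a rescaling $r\mapsto \lambda r$ to normalize each member and make the monotonicity transparent. Finally, for Condition \ref{num-cond-5} I would invoke Lemma \ref{lemma:abn-geo-mag}: the only abnormal curves are vertical lines and horizontal isocontours, neither of which can join the endpoints $c_h(\pm n)$, whose $y$- and $z$-coordinates both diverge by \eqref{eq:asymp-cond-h}; hence the minimizers $c_n$ are normal, and normality is preserved under the uniform limits extracted by Proposition \ref{prp:seque-comp}, giving strict normality.

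The main obstacle I anticipate is Condition \ref{num-cond-1}: although coercivity makes each individual Hill region compact, geodesic compactness demands a \emph{uniform} arc-length bound across a whole sequence whose polynomials $G_n$ and angular momenta $\ell_n$ vary, so the delicate point is extracting uniform coefficient estimates from the cost bound $C$ alone and ruling out minimizers that linger near $\partial\Omega_{G_n}$ while accumulating unbounded radial length. The injectivity computation in Condition \ref{num-cond-2} is the second potentially subtle step, as it rests on a monotonicity statement for a nonelementary integral.
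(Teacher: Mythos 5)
Your overall strategy coincides with the paper's: reduce to the planar radial case and verify the four hypotheses of Theorem \ref{the:metrtic-lines-method}. Your treatment of Condition \ref{num-cond-3} (integrable square-root singularity at the endpoint where $G\to -1$) and of Condition \ref{num-cond-2} (rescaling $r\mapsto\lambda r$ to reduce $\Theta_2$ to a multiple of a fixed integral, then monotonicity in the scaling parameter) is exactly what the paper does in Lemma \ref{lem:theta-r-bounded}. The one cosmetic difference is that the paper also invokes the Carnot dilation (Lemma \ref{lem:carnor-dil}) to normalize to $F(r)=1-2r^2$ rather than carrying $\beta$ throughout.

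There is, however, a genuine gap in your verification of Condition \ref{num-cond-5}. Ruling out abnormal minimizers joining $c_h(\pm n)$ is the easy half; the half that matters for the sequence method is ruling out that the limit $c_\infty$ of the minimizing sequence is a vertical line geodesic. The vertical line over $x=0$ is itself a \emph{normal} geodesic (for the pencil member $G\equiv 1$), so your assertion that ``normality is preserved under uniform limits'' — which is in any case not a valid general principle in sub-Riemannian geometry — does not address the actual danger: the limit could be normal but fail to lie in $Homc^+(x_0)$, in which case the injectivity of $\Theta_2$ gives you nothing. The paper's Corollary \ref{c} excludes this by a sign computation: for the vertical line $Cost_y\ge 0$ on every interval since $1-F(r)=2r^2\ge 0$, whereas an explicit integration by parts shows $\Theta_2([c_h])<0$, contradicting the asymptotic period condition \eqref{eq:asymp-cond-theta}. (This same nonvanishing of $\Theta_2([c_h])$ is also silently needed in your Condition \ref{num-cond-2} argument: the scaling identity $\Theta_2([c_\beta])=4(2/\beta)^{3/2}\Theta_2([c_h])$ is injective in $\beta$ only because $\Theta_2([c_h])\neq 0$.) You should supply this computation.

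On Condition \ref{num-cond-1}, you correctly identify the uniformity over varying $(G_n,\ell_n)$ as the hard point, but your coercivity sketch does not close it: pencil members with vanishing quadratic coefficient have no coercivity at all, and confinement to one radial period bounds the radial excursion but not the arc length, since the angular winding $\Delta\theta$ can a priori accumulate. The paper's Lemma \ref{lem:geod-com-Ent} (Appendix \ref{ap:proof-geod-com-Ent}) handles both issues by rescaling the radial Hill interval to $[0,1]$, extracting convergent subsequences from the compact polynomial ball $\mathcal{P}(6)$, applying Fatou's lemma to show unbounded radial intervals force unbounded cost, and separately bounding $\Delta\theta(c_n,\mathcal{T}_n)$ uniformly. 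As written, your proposal defers rather than supplies this argument.
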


Lemma \ref{lem:carnor-dil} and Lemma \ref{lem:plan-mag-geo-r} imply it is enough to consider the space $\R^{4} _F$ and the $r$-homoclinic geodesic $\gamma_h(t)$ corresponding to the polynomial $F_h(r) := 1- 2r^2$ and with initial condition $c(0) = (1,0,0,0)$, then by construction $c(t) = (x_1(t),0,y(t),z(t))$. In addition, the time reversibility of the reduce Hamiltonian $H_{\mu}$ implies $x_1(n) = x_1(-n)$ for all $n$, so $\Delta x(c_h,[-n,n]):= x(n)-x(-n) = 0$ for all $n$. 

%, the Carnot dilation and the isometric action of $SO(n)\times \R^2$ on $R^{n+2}_F$

\subsubsection{Set up the proof of Theorem \ref{the:1}}

Theorem \ref{the:1} is a consequence of Theorem \ref{the:metrtic-lines-method} and Corollary \ref{cor:no-met-lin-Eng}. Let us verify the conditions of Theorem \ref{the:metrtic-lines-method}. 
The following lemma characterizes the $r$-homoclinic geodesic in $\R^4_{F_d}$ and proves conditions \ref{num-cond-2} and \ref{num-cond-3}.

\begin{lemma}\label{lem:theta-r-bounded}
The set of all $r$-homoclinic classes of geodesic $Homc$ is given by 
$$Homc^{\pm}  := \{ (G,\mathcal{R}): \; G(r) = \pm(1-\beta r^2), \; \text{and}\; \mathcal{R}_{\beta} = [0,\frac{2}{\beta}] \; \text{with}\; \beta >0\}.$$
 Moreover, the map $\Theta_2: Homc^{\pm}\to \R$ is one to one.
\end{lemma}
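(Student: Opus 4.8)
The plan is to split the statement into its two halves: the explicit description of the family $Homc$, which is essentially a repackaging of the equilibrium analysis already done, and the injectivity of $\Theta_2$, which is the one genuine computation. By Lemma \ref{lem:plan-mag-geo-r} it suffices to work inside $\R^4_F$, where every geodesic is governed by the radial one-degree-of-freedom system with effective potential $V_{(\mu,\ell)}(r)=\frac{\ell^2}{r^2}+G^2(r)$ for a pencil element $G\in Pen_F$. A class $[c]$ is $r$-homoclinic exactly when this reduced system admits a homoclinic orbit, and Lemma \ref{lemma:equi-point-an-har} tells us precisely when that occurs: only for $\ell=0$ and $G(r)=\pm(1-\beta r^2)$ with $\beta>0$, the orbit being the one asymptotic to the equilibrium at $r=0$ and filling the radial Hill interval $\mathcal{R}$. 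Since $F=F_h=1-2r^2$ has the an-harmonic form, the pencil $Pen_F$ consists of all quadratics $c+dr^2$, so every such $G$ indeed lies in $Pen_F$; thus $Homc$ is exactly the advertised family, parametrized by the sign and by $\beta>0$, with the sign separating $Homc^+$ (where $G(0)=1$) from $Homc^-$ (where $G(0)=-1$).

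Next I would compute $\Theta_2$ explicitly as a function of $\beta$. On $Homc^+$ we have $G(r)=1-\beta r^2$, $1-F(r)=2r^2$, and $1-G^2(r)=\beta r^2(2-\beta r^2)$, so Definition \ref{def:y-z-period-r} gives
\[
\Theta_2([c]) = 2\int_{0}^{\sqrt{2/\beta}}\frac{(1-\beta r^2)\,2r^2}{r\sqrt{\beta}\,\sqrt{2-\beta r^2}}\,dr = \frac{4}{\sqrt{\beta}}\int_{0}^{\sqrt{2/\beta}}\frac{(1-\beta r^2)\,r}{\sqrt{2-\beta r^2}}\,dr.
\]
The substitution $u=\beta r^2$ (so $r\,dr=du/(2\beta)$) converts this into $\frac{2}{\beta^{3/2}}\int_0^2\frac{1-u}{\sqrt{2-u}}\,du$, and the elementary antiderivative evaluates the remaining integral to $-\tfrac{2\sqrt2}{3}$, whence $\Theta_2(\beta)=-\frac{4\sqrt2}{3}\,\beta^{-3/2}$. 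The identical computation on $Homc^-$, where $G=-(1-\beta r^2)$, merely flips the sign of $G$ in the numerator and yields $\Theta_2(\beta)=+\frac{4\sqrt2}{3}\,\beta^{-3/2}$.

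From these formulas injectivity reads off immediately: $\beta\mapsto\beta^{-3/2}$ is strictly monotone on $(0,\infty)$, so $\Theta_2$ is injective on each branch, and its image is $(-\infty,0)$ on $Homc^+$ and $(0,\infty)$ on $Homc^-$, two disjoint sets, so $\Theta_2$ is one-to-one on all of $Homc^{\pm}$. I expect the only point requiring care to be the finiteness of the integral defining $\Theta_2$ near $r=0$: there $G(r)(1-F(r))/\sqrt{1-G^2(r)}\sim 1\cdot 2r^2/(\sqrt{2\beta}\,r)\to 0$, and this boundedness holds precisely because $F(0)=1$ — the normalization fixed by the reduction to $F_h=1-2r^2$ in Theorem \ref{the:1}; had $F(0)\neq1$ the integrand would blow up like $1/r$ and $\Theta_2$ would diverge. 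The explicit evaluation is routine, so the substantive content is this convergence check together with the monotonicity-and-disjointness argument that promotes per-branch injectivity to injectivity on the whole of $Homc^{\pm}$.
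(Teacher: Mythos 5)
Your proof is correct and follows essentially the same route as the paper: the first part is read off from Lemma \ref{lemma:equi-point-an-har}, and injectivity of $\Theta_2$ comes from showing $\Theta_2(\beta)\propto\beta^{-3/2}$, which you obtain by explicit evaluation ($-\tfrac{4\sqrt{2}}{3}\beta^{-3/2}$) where the paper instead uses the scaling substitution $r=\sqrt{2/\beta}\,\tilde r$ to reduce to the reference value $\Theta_2([c_h])$. Your closed-form computation has the small advantage of verifying on the spot that the proportionality constant is nonzero --- a fact the paper's argument tacitly needs and only establishes elsewhere (in Corollary \ref{c}, via integration by parts) --- and of separating the images of $Homc^+$ and $Homc^-$ by sign.
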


\begin{proof}
Lemma \ref{lemma:equi-point-an-har} implies the first part of the proof. Let us proof $\Theta_2: Homc^{\pm}\to \R$ is one to one: let $[c_\beta]$ be a class corresponding to the polynomial $G(r) = (1-\beta r^2)$, then if we set up the change of variable $r = \sqrt{\frac{2}{\beta}} \tilde{r}$, we have 
\begin{equation*}
\begin{split}
\Theta_2([c_{\beta}]) & = 4\int_{\mathcal{R}_{\beta}} \frac{r^2(1-\beta r^2)dr}{\sqrt{1-(1-\beta r^2)^2}} = 4\big( \sqrt{\frac{2}{\beta}} \big)^3\int_{[0,1]} \frac{\tilde{r}^2(1-2\tilde{r}^2)d\tilde{r}}{\sqrt{1-(1-2\tilde{r}^2)^2}}. \\
                      & = 4\big( \sqrt{\frac{2}{\beta}} \big)^3 \Theta_2([c_h]).
\end{split}
\end{equation*}
Since $\Theta_2([c_h])$ is constant, we conclude that $\Theta_2([c_{\beta}])$ is one-to-one when we vary $\beta$. 
\end{proof}

The following lemma verifies condition \ref{num-cond-1} from Theorem \ref{the:metrtic-lines-method}.
\begin{lemma}\label{lem:geod-com-Ent}
The space $\R^{4}_{F_h}$ with $F(r) = 1 - 2 r^2$ is geodesically compact.
\end{lemma}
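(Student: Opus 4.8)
The plan is to work with the radial reduction of the geodesic flow and reduce the claim to a one–dimensional arc–length estimate. By Lemma \ref{lem:plan-mag-geo-r} every normal geodesic of $\R^{4}_{F_h}$ is, up to the isometry $\varphi_{(Q,y_0,z_0)}$, governed by the effective Hamiltonian \eqref{eq:planar-an-harm-osc} with $F(r)=1-2r^{2}$ and $G(r)=a+b(1-2r^{2})\in Pen_{F}$, and the quantity to be bounded, $s(\mathcal{T}_n)$, is exactly the radial arc length $\int_{\mathcal{T}_n}\sqrt{1-G_n^{2}(r)}\,dt$. So I would fix a sequence $(c_n,\mathcal{T}_n)\in \Com(\rho,x_0,C)$ with $\mathcal{T}_n\to[-\infty,\infty]$, write $G_n(r)=a_n+b_n(1-2r^{2})$ and $V_n(r)=\ell_n^{2}/r^{2}+G_n^{2}(r)$, and aim to produce a single $S_0$ with $s(\mathcal{T}_n)\le S_0$.

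First I would show that a long time interval forces the reduced dynamics to be nearly homoclinic. Each $c_n$ is minimizing, so Proposition \ref{prop:conj-point} and Proposition \ref{Maxwell-r} give $|\mathcal{T}_n|\le L(c_n)$, the radial period; hence $L(c_n)\to\infty$. The key structural fact is that, in the variable $u=r^{2}$, the effective potential $V_n(u)=\ell_n^{2}/u+(a_n+b_n-2b_nu)^{2}$ is a sum of two convex functions, hence strictly convex. Thus for $\ell_n\neq0$ the well is non-degenerate and $V_n$ has no interior local maximum, so no separatrix exists and the period stays bounded; the only way to have $L(c_n)\to\infty$ is $\ell_n\to 0$ together with the degeneration of the inner turning point at $r=0$, i.e. $G_n(0)=a_n+b_n\to\pm1$. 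After the reflection of Section \ref{sub-sub-sec:exp-coor} we may assume the $+$ sign, so $V_n$ approaches the homoclinic configuration $G(r)=1-\beta r^{2}$, $\beta=2b$, of Lemma \ref{lemma:equi-point-an-har}.

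Then I would bound the arc length by splitting the radial range at a fixed $\epsilon>0$. On $\{r\ge \epsilon\}$ the polynomial $G_n$ is bounded away from $1$, so $\sqrt{(1-G_n)/(1+G_n)}\ge\delta$ and control by $\mathrm{Cost}_t$ through Proposition \ref{prop:mag-geo-Delta-r} gives $\int_{\{r\ge\epsilon\}}ds\le C/\delta$; on $\{r<\epsilon\}$ the near-homoclinic expansion $1-G_n^{2}\approx2\beta_n r^{2}$ and $1-V_n\approx 2\beta_n r^{2}-\ell_n^{2}/r^{2}$ makes the integrand $\sqrt{(1-G_n^{2})/(1-V_n)}$ converge to $1$ as $\ell_n\to0$, while the minimizing (single-bump) property bounds the number of passages, so $\int_{\{r<\epsilon\}}ds\le 2\epsilon+o(1)$. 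Summing yields a uniform $S_0$, and the compact set $K_{\Ho}$ follows because the endpoints lie in $B_{\Ho}(\rho,x_0)$ by hypothesis (3) and the total radial excursion is now bounded. For a general ball I would invoke the $SO(n)$-symmetry of Proposition \ref{prp:act-eng-alg} and the translation invariance of $\Delta$ and $\mathrm{Cost}$.

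The main obstacle is the constant $\delta$ in the $\{r\ge\epsilon\}$ estimate, equivalently keeping $\beta_n$ bounded away from $0$: if $\beta_n\to0$ the homoclinic orbit spreads to infinity and $s(\mathcal{T}_n)$ would diverge. This is precisely where the cost hypothesis is indispensable. Since $\Theta_1[c_\beta]=2\sqrt{2}\,\beta^{-1/2}\to\infty$ as $\beta\to0$, and $\mathrm{Cost}_t(c_n,\mathcal{T}_n)$ approaches $\Theta_1$ as $\mathcal{T}_n$ exhausts the orbit, the bound $\mathrm{Cost}_t\le C$ forces $\beta_n$ to stay bounded below by a constant depending only on $C$. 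Making rigorous that the finite-time minimizers accumulate a definite fraction of $\Theta_1$, so that $\mathrm{Cost}_t\le C$ genuinely bounds $\beta_n$ from below, and obtaining all of the above estimates uniformly as $\ell_n\to0$, is the technical heart of the argument.
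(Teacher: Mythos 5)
Your proposal hinges on a dichotomy that does not hold. You argue that for $\ell_n\neq 0$ the effective potential $V_n(u)=\ell_n^2/u+(a_n+b_n-2b_nu)^2$ is strictly convex in $u=r^2$, hence has no separatrix, hence the radial period $L(c_n)$ stays bounded, so that $L(c_n)\to\infty$ forces $\ell_n\to 0$ and convergence to the homoclinic configuration $G=1-\beta r^2$. Convexity does rule out a separatrix, but it does not bound the period uniformly over the parameter family: the period of a convex well degenerates as the well widens or flattens (e.g.\ $b_n\to 0$ with $a_n\to 1$ and $\ell_n$ fixed gives $r_{\max}\sim b_n^{-1/2}\to\infty$ and $L(c_n)\to\infty$). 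So long time intervals do \emph{not} force the near-homoclinic regime, and everything downstream collapses: the estimate on $\{r\ge\epsilon\}$ needs $G_n\le 1-\delta$ there, which is only guaranteed once you know $G_n\to 1-\beta r^2$ with $\beta$ bounded below; for a general $G_n=a_n+b_n(1-2r^2)$ in the sequence the boundary $\{G_n=1\}$ can sit at any radius. You also flag yourself that the lower bound on $\beta_n$ coming from $\mathrm{Cost}_t\le C$ (via $\Theta_1[c_\beta]=2\sqrt{2/\beta}$) is left unproved, and that bound is exactly what would be needed to close the $\{r\ge\epsilon\}$ estimate; as written the argument is circular in that region (you need the cost to control $\beta_n$, and $\beta_n$ to convert the cost into an arc-length bound).

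For contrast, the paper avoids classifying the asymptotic dynamics altogether. It first proves the radial excursion $r(\mathcal{T}_n)$ is uniformly bounded by contradiction: rescale the radial range to $[0,1]$ by an affine map $h_n$, use compactness of the unit ball of polynomials of bounded degree to extract a limit $\tilde G$ of $\tilde G_n=G_n\circ h_n$, and apply Fatou's lemma to show that if the range length $u_n\to\infty$ then either $\mathrm{Cost}_t\to\infty$ (when $\tilde G\not\equiv 1$) or $\mathrm{Cost}_y\to\infty$ (when $\tilde G\equiv 1$, using $1-F(r)=2r^2>0$) --- contradicting the uniform cost bound $C$. It then converts the radial bound into an arc-length bound by estimating the total angular variation $\Delta\theta(c_n,\mathcal{T}_n)$ uniformly, via smooth dependence of the corresponding one-form on parameters ranging in a sequentially compact set, giving $s(\mathcal{T}_n)\le C_rC_\theta$. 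If you want to pursue your route, the missing ingredient is a rigorous proof that bounded cost over exhausting intervals forces the rescaled polynomials into a compact family with $\beta_n$ bounded below --- which is essentially the paper's Fatou step in disguise.
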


We will prove Lemma \ref{lem:geod-com-Ent} in the Appendix \ref{ap:proof-geod-com-Ent}. The following corollary tells us $c_n(t)$ is not a sequence of line geodesics, in particular, showing condition \ref{num-cond-5}. 

\begin{Cor}\label{c}
Let $n$ be larger than $T^*_h$, where $T^*_h$ is given by Corollary \ref{cor:T_h}, then the sequence of geodesics $c_n(t)$ is neither a sequence of geodesic lines nor convergent to a geodesic line. In particular, $c_n(t)$ is strictly normal. 
\end{Cor}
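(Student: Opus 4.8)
The plan is to verify condition \ref{num-cond-5} by excluding, for all large $n$, that the connecting minimizer $c_n$ is a line geodesic, and that any subsequential limit of the reparametrized sequence $\tilde c_n$ is a line geodesic. Strict normality then follows from the trichotomy of Lemma \ref{lemma:abn-geo-mag}: in $\R^{4}_{F_h}$ every abnormal geodesic is either a line geodesic (the vertical lines) or a horizontal isocontour, so once line geodesics and isocontours are ruled out for both $c_n$ and its limits, every such geodesic must be normal.

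First I would record the endpoint data. Time-reversibility of the reduced Hamiltonian $H_\mu$ gives $x_h(-n)=x_h(n)=:x_*$, a point on the radial axis with $x_*\neq 0$ (the homoclinic orbit meets the equilibrium $0$ only asymptotically), while Lemma \ref{cor:T_h} gives $y_h(-n)<0<y_h(n)$ with $|y_h(\pm n)|\to\infty$. Thus $c_h(-n)$ and $c_h(n)$ have equal $\Ho$-component but distinct, diverging $y$-components. A line geodesic is the horizontal lift of a straight line in $\R^{n+1}$, so one joining two points with equal $\Ho$-component has constant reduced dynamics (the segment between them is vertical), forcing $x\equiv x_*$, $G\equiv 1$ (the sign fixed by $\Delta y>0$), and hence the rigid relation $\Delta z=F(x_*)\,\Delta y$. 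Comparing with $c_h$ and using $G=F$ along the homoclinic, a short computation gives
\begin{equation*}
\Delta z(c_h,[-n,n]) - F(x_h(n))\,\Delta y(c_h,[-n,n]) = \int_{-n}^{n} F(x_h(t))\big(F(x_h(t))-F(x_h(n))\big)\,dt,
\end{equation*}
which converges as $n\to\infty$ to $-\Theta_2([c_h])$, and $\Theta_2([c_h])\neq 0$ since $\Theta_2$ is injective by Lemma \ref{lem:theta-r-bounded}. Hence for all large $n$ no line geodesic joins $c_h(-n)$ and $c_h(n)$; in particular $c_n$ is not a line geodesic, and it is automatically normal, its $\Ho$-component sitting at $x_*\neq 0$ and its $y$-component being nonconstant, so that it is neither a (critical-point) vertical line nor a horizontal isocontour.

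For the subsequential limit I would combine geodesic compactness (Lemma \ref{lem:geod-com-Ent}) with the cost relation \eqref{eq:asymp-cond-theta}. Geodesic compactness bounds the reduced arc lengths $s(\mathcal T_n)$ uniformly, so after reparametrizing by arc length the reduced orbits converge to an orbit $x_\infty$ of \emph{finite} length traversed over the infinite time interval; meanwhile $Cost_y(c_n,[0,T_n])\to\Theta_2([c_h])\neq 0$, whose density $G(1-F)$ vanishes on the axis, forces $x_\infty$ to make a genuine excursion away from the equilibrium. A line-geodesic limit is then impossible: the nonconstant case ($G\equiv a$, $|a|<1$) moves at constant positive reduced speed, hence has infinite arc length over infinite time, contradicting $s(\mathcal T_n)<S_0$; and the constant case ($x\equiv 0$ or $x\equiv x_*$) has zero reduced length, contradicting the excursion forced by $\Theta_2\neq 0$. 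Finally the horizontal isocontours ($G\equiv 0$, $y$ constant) are excluded because $Cost_t=\int(1-G)\,dt$ equals the full interval length, exceeding the uniform bound $\Theta_1([c_h])$ of Lemma \ref{lem:uniform-bound-c-s} once the interval is long; since $Cost_t$ is nonnegative, monotone in the interval, and continuous under uniform convergence, the same bound applies to $c_\infty$. Therefore $c_n$ and all its subsequential limits are normal, i.e.\ $c_n$ is strictly normal.

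The step I expect to be the main obstacle is ruling out collapse of the reduced orbits onto the vertical axis (the $x\equiv 0$ line geodesic). Uniform convergence on compact time-intervals alone does not prevent this, because the cost producing $\Theta_2$ could in principle drift to the ends of the growing intervals; the decisive input is that geodesic compactness pins the reduced arc length uniformly, so that after arc-length reparametrization the limiting reduced orbit is a nondegenerate finite-length curve, while the non-vanishing of $\Theta_2([c_h])$ certifies that this curve genuinely leaves the axis.
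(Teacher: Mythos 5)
Your overall strategy coincides with the paper's core idea: observe that only a vertical line geodesic could join $c_h(-n)$ and $c_h(n)$ (equal $\Ho$-components, distinct $y$-components), and then contradict the endpoint/asymptotic conditions through $\Theta_2([c_h])$. Your write-up is actually more complete than the paper's own one-paragraph proof, which only excludes the vertical line as a connector and leaves the ``nor convergent to a geodesic line'' and horizontal-isocontour cases essentially implicit; your use of the uniform arc-length bound $s(\mathcal{T}_n)<S_0$ from geodesic compactness and of the $Cost_t$ bound to exclude line-geodesic and isocontour limits is a legitimate and useful strengthening.

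There is, however, one genuine gap: the inference ``$\Theta_2([c_h])\neq 0$ since $\Theta_2$ is injective by Lemma \ref{lem:theta-r-bounded}.'' Injectivity of a real-valued map does not prevent it from vanishing at a single point, so as written this is a non sequitur. It can be patched by going inside the proof of that lemma --- the scaling identity shows every $\Theta_2([c_\beta])$ is a positive multiple of $\Theta_2([c_h])$, so vanishing at $[c_h]$ would force $\Theta_2\equiv 0$ and destroy injectivity --- but note that the lemma's own injectivity argument already presupposes $\Theta_2([c_h])\neq 0$, so citing injectivity to obtain nonvanishing is circular relative to the paper's logical order. The paper instead evaluates the sign directly: integration by parts gives
\begin{equation*}
\int_{[0,1]} \frac{r^2(1-2r^2)\,dr}{\sqrt{1-(1-2r^2)^2}} \;=\; -\tfrac{1}{4}\int_{[0,1]}\sqrt{1-(1-2r^2)^2}\,dr \;<\;0,
\end{equation*}
so $\Theta_2([c_h])<0$, while a vertical line has $Cost_y=(1-F(x_*))\,\Delta t\ge 0$; the identity $Cost_y(c_n,[0,T_n])=Cost_y(c_h,[-n,n])\to\Theta_2([c_h])$ then yields the contradiction. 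You should replace the injectivity citation by this (or an equivalent) explicit evaluation. With that repair, the rest of your argument goes through; just note that your asymptotic identity $\Delta z(c_h,[-n,n])-F(x_h(n))\,\Delta y(c_h,[-n,n])\to -\Theta_2([c_h])$ silently uses that $1-F(x_h(n))=2r_h^2(n)$ decays fast enough to kill the cross term against $\Delta y(c_h,[-n,n])=O(n)$, which holds here because $r_h(t)$ decays exponentially along the homoclinic orbit.
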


\begin{proof}
    First, we notice that only the vertical line can connect the endpoint of the homoclinic geodesic $c_h(t)$. But, if $c_v(t)$ is a vertical line, then $c_v(t)$ is generated by the polynomial $G(r) = 1$. So $Cost_y(c_v,\mathcal{T})$ is positive for al time interval $\mathcal{T}$ snce $0 \leq 1-F(r)$. However,  $\Theta_2([c_h])$ is negative. Indeed, a direct integration by parts shows that 
    \begin{equation*}
        \begin{split}
            \int_{[0,1]} \frac{r^2(1-2r^2)dr}{\sqrt{1-(1-2r^2)^2}} & = \frac{r}{4} \sqrt{1-(1-2r^2)^2} \Big|_0^1 - \frac{r}{4} \int_{[0,1]}  \sqrt{1-(1-2r^2)^2} dr \\
            & = - \frac{r}{4} \int_{[0,1]}  \sqrt{1-(1-2r^2)^2} dr < 0.
        \end{split}
    \end{equation*}
    Therefore, we conclude that the vertical line cannot satisfy the endpoint condition \ref{eq:shot-con} and the asymptotic condition \ref{eq:asymp-cond-h}. 
\end{proof}

\subsubsection{Proof of Theorem \ref{the:1}}

\begin{proof}
Let $\gamma(t)$ be a $r$-homoclinic geodesic in $\Eng(n)$ for a polynomial $F_h(r) = 1-\beta r^2$, and let $c_h(t)$ be the projection of $\gamma(t)$ by $\pi_{F_h}$. By Lemma \ref{lem:carnor-dil}, it is enough to consider the $r$-homoclinic geodesic for the polynomial $F(r) = 1-2r^2$. Lemma \ref{lem:geod-com-Ent}, Lemma \ref{lem:theta-r-bounded}, and \ref{c} imply the Conditions \ref{num-cond-1}, \ref{num-cond-2}, \ref{num-cond-3}, and \ref{num-cond-5}, respectively. So Theorem \ref{the:metrtic-lines-method} shows $c_h(t)$ is a metric line in $\R^{n+2}_{F_h}$.
\end{proof}

\subsubsection{Proof of Theorem \ref{the:main-1}}

\begin{proof}
Corollary \ref{cor:no-met-lin-Eng} implies the $r$-periodic geodesics and small oscillation geodesics are not metric lines in $Eng(n)$. Corollary \ref{lem:sR-sub-metric-line} shows that the line geodesics are metric lines. Finally, Theorem \ref{the:1} verify that if $\gamma(t)$ is a $r$-homclinic geodesic, then $c(t) := \pi(\gamma(t))$ is a metric line in $\R^{n+2}_F$. So Lemma \ref{lem:sR-sub-metric-line}  tells $\gamma(t)$ is a metric line in $Eng(n)$.
\end{proof}

\section*{Conclusion}\label{sec:conclusions}

We present an approach to proving that a particular \sR geodesic in a \ma Carnot group is a metric line, called the sequence method. The sequence method considers the projection of the \sR geodesic to the \sR magnetic space $\R_{F}^{n+2}$. Theorem \ref{the:metrtic-lines-method} and Theorem \ref{the:metrtic-lines-method-d-t} state the sufficient condition for homoclinic and direct-type geodesics in $\R_{F}^{n+2}$ to be a metric line, respectively. In Theorem \ref{the:main-1}, we classify entirely the metric lines in the Engel-type group using the sequence method.

Future work: 
\begin{itemize}
\item Prove that a magnetic space is sequentially compact when the \sR geodesic flow is integrable, can be proved following a similar strategy from Lemma \ref{lem:geod-com-Ent}. A significant step is to show that a magnetic space is sequentially compact when the \sR geodesic flow is non-integrable.

\item Provide a complete classification of the metric lines in the Carnot groups of step 3 with a semi-direct product structure with $dim \;\Vs = 1$.

\item Prove that bounded-generic geodesics arbitrarily close to a critical point do not qualify as a metric line. 

\item Prove the homoclinic geodesics in the jet space $\J$ are metric lines.
\end{itemize}

Two interesting questions are: 
\begin{itemize}
\item Are all the metric lines in a Carnot group of step 3 related to Euler-Solition in the same way that Propositions \ref{prp:Euler-Elastica-Eng}? 

\item Are all the metric lines in a Carnot group of step 3 homoclinic?
\end{itemize}

\appendix

\section{Abnormal Geodesics}\label{sub-AP:ab-geo}

\subsection{Proof of Lemma \ref{lemma:abn-geo-mag}}
\begin{proof}
We will compute the abnormal curves using Pontryagin's maximum principle. Let us look at the following optimal control problem:
\begin{equation}\label{eq:abn-proof-gama}
\dot{c}(t) = u_1 \tilde{X}_1 + \cdots + u_n \tilde{X}_n + u_{n+1} \tilde{Y}, 
\end{equation}
with the boundary conditions $\gamma(0)$ and $\gamma(T)$, we notice that equation \eqref{eq:abn-proof-gama} implies $\dot{x}_i = u_i$. It follows from the Cauchy-Schwarz inequality that minimizing the \sR length is equivalent to minimizing the action
\begin{equation}\label{eq:abn-proof-gama-2}
\int_0^T \frac{u_1^2+ \cdots +u_{n+1}^2}{2}dt \to min,
\end{equation} 
with the condition $u_1^2 + \cdots + u_{n+1}^2 = 1$. Thus, the Hamiltonian associated to the optimal control problem \eqref{eq:abn-proof-gama} and \eqref{eq:abn-proof-gama-2} for the abnormal case is
\begin{equation}\label{eq:abn-proof-gama-3}
H(p,x,y,z,u) = u_1 p_{x_1} + \cdots +  u_n p_{x_n} + u_{n+1}(p_y + p_z F(x)). 
\end{equation}
From Pontryagin's maximum principle, we obtain a Hamiltonian system for the variables $p_{x_1}, \cdots, p_{x_n}$, $p_y$ and $p_z$:
\begin{equation}\label{eq:abn-proof-gama-4}
\dot{p}_{x_i} = u_{n+1} \frac{\partial F}{\partial x_i}\;\;\text{for} \;\;i = 1,\dots,n,\;\;\dot{p}_y = 0 \;\;\text{and}\;\;\dot{p}_z = 0,
\end{equation}
the maximum condition
\begin{equation}\label{eq:pont-prin}
\max_{u \in \R^{n+1}} H(p(t),x(t),u) = H(p(t),\hat{x}(t),\hat{y}(t),\hat{z}(t),\hat{u}(t)), 
\end{equation}
where  $\hat{c}(t) = (\hat{x}(t),\hat{y}(t),\hat{z}(t))$ is the optimal process, and the condition of non-triviality $p(t) \neq 0$. Therefore, the maximum condition implies
\begin{equation}\label{eq:abn-proof-gama-5}
0 = p_{x_i}, \;\; \text{for} \;\;i = 1,\dots,n,\;\;\text{and}\;\; 0 = p_y + p_zF(x). 
\end{equation}
We notice that the non-trivial condition imply $(p_y,p_z) \neq (0,0)$, if $p_z = 0$, then $p_y = 0$, so $p_z \neq 0$. By derivating equation \eqref{eq:abn-proof-gama-5} with respect time and using equation \eqref{eq:abn-proof-gama-4}, we obtain;  
\begin{equation}\label{eq:abn-proof-gama-6}
0 = u_{n+1} \frac{\partial F}{\partial x_i}\;\;\text{for} \;\;i = 1,\dots,n,\;\;\text{and}\;\;  0 = \frac{\partial F}{\partial x_1} u_1 + \cdots + \frac{\partial F}{\partial x_n} u_n .  
\end{equation}
The left equation from \eqref{eq:abn-proof-gama-6} tells $u_{n+1} = 0$ or $\frac{\partial F}{\partial x_i} = 0$ for $i = 1,\dots,n$. This dichotomy yields the two different families of abnormal geodesics. 

\textbf{(Vertical lines)} Let us assume $\frac{\partial F}{\partial x_i} = 0$ for $i = 1,\dots,n$, then $u_1 = \cdots = u_n = 0$ since $F(x)$ is a quadratic function  and its unique critical point critical $x^*$ is isolated ($dF|_{x=x^*} = 0$. We conclude that $u_{n+1} = \pm 1$, so $c(t)$ is a vertical line.

\textbf{(Horizontal isocontour)}  Let us assume $u_{n+1} = 0$, then $c(t)$ is tangent to $\Ho$ and the second equation from \eqref{eq:abn-proof-gama-6} implies the curve $x(t)$ belongs to a level set of $F(x)$, a curve in the Euclidean sub-space $\Ho$ is a metric line if and only if is a line. 
\end{proof}

\subsection{Proof of Lemma \ref{lemma:abn-geo-Eng} }

\begin{proof}
Let us set up the notation $Y_i = [X_i,Y]$ and $Y_{n+1} = [X_i, y_i]$ for all $i=1,\dots,n$, where $\{X_1,\dots,X_n,Y\}$ is the frame for $\D$ presented in Section \ref{sec:Eng-type}. Following the strategy from the proof of Lemma \ref{lemma:abn-geo-mag}, we have 

\begin{equation}\label{eq:abn-proof-gama-3-En}
H(p,g,u) = u_1 P_{X_1} + \cdots +  u_n P_{X_n} + u_{n+1}P_{Y_0}, 
\end{equation}
where $P_{X_1}, \cdots ,P_{X_n}, P_{Y_0}$ are the left-invariant momentum functions associated to the left-invariant vector fields tangent to $\Di$. From Pontryagin's maximum principle, we obtain a Hamiltonian system for the left-invariant momentum functions:
\begin{equation}\label{eq:abn-proof-gama-4-En}
\begin{split}
\dot{P}_{X_i} & = -u_{n+1} P_{Y_i}\;\;\text{for} \;\;i = 1,\dots,n,\;\;\dot{P}_{Y_0} = u_1 P_{Y_1} + \cdots +  u_n P_{Y_n},\\
\dot{P}_{Y_i} & = u_{i} P_{Y_{n+1}}\;\;\text{for} \;\;i = 1,\dots,n,\;\text{and}\;\dot{P}_{Y_{n+1}} = 0. 
\end{split}
\end{equation}
 Then, the maximum condition imply
\begin{equation}\label{eq:abn-proof-gama-5-En}
0 = -u_{n+1} P_{Y_i} , \;\; \text{for} \;\;i = 1,\dots,n,\;\;\text{and}\;\; 0 = u_1 P_{Y_1} + \cdots +  u_n P_{Y_n}. 
\end{equation}

\textbf{(Vertical lines)} Let us consider the case $P_{Y_1} = \cdots = P_{Y_n} = 0$, the no trivial condition implies $P_{Y_{n+1}} \neq 0$. Since $P_{Y_1}, \cdots, P_{Y_n}$ are constant, then equation \eqref{eq:abn-proof-gama-4-En} says  $0 = u_{i} P_{Y_{n+1}}$ for $i = 1,\dots,n$, so $0 = u_{i}$ for $i = 1,\dots,n$. We conclude that $u_{n+1} = \pm 1$, so $\gamma(t)$ is a vertical line. 

\textbf{(Horizontal curves)} Let us consider the case $P_{Y_{n+1}} = 0$, then \eqref{eq:abn-proof-gama-4-En} and the non-trivial condition show $(P_{Y_1}, \cdots, P_{Y_n})$ is a constant vector different of zero. Therefore, the \eqref{eq:abn-proof-gama-5-En} implies $u_{n+1} = 0$ and  $(P_{Y_1}, \cdots, P_{Y_n})\cdot (u_1, \cdots,u_n) = 0$, so $\gamma(t)$ is a Horizontal curve tangent to $\Ho$ and perpendicular to the vector $(P_{Y_1}, \cdots, P_{Y_n})$.

Let us consider the case $P_{Y_{n+1}} \neq 0$, then equation \eqref{eq:abn-proof-gama-4-En} yields $u_i = \frac{\dot{P}_{Y_i}}{P_{Y_{n+1}}}$, plugin the last expression \eqref{eq:abn-proof-gama-5-En} implies
$$ 0 = \dot{P}_{Y_1} P_{Y_1} + \cdots +  \dot{P}_{Y_n} P_{Y_n}\;\;\text{then}\;\; R^2 = P^2_{Y_1} + \cdots +  P^2_{Y_n}. $$
We conclude that $(P_{Y_1}, \cdots, P_{Y_n})$ is never the zero vector if $R \neq 0$, so $u_{n+1} = 0$, and $\gamma(t)$ is tangent to a sphere in $\Ho$. If $R = 0$, then $\gamma(t)$ is vertical line.

\end{proof}

\section{Sequence Method}

\subsection{Sequence Method for direct-type Geodesics}

Let us present the theorem for direct-type geodesics. 
\begin{Theorem}[Sequence method for direct-type gedesic]\label{the:metrtic-lines-method-d-t}
Let $\G$ be a \ma Carnot group with a semidirect product structure such that $\dim \; \Vs = 1$. Let $\gamma_d(t)$ be a directy-type geodesic in $\G$ corresponding to a polynomial $F(x)$, and let $c_d(t):= \pi_F(\gamma_s(t))$ be the \sR geodesic in $\R^{n+2}_F$. Without loss of generality, let us assume that $[c_d]\in Hetc^+(x_0,x_1)$. If the following conditions hold: 
\begin{enumerate}
\item The value $\Theta_1([c_d])$ is finite. 
\item  There exist a ball $B_{\Ho}$ with the property that the region $B_{\Ho}(r,x_0) \times \R^{2}$ is geodesically compact.

\item The map $\Theta_2: Hetc^+(x_0,x_1) \to \R$ is one to one.

\item  The sequence of minimizing geodesics $c_n(t)$ joining the points $c(-n)$ and $c(n)$ is strictly normal. 

\end{enumerate}
Then $c_d(t)$ is a metric line in $\R^{n+2}_F$, consequently the direct-type geodesic $\gamma_d(t)$ is a metric line. 
\end{Theorem}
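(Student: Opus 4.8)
The plan is to repeat, essentially line for line, the argument that established the homoclinic case in Theorem \ref{the:metrtic-lines-method}, replacing the homoclinic class space $Homc^+(x_0)$ by the heteroclinic class space $Hetc^+(x_0,x_1)$ and the return dynamics by a heteroclinic connection. As there, it suffices to show that $c_d(t)$ is arc length minimizing on $[-T,T]$ for every $T>0$; Lemma \ref{lem:sub-submersion} then lifts this back to $\gamma_d(t)$ in $\G$. First I would fix $T$, take $n>T$, and choose a sequence $c_n(t)$ of minimizers on $[0,T_n]$ joining $c_d(-n)$ to $c_d(n)$ and satisfying the shorter condition $T_n\le n$ as in \eqref{eq:shot-con}. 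The only structural change is in the $x$-component of the asymptotics: since $[c_d]\in Hetc^+(x_0,x_1)$ we have $G(x_0)=G(x_1)=1$, and the direct-type condition \eqref{eq:asym-pol} gives $F(x_0)=F(x_1)=1$; hence $\dot y=G(x)\to 1$ and $\dot z=G(x)F(x)\to 1$ at both ends, so the $y$- and $z$-asymptotics of \eqref{eq:asymp-cond-h} are unchanged, while $x_n(0)\to x_0$ and $x_n(T_n)\to x_1$ now tend to distinct points. The period-map identity \eqref{eq:asymp-cond-theta}, namely $\lim_{n\to\infty}Cost_y(c_n,[0,T_n])=\Theta_2([c_d])$, follows as before from \eqref{eq:cots-fuc-cond}.

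Next I would produce a bounded initial condition in the manner of Proposition \ref{prp:bounded-ini-con}: the intermediate value theorem furnishes $t_n^*\in(0,T_n)$ with $y_n(t_n^*)=0$; Condition (1), the finiteness of $\Theta_1([c_d])$, together with Lemma \ref{lem:uniform-bound-c-s} and the heteroclinic analogue of Lemma \ref{lem:uniform-bound-c-n}, bounds $z_n(t_n^*)$; and Condition (2), geodesic compactness, confines $x_n(t_n^*)$ to a fixed compact $K_\Ho\subset\Ho$. Here one must take the ball $B_\Ho(r,x_0)$ large enough to contain both $x_0$ and $x_1$, so that the endpoint requirement in the definition of $\Com(r,x_0,C)$ is met for large $n$; this is the one place where the setup genuinely differs from the homoclinic case. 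Reparametrizing $\tilde c_n(t):=c_n(t+t_n^*)$ so that $\tilde c_n(0)$ is bounded, Corollary \ref{cor:sub-seq-time-int} gives nested intervals $\mathcal{T}_n\uparrow[-\infty,\infty]$, and Lemma \ref{lem:Min-seq} produces compact sets $K_N$ with $\tilde c_n\in Min(K_N,\mathcal{T}_N)$ for $n>N$. Proposition \ref{prp:seque-comp} then extracts a subsequence converging to a geodesic $c_\infty$, which Condition (4) forces to be normal, hence attached to a polynomial $G\in Pen_F$.

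The decisive step is the heteroclinic analogue of Lemma \ref{lem:uniq-G}. The heteroclinic analogue of Lemma \ref{lem:com-converge}, with the same $Cost_t$-finiteness argument, shows that the reduced dynamics of $c_\infty$ is asymptotic to the boundary $\partial\Omega_G^+$ at both ends; combined with the endpoint asymptotics $\tilde c_n(-t_n^*)\to(x_0,-\infty,\cdot)$ and $\tilde c_n(T_n-t_n^*)\to(x_1,+\infty,\cdot)$ carried to the uniform limit on each $\mathcal{T}_N$, this identifies $x_\infty(t)\to x_0$ as $t\to-\infty$ and $x_\infty(t)\to x_1$ as $t\to+\infty$, so that $[c_\infty]\in Hetc^+(x_0,x_1)$. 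Continuity of $Cost_y$ under the uniform limit gives $\Theta_2([c_\infty])=\Theta_2([c_d])$, whence Condition (3), the injectivity of $\Theta_2$ on $Hetc^+(x_0,x_1)$, forces $[c_\infty]=[c_d]$. Finally, since $c_\infty$ and $c_d$ lie in the same equivalence class they are related by a translation $\varphi_{(y_0,z_0)}\in Iso^*(\R^{n+2}_F)$; choosing $N$ large enough that $c_d([-T,T])\subset\varphi_{(y_0,z_0)}(c_\infty(\mathcal{T}_N))$ and invoking Lemma \ref{lem:iso-metr} shows $c_d$ is minimizing on $[-T,T]$. As $T$ is arbitrary, $c_d$ is a metric line in $\R^{n+2}_F$, and by Lemma \ref{lem:sub-submersion} the lift $\gamma_d$ is a metric line in $\G$.

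I expect the main obstacle to be the identification step: verifying that the limit of the minimizing sequence is genuinely the heteroclinic orbit connecting the prescribed pair $(x_0,x_1)$, rather than degenerating to a homoclinic orbit at a single endpoint or to a concatenation of boundary arcs. This hinges on the endpoint asymptotics surviving the passage to the uniform limit on each compact $\mathcal{T}_N$, and it is precisely the injectivity hypothesis on $\Theta_2$ restricted to $Hetc^+(x_0,x_1)$ that excludes the competing limiting classes.
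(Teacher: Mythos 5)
Your proposal is correct and follows exactly the route the paper intends: the paper omits this proof, stating only that it ``follows the same strategy'' as Theorem \ref{the:metrtic-lines-method}, and your adaptation (replacing $Homc^+(x_0)$ by $Hetc^+(x_0,x_1)$, tracking distinct endpoint asymptotics $x_0\neq x_1$, enlarging the ball to contain both critical points, and using injectivity of $\Theta_2$ on $Hetc^+(x_0,x_1)$ for the identification step) is precisely that strategy, spelled out in more detail than the paper provides.
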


We will omit the proof of Theorem \ref{the:metrtic-lines-method-d-t} since it follows the same strategy. See \cite{bravo2022metric} for the jet space case.

\subsection{Complementary Results for the Sequence Method for Homoclinic Geodesics}

An essential definition for this section is the following.

\begin{defi}
Let us consider the vector space of polynomial on $\R$  of degree bounded by $k$, and let  $||F||_{\infty} :=  \sup_{x \in [0,1]} |F(x)|$ be the uniform norm. We denote by $\mathcal{P}(k)$ the closed ball of radius 1.
\end{defi}

We remark that it is well-known that $\mathcal{P}(k)$ is a compact set. This result will be fundamental for the following proof.

\subsubsection{Proof of Lemma \ref{lem:uniform-bound-c-n} }\label{app:lem-unif-bounf-c-n}

We will devote this section to proving Lemma \ref{lem:uniform-bound-c-n}. The strategy is the following: We will consider a magnetic space $\R^{n+1}_F$ with a geodesically compact region $B_{\Ho}(r,x_0)\times \R^2$. We will take the map $Cost_y(c,\mathcal{T})$ restricted to the space $Com(r,x_0,C)$. For every pair $(c,\mathcal{T})$, we will associate a pair $(G,x(\mathcal{T}))$ and we will take on $Cost_y(c,\mathcal{T})$ as a function on the space on the space of pairs $(G,x(\mathcal{T}))$. Then, we will show that the map $Cost_y(c,\mathcal{T})$ is a continuous function restricted to the space  $Com(r,x_0,C)$, i.e., we will consider a sequence of pairs $(G_n,x_n(\mathcal{T_n}))$ converging to $(G,x(\mathcal{T}))$ and show that $Cost_y(c_n,\mathcal{T}_n)$ converge to $Cost_y(c,\mathcal{T})$. We will show that the sequence of pairs defined by Lemma \ref{lem:uniform-bound-c-n} is sequentially compact. Therefore, $Cost_y(c,\mathcal{T})$ is a continuous function on a sequentially compact set, so $Cost_y(c,\mathcal{T})$ is bounded.

An essential result of this proof is the General Lebesgue Dominated Convergence Theorem.

\begin{Theorem}[General Lebesgue Dominated Convergence Theorem]
Let $\{f_n\}$ be a sequence of measurable functions on $E$ that converge pointwise a.e. on $E$ to $f$. Suppose there is a sequence $\{g_n\}$ of nonnegative functions on $E$ that converges a.e. on $E$ to $g$ and dominates $\{f_n\}$ on $E$ in the sense that
$$|f_n| \leq g_n \;\;\text{on}\;\;E\;\; \text{for all}\;\;n.$$
$$\text{If}\;\;\lim_{n\to \infty} \int_{E} g_n = \int_{E} g< \infty,\;\;\text{then}\;\; \lim_{n\to \infty} \int_{E} f_n = \int_{E} f.$$
\end{Theorem}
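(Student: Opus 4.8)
The plan is to deduce the statement from Fatou's Lemma applied to two auxiliary sequences of nonnegative functions. First I would record the preliminaries that make every integral meaningful: since each $f_n$ is measurable and $f_n \to f$ pointwise a.e., the limit $f$ is measurable, and passing to the limit in $|f_n|\le g_n$ gives $|f|\le g$ a.e.; as $\int_E g$ is finite by hypothesis, $f$ is integrable and $\int_E f$ is a well-defined finite number. The structural observation driving the proof is that the domination $|f_n|\le g_n$ is equivalent to the two one-sided bounds $-g_n \le f_n \le g_n$, so both $g_n+f_n$ and $g_n-f_n$ are nonnegative measurable functions — precisely the setting where Fatou's Lemma applies.

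Next I would apply Fatou's Lemma to $\{g_n+f_n\}$. Because $g_n\to g$ and $f_n\to f$ a.e., the pointwise limit, hence the $\liminf$, of $g_n+f_n$ is $g+f$, so Fatou yields
\[
\int_E (g+f)\;\le\;\liminf_{n\to\infty}\int_E (g_n+f_n)\;=\;\liminf_{n\to\infty}\Big(\int_E g_n+\int_E f_n\Big).
\]
Since $\int_E g_n$ converges to the finite number $\int_E g$, the $\liminf$ of the sum splits as $\int_E g+\liminf_n\int_E f_n$; the finiteness of $\int_E g$ is exactly what permits cancelling it from both sides, giving $\int_E f\le\liminf_n\int_E f_n$. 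A symmetric application to $\{g_n-f_n\}$, whose a.e. limit is $g-f$, produces $\int_E(g-f)\le\int_E g-\limsup_n\int_E f_n$, and cancelling $\int_E g$ again gives $\limsup_n\int_E f_n\le\int_E f$.

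Finally I would chain the two inequalities together with the trivial $\liminf\le\limsup$:
\[
\limsup_{n\to\infty}\int_E f_n\;\le\;\int_E f\;\le\;\liminf_{n\to\infty}\int_E f_n\;\le\;\limsup_{n\to\infty}\int_E f_n,
\]
which forces equality throughout, so $\lim_n\int_E f_n$ exists and equals $\int_E f$. The step I would flag as the genuinely load-bearing one — and the main obstacle to watch — is the joint hypothesis $\int_E g<\infty$ and $\int_E g_n\to\int_E g$: this is what licenses the cancellation of the (a priori possibly infinite) term $\int_E g$ and the clean splitting of the $\liminf$ and $\limsup$ of the sums. Without it the cancellations are illegitimate and the argument collapses, which is exactly the point at which the classical fixed-dominator Dominated Convergence Theorem is generalized to a convergent sequence of dominators with convergent integrals.
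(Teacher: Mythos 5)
Your proof is correct. Note that the paper itself offers no proof of this statement: it quotes the General Lebesgue Dominated Convergence Theorem as a classical background result (it is Royden--Fitzpatrick's generalization of dominated convergence) and immediately applies it in the proof of Proposition \ref{prp:cost-cont}, so there is no in-paper argument to compare against. Your route --- Fatou's Lemma applied to the nonnegative sequences $g_n + f_n$ and $g_n - f_n$, followed by cancellation of the finite quantity $\int_E g$ --- is exactly the standard textbook proof, and you correctly identify the load-bearing hypothesis: the splitting $\liminf_n\bigl(\int_E g_n + \int_E f_n\bigr) = \int_E g + \liminf_n \int_E f_n$ is only legitimate because $\int_E g_n$ converges to the \emph{finite} value $\int_E g$, and the subsequent cancellation requires that finiteness. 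Two small points you handle implicitly but could make explicit: $f$ is measurable as an a.e.\ pointwise limit of measurable functions (modulo the usual null-set convention), and since a sequence in $[0,\infty]$ converging to a finite limit is eventually finite, $\int_E g_n < \infty$ and hence $f_n$ is integrable for all large $n$, which is all the limit assertions require.
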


\begin{Prop}\label{prp:cost-cont}
    Let $\R^{n+1}_F$ be magnetic space, and let $B_{\Ho}(r,x_0)\times \R^2$ be a geodesically compact region. Then, the function $Cost_y(c,\mathcal{T})$ is continuous in the space $Com(r,x_0,C)$. 
\end{Prop}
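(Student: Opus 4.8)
The plan is to prove sequential continuity: I take a sequence $(c_n,\mathcal{T}_n)$ in $\Com(r,x_0,C)$ whose associated data $(G_n,x_n(\mathcal{T}_n))$ converge to $(G,x(\mathcal{T}))$, and show $Cost_y(c_n,\mathcal{T}_n)\to Cost_y(c,\mathcal{T})$. First I would pass to the arc-length description: by Proposition \ref{prop:mag-geo-Delta-C},
$$ Cost_y(c_n,\mathcal{T}_n)=\int_0^{s_n}\frac{G_n(x_n(s))\,(1-F(x_n(s)))}{\sqrt{1-G_n^2(x_n(s))}}\,ds,\qquad s_n:=s(\mathcal{T}_n). $$
The geodesic compactness hypothesis (Definition \ref{def:geo-comp-def}) provides a uniform bound $s_n\le S_0$ and confines all $x_n$ to a fixed compact $K_{\Ho}\subset\Ho$; the polynomials $G_n=a_n+b_nF$ lie in the compact ball $\mathcal{P}(k)$, so along the given convergent sequence $G_n\to G$ uniformly on $K_{\Ho}$, $x_n\to x$ uniformly, and $s_n\to s$. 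I would then place all integrands on the common interval $[0,S_0]$ by multiplying by the indicators $\mathbbm{1}_{[0,s_n]}$, which converge to $\mathbbm{1}_{[0,s]}$ almost everywhere.

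The next step is pointwise a.e. convergence of the integrands $f_n(s)$. Off the zero set of $1-G^2(x(\cdot))$ the integrand is a continuous function of $(G,x,s)$, so $f_n(s)\to f(s)$ there. The obstruction is confined to the turning points, where $1-G_n^2(x_n(s))=0$; since each $c_n$ is minimizing, Proposition \ref{prop:conj-point} forces its reduced trajectory to meet each component $\partial\Omega_{G_n}^{\pm}$ of the Hill boundary at most once, so these points form a finite, hence measure-zero, set and $f_n\to f$ a.e. on $[0,S_0]$.

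The hard part is the passage to the limit under the integral, which is where I would invoke the General Lebesgue Dominated Convergence Theorem. A single fixed integrable dominator does not exist, because the square-root singularities $|f_n(s)|\sim C/\sqrt{|s-s^\ast_n|}$ sit at turning points $s^\ast_n$ that move with $n$ and, in the homoclinic limit, migrate to the endpoint where $G=F=1$ and the singularity cancels. Accordingly I would construct a sequence of dominators $g_n\ge|f_n|$ that themselves converge a.e. to some $g$ with $\int g_n\to\int g<\infty$: using $|G_n|\le 1$, $0\le 1-F\le 2$ on the Hill region, and the identity $1-F=(\,(a_n+b_n)-G_n\,)/b_n$, I would split $|f_n|$ into a contribution controlled by the $Cost_t$-integrand $\sqrt{(1-G_n)/(1+G_n)}$ near $\partial\Omega^-$ and a contribution of the form $|1-F|/\sqrt{1-G_n^2}$ near $\partial\Omega^+$; both carry only integrable $1/\sqrt{\,\cdot\,}$ singularities whose integrals are uniformly bounded by $C$ and converge by the already-established uniform and a.e. convergence on the bulk. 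Verifying $\int g_n\to\int g$ is the delicate point, and it is precisely the hypothesis the General Lebesgue Dominated Convergence Theorem is designed to exploit.

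With $f_n\to f$ a.e. and a convergent dominating sequence in hand, the General Lebesgue Dominated Convergence Theorem yields $\int_0^{S_0}f_n\to\int_0^{S_0}f$, that is $Cost_y(c_n,\mathcal{T}_n)\to Cost_y(c,\mathcal{T})$. Since the convergent sequence in $\Com(r,x_0,C)$ was arbitrary, this establishes the continuity of $Cost_y$ and proves Proposition \ref{prp:cost-cont}.
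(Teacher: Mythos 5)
Your overall strategy coincides with the paper's: reduce to sequential continuity, get pointwise a.e.\ convergence of the integrands away from the finitely many turning points (via Proposition \ref{prop:conj-point}), and close with the General Lebesgue Dominated Convergence Theorem. The gap is in the domination step, and you have in fact pointed at it yourself. Near the turning point on $\partial\Omega_{G_n}^{+}$ your proposed dominator is $|1-F|/\sqrt{1-G_n^2}$, which is $|f_n|/|G_n|$, i.e.\ essentially the integrand itself; so the hypothesis $\int g_n \to \int g<\infty$ that the G.L.D.C.T.\ requires is exactly the convergence you are trying to prove, and the argument is circular. The uniform bound coming from membership in $\Com(r,x_0,C)$ gives only $\sup_n \int g_n<\infty$, which does not suffice for dominated convergence (mass can concentrate and escape). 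This is not a removable technicality here: as $(a_n,b_n)\to(0,1)$ the $\partial\Omega^{+}$ turning point migrates toward the degenerate zero of $1-G^2$, where $\int ds/\sqrt{1-G_n^2}$ blows up, while $1-F=\delta_n+(1-G_n)/b_n$ with $\delta_n=(a_n+b_n-1)/b_n$ only tending to zero; whether $\delta_n$ times the blowup vanishes is precisely the quantitative question your dominators do not answer.

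The paper resolves this by \emph{not} staying in the arc-length parametrization near the singularities. It splits the time interval into the set $\mathcal{T}^-_n$ where $G_n<0$ (whose Lebesgue measure is uniformly bounded by the $Cost_t$ estimate), small time windows $\mathcal{T}^1_n$ of fixed length around the endpoints and the interior $\partial\Omega^{+}$ turning point, and the complement. On the first two pieces it integrates in the time variable, where the $y$-increment integrand is $G_n(1-F)$ with no square-root denominator, so the dominator is a constant $C_F$ times the indicator of a set of uniformly bounded, convergent measure, and $\int g_n\to\int g$ is immediate. Only on the complement, where $\sqrt{1-G_n^2}$ is bounded away from zero, does it use the arc-length form on $[0,S_0]$. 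If you want to repair your proof, you should import this change of parametrization near the turning points (or else prove a genuine uniform-integrability estimate there, e.g.\ that $\delta_n\int_{|s-s^*_n|<\rho} ds/\sqrt{1-G_n^2}\to 0$); as written, the step labelled ``the delicate point'' is the theorem.
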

We remark that if $c(t)$ is a \sR geodesic corresponding to a polynomial $G(x)\in Pen_F$, then we think on $Cost_y(c,\mathcal{T})$ as a function on the polynomial $G(x)$ and the curve $x(\mathcal{T})$. Let us prove Proposition \ref{prp:cost-cont}.  
\begin{proof}
    We will consider a sequence $(c_n,\mathcal{T}_n) \in Com(r,x_0,C)$ converging to a pair $(c,\mathcal{T})$, and we will show that 
    $$\lim_{n\to \infty}Cost_y(c_n,\mathcal{T}_n) = Cost_y(c,\mathcal{T}). $$
    The goal is to apply G.L.D.C.T. (General Lebesgue Dominated Convergence Theorem). Let us consider the following cases.
    
    (Case 1) The interval $\mathcal{T}$ is bounded. We will use $E = \R$, and we define the sequence of functions $\{f_n\}_{n\in \mathbb{N}}$ and $\{g_n\}_{n\in \mathbb{N}}$ as follows
    \begin{equation*}
        \begin{split}
     f_n(t)& = \begin{cases}
              G_n(x_n(t))(1-F(x_n(t)) \quad \text{if}\;\;t \in \mathcal{T}_n,\\
              0 \qquad\qquad\qquad\qquad\;\;\;\qquad \text{if}\;\;t \notin \mathcal{T}_n,
             \end{cases}\\    
      g_n(t)& = \begin{cases}
              C_F \quad \text{if}\;\;t \in \mathcal{T}_n,\\
              0 \quad\;\;\; \text{if}\;\;t \notin \mathcal{T}_n,
             \end{cases} \quad\text{where}\quad C_F:= \max_{x\in K_{\Ho}} (1-F(x)).\\        
        \end{split}
    \end{equation*}
By construction, we have $|f_n| \leq g_n$ and the Lebesgue measure of the interval $\mathcal{T}_n$ is finite, for all $n$. We define the functions $f$ and $g$ by
\begin{equation*}
        \begin{split}
     f(t)& = \begin{cases}
              G(x(t))(1-F(x(t)) \quad \text{if}\;\;t \in \mathcal{T},\\
              0 \qquad\qquad\qquad\;\;\;\;\qquad \text{if}\;\;t \notin \mathcal{T},
             \end{cases}\;\;\text{and}\;\;    
      g(t) = \begin{cases}
              C_F \quad \text{if}\;\;t \in \mathcal{T},\\
              0 \quad\;\;\; \text{if}\;\;t \notin \mathcal{T}.
             \end{cases}\\        
        \end{split}
    \end{equation*}
    Since the Lebesgue measure of the interval $\mathcal{T}$ is finite, we can apply G.L.D.C.T. to obtain our desired result.

    (Case 2) The interval $\mathcal{T}$ is unbounded. Without loss of generality, let us assume that $\mathcal{T} = [-\infty,\infty]$ (otherwise, we repeat the proof with only one side unbounded).  We define the following sets
    $$\mathcal{T}^+_n := \{ t \in \mathcal{T}_n: \; 0 < G(x(t))  \}\;\;\text{and}\;\; \mathcal{T}^-_n := \{ t \in \mathcal{T}_n: \; 0 > G(x(t))  \} .$$
    Let us notice that $Cost(c_n,\mathcal{T})$ is uniformly bounded implies that the Lebesgue measure of $\mathcal{T}^-_n$ is uniformly bounded for all $n\in \mathbb{N}$. Indeed,
    \begin{equation*}
        \int_{\mathcal{T}^-_n} dt \leq \int_{\mathcal{T}^-_n} (1-G_n(x_n(t))) dt < C.  
    \end{equation*}

    Since $c_n(t)$ is a sequence of minimizing geodesic, Proposition \ref{prop:conj-point} implies that $x_n(t)$ can touch only once the boundary $\Omega^+_{G_n}$ in the interior of the interval $\mathcal{T}_n$, or twice in the bounder of the interval $\mathcal{T}_n$. Having this in mind, we will define a subset $\mathcal{T}_n^1 \subset\mathcal{T}_n$, its definitions depends on whenever or not $x_n(t)$ touches the boundary $\Omega^+_{G_n}$ in the interior of the interval $\mathcal{T}_n$.  Let us assume that $\mathcal{T}_n = [t^0_n,t^1_n]$. If there exists $t^*_n \in (t_n^0,t^1_n)$ with the property that $x_n(t^*) \in \Omega^+_{G_n}$, then we define $\mathcal{T}_n^1$ as follows
    $$ \mathcal{T}_n^1 := (t^0_n,t^0_n+1)\cup (t^*_n-\epsilon_n,t^*_n+\epsilon_n) \cup (t^1_n-1,t^1_n)\subset\mathcal{T}_n,  $$
    where each $\epsilon_n<1$ is any positive number that make us sure that $\T_n^1 \subset \T_n$ for every $n\in \mathbb{N}$. If such $t^*_n$ does not exist, then we define $\mathcal{T}_n^1$ as follows
    $$ \mathcal{T}_n^1 := (t^0_n,t^0_n+1) \cup (t^1_n-1,t^1_n)\subset\mathcal{T}_n.$$

    We remark that Lemma \ref{lem:com-converge} implies that 
    $$ \lim_{n\to \infty}G(x(t^0_n)) = \lim_{n\to \infty}G(x(t^1_n)) = 1.$$
    Since $t^0_n \to -\infty$ and $t^1_n \to \infty$ when $n \to \infty$. Consequently, 
    \begin{equation*}
        \lim_{n\to \infty}G(x(t^0_n+1)) = \lim_{n\to \infty}G(x(t^1_n-1)) = 1.
    \end{equation*}
   If $t^*_n \to t^* \in \R$ when $n\to \infty$, then we define the interval $\mathcal{T}^1$ as
    $$ \T^1:= \{  [t^*-\epsilon,t^*+\epsilon] \subset\mathcal{T} : \;\; G_n(t^*) =1 \}. $$
   If $t^*_n$ diverges, then $\T^1 :=\{\emptyset\}$. 
    
    We use the set $\T_n^1$ to rewrite the following integral
    \begin{equation*}
        \begin{split}
        \int_{\mathcal{T}^+_n} G_n(x_n(t))(1-F(x_n(t))) dt = & \int_{\mathcal{T}^1_n} G_n(x_n(t))(1-F(x_n(t))) dt\\
         & +  \int_{s(\T_n\setminus\mathcal{T}^1_n)} \frac{G_n(x_n(s))(1-F(x_n(s)))}{\sqrt{1-G^2_n(x_n(s))}}ds.
        \end{split}
    \end{equation*}

    For all $n \in \mathbb{N}$, we define the set $\overline{\T}_n := \T^-_n \cup \T^1_n$ and the sequences functions  $\{f_n^1\}$ and $\{g_n^1\}$ from $\R$ to $\R$ as follows
    \begin{equation*}
        \begin{split}
     f_n^{1}(t)& = \begin{cases}
              G_n(x_n(t))(1-F(x_n(t)) \quad \text{if}\;\;t \in \overline{\T}_n,\\
              0 \qquad\qquad\qquad\qquad\;\;\;\qquad \text{if}\;\;t \notin \overline{\T}_n,
             \end{cases}\;\text{and}\;\;    
      g_n^{1}(t) = \begin{cases}
              C_F \; \text{if}\;t \in \overline{\T}_n,\\
              0 \quad \text{if}\;t \notin \overline{\T}_n.
             \end{cases}\\        
        \end{split}
    \end{equation*}
    By construction, we have $|f_n^1| \leq g_n^1$, and the Lebesgue measure of the set $\overline{\T}_n$ is finite, for all $n \in \mathbb{N}$. We define $\overline{\T} = \T^- \cup \T^1$ and the functions $f^1$ and $g^1$ on $\R$ by
\begin{equation*}
        \begin{split}
     f^1(t)& = \begin{cases}
              G(x(t))(1-F(x(t)) \quad \text{if}\;\;t \in \overline{\T},\\
              0 \qquad\qquad\qquad\;\;\;\;\qquad \text{if}\;\;t \notin \overline{\T},
             \end{cases}\;\;\text{and}\;\;    
      g^1(t) = \begin{cases}
              C_F \quad \text{if}\;\;t \in \overline{\T},\\
              0 \quad\;\;\; \text{if}\;\;t \notin \overline{\T}.
             \end{cases}\\        
        \end{split}
    \end{equation*}
    The function $ g^1$ is integrable since the Lebesgue measure of $\overline{\T}$ is uniformly bounded. Therefore, the sequences of functions $ \{f^1_n\}_{n\in \mathbb{N}}$ and $\{g^1_n\}_{n\in \mathbb{N}}$ satisfy the hypothesis of G.L.D.C.T.

    For every $s \in S(\T_n\setminus\mathcal{T}^1_n)$, we have that
    $$ \frac{G_n(x_n(s))(1-F(x_n(s)))}{\sqrt{1-G^2_n(x_n(s))}} $$
    is a smooth function. Moreover, $S(\T_n\setminus\mathcal{T}^1_n)$ is a compact set, then there exists a constant $C_n^*$ defined by
    $$ C_n^*:= \max_{s\in S(\T_n\setminus\mathcal{T}^1_n)}\frac{|1-F(x_n(s))|}{\sqrt{1-G^2_n(x_n(s))}} .  $$
    Lemma \ref{lem:com-converge} implies that
    $$ \lim_{n\to \infty}\Big|\frac{G_n(x_n(s))(1-F(x_n(s)))}{\sqrt{1-G^2_n(x_n(s))}}\Big| = 0\;\;\text{for all}\;\;s\in S(\mathcal{T}^1_n\setminus(t^*_n-\epsilon,t^*_n+\epsilon)).  $$
    We concluded that
    $$C^*:=\lim_{n \to \infty} C^*_n = \max_{s\in S(\T\setminus\mathcal{T}^1)}\frac{|1-F(x(s))|}{\sqrt{1-G^2(x(s))}}.$$
    When $t^*_n$ diverges we define $C^* := 0$. 

   The arc length of $x_n(t)$ is uniformly bounded by $S_0$ by Definition \ref{def:geo-comp-def}. We define the interval $\overline{S}_n := s(\T_n\setminus\mathcal{T}^1_n) \cap [0,S_0]$, and the sequences functions  $\{f_n^2\}_{n\in \mathbb{N}}$ and $\{g_n^2\}_{n\in \mathbb{N}}$ on the interval $[0,S_0]$ as follows
    \begin{equation*}
        \begin{split}
     f_n^{2}(s)& = \begin{cases}
              \frac{G_n(x_n(s))(1-F(x_n(s)))}{\sqrt{1-G^2_n(x_n(s))}} \; \text{if}\;\;s \in \overline{S}_n,\\
              0 \qquad\qquad\qquad \;\;\;\;\;\;\;\;\text{if}\;\;s \notin \overline{S}_n,
             \end{cases}\;\text{and}\;  
      g_n^{2}(s) = \begin{cases}
              C_n^* \; \text{if}\;s \in \overline{S}_n,\\
              0 \quad \text{if}\;s \notin \overline{S}_n.
             \end{cases}\\        
        \end{split}
    \end{equation*}
    By construction, we have $|f_n^1| \leq g_n^1$ for all $n \in \mathbb{N}$.  In the case that $t_n^* \to t^*\in \R$ when $n \to \infty$, we consider the set $\overline{S} := s([t^*-\epsilon,t^*+\epsilon])\cap [0,S_0]$. In the case that $t^*_n$ diverges, we consider the set $\overline{S}:=\{\emptyset\}$. We define the functions $f^2$ and $g^2$ on the interval $[0,S_0]$ by
    \begin{equation*}
        \begin{split}
     f^{2}(s)& = \begin{cases}
              \frac{G_n(x(s))(1-F(x(s)))}{\sqrt{1-G^2(x_n(s))}} \; \text{if}\;\;s \in \overline{S},\\
              0 \qquad\qquad\qquad \;\;\;\text{if}\;\;s \notin \overline{S},
             \end{cases}\;\;\text{and}\;  
      g^{2}(s) = \begin{cases}
              C^* \; \text{if}\;s \in \overline{S},\\
              0 \quad \text{if}\;s \notin \overline{S}.
             \end{cases}\\        
        \end{split}
    \end{equation*}
    The function $ g^2$ is integrable since the Lebesgue measure of $\overline{S}$ is bounded. Therefore, the sequences of functions $ \{f^2_n\}_{n\in \mathbb{N}}$ and $\{g^2_n\}_{n\in \mathbb{N}}$ satisfy the hypothesis of G.L.D.C.T. By construction, we have
    $$ Cost_y(c_n,\T_n) = \int_{\R} f^1_n(t) dt + \int_{[0,S_0]} f^2_n(s)ds.$$
    Therefore, G.L.D.C.T. implies that $Cost_y(c_n,\T_n) \to Cost_y(c,\T)$ when $n\to \infty$. 
\end{proof}
 Let $c_n(t)$ be the sequence of geodesics defined in Proposition \ref{prp:bounded-ini-con} with the interval $[0,t_n^*]$, since $c_n(t)$ is a normal sequence we can associate $c_n(t)$ with a pair $(G_n,x_n([0,t_n^*]))$, where $c_n(t)$ is a geodesic associated to $G_n(x) \in Pen_F$ and $x_n:[0,t^*_n] \to K_{\Ho}$ is the reduced dynamics. The following results state that $(G_n,x_n([0,t_n^*]))$ has a convergent subsequence. 

\begin{lemma}\label{le:apend-sub-seq-pair}
    If $c_n(t)$ is the sequence of geodesics defined in Proposition \ref{prp:bounded-ini-con} with the interval $[0,t_n^*]$, then $(G_n,x_n([0,t_n^*]))$ has a convergent subsequence, where $c_n(t)$ is a geodesic associated to $G_n(x) \in Pen_F$ and $x_n:[0,t^*_n] \to K_{\Ho}$ is the reduced dynamics.
\end{lemma}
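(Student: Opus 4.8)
The plan is to produce the convergent subsequence in two independent steps — one for the reduced curves $x_n$ and one for the polynomials $G_n$ — and then to intersect the two subsequences. First I would import the compactness coming from the hypotheses. The geodesics $c_n$ are minimizing on $[0,T_n]$, and by the endpoint matching \eqref{eq:cots-fuc-cond} together with Lemma \ref{lem:uniform-bound-c-s} their full cost $Cost(c_n,[0,T_n])$ is uniformly bounded, while Lemma \ref{cor:T_h} places the endpoints $x_n(0)=x_h(-n)$ and $x_n(T_n)=x_h(n)$ in $B_{\Ho}(r,x_0)$; hence $(c_n,[0,T_n])\in\Com(r,x_0,C)$. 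Geodesic compactness (Definition \ref{def:geo-comp-def}) then bounds the reduced arc length $s([0,T_n])$ by a constant $S_0$ and confines $x_n([0,T_n])$ to a fixed compact set $K_{\Ho}\subset\Ho$; both bounds descend to the subinterval $[0,t_n^*]$.

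With this in hand I would reparametrize each $x_n$ by its own arc length $s\in[0,s_n]$, $s_n\le S_0$, so that the curves become $1$-Lipschitz maps into $K_{\Ho}$. Passing to a subsequence along which $s_n\to s_\infty$ and invoking Arzel\`a--Ascoli, I obtain a uniformly convergent subsequence $x_{n_j}\to x_\infty$ on $[0,s_\infty]$.

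For the polynomials, each $G_n=a_n+b_nF$ is a point $(a_n,b_n)\in\R^2$ of the two-dimensional pencil $Pen_F$ (Definition \ref{def:pencil}), so it suffices to bound $(a_n,b_n)$; once bounded, Heine--Borel yields a convergent subsequence $G_{n_j}\to G_\infty=a_\infty+b_\infty F\in Pen_F$, the pencil being closed. Since $F(r)=1-2r^2$ is bounded on $K_{\Ho}$, the energy constraint $|G_n(x_n(t))|\le 1$ at any sampled point shows that $a_n$ is bounded as soon as the leading coefficient $b_n$ is; the whole problem thus collapses to bounding $b_n$.

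The hard part will be exactly this bound. If $|b_n|\to\infty$ along a subsequence, then the radial Hill set $\{r:|G_n(r)|\le 1\}$ — which by conservation of energy contains the entire reduced orbit $x_n$ — shrinks to a vanishing neighborhood of its turning radius, forcing the orbit to collapse and its arc length to tend to $0$, i.e.\ $s_\infty=0$. I would exclude this using the minimizing property: by Proposition \ref{prop:conj-point} the radial motion of $c_n$ has at most one interior turning point, so it cannot accumulate length inside a shrinking region, while Lemma \ref{lem:com-converge} and the endpoint and shorter conditions \eqref{eq:shot-con} force the limiting half-orbit to run from $x_0$ out to the far turning point of the homoclinic and hence to have positive extent — contradicting $s_\infty=0$. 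Making the non-collapse argument rigorous without circularly assuming the convergence of $G_n$ is the genuinely delicate point. Granting the bound, a final passage to a common subsequence $n_j$ gives simultaneously $x_{n_j}\to x_\infty$ uniformly and $G_{n_j}\to G_\infty$ in $Pen_F$, so the pair $\bigl(G_{n_j},x_{n_j}([0,t_{n_j}^*])\bigr)$ converges to $\bigl(G_\infty,x_\infty([0,s_\infty])\bigr)$, which is the required convergent subsequence.
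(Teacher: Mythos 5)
Your two-step decomposition (compactness for the reduced curves, compactness for the polynomials, then a common subsequence) has the right shape, and the curve half is sound: geodesic compactness of $B_{\Ho}(r,x_0)\times\R^{2}$ does give the uniform arc-length bound $S_0$ and the confining compact set $K_{\Ho}$, and Arzel\`a--Ascoli applied to the arc-length reparametrizations is a legitimate alternative to the paper's route, which instead obtains the curves from continuous dependence of the reduced Hamiltonian flow on the polynomial and the initial condition \emph{after} the polynomials have been made to converge. The genuine gap is exactly where you flag it: the bound on the leading pencil coefficient $b_n$. You never establish it, and the contradiction you sketch does not go through as stated. If $|b_n|\to\infty$ the radial Hill set $\{r:|G_n(r)|\le 1\}$ indeed degenerates to a thin shell about some radius $r_n^{*}$, but that only collapses the \emph{radial} extent of the orbit; the reduced speed satisfies $|\dot x_n|^{2}=1-G_n^{2}(x_n)$, which is not forced to vanish on such a shell (a near-circular orbit with nonzero angular momentum traverses it at speed bounded away from zero), so the reduced curve can wind in the angular directions and the conclusion $s_\infty=0$ is unjustified. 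Worse, your proposed rescue --- that the limiting half-orbit must run from $x_0$ out to the far turning point of the homoclinic --- presupposes that the limit lies in $Homc^{+}(x_0)$, which is precisely what Lemma \ref{lem:uniq-G} proves \emph{after} the present lemma has been used; invoking it here is the circularity you yourself worry about.

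The paper sidesteps the coefficient bound entirely. Since each $c_n$ is normal, the Hill region $\Omega_{G_n}$ is not a singleton and contains a ball $B_{\Ho}(\epsilon_n,x_n)$ with $\epsilon_n\le 1$ on which $|G_n|\le 1$; pulling back by the affine map $h_n(\tilde x)=x_n+\epsilon_n\tilde x$ places the rescaled polynomial $\widehat G_n=G_n\circ h_n$ in the fixed compact ball $\mathcal{P}(s-1)$ of a finite-dimensional polynomial space, and one extracts a convergent subsequence of the pairs $(\widehat G_{n_j},h_{n_j})$, the centers lying in $K_{\Ho}$ and $\epsilon_n\in[0,1]$. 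No a priori bound on $(a_n,b_n)$ is required. If you prefer your more concrete coefficient-based route, you must actually supply the missing bound on $b_n$ by a non-circular argument --- for instance by showing that a degenerating pencil element is incompatible with the prescribed displacements $\Delta y(c_n,[0,T_n])=\Delta y(c_h,[-n,n])\to\infty$ together with the uniform cost bound --- and that estimate is the real content of the lemma, not a step to be granted.
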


\begin{proof}
    Let $c_n(t)$ be the sequence of geodesics defined in Proposition \ref{prp:bounded-ini-con} with the interval $[0,t_n^*]$. Since $c_n(t)$ is a normal sequence, the Hill region $\Omega_{G_n}$ is not a singleton so we can find a ball $B_{\Ho}(\epsilon_n,x_n)\subset \Omega_{G_n}$, where we impose $\epsilon_n\leq 1$. We consider the affine map $h_n(\tilde{x}) = x_n + \epsilon_n \tilde{x}$, which maps $B_{\Ho}(1,0)$ to $B_{\Ho}(\epsilon_n,x_n)$. Then, we consider the sequence of polynomials $\widehat{G}_n(\tilde{x}):= G_n(h_n(\tilde{x})$. Therefore, $\widehat{G}_n(\tilde{x}) \in \mathcal{P}(s-1)$, where $s$ is the step of the Carnot group $\G$ and we concluded that there exist a convergent subsequence $\widehat{G}_{n_j}(\tilde{x})$ . Moreover, we find a convergent subsequence of maps $h_{n_j}$ since $x_n \in K_{\Ho}$ and $\epsilon_n \in [0,1]$. Then, we find a convergent subsequence of polynomials  $G_{n_j}(x)$.

    The sequence of polynomials $G_n(x)$ defines a sequence of Hamiltonian systems with an initial condition $x_n(0) \in K_{\Ho}$. Therefore, the subsequence of polynomials  $G_{n_j}(x)$ defines a convergent subsequence of Hamiltonian systems with a convergent subsequence of initial conditions. Then, we extract a convergent subsequence of curves $x_{n_j}([0,t^*_{n_j}])$.
\end{proof}

We are ready to prove Lemma \ref{lem:uniform-bound-c-n}.

\begin{proof}
Proposition \ref{prp:cost-cont} implies  $Cost_y(c_n,[0,t^*_n])$ is a continuous function with respect the pairs $(G_n,x_n([0,t_n^*]))$. The function $Cost_y(c_n,[0,t^*_n])$ is restricted to a sequentially compact set, then $Cost_y(c_n,[0,t^*_n])$ is uniformly bounded. 
\end{proof}

\subsubsection{Proof of Lemma \ref{lem:geod-com-Ent} } \label{ap:proof-geod-com-Ent}

Let us prove Lemma \ref{lem:geod-com-Ent}.

\begin{proof}

Let us consider the ball $B_{\Ho}(r^*,0)\subset\Ho$ for arbitrary radius $r_0$. Let $(c_n,\mathcal{T}_n)$ be a sequence of geodesic in $Com(r^*,0,C)$ where $c_n(t) = (x_n(t),y_n(t),z_n(t))$ is a geodesic in $\R^{n+2}_F$ with $F(r) = 1 - 2r^2$ and $\mathcal{T}_n$ is time interval sequence such that $\mathcal{T}_n \to [-\infty,\infty]$, where $n \to \infty$. We will prove first that the radial interval $r(\mathcal{T}_n) = [(r_{min})_n,(r_{max})_{n}]$ is uniformly bounded and then we will see that length of $s(\mathcal{T}_n)$ is uniformly bounded.

%First let us rewrite the $Cost_t(c_n,\mathcal{T}_n)$ as follows
% \begin{equation*}
%     Cost(c_n,\mathcal{T}_n) = \int_{r(\mathcal{T}_n)} \frac{r(1-G_n(r)) dr}{\sqrt{r^2(1-G_n^2(r)) - \ell_n^2}}.
% \end{equation*}
For the first part, we will show that if $x_n(\mathcal{T}_n)$ is unbounded, then $Cost(c_n,\mathcal{T}_n)$ is unbounded. 
The sequence of geodesics $c_n(t)$ induces a sequence of pairs $(G_n,\ell_n)$. The first goal is to extract a convergent subsequence  $(G_{_j},\ell_{n_j})$.  If $r(\mathcal{T}_n) = [r^{min}_n,r^{max}_{n}]$, we consider the affine map $h_n(\tilde{r}) = r^{min}_n + u_n \tilde{r}$ with $u_n := r^{max}_n - r^{min}_n$, which maps $[0,1]$ to $r(\mathcal{T}_n)$. Since the effective potential satisfies $ V_n(r) = \frac{\ell_n^2}{r^2} + G^2_n(r) \leq 1$, it follows that the polynomial  $\tilde{G}_n(\tilde{r}) = G_n^2(h_n(\tilde{r}))\in \mathcal{P}(6)$. Moreover, the condition $x_n(t)$ leaves the ball $B_{\Ho}(r^*,0)$ for some $t$ in $\mathcal{T}_n$ implies $|\ell_n| < r^*$, then $\ell_n$ is a sequence of bounded real numbers. Therefore, there exists a subsequence $(\tilde{G}_{n_j},\ell_{n_j})$ converging to $(\tilde{G},\ell)$. Let us proceed by the following cases: case $\tilde{G}(\tilde{r}) \neq 1$ and case $\tilde{G}(\tilde{r}) = 1$.

Case $\tilde{G}(\tilde{r}) \neq 1$: setting the change of variable, $r = h_{n_j}(\tilde{r})$, we have
$$ Cost_t(c_n,\mathcal{T}_n) = u_n \int_{[0,1]} \frac{1-\tilde{G}_{n_j}(\tilde{r})}{\sqrt{1-\frac{\ell^2_{n_j}}{\tilde{r}^2} - \tilde{G}^2_{n_j}(\tilde{r})}} d\tilde{r} , $$
Therefore, Fatou's lemma implies
$$ 0 <\int_{[0,1]} \frac{1-\tilde{G}(\tilde{r})}{\sqrt{1-\frac{\ell^2}{\tilde{r}^2} - \tilde{G}^2(\tilde{r})}} d\tilde{r} \leq \liminf_{n_j \to \infty}  \int_{[0,1]} \frac{1-\tilde{G}_n(\tilde{r})}{\sqrt{1-\frac{\ell^2_{n_j}}{\tilde{r}^2} - \tilde{G}^2_{n_j}(\tilde{r})}} d\tilde{r} .$$
Then, $u_{n_j} \to \infty$ implies $Cost_t(c_{n_j},\mathcal{T}_{n_j})$ when $n_j \to \infty$.

Case  $\tilde{G}(\tilde{r}) = 1$: there exist $n^*$ such that $\tilde{G}_{n_j}(\tilde{r}) > \frac{1}{2}$ for all $\tilde{r}$ in $[0,1]$ if $n_j > n^*$, so $\frac{3}{4} \leq 1-\frac{\ell_{n_j}^2}{r^2} - G_{n_j}^2(\tilde{r}) $. Using that $1-F(r) = 2r^2$, then we have
\begin{equation*}
\begin{split}
\frac{2 u_{n_j}}{\sqrt{3}}  & \int_{[0,1]} r^2dr <  Cost_y(G_{n_j},\mathcal{T}_{n_j}) . \\
\end{split}
\end{equation*} 
So $ Cost_y(G_{n_j},\mathcal{T}_{n_j})\to \infty$, when $n_j \to \infty$. Therefore, we conclude that the radial interval  $r(\mathcal{T}_n)$ is uniformly bounded, i.e., there exists a $C_r$ such that 
$$ \max_{n\in \mathbb{N}}\{r(\mathcal{T}_n)\} < C_r.$$

If $\ell_n = 0$, Proposition \ref{Maxwell-r} implies $s(\mathcal{T}_n)<2C_r$. If $\ell_n \neq 0$ and $r_n(t)$ is constant, i.e., it is a relative equilibrium point by Lemma \ref{lemma:equi-point-an-har}, then  Proposition \ref{Maxwell-r} implies $s(\mathcal{T}_n)<2\pi C_r$. 

We will focus on the case when $\ell_n \neq 0$ and $r_n(t)$ is not constant. We can think of the one-form, defining the change in the coordinate $\theta$ given by equation \ref{eq:theta-period}, as a smooth, closed, and not exact one-form defined on the closed and simple curve on the plane $(r,p_r)$ given by the equation
$$ \alpha_n  := \{(p_r,r) \in \R^2: \frac{1}{2} = H_{n}(p_r,r) := \frac{1}{2} \big(p_r^2 + \frac{\ell_n^2}{p_r^2} + G_n^2(r)\big),\;\text{and}\; r \in \mathcal{R}_n   \}.  $$
This one-form is given by
$$ \phi_n =\frac{2\ell_ndr}{r^2\sqrt{1-V_n(r)}},\;\text{and the change by}\;\Delta\theta(c_n,\mathcal{T}_n) = \int_{r(\mathcal{T}_n)} \frac{2\ell_ndr}{r^2\sqrt{1-V_n(r)}}. $$
Note that $\ell_0\neq 0$ implies $0 \notin \mathcal{R}_n$.

We will proceed as in \cite[Lemma A.4]{bravo2022metric}. Our goal is to show that $\Delta\theta(c_n,\mathcal{T}_n)$ is uniformly bounded by a constant $C_\theta$. If this is true, then the arc length of the curve $s(\mathcal{T}_n)$ is bounded by $C_rC_\theta$.

Indeed, by the preimage theorem, the curve $\alpha_n$ is smooth at a point $(p_r,r) \in \alpha_n$ if and only if $\nabla H_n(p_r,r) \neq 0$ if and only if $(p_r,r)$ is a relative equilibrium point, but this is not the case. Therefore, $\alpha_n$ is smooth. It is well known that the one-form is smooth around a neighborhood of the parameters $\ell_n, a_n$, and $b_n$ where the curve $\alpha_n$ is smooth, see \cite[Remark 2.21]{bravo2022metric}, \cite[Lemma 2.24]{bravo2022metric}, or \cite[Chpater 3.10]{lawden2013elliptic}. 

To find $C_{\theta}$, we proceed in two steps: first, the smoothness of the one-form $\phi_n$ implies that $h^*_n\phi_n$ is smooth, where $h_n^*$ is the pull back of $h_n(\tilde{x})$. We fix the lower bound of the integral $h_n^{-1}(r(0))$ and vary the upper bound $h_n^{-1}(r(t))$, where $r \in \mathcal{T}_n$. The integral of $h^*_n\phi_n$ is a smooth function with respect to the upper bound $h_n^{-1}(r(t))$, and we can find its maximum
$$ \max_{h_n^{-1}(r(t)) \in [0,1]} \big|\int_{h_n^{-1}([0,t])} h^*_n\phi_n\big|.$$
Second,  the above value is smooth with respect to the parameters $\ell_n, a_n$, and $b_n$, and they belong to a sequentially compact set, then the above value is uniformly bounded for all $n \in \mathbb{N}$.

\end{proof}

\bibliographystyle{plain}
\bibliography{Bibliography}

\end{document}